\newtheorem{theorem}{Theorem}[section]
\newtheorem{lemma}{Lemma}[section]
\newtheorem{proposition}{Proposition}[section]
\newtheorem{assume}{Assumption}
\newtheorem{remark}{Remark}[section]
\newtheorem{corollary}{Corollary}[section]
\numberwithin{equation}{section}
\newcommand{\T}{{\mathbb T}}
\newcommand{\N}{{\mathbb N}}
\newcommand{\Z}{{\mathbb Z}}
\newcommand{\R}{{\mathbb R}}
\newcommand{\pa}{{\partial}}
\newcommand{\na}{{\nabla}}
\newcommand{\be}{\begin{equation}}
\newcommand{\ee}{\end{equation}}
\newcommand{\ba}{\begin{aligned}}
\newcommand{\ea}{\end{aligned}}
\newcommand{\sgn}{\mathrm{sgn}}
\newcommand{\cA}{\,\|\hspace{-.68em}|\,\,\,}
\def\Reel{{\cal R}e \,}
\def\mspace{\medskip \noindent}
\begin{document}

\title{Optimal Prandtl expansion around concave boundary layer}
\author{ \footnote{Universit\'e Paris Diderot, Sorbonne Paris Cit\'e, Institut de Math\'ematiques de Jussieu-Paris Rive Gauche, UMR 7586, F- 75205 Paris, France} David Gerard-Varet \and \footnote{Department of Mathematics, Graduate School of Science, Kyoto University
Kitashirakawa Oiwake-cho, Sakyo-ku, Kyoto 606-8502, Japan } Yasunori Maekawa \and
\footnote{Courant Institute of Mathematical Sciences, 251 Mercer
Street, New York, NY 10012, USA. } Nader Masmoudi}

\date{}

\maketitle
\begin{abstract}
We provide an optimal Gevrey stability result for general boundary layer expansions, under a mild concavity condition on the boundary layer profile. Our result generalizes (and even improves in the non strictly concave case) the one obtained in \cite{GMM},  restricted to expansions of shear flow type. 
\end{abstract}

\section{Introduction}\label{sec.intro}

We are interested in the high Reynolds number dynamics of the Navier-Stokes equation in a half-plane: 
\begin{equation}\label{ns}
\begin{aligned}
\pa_t u^\nu -\nu \Delta u^\nu + \nabla p^\nu + u^\nu\cdot \nabla u^\nu & = 0,  \qquad & t>0,~x\in \T,~y>0,\\
\nabla \cdot u^\nu & = 0,  \qquad  &t\geq 0,~x\in \T, ~y>0,\\
u^\nu|_{y=0} =0, \qquad u^\nu|_{t=0} &= u_0 &
\end{aligned}
\end{equation}
where $\nu$ stands for the inverse Reynolds number. Note that we consider periodic boundary conditions in $x$, but  could consider decay conditions as well. 
As is well-known, the Navier-Stokes solution $u^\nu$ exhibits a boundary layer near $y=0$, that is a region of high velocity gradients generated by the no-slip condition. A famous modelling of this boundary layer was provided by Prandtl. In modern language, he provided approximate solutions of Navier-Stokes in the form of  multiscale asymptotic expansions: 
\begin{equation} \label{Prandtl_profile}
 v  = \sum_{i=0}^N \sqrt{\nu}^i U^{E,i}(t,x,y) + \sum_{i=0}^N \sqrt{\nu}^i  \Big( V^{bl, i}_1\big(t,x,y/\sqrt{\nu}\big), \sqrt{\nu} V^{bl}_2\big(t,x,y/\sqrt{\nu}\big) \Big) 
\end{equation}
where the profiles $U^{E,i} = U^{E,i}(t,x,y)$ describe the flow away from the boundary, and  the profiles  $V^{bl, i} = V^{bl,i}(t,x,Y)$  are boundary layer correctors, that go to zero exponentially fast in variable$Y=y/\nu^\frac12$. We stress that there is a factor $\sqrt{\nu}$ between the amplitudes of the horizontal and vertical components of the boundary layer profiles: this is consistent with the divergence-free condition.  In particular, the leading order term $U^E := U^{E,0}$ solves the Euler equation, while the leading order boundary corrector $V^{bl} := V^{bl,0}$ solves the modified Prandtl equation 
\begin{align*}
& \pa_t V^{bl}_1 + (U^E_1\vert_{y=0} + V^{bl}_1 )  \pa_x  V^{bl,1} +  V^{bl}_1  \pa_x  U^E_1\vert_{y=0} +  (Y \pa_y U^{E}_2\vert_{y=0}  +  V^{bl}_2)  \pa_Y   V^{bl}_1 - \pa^2_Y V^{bl}_1 = 0 
\\
& \pa_x V^{bl,1} + \pa_Y V^{bl}_2 = 0 \\
& V^{bl}_1\vert_{Y=0} = - U^{E}_1\vert_{y=0}, \quad V^{bl} \rightarrow 0, \quad Y \rightarrow +\infty
 \end{align*}
Prandtl boundary layer theory has revealed illuminating about the mechanism of vorticity generation in fluids, and successful in the quantitative understanding of some model problems, notably the description of the Blasius flow near a flat plate. Still, Navier-Stokes flows of type \eqref{Prandtl_profile} are known to experience instabilities, due to two main mechanisms: 
\begin{itemize}
\item Boundary layer separation, which corresponds to a loss of monotonicity and concavity of the boundary layer profile $V^{BL}_1$, under an adverse pressure gradient. Mathematically, it corresponds to some ill-posedness or blow-up of the Prandtl model. 
\item Hydrodynamic instabilities of Tollmien-Schlichting type, experienced by concave boundary layer flows. 
\end{itemize}
These phenomena have crucial consequences in hydrodynamics and aerodynamics.  From the mathematical point of view, describing the stability/instability properties of flows $v$  of type \eqref{Prandtl_profile} is a difficult topic. The evolution of the perturbation $w=u^\nu-v$ obeys the perturbed Navier-Stokes system
\begin{align}\label{pns}
\begin{split}
&\pa_t w -\nu \Delta w + \nabla q + v\cdot \nabla w + w\cdot \nabla v =-w\cdot \nabla w + r , \qquad t>0,~x\in \T,~y>0,\\
& \nabla \cdot w =0,\qquad t\geq 0,~x\in \T, ~y>0,\\
& w|_{y=0} =0, \qquad w|_{t=0}=w_0.
\end{split}
\end{align}
Here, $r$  represents a remainder term due to the approximation $v$, while $w_0$ is a given initial perturbation of the velocity. We will assume that $r$ and $w_0$ are of the order $O(\nu^n)$ in some norm with $n \gg 1$. In the case of $r$, this is realized  by taking $N$ large enough in \eqref{Prandtl_profile}.
More precisely, one has to consider  functional frameworks such that the equations of both Prandtl type and Euler type are uniquely solvable at least locally in time. Then, the point is to understand under which conditions one can obtain uniform  (in $\nu$) estimates of $w$ in a suitable norm, that is justification of the Prandtl theory. 

An  important result in this direction is due to Caflisch and Sammartino \cite{SaCa1,SaCa2}, who proved local well-posedness of Euler and Prandtl equations, as well as stability results for \eqref{pns} in the case of analytic data. The stability result is then extended by \cite{Mae,MR3614755,FeTaZha,WaWa,KuViWa2020}, where all of them requires the analyticity near the boundary.  This general analytic stability result is somehow optimal, in view of a work of Grenier \cite{Gre}, see also \cite{MR4038143}. Grenier studied the case where the  Prandtl expansion $v$ in \eqref{Prandtl_profile} is a shear flow: this means that
\begin{equation} \label{shear_flow_profile}
 v = \Big(V^{bl}_1\big(t,x,y/\sqrt{\nu}\big), 0 \Big) 
 \end{equation}
where $V_1^{bl}$ solves the heat equation 
\begin{equation} \label{heat} 
\pa_t V^{bl}_1 - \pa_Y^2 V^{bl}_1 = 0, \quad V^{bl}_1\vert_{Y=0} = 0.
\end{equation}  
He proved that for some profiles $V^{bl}_1$ that have initially inflexion points, the linearized version of \eqref{pns} admits growing perturbations of the form 
$$ w^\nu(t,x,y) \approx e^{ \alpha t/\nu^{\frac12}} e^{i x/\nu^{\frac12}} \tilde w^{\nu}(y),  $$
with fixed $\alpha > 0$. This shows  that high frequencies   $k \approx 1/\nu^{\frac12}$ in variable $x$ may be amplified by $e^{\alpha k t}$. In other words, to obtain  a  bound  independent of $\nu$ over a time $T = O(1)$  will only be possible if those modes $k$ have amplitude less than  $e^{-\delta k}$, $\delta \le  \alpha T$. This necessary exponential decay of the frequency spectrum corresponds to analytic perturbations.  Let us note that the result of Grenier relies on the so-called Rayleigh instability, which is an inviscid instability mechanism for shear flows  with inflexion points.  In terms of hydrodynamics of the boundary layer, the appearance of inflexion points corresponds to the separation phenomenon. Hence, it is a framework in which various negative results exist for the Prandtl equation itself \cite{MR1476316,GeDo,GeNg,MR3590519}. 

The case without inflexion points, corresponding to the nicer situation where the boundary layer profile $V^{bl}_1$  is concave in variable $Y$, is much more involved. Again, the natural first step is to consider the shear flow situation \eqref{shear_flow_profile}. The stability of shear flows within the Navier-Stokes equation is an old topic of hydrodynamics, notably studied by Tollmien and Schlichting. See \cite{DrRe} for a detailed account. They showed that generic concave shear flows, although stable in the Euler evolution, exhibit instability in the Navier-Stokes one (albeit with a growth rate vanishing with viscosity). This is the so-called Tollmien-Schlichting instability, revisited on a rigorous basis by Grenier, Guo and Nguyen \cite{GGN2014}. Roughly,  by using a proper rescaling of these unstable eigenmodes, one can construct for the linearization of \eqref{pns} solutions of the type 
$$ w^\nu(t,x,y) \approx e^{ \alpha t/\nu^{\frac14}} e^{i x/\nu^{\frac38}} \tilde w^{\nu}(y).  $$
This time,  high frequencies   $k \approx 1/\nu^{\frac38}$ may be amplified by $e^{\alpha k^{\frac23} t}$. This is still  not compatible with Sobolev uniform bounds. More precisely, under the assumption that the spectral radius of the linearized Navier-Stokes operator is given by the growth rate of the Tollmien-Schlichting instability, one can obtain exponential bounds on the semigroup and from there show nonlinear Sobolev instability of Prandtl expansions of shear flow type: {\it cf} \cite{GNg2,GreToanJMPA}.

Nevertheless, in the setting of concave boundary layer flows, the class of data $w_0$ for which one can hope  uniform (in $\nu$) local (in time) control of $w$  is larger than analytic: namely, one may expect control for data whose Fourier spectrum  in $x$ decays like $O(e^{-k^{2/3}})$. This corresponds to the so-called Gevrey class of exponent $3/2$. 

{\em To show such optimal stability result for general "concave" Prandtl expansions is  the main goal of the present paper}. It extends the result established in \cite{GMM}, limited to the case when the boundary layer is the shear type like \eqref{shear_flow_profile}. See also the recent development \cite{ChenWuZhang}, still on shear flow expansions. Precise statements  will be given in next Section \ref{sec_statements}. Three preliminary remarks are in order: 
\begin{itemize}
\item  The approach in \cite{GMM} was very much based on Fourier transform in $x$, made easy because \eqref{shear_flow_profile} is independent of $x$. It does not adapt to general Prandtl expansions. The approach in the present paper relies on strongly different ideas. 
\item The main step in our approach is the derivation of stability estimates for the linearized equations:
\begin{equation}\label{ps}
\begin{aligned}
\pa_t w -\nu \Delta w + \nabla q + v\cdot \nabla w + w\cdot \nabla v & = f, \qquad & t>0,~x\in \T,~y>0,\\
\nabla \cdot w & = 0,\qquad &  t\geq 0,~x\in \T, ~y>0,\\
w|_{y=0} = 0, \qquad w|_{t=0} &= w_0.
\end{aligned}
\end{equation}
 But to derive such bounds, we do not make any assumption on the spectral radius of the linearized operator, in contrast with works \cite{GNg2,GreToanJMPA}. 
\item A strong point of our analysis is that it applies to boundary layer profiles $V^{bl}_1$ that are  concave in $Y$, but not necessarily strictly concave. See Section \ref{sec_statements} for detailed hypotheses. This is important for applications, as can be seen from \eqref{heat}: there,  $\pa^2_Y V^{bl}_1$ vanishes at the boundary for $Y=0$ at positive times. Despite such possible degeneracies, we are able to reach Gevrey $\frac32$ stability: this was not the case in our previous paper \cite{GMM}, where our Gevrey exponent for stability was less than $\frac32$ for non strictly concave flows. 
\end{itemize}

Let us insist that our result is the first one justifying boundary layer theory beyond the analytic scale. 
%As a byproduct, we deduce some new result regarding the inviscid limit: $u^\nu \rightarrow U^E$,  see Section \ref{sec_statements}. So far, the proof of the  inviscid limit was restricted to Sobolev data with special symmetries \cite{}, analytic data, or Sobolev data with Euler vorticity supported away from the boundary, which made the Euler and Prandtl solutions still analytic near the boundary. See also \cite{} for interesting conditional inviscid limit results. To our knowledge, our inviscid limit result is the first one that is robust with respect to non-analytic perturbations near the boundary (although Gevrey). 

\section{Statements of the results} \label{sec_statements}

To state our stability result, we first introduce our functional framework. 
Let $p\in [1,\infty]$, $K\geq 1$, and $\nu\in (0,1]$. For simplicity we assume $\nu^{-\frac12}\in \N$, but it is not at all  essential to our argument. We set
\begin{align}\label{def.norm}
\| f\|_{p} =  \sum_{j=0}^{\nu^{-\frac12}} \frac{1}{(j!)^\frac32} \sup_{j_2=0,\cdots,j} \|e^{-Kt(j+1)}\beta_{j_2} \pa_x^{j-j_2} f \|_{L^p_t (0,\frac{1}{K}; L^2_{x,y})},
\end{align}
where 
\begin{align}\label{def.beta}
\beta_{j_2}=\chi^{j_2} \pa_y^{j_2}, \qquad \chi(y) = 1-e^{-\kappa y}.
\end{align}
Here $\kappa\in (0,1]$ is a fixed number, which will be taken small enough. 
We note that $\|f\|_p$ depends on $\nu, \kappa\in (0,1]$ and $K\geq 1$, though we drop this dependence to simplify the notation. 
Note that for each fixed $\nu$ the norm $\|f\|_p$ is of Sobolev type, but if $\|f\|_p$ is uniformly bounded  in $\nu$, it implies a usual Gevrey $\frac32$ regularity for the $C^\infty$ function $f$. The reason we can restrict to $j \le \nu^{-\frac12}$ in the sum above is that  in \eqref{pns}, the stretching term $\nabla v =  O(\nu^{-\frac12})$ creates at most an amplification $O(e^{C\nu^{-\frac12}t})$. For $j \sim  \nu^{-\frac12}$, it is therefore balanced by the factor $e^{-Kt(j+1)}$ for large enough $K$. This means that we will be able to close an estimate considering only derivatives up to order $\nu^{-\frac12}$. 

Our main theorem is the following. Let us set $H_{0,\sigma}^1(\T\times \R_+)=\{f\in H^1_0(\T\times \R_+)^2~|~{\rm div}\, f=0~~{\rm in}~\T\times \R_+\}$, the space of all $H^1$ solenoidal vector fields satisfying the noslip boundary condition at $Y=0$.
\begin{theorem} \label{thm.main.nl} {\bf (Nonlinear stability of concave Prandtl expansions)}
Let  $v = v(t,x,y)$ a divergence-free vector field that fulfills the regularity and concavity conditions gathered in Assumption \ref{assume} below, not necessarily of type \eqref{Prandtl_profile}. There exists 
$\kappa_0 > 0$, such that the following statement holds for any $\kappa \in (0, \kappa_0]$: there exist $C>0$, $K > 0$, $\delta_0 > 0$ such that:  for all $\nu \le K^{-2}$, if $r\in L^2(0,\frac1K; L^2(\T\times \R_+)^2)$ and $w_0\in H^1_{0,\sigma}(\T\times \R_+)$ satisfy
\begin{align}
[|w_0|] + [|{\rm rot}\, w_0|] \le \delta_0 \nu^\frac94,\qquad \| r\|_2\le \delta_0 \nu^\frac{11}{4},
\end{align}
then the system \eqref{pns} has a unique solution $w \in C([0,\frac{1}{K}], H^1_{0,\sigma} (\T \times \R_+))$, satisfying 
\begin{align}
\| w \|_\infty  + \nu^\frac12  \| {\rm rot} \,  w\|_\infty \leq C\nu^{-\frac12} \Big ( [| w_0|] +[| {\rm rot} \, w_0|] + \nu^{-\frac12} \|r\|_2\Big ).
\end{align} 
Here  ${\rm rot}\, w=\pa_x w_2-\pa_y w_1$ and $\displaystyle [|w_0|]=\sum_{j=0}^{\nu^{-\frac12}} \frac{1}{(j!)^\frac32} \sup_{j_2=0,\cdots,j} \| \beta_{j_2}\pa_x^{j-j_2} w_0\|_{L^2_{x,y}}$.
\end{theorem}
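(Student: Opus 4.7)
The plan is to reduce Theorem \ref{thm.main.nl} to a purely linear Gevrey estimate for the system \eqref{ps}, and then handle the nonlinearity $-w\cdot\nabla w$ by a small-data Picard iteration in the norms defined in \eqref{def.norm}. Concretely, the linear statement I would target is
\begin{equation*}
\|w\|_\infty + \nu^{\frac12}\|\mathrm{rot}\,w\|_\infty \,\lesssim\, \nu^{-\frac12}\bigl([|w_0|] + [|\mathrm{rot}\,w_0|] + \nu^{-\frac12}\|f\|_2\bigr)
\end{equation*}
for solutions of \eqref{ps} with source $f$ and datum $w_0$. The map $T:w\mapsto \widetilde w$, where $\widetilde w$ solves \eqref{ps} with source $r-w\cdot\nabla w$, then becomes a contraction on a ball of radius $\sim \nu^{7/4}$ once one has an algebra-type bound of the shape $\|w\cdot\nabla w\|_2 \lesssim \nu^{-\alpha}\bigl(\|w\|_\infty + \nu^{1/2}\|\mathrm{rot}\,w\|_\infty\bigr)^2$ with a benign exponent $\alpha$; this is where the combinatorial weights $1/(j!)^{3/2}$ and the tangential cutoff $\chi^{j_2}$ in \eqref{def.beta} pay off, because they are tailor-made to make $\|\cdot\|_2$ behave like a Banach algebra under the Leibniz rule. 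Matching the threshold $\delta_0\nu^{9/4}$ on the data with the $\nu^{-1/2}$ amplification in the linear estimate then yields both existence and uniqueness on $[0,1/K]$ for $\delta_0$ small.

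For the linear estimate I would pass to the vorticity formulation: using $\mathrm{div}\,v=\mathrm{div}\,w=0$, the vorticities $\omega=\mathrm{rot}\,w$ and $\Omega=\mathrm{rot}\,v$ satisfy
\begin{equation*}
\pa_t\omega - \nu\Delta\omega + v\cdot\nabla\omega + w\cdot\nabla\Omega = \mathrm{rot}\,f,
\end{equation*}
with $w$ recovered from $\omega$ via the Biot--Savart reconstruction enforcing $w|_{y=0}=0$. The procedure is to apply $\beta_{j_2}\pa_x^{j-j_2}$ to this equation, test against $\beta_{j_2}\pa_x^{j-j_2}\omega$, and sum in $(j,j_2)$ with weights $e^{-2Kt(j+1)}/(j!)^3$. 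The factor $\chi^{j_2}$ vanishing at $y=0$ kills boundary traces when integrating $\nu\Delta\omega$ by parts, while the exponential weight $e^{-Kt(j+1)}$ absorbs the $O(\nu^{-1/2})$ stretching $\nabla v$ once $K$ is large enough; this is exactly why the sum may be cut off at $j\le\nu^{-1/2}$. The concavity hypothesis on $v$ enters decisively in the treatment of the stretching term $w\cdot\nabla\Omega$, which is of size $\nu^{-1/2}$ and would otherwise be destabilizing: the sign of $\pa_Y^2 V_1^{bl}$ provides the coercivity needed to close the energy estimate, and the allowance for $\pa_Y^2 V_1^{bl}$ to vanish at $Y=0$ (as happens for \eqref{heat}) is accommodated by choosing $\kappa$ small in $\chi=1-e^{-\kappa y}$ together with a viscous multiplier matched to the Gevrey-$\tfrac32$ scale $k\approx \nu^{-1/2}$.

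The main obstacle, and the real novelty with respect to \cite{GMM}, will be to carry out this program \emph{without} Fourier transform in $x$: since $v$ now depends genuinely on $x$, the commutators $[\beta_{j_2}\pa_x^{j-j_2},v\cdot\nabla]$ and $[\beta_{j_2}\pa_x^{j-j_2},\Omega\,\cdot\,]$ become fully variable-coefficient objects that must be organized so that (i) their combinatorics respect the Gevrey-$\tfrac32$ weights $1/(j!)^{3/2}$, (ii) they create no loss of $y$-derivatives, and (iii) they generate no uncontrolled boundary terms. Distributing the $\pa_y$'s in the Leibniz expansion against the $\chi^{j_2}$ factor, controlling the derivatives of $v$ by its own Gevrey profile, and re-summing via the $1/(j!)^{3/2}$-weight is the delicate combinatorial calculation that replaces the clean Fourier--ODE analysis of \cite{GMM}. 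Tracking the precise exponents of $\nu$ through this calculation, through the Biot--Savart step $\omega\mapsto w$ (which costs a negative power of $\nu$ due to the boundary-layer rescaling), and through the passage from vorticity bounds to velocity bounds, should then deliver the scalings $\nu^{-1/2}$, $\nu^{9/4}$ and $\nu^{11/4}$ appearing in the statement.
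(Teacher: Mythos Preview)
Your high-level plan for the nonlinear closure is essentially the paper's: reduce to the linear bound of Theorem \ref{main.thm2}, establish a bilinear estimate of the form $\|w\cdot\nabla w\|_2\lesssim \nu^{-3/4}\|w\|_X^2$ (the paper writes this as $\|f\,\mathrm{rot}\,g\|_2\le CK^{-1/2}\nu^{-3/4}\|f\|_X\|g\|_X$ in Lemma \ref{lem.ap.bilinear}), and run a fixed point in a ball of radius $\sim\delta_0\nu^{7/4}$. That part is fine.

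The genuine gap is in your outline of the \emph{linear} estimate. You propose to test the vorticity equation directly against $\beta_{j_2}\pa_x^{j-j_2}\omega$ and assert that ``the factor $\chi^{j_2}$ vanishing at $y=0$ kills boundary traces when integrating $\nu\Delta\omega$ by parts''. This fails precisely for $j_2=0$: the functions $\omega^{(j,0)}=e^{-Kt(j+1)}\pa_x^{j}\omega$ carry no $\chi$ factor and do \emph{not} vanish at the boundary, because the no-slip condition prescribes $w|_{y=0}=0$, not $\omega|_{y=0}=0$. The boundary contribution from $\nu\langle\Delta\omega,\omega\rangle$ is exactly the obstruction that forces the paper to split the linear problem into two pieces: first solve the Orr--Sommerfeld equation under the \emph{artificial} pure-slip condition $\phi|_{Y=0}=\Delta\phi|_{Y=0}=0$ (Section \ref{sec:vorticity:pure:slip}), then build a boundary corrector that restores $\pa_Y\phi|_{Y=0}=0$ through an iteration whose building blocks are an explicit Stokes solution, a heat--transport step, and a further Orr--Sommerfeld solve with artificial conditions (Section \ref{sec.boundary}). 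Your single-step energy argument does not see this decomposition and would not close.

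A second, related gap concerns your treatment of the stretching term $w\cdot\nabla\Omega$. You say the sign of $\pa_Y^2 V_1^{bl}$ ``provides the coercivity'', but under the mild concavity of Assumption \ref{assume}(iv) the quantity $\pa_Y\Omega$ may vanish or even be slightly negative, so testing against $\omega^{\bf j}$ gives no sign. The paper's mechanism is to test against $\xi_j^2\omega^{\bf j}$ with the weight $\xi_j=(\pa_Y\Omega+2\rho_j)^{-1/2}$, where $\rho_j$ contains the $j$-dependent regularization $K^{1/4}C_*(1+(j+1)^{1/2}Y)^{-2}$; the choice $\lambda_j=(j+1)^{1/2}$ is exactly what matches the Gevrey-$\tfrac32$ combinatorics and allows the degenerate-concavity terms to be absorbed (see the discussion around \eqref{calculations.toy}). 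Without this weighted test function, the cancellation $\int\pa_X\phi\,\omega=0$ is unavailable in the non-strictly-concave case, and the stretching term is genuinely $O(\nu^{-1/2})$ with no compensating sign.
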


%As a corollary of this Prandtl stability result, we establish the following inviscid limit theorem. 
%\begin{theorem} \label{thm.lim.inviscid} {\bf (Inviscid limit)} {\bf TO BE COMPLETED}
%\end{theorem}
To complete the statement of our theorem, it remains to describe the set of assumptions on $v$  that yield Theorem \ref{thm.main.nl}. Of course, these assumptions are designed to be satisfied by Prandtl expansions of type \eqref{Prandtl_profile}, when $V^{bl}_1$ has some mild concavity.  Due  to the boundary layer variable $Y$, it is more convenient to work with  rescaled variables  $(\tau, X,Y) := \nu^{-\frac12} (t,x,y)$. 
Accordingly, we shall express our assumptions directly on 
$$ V(\tau,X,Y) := v(t,x,y), \quad \tau>0,~X\in \T_\nu,~Y>0. $$
Here, $\T_\nu := \nu^{-\frac12} \T$. We set 
\begin{align*}
\Omega = \pa_XV_2-\pa_Y V_1,
\end{align*}
which describes the vorticity field of the approximation in the rescaled variables. We also set
\begin{align}\label{def.chi}
\chi_\nu =\chi (\nu^\frac12 Y)= 1-e^{-\kappa \nu^\frac12 Y}. 
\end{align}
Note that $\kappa\in (0,1]$ is fixed, but taken small enough. Also, in the rescaled variables our almost Gevrey norm becomes
\begin{align}\label{def.Norm}
\cA F \cA_p = \sum_{j=0}^{\nu^{-\frac12}} \frac{1} {(j!)^\frac32 \nu^\frac{j}{2}} \sup_{j_2=0,\cdots, j} \| e^{-K\tau \nu^\frac12 (j+1)}B_{j_2} \pa_X^{j-j_2} F \|_{L^p_\tau (0,\frac{1}{K\nu^\frac12}; L^2_{X,Y})}, \qquad B_{j_2} = \chi_\nu^{j_2} \pa_Y^{j_2}.
\end{align}
% To this end we set 
%\begin{align*}
%W(\tau,X,Y) := w(t,x,y), \quad V(\tau,X,Y) := v(t,x,y).
%\end{align*}
%Then the system \eqref{pns} is equivalent to 
%\begin{equation}\label{PS}
%\begin{aligned}
%\pa_\tau W -\nu^\frac12  \Delta W + \nabla Q + V\cdot \nabla W + W\cdot \nabla V  W\cdot \nabla W & = R , \qquad  & \tau>0,~X\in \T_\nu,~Y>0,\\
%\nabla \cdot W & = 0,\qquad & \tau \geq 0,~X\in \T_\nu, ~Y>0,\\
% W|_{Y=0} =0, \qquad W|_{\tau=0} & = W_0, 
%\end{aligned}
%\end{equation}
%with $R(\tau,X,Y) := \nu^\frac12 r(t,x,y) - w \cdot \na w + r$, $W_0(X,Y) := w_0(x,y)$, and $\T_\nu := \nu^{-\frac12} \T$.
%We set 
%\begin{align*}
%\Omega = \pa_XV_2-\pa_Y V_1,
%\end{align*}
%which describes the vorticity field of the approximation in the rescaled variable. We also set
%\begin{align}\label{def.chi}
%\chi_\nu =\chi (\nu^\frac12 Y)= 1-e^{-\kappa \nu^\frac12 Y}. 
%\end{align}
%Note that $\kappa\in (0,1]$ is fixed, but taken small enough. 
%In the rescaled variables our almost Gevrey norm reads as 
%\begin{align}\label{def.Norm}
%\cA F \cA_p = \sum_{j=0}^{\nu^{-\frac12}} \frac{1} {(j!)^\frac32 \nu^\frac{j}{2}} \sup_{j_2=0,\cdots, j} \| e^{-K\tau \nu^\frac12 (j+1)}B_{j_2} \pa_X^{j-j_2} F \|_{L^p_\tau (0,\frac{1}{K\nu^\frac12}; L^2_{X,Y})}, \qquad B_{j_2} = \chi_\nu^{j_2} \pa_Y^{j_2}.
%\end{align}
We state our key assumptions in terms of $V$ and  $\Omega$.
\begin{assume} \label{assume} 
\end{assume}
{\em 
\noindent {\rm {\bf (i)  Divergence-free and Dirichlet condition on $V$:}} 
\begin{align}\label{div}
\pa_X V_1+\pa_Y V_2=0,\qquad V|_{Y=0}=0
\end{align}
Moreover, there exist constants $C_*\geq 1$ and $C_0^*, C^*_1, C^*_2>0$ such that the following statements hold for any $\nu\in (0,1]$ and $K\geq 1$.

\medskip
\noindent {\rm {\bf (ii) Almost Gevrey $L^\infty$ bounds for $V$ and $\nabla \Omega$:}} For any $\kappa\in (0,1]$ we have
\begin{align}\label{est.assume.1}
\begin{split}
& \sum_{j=0}^{\nu^{-\frac12}} \frac{1}{(j!)^\frac32 \nu^\frac{j}{2}}  \sup_{j_2=0,\cdots,j} \Big (  \| e^{-K\tau \nu^\frac12 j} B_{j_2} \pa_X^{j-j_2}  V_1 \|_{L^\infty_{\tau,X,Y}} + \kappa \| e^{-K\tau \nu^\frac12 j}\frac{\pa_X^{j} V_2}{\chi_\nu}  \|_{L^\infty_{\tau,X,Y}} \\
& \quad + \nu^{-\frac12}(j+1)^\frac12  \| e^{-K\tau \nu^\frac12 j} B_{j_2} \pa_X^{j-j_2}  \pa_X V_1 \|_{L^\infty_{\tau,X,Y}} + (j+1)^\frac12 \| e^{-K\tau \nu^\frac12 j} B_{j_2} \pa_X^{j-j_2}  \pa_Y V_1 \|_{L^\infty_{\tau,X,Y}} \\
&\quad + \nu^{-\frac12} \| \frac{1+Y}{1+\nu^\frac12 Y} e^{-K\tau \nu^\frac12 j} B_{j_2} \pa_X^{j-j_2} \pa_{X} \Omega \|_{L^\infty_{\tau,X,Y}} +  \| (\frac{1+Y}{1+\nu^\frac12 Y})^2 e^{-K\tau \nu^\frac12 j}B_{j_2} \pa_X^{j-j_2} \pa_{Y} \Omega \|_{L^\infty_{\tau,X,Y}}\Big ) \\
& \leq C_0^*.
\end{split}
\end{align}
Here $L^\infty_{\tau,X,Y}=L_\tau^\infty (0,\frac{1}{K\nu^\frac12}; L^\infty_{X,Y})$.

\medskip
\noindent {\rm {\bf (iii) Derivative bounds for $V$ and $\Omega$:}} We have
\begin{align}\label{est.assume.2}
\begin{split}
& \| V\|_{L^\infty_{\tau,X,Y}} + \nu^{-\frac12} \| \pa_X V \|_{L^\infty_{\tau,X,Y}} + \| \frac{1+Y}{1+\nu^\frac12 Y} \pa_Y V_1\|_{L^\infty_{\tau,X,Y}}   \\
& + \nu^{-\frac12} \| \frac{1+Y}{1+\nu^\frac12 Y} \pa_X \Omega \|_{L^\infty_{\tau,X,Y}}   + \| (\frac{1+Y}{1+\nu^\frac12 Y})^2 \pa_Y \Omega\|_{L^\infty_{\tau,X,Y}} \\
& + \nu^{-\frac12} \| \big (\frac{Y}{1+\nu^\frac12 Y} \big )^2 \pa_\tau \pa_Y \Omega\|_{L^\infty_{\tau,X,Y}} + \nu^{-\frac12} \| \frac{Y(1+Y)}{(1+\nu^\frac12 Y)^2}  \pa^2_{XY} \Omega\|_{L^\infty_{\tau,X,Y}} +\| (\frac{Y(1+Y)^2}{(1+\nu^\frac12 Y)^3} \pa_Y^2 \Omega\|_{L^\infty_{\tau,X,Y}} \\
& \leq C^*_1.
\end{split}
\end{align}

\medskip
\noindent {\rm {\bf (iv) Monotonicity of $\Omega$:}} Set $\rho (Y) =C_* \big ( (1+\frac{Y}{\nu^{\frac14}})^{-2} + \nu^\frac12 (1+Y)^{-2} + \nu\big ) $. Then we have
\begin{align}\label{est.assume.3}
\pa_Y \Omega + \rho  \geq 0,
\end{align}
and 
\begin{align}\label{est.assume.4}
\nu^{-\frac12} \| \frac{Y}{1+\nu^\frac12 Y} \frac{\pa^2_{XY}\Omega}{\sqrt{\pa_Y \Omega + 2\rho}} \|_{L^\infty_{\tau,X,Y}} + \| \frac{Y(1+Y)}{(1+\nu^\frac12 Y)^2} \frac{\pa_Y^2\Omega}{\sqrt{\pa_Y \Omega + 2\rho}} \|_{L^\infty_{\tau,X,Y}} \leq C^*_2.
\end{align}
}

\begin{remark}[Link between the Prandtl expansions and the assumptions]
\end{remark}
\noindent
Let us explain how the set of assumptions above relates to Prandtl expansions as given in \eqref{Prandtl_profile}. 

\medskip
\noindent
{\bf i)} The divergence-free and Dirichlet conditions are satisfied by Prandtl expansions of type \eqref{Prandtl_profile}. Fields $u^{E,i}$ solve Euler or linearized Euler equations, while fields $V^{bl,i}$ solve Prandtl or linearized Prandtl equations: in both cases, they are divergence-free. Moreover, they are constructed alternatively in order to satisfy the Dirichlet boundary condition : once $U^{E,i}$ is constructed, $V^{Bl,i}$ is constructed so that 
$$ U^{E,i}_1\vert_{y = 0} +  V^{bl,i}_1\vert_{Y = 0} = 0. $$ 
Then, $U^{E,i+1}$ is constructed by solving an Euler type equation with  the non-penetration condition 
$$ U^{E,i+1}_2\vert_{y=0} +  V^{bl,i}_2\vert_{Y=0} = 0. $$
More precisely, one can construct  $(U^{E,i}, V^{bl,i})$ in this way for $i \le N-1$, and conclude by 
$$U^{E,N}(t,x,y) := \Big(0,  -V^{bl,N-1}_2(t,x,0)\Big), \quad V^{BL,N} := 0. $$

\medskip
\noindent
{\bf ii)} Assumption ii) amounts essentially to a Gevrey $\frac{3}{2}$  bound on solutions $U^{E,i}$, resp. $V^{bl,i}$,  of Euler like and Prandtl like equations. Such solutions exist locally in time. For the Euler equations, we refer to \cite{MR2765482} and references therein. For the Prandl equations, as mentioned before, the works \cite{SaCa1,KuVi} provide local in time solutions for analytic data. These local solutions being analytic, they belong to the Gevrey class $\frac{3}{2}$. More recently, Gevrey  local in time well-posedness of the Prandtl equation has been established in \cite{MR3925144} (see \cite{GeMa,MR4055987} for preliminary partial results).  Also, if $v$ is given by \eqref{Prandtl_profile},  as $V_2(\tau,X,Y) = v_2(t,x,y)$ is zero at the boundary $Y=0$, we can write 
$$ V_2 = \int_0^Y \pa_Y V_2 \approx  \int_0^Y \Big(\nu^{1/2} \big( \pa_y V^{E,0}_2  +  \pa_Y  V^{bl,0}_2 \big) + \dots \Big)  = O(\nu^{\frac12} Y) = O(\frac{1}{\kappa} \chi_\nu(Y)) \quad \text{at}~~Y=0 $$
 so that $\frac{1}{\kappa}\frac{V_2}{\chi_\nu}$ is under control as required in ii).  

\medskip
\noindent
{\bf iii)} Again, assumption iii) is satisfied by classical Prandtl expansions of type \eqref{Prandtl_profile}. To check that, one has to keep in mind that $\pa_\tau \sim \nu^{\frac12} \pa_t$, $\pa_X \sim \nu^{\frac12} \pa_x$, so that for Prandtl expansions, which depend smoothly on $t$ and $x$, any $\tau$- or $X$-derivative allows to gain  $\nu^{1/2}$. This explains for instance the factor $\nu^{-\frac12}$ in front of the second and fourth terms of \eqref{est.assume.2}, related to $\pa_X V$ and $\pa_X \Omega$.  In the same spirit, as  $\pa_Y \sim \nu^{\frac12} \pa_y$, for the Euler part of the Prandtl expansion (which depends smoothly on $y$), any $Y$-derivative  allows to gain    $\nu^{1/2}$. This remark does not apply to the boundary layer part of the expansion, as it depends genuinely on $Y$. Still, this part has good decay in $Y$ (typically like $e^{-Y}$ or $(1+Y)^{-N}$ for large $Y$). This is coherent with  the weights $(1+Y)/(1+\nu^{\frac12} Y)$ or $Y/(1+\nu^{\frac12} Y)$ that can be found in \eqref{est.assume.2} in front of terms with $Y$ derivatives: outside the boundary layer ($Y \gg 1$), it yields  a gain of $\nu^{\frac12}$, but in the boundary layer ($Y \sim 1$), it yields some decay information on the boundary layer terms. 

\medskip
\noindent
{\bf iv)}  In the case $v$ is given by Prandtl expansions of type \eqref{Prandtl_profile},
$$\pa_Y \Omega = \pa^2_{XY} V_2 - \pa_Y^2 V_1= - \pa_Y^2 V^{bl}_1 +   O(\nu) + O(\sqrt{\nu} (1+Y)^{-2})   $$ 
Here, the $O(\nu)$ comes from the Euler part of the Prandtl expansion. The $O(\sqrt{\nu} (1+Y)^{-2})$ corresponds to the boundary layer profiles $V^{bl,i}$, $i \ge 1$. The last two terms in the definition of the weight $\rho$ allow to control them for $C_*$ large enough. Hence, condition \eqref{est.assume.3} is essentially a (non strict) concavity condition on the leading term of the Prandtl boundary layer,  $V^{bl} := V^{bl,0}$. Moreover,  by the addition  of the sublayer term 
$(1+(Y/\nu^{\frac14}))^{-2}$ in the definition of $\rho$, we allow any sign for $\pa_Y^2 V_{0,1}^P$ in the sublayer  $0\le Y\le O(\nu^\frac14)$, and the concavity is only needed for $Y\geq O(\nu^\frac14)$. In the original variables this sublayer is of the order $O(\nu^\frac34)$, which  is  typical order of  Kolmogorov dissipation length in the theory of turbulence. 

As regards \eqref{est.assume.4}, we notice that for Prandtl expansions:  
\begin{align*}
\pa_{XY}^2 \Omega & = -\pa_X \pa^2_Y V^{bl}_1 + O (\nu^{\frac32}) + O(\nu (1+Y)^{-2}),\\
\pa_Y^2 \Omega & = -\pa_Y^3 V^{bl}_1  + + O (\nu^{\frac32})  + O (\nu^\frac12 (1+Y)^{-2}) 
\end{align*}
Hence, by taking into account the bound $\frac{1}{\sqrt{\pa_Y \Omega+2\rho}} \leq  \frac{1}{C_* \nu^\frac12}$,  the condition \eqref{est.assume.4} is essentially verified if $V^{bl}_1$ satisfies
\begin{align*}
\nu^{-\frac12} \| \frac{Y \pa_X\pa_Y^2 V^{bl}_1}{\sqrt{-\pa_Y^2 V^{bl}_1 + 2 C_* (1+\frac{Y}{\nu^\frac14})^{-2}}} \|_{L^\infty_{\tau,X,Y}} +  \| \frac{Y(1+Y) \pa_Y^3 V^{bl}_1}{\sqrt{-\pa_Y^2 V^{bl}_1 + 2 C_* (1+\frac{Y}{\nu^\frac14})^{-2}}} \|_{L^\infty_{\tau,X,Y}} \leq C<\infty.
\end{align*}

In the next section, we will explain the general strategy for the proof of our main stability theorem. More precisely, we will briefly describe our stability analysis of the linearized equation \eqref{ps}, for $f$ a given force. This is the core of our paper:  the transition from linear to nonlinear stability is more standard. As explained before, we shall work with the rescaled variables $(\tau, X, Y)$. We set 
$$ W(\tau, X, Y) := w(t,x,y), \quad F(\tau, X, Y) := \sqrt{\nu} f(t,x,y), \quad W_0(X,Y) := w_0(x,y)  $$
(and still $V(\tau,X,Y) = v(t,x,y)$). System \eqref{ps} becomes
\begin{equation}\label{PS}
\begin{aligned}
\pa_\tau W -\nu^\frac12  \Delta W + \nabla Q + V\cdot \nabla W + W\cdot \nabla V & = F, \qquad  & \tau>0,~X\in \T_\nu,~Y>0,\\
\nabla \cdot W & = 0,\qquad & \tau \geq 0,~X\in \T_\nu, ~Y>0,\\
 W|_{Y=0} =0, \qquad W|_{\tau=0} & = W_0, 
\end{aligned}
\end{equation}

The main result on this linear system is 
\begin{theorem}\label{thm.main1} Suppose that Assumption \ref{assume} holds. Then there exists $\kappa_0\in (0,1]$ such that the following statement holds for any $\kappa \in (0,\kappa_0]$. There exists $K_0=K_0(\kappa, C_*,C^*_j)\geq 1$ such that if $K\geq K_0$ then the system \eqref{PS} admits a unique solution $W\in C([0,\infty); H^1_{0,\sigma} (\T_\nu \times \R_+))$ satisfying 
\begin{align}\label{est.thm.main1.1}
\begin{split}
& \cA W\cA_\infty  + \cA {\rm rot}\,  W\cA_\infty  \leq C \Big ( (\nu^{-\frac12} + K^\frac12 \nu^{-\frac14}) [\| W_0\|]+\nu^{-1}  [\|{\rm rot}\, W_0\|]  + \nu^{-\frac54} \cA F\cA_2 \Big ).
\end{split}
\end{align} 
Here ${\rm rot}\, W=\pa_XW_2-\pa_Y W_1$ and $\displaystyle [\|W_0\|]=\sum_{j=0}^{\nu^{-\frac12}} \frac{1}{(j!)^\frac32\nu^\frac{j}{2}} \sup_{j_2=0,\cdots,j} \| B_{j_2}\pa_X^{j-j_2} W_0\|_{L^2_{X,Y}}$,
and $C$ is a universal constant.
\end{theorem}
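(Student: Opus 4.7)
The plan is to prove Theorem \ref{thm.main1} by closing an a priori Gevrey $\frac32$ estimate for the linear system \eqref{PS}; existence and uniqueness then follow by a standard Galerkin approximation, with the a priori bound inherited from the smooth approximations. The basic quantity to estimate is the scalar vorticity $\omega:=\mathrm{rot}\,W$, which solves
\begin{equation*}
\pa_\tau \omega-\nu^{\frac12}\Delta\omega+V\cdot\nabla\omega+W\cdot\nabla\Omega=\mathrm{rot}\,F,
\end{equation*}
while $W$ is reconstructed from $\omega$ via the half-plane Biot--Savart formula enforcing the no-slip condition. For every $0\leq j_2\leq j\leq \nu^{-\frac12}$ I would apply the weighted tangential operator $B_{j_2}\pa_X^{j-j_2}$ and carry out an $L^2$ energy estimate for $\omega_{j,j_2}:=e^{-K\tau\nu^{\frac12}(j+1)}B_{j_2}\pa_X^{j-j_2}\omega$. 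The dissipation $\nu^{\frac12}\|\nabla\omega_{j,j_2}\|_{L^2}^2$ from the Laplacian and the damping $K\nu^{\frac12}(j+1)\|\omega_{j,j_2}\|_{L^2}^2$ from the exponential factor are the two good terms that must absorb the commutators and the boundary contributions at $Y=0$.

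The Gevrey-$\frac32$ weight $(j!)^{-3/2}\nu^{-j/2}$ arises from the combinatorics of the commutators $[B_{j_2}\pa_X^{j-j_2},V\cdot\nabla]$ and $[B_{j_2}\pa_X^{j-j_2},W\cdot\nabla\Omega]$. When $k$ of the $j$ derivatives fall on $V$ or $\Omega$ and $j-k$ on $W$ or $\omega$, the binomials $\binom{j}{k}$ combine with the Gevrey-$\frac32$ weights on $V$, $\Omega$ from Assumption \ref{assume}(ii) to form a Cauchy product summable with exponent $\frac32$; the extra $(j+1)^{\frac12}$ gain appearing there on $\pa_X V$ and $\pa_Y V_1$ is exactly what closes this exponent. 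Borderline configurations where all derivatives land on $W$ produce a loss proportional to $(j+1)$ which is absorbed by the damping provided $K$ is chosen large depending on $\kappa$, $C_*$ and the $C_j^*$. The factor $\chi_\nu^{j_2}$ inside $B_{j_2}$ is commuted only through $\pa_Y$, and each such commutator carries a factor $\kappa$, so that taking $\kappa$ small controls the $B_{j_2}$-commutator chain.

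The main obstacle is the stretching term $W\cdot\nabla\Omega$, and specifically its normal component $W_2\pa_Y\Omega$: a naive $L^2$ bound costs a full $\nu^{-\frac12}$ beyond what the exponential damping can provide, reflecting the Tollmien--Schlichting amplification which fixes the Gevrey-$\frac32$ threshold. Here the monotonicity condition \eqref{est.assume.3}--\eqref{est.assume.4} is essential. I would test the vorticity equation against $\omega_{j,j_2}/\sqrt{\pa_Y\Omega+2\rho}$ or, equivalently, introduce a modified vorticity $\tilde\omega_{j,j_2}=\omega_{j,j_2}+\gamma\,\psi_{j,j_2}$, with $\psi_{j,j_2}$ a tangential derivative of the stream function and $\gamma\sim\sqrt{\pa_Y\Omega+\rho}$, so that $W_2\pa_Y\Omega$ is recast as a non-negative quadratic form modulo a $\rho$-controllable error. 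The commutator errors generated by differentiating this weight are of the exact shape $\pa_{XY}^2\Omega/\sqrt{\pa_Y\Omega+2\rho}$ and $\pa_Y^2\Omega/\sqrt{\pa_Y\Omega+2\rho}$, which are bounded precisely by Assumption \ref{assume}(iv). The sublayer $0\leq Y\lesssim \nu^{\frac14}$, where $\rho\gg\pa_Y\Omega$ and concavity carries no information, must be handled separately via Hardy-type inequalities exploiting $W|_{Y=0}=0$ together with the viscous dissipation.

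After closing this modified energy inequality I would sum over $(j,j_2)$ weighted by $(j!)^{-3/2}\nu^{-j/2}$, recover $\cA W \cA_\infty$ from $\cA\omega\cA_\infty$ and from $W_0$ through the Biot--Savart formula (which, under no-slip data, converts $L^2$ control of $\omega$ into $L^2$ control of $\nabla W$), and bound the initial-data contributions with the announced prefactors $\nu^{-\frac12}+K^{\frac12}\nu^{-\frac14}$ on $W_0$ and $\nu^{-1}$ on $\mathrm{rot}\,W_0$; these reflect respectively the loss in the Biot--Savart step and the loss in inverting the vorticity equation through the dissipation. I expect the most delicate technical points to be the closure of the concavity-weighted energy in the sublayer and the careful bookkeeping needed to keep all commutator contributions compatible with the Gevrey-$\frac32$ summation over $j$ and $j_2$.
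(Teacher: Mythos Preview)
Your proposal has a genuine gap at the most delicate point: you attempt a direct weighted energy estimate for the vorticity under the \emph{no-slip} boundary condition, but this is precisely what fails. When you test the vorticity equation against $\omega_{j,j_2}/(\pa_Y\Omega+2\rho)$, the integration by parts of $-\nu^{\frac12}\Delta\omega_{j,j_2}$ produces a boundary term $\nu^{\frac12}\int \pa_Y\omega_{j,j_2}\cdot \xi_j^2\omega_{j,j_2}\big|_{Y=0}$. Under no-slip one has $W|_{Y=0}=0$ but \emph{not} $\omega|_{Y=0}=0$, so this term is uncontrolled; Hardy inequalities from $W|_{Y=0}=0$ do not help here because the issue is the vorticity trace, not the velocity trace. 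The paper states this obstruction explicitly in Section~\ref{sec:strategy}: ``even in the simpler case $\pa_Y\Omega\geq C>0$, the weighted estimate above is not compatible with the introduction of viscosity and no-slip conditions.'' Your sentence about absorbing ``boundary contributions at $Y=0$'' by dissipation and damping does not address this; there is no mechanism in your outline that would.

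The paper's remedy is a two-step construction that you are missing entirely. First (Section~\ref{sec:vorticity:pure:slip}) one solves the Orr--Sommerfeld equation with the \emph{artificial} condition $\Delta\phi|_{Y=0}=0$ in place of $\pa_Y\phi|_{Y=0}=0$; then $\omega|_{Y=0}=0$ and the concavity-weighted estimate closes, with the crucial refinement that the weight is $\xi_j=(\pa_Y\Omega+2\rho_j)^{-1/2}$ where $\rho_j$ contains a $j$-dependent sublayer term $K^{1/4}C_*(1+(j+1)^{1/2}Y)^{-2}$---this is what makes the stretching term $\pa_X\phi^j\,\pa_Y\Omega$ summable at Gevrey $\frac32$, and your fixed $\rho$ would not suffice. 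Second (Section~\ref{sec.boundary}) one corrects the wrong Neumann trace $h=-\pa_Y\Phi_{slip}|_{Y=0}$ by an iteration: an explicit Stokes solution $\phi_{1,1}[h]$ via Fourier--Laplace (Lemma~\ref{lem.Stokes}), a transport correction $\phi_{1,2}[h]$ propagating weighted-in-$Y$ bounds (Proposition~\ref{prop.Airy}), and a final Orr--Sommerfeld correction under artificial conditions; the resulting boundary remainder operator $R_{bc}$ is then shown to be a contraction on the trace space $Z_{bc}$ (Proposition~\ref{prop.boundary.correct}). The full estimate \eqref{est.thm.main1.1} is assembled in Section~\ref{sec.linear} from these pieces together with Proposition~\ref{prop.extra}. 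None of this architecture appears in your plan, and without it the argument does not close.
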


As a consequence, we have the following result in the original variables.
Note that, from $F(\tau,X,Y) = \nu^\frac12 f(t,x,y)$, we have $\nu^{-\frac54} \cA F\cA_2=\nu^{-\frac32} \| f\|_2$. 
\begin{theorem}\label{main.thm2} Suppose that Assumption \ref{assume} holds. Then there exists $\kappa_0\in (0,1]$ such that the following statement holds for any $\kappa \in (0,\kappa_0]$. There exists $K_0=K_0(\kappa, C_*,C^*_j)\geq 1$ such that if $K\geq K_0$ then the system \eqref{ps} admits a unique solution $w\in C([0,\infty); H^1_{0,\sigma} (\T\times \R_+))$ satisfying 
\begin{align}\label{est.thm.main2.1}
\| w \|_\infty  + \nu^\frac12  \|{\rm rot}\,  w\|_\infty \leq C\nu^{-\frac12} \Big ( (1+K^\frac12 \nu^\frac14) [| w_0|] +  [| {\rm rot}\, w_0|] + \nu^{-\frac12} \| f \|_2\Big ).
\end{align} 
Here  ${\rm rot}\, w=\pa_x w_2-\pa_y w_1$ and $\displaystyle [|w_0|]=\sum_{j=0}^{\nu^{-\frac12}} \frac{1}{(j!)^\frac32} \sup_{j_2=0,\cdots,j} \| \beta_{j_2}\pa_x^{j-j_2} w_0\|_{L^2_{x,y}}$, and $C$ is a universal constant.
\end{theorem}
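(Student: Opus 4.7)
Theorem \ref{main.thm2} is a direct corollary of Theorem \ref{thm.main1}, obtained by undoing the parabolic rescaling $(\tau,X,Y) = \nu^{-1/2}(t,x,y)$. The plan has three steps: (1) identify \eqref{ps} with \eqref{PS} through the change of variable and read off existence and uniqueness; (2) convert each norm appearing in \eqref{est.thm.main1.1} to its $(t,x,y)$-counterpart; (3) substitute and rearrange to reach \eqref{est.thm.main2.1}.

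For (1), setting $W(\tau,X,Y)=w(t,x,y)$, $F(\tau,X,Y)=\nu^{1/2}f(t,x,y)$ and $V(\tau,X,Y)=v(t,x,y)$ turns \eqref{ps} into \eqref{PS}: the $\nu^{1/2}$ in front of $f$ absorbs the extra $\nu^{1/2}$ produced by the rescaling of $\pa_t-\nu\Delta$. Assumption \ref{assume} is already phrased in the rescaled variables, so it applies verbatim. The unique solution $W \in C([0,\infty); H^1_{0,\sigma}(\T_\nu\times\R_+))$ provided by Theorem \ref{thm.main1} thus corresponds, through the (linear) change of variable, to a unique $w \in C([0,\infty); H^1_{0,\sigma}(\T\times\R_+))$.

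For (2), the chain rule $\pa_X=\nu^{1/2}\pa_x$, $\pa_Y=\nu^{1/2}\pa_y$ together with $\chi_\nu(Y)=\chi(y)$ yields
\[
B_{j_2}\pa_X^{j-j_2} W = \nu^{j/2}\,\beta_{j_2}\pa_x^{j-j_2} w, \qquad e^{-K\tau\nu^{1/2}(j+1)} = e^{-Kt(j+1)}.
\]
The factor $\nu^{j/2}$ exactly cancels the $\nu^{-j/2}$ in the definition of $\cA\cdot\cA_p$, so only the $L^2$ Jacobians $dX\,dY=\nu^{-1}dx\,dy$ and $d\tau=\nu^{-1/2}dt$ remain. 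A short bookkeeping gives
\[
\cA W\cA_\infty = \nu^{-1/2}\|w\|_\infty, \qquad [\|W_0\|] = \nu^{-1/2}[|w_0|].
\]
For the quantities carrying an additional $\nu^{1/2}$ from the rescaling, namely ${\rm rot}\, W=\nu^{1/2}{\rm rot}\, w$, ${\rm rot}\, W_0=\nu^{1/2}{\rm rot}\, w_0$, and $F=\nu^{1/2}f$, the same computation gives
\[
\cA {\rm rot}\, W\cA_\infty = \|{\rm rot}\, w\|_\infty, \qquad [\|{\rm rot}\, W_0\|] = [|{\rm rot}\, w_0|], \qquad \cA F\cA_2 = \nu^{-1/4}\|f\|_2,
\]
in agreement with the identity $\nu^{-5/4}\cA F\cA_2=\nu^{-3/2}\|f\|_2$ noted just before the statement.

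For (3), substituting into \eqref{est.thm.main1.1} and multiplying by $\nu^{1/2}$ yields
\[
\|w\|_\infty + \nu^{1/2}\|{\rm rot}\, w\|_\infty \leq C\bigl((\nu^{-1/2}+K^{1/2}\nu^{-1/4})[|w_0|] + \nu^{-1/2}[|{\rm rot}\, w_0|] + \nu^{-1}\|f\|_2\bigr),
\]
which is exactly \eqref{est.thm.main2.1} after factoring $\nu^{-1/2}$ from the right-hand side. There is no genuine analytic obstacle: all the difficulty is concentrated in Theorem \ref{thm.main1}, and the present corollary is a cosmetic reformulation. The only care required is scrupulous bookkeeping of the powers of $\nu$ arising from the chain rule, from the $L^2_{X,Y}$ and $L^2_\tau$ Jacobians, and from the $\nu^{1/2}$ factors built into the definitions of $F$ and into ${\rm rot}\,W$.
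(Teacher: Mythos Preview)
Your proof is correct and follows exactly the approach the paper indicates: the paper presents Theorem \ref{main.thm2} simply ``as a consequence'' of Theorem \ref{thm.main1} via the rescaling, recording only the identity $\nu^{-5/4}\cA F\cA_2=\nu^{-3/2}\|f\|_2$. You have merely supplied the (correct) bookkeeping for the remaining norms, which the paper leaves implicit.
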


\section{General strategy} \label{sec:strategy}
Estimates on system \eqref{PS} will be performed at the level of the vorticity,  through the Orr-Sommerfeld formulation: 
\begin{align}\label{eq.OS}
\begin{split}
& (\pa_\tau + V \cdot \nabla - \nu^{\frac12} \Delta) \omega + \nabla^\bot \phi \cdot \nabla \Omega =  {\rm rot}\, F,\qquad \tau>0,~X\in \T_\nu, ~Y>0,\\
& \phi|_{Y=0} =\pa_Y \phi|_{Y=0}=0.
\end{split} 
\end{align}
Here,  $\omega = {\rm rot} \, W  := \pa_X W_2 - \pa_Y W_1$ is the vorticity, and  $\phi$ is the stream function, satisfying  $W =  \nabla^\bot \phi := \left( \begin{smallmatrix} \pa_Y \phi \\ -\pa_X \phi \end{smallmatrix}\right)$ and   $-\Delta \phi = \omega$. 
We recall that $\tau = \nu^{-\frac12} t$: the point is to get estimates that are valid over time intervals of size $\nu^{-\frac{1}{2}}$, which is difficult due to the stretching term  $\nabla^\bot \phi \cdot \nabla \Omega$. Classical estimates and Gronwall lemma would  only yield a control  on time intervals $O(1)$. We have to use both our Gevrey functional framework and concavity condition.

Actually, several difficulties are already captured by the toy model 
\begin{align}\label{eq.toy}
\begin{split}
& (\pa_\tau - \nu^{\frac12} \Delta) \omega + \pa_X \phi \,  \pa_Y \Omega =  0,\qquad \tau>0,~X\in \T_\nu, ~Y>0,\\
& \phi|_{Y=0} =\pa_Y \phi|_{Y=0}=0.
\end{split} 
\end{align}
where $\Omega = \Omega(Y)$ (for simplicity, we assume no dependence on $\tau$ and $X$). We shall stick to this model for what follows. 

In the case of the inviscid equation 
$$ \pa_\tau \omega + \pa_X \phi \,  \pa_Y \Omega  = 0, \quad \phi\vert_{Y=0} = 0$$
under the strict sign condition $\pa_Y \Omega \ge C > 0$, a trick that goes back to \cite{MR1761422} is to test the equation against $\frac{\omega}{\pa_Y \Omega}$. By the cancellation 
\begin{align*}
 \int  \pa_X \phi \,  \pa_Y \Omega  \frac{\omega}{\pa_Y \Omega} = -  \int  \pa_X \phi \, \Delta \phi = \frac{1}{2} \int \pa_X |\na \phi|^2 = 0 
 \end{align*}
 one can obtain a uniform in time control on the weighted quantity $||\frac{\omega}{\sqrt{\pa_Y \Omega}}||_{L^2} \sim \|\omega\|_{L^2}$. However,  back to the model \eqref{eq.toy}, we are facing two difficulties: 
 \begin{enumerate}
 \item Inpired by the case of Prandtl layers, we must consider situations where $\pa_Y \Omega$  vanishes or even becomes slightly negative : see iv) in Assumption \ref{assume}.
 \item Even in the simpler case  $\pa_Y \Omega \ge C > 0$, the weighted estimate above is not compatible with the introduction of viscosity and no-slip conditions. 
 \end{enumerate}
We recall that these difficulties are not purely technical, as no uniform in $\nu$ stability estimate is expected below Gevrey $\frac{3}{2}$ regularity. 
To overcome these issues, we shall proceed in two steps. 
\subsection{First step : Gevrey estimates for artificial boundary conditions} 
The first step consists in deriving Gevrey bounds for the same equation, but with pure slip instead of no-slip conditions. For the real Orr-Sommerfeld equation, it will be performed in Section \ref{sec:vorticity:pure:slip}. For our toy model, this means that we consider 
\begin{align}\label{eq.toy.2}
\begin{split}
& (\pa_\tau - \nu^{\frac12} \Delta) \omega + \pa_X \phi \,  \pa_Y \Omega =  0,\qquad \tau>0,~X\in \T_\nu, ~Y>0,\\
& \phi|_{Y=0} = \omega|_{Y=0}=0.
\end{split} 
\end{align}
The main point in this change of boundary conditions is that the difficulty 2. mentioned above disappears: the Dirichlet condition on $\omega$ goes well with integration by parts, and  in the case $\pa_Y \Omega \ge C > 0$, one can achieve again some good control on  $||\frac{\omega}{\sqrt{\pa_Y \Omega}}||_{L^2}$. 
Still, we have to explain how to obtain stability under the less stringent condition  iv)  in Assumption \ref{assume}. Here, we need Gevrey regularity.  Let us for simplicity forget about $Y$-derivatives, which are not important for the toy model, and set 
$$ \omega^j := e^{-K\tau \nu^\frac12 (j+1)} \pa_X^j \omega, \quad \phi^j := e^{-K\tau \nu^\frac12 (j+1)} \pa_X^{j} \phi, \dots $$
The point is to obtain a bound on $\sum_{j \le \nu^{-1/2}}   \frac{1}{(j!)^{3/2} \nu^{j/2}} ||\omega^j||_{L^2_{X,Y}}$. As $\Omega = \Omega(Y)$, equation satisfied by $\omega^j$ is: 
\begin{equation}\label{eq.toy.2.j}
 (K \nu^{1/2} (j+1) + \pa_\tau - \nu^{\frac12} \Delta) \omega^j + \pa_X \phi^j \,  \pa_Y \Omega = 0. 
 \end{equation}
Roughly,  the idea is to control a weighted Gevrey norm of the form 
$$\sum_{j \le \nu^{-1/2}}  \frac{1}{(j!)^{3/2} \nu^{j/2}} \big\|\frac{\omega^j}{\sqrt{\pa_Y \Omega +  2 \rho_j}}\big\|_{L^2_{X,Y}},$$ 
where $\rho_j$ is added to compensate for possible degeneracies of $\pa_Y \Omega$. Testing \eqref{eq.toy.2.j} against $\frac{\omega_j}{\pa_Y \Omega +  2 \rho_j}$, we find
\begin{equation} \label{calculations.toy}
\begin{aligned}
& K \nu^{1/2} (j+1)  \big\|\frac{\omega^j }{\sqrt{\pa_Y \Omega +  2 \rho_j}}\big\|_{L^2}^2 + \frac{1}{2} \frac{d}{d\tau}  \big\|\frac{\omega^j}{\sqrt{\pa_Y \Omega +  2 \rho_j}}\big\|_{L^2}^2  + \nu^{\frac12} \big\|\frac{\na \omega^j}{\sqrt{\pa_Y \Omega +  2 \rho_j}}\big\|_{L^2}^2 \\
 &= - \nu^{\frac12} \int \na \frac{1}{\pa_Y \Omega +  2 \rho_j}  \cdot \na  \omega^j   \omega^j    - \int  \pa_X \phi^j \, \pa_Y \Omega \frac{\omega^j}{\pa_Y \Omega +  2 \rho_j} \\
 & =  \nu^{\frac12} \int \frac{\na \pa_Y  \Omega}{(\pa_Y \Omega +  2 \rho_j)^2}   \cdot \na  \omega^j   \omega^j    +  \nu^{\frac12} \int  \frac{\na \rho_j}{(\pa_Y \Omega +  2 \rho_j)^2}  \cdot \na  \omega^j   \omega^j   \\
 & + \int \pa_X \phi^j  \frac{2\rho_j}{\pa_Y \Omega +  2 \rho_j} \omega^j
\end{aligned}
\end{equation}
where we used again the cancellation property $\int \pa_X \phi^j \omega^j = 0$. One must then choose $\rho_j$ so that the three terms at the right are controlled by the left-hand side, for $K$ large enough. Roughly, this can be achieved by taking  $\rho_j$  in the form $\rho_j(Y) \approx  \rho +  (1+ \lambda_j Y)^{-2}$, $\lambda_j := (j+1)^{1/2}$. To give an idea of why it works, let us consider the first and last terms. As regards the first one, we write 
\begin{align*}
& \nu^{\frac12} \int \frac{\na \pa_Y  \Omega}{(\pa_Y \Omega +  2 \rho_j)^2}   \cdot \na  \omega^j   \omega^j \\
= \: & \nu^{\frac12}  \int_{\{Y \ge\frac{1}{\lambda_j}\}}   \frac{1}{Y \sqrt{\pa_Y \Omega +  2\rho_j}}    \frac{Y \na \pa_Y  \Omega}{\sqrt{\pa_Y \Omega +  2 \rho_j}}  \cdot    \frac{\na  \omega^j}{\sqrt{\pa_Y \Omega +  2 \rho_j}}  \frac{\omega^j}{\sqrt{\pa_Y \Omega +  2 \rho_j}}  \\
+ \: & \nu^{1/2} O\Big( \big\|   \frac{\na  \omega^j}{\sqrt{\pa_Y \Omega +  2 \rho_j}} \big\|_{L^2} \,  \big\|   \frac{\omega^j}{\sqrt{\pa_Y \Omega +  2 \rho_j}} \big\|_{L^2}  \Big) 
\end{align*}
The second term at the right corresponds to the contribution of the region $Y \le \frac{1}{\lambda_j}$, for which the weight $\pa_Y \Omega +  2 \rho_j$ is bounded from below and raises no issue (we further assumed here that $\pa_Y \na \Omega$ for the sake of brevity).  As regards the first term, we use the bounds 
$$ \forall Y \ge \frac{1}{\lambda_j}, \quad  \frac{1}{Y \sqrt{\pa_Y \Omega +  2\rho_j}}  \le  \frac{1}{Y \sqrt{2\rho_j}} \le  C \lambda_j,   $$ 
and 
$$   \frac{|Y \na \pa_Y  \Omega|}{\sqrt{\pa_Y \Omega +  2 \rho_j}} \le  \frac{|Y \na \pa_Y  \Omega|}{\sqrt{\pa_Y \Omega +  2 \rho}} \le C  $$
where we used  Assumption \ref{assume} iv). We end up with 
 \begin{align*}
 \nu^{\frac12} \int \frac{\na \pa_Y  \Omega}{(\pa_Y \Omega +  2 \rho_j)^2}   \cdot \na  \omega^j   \omega^j 
 \le C \nu^{\frac12} \lambda_j \big\| \frac{\na  \omega^j}{\sqrt{\pa_Y \Omega +  2 \rho_j}} \big\|_{L^2} \,  \big\|   \frac{\omega^j}{\sqrt{\pa_Y \Omega +  2 \rho_j}} \big\|_{L^2}  
 \end{align*}
 which is absorbed by the left-hand side under the constraint $ \lambda_j \lesssim (j+1)^{\frac12}$. As regards the third term at the right of \eqref{calculations.toy}, we use the inequality 
 \begin{align*}
 \frac{\rho_j}{\sqrt{\pa_Y \Omega + 2 \rho_j}} \: \le \:   \frac{\sqrt{\rho_j} }{\sqrt{2}} \: \le \: C \left(   \sqrt{\nu} + \frac{1}{\lambda_j Y} \right)
 \end{align*}
to obtain 
\begin{align*}
\int \pa_X \phi^j  \frac{2\rho_j}{\pa_Y \Omega +  2 \rho_j} \omega^j  & \le C   \sqrt{\nu}   \big\| \pa_X \phi^j \big\|_{L^2}  \big\|   \frac{\omega^j}{\sqrt{\pa_Y \Omega +  2 \rho_j}} \big\|_{L^2}  + \frac{C}{\lambda_j}    \big\| \frac{\pa_X \phi_j}{Y} \big\|_{L^2}  \big\|   \frac{\omega^j}{\sqrt{\pa_Y \Omega +  2 \rho_j}} \big\|_{L^2} \\
& \le C \left(  \sqrt{\nu}   \big\| \pa_X \phi^j \big\|_{L^2}  +   \frac{1}{\lambda_j}   \big\| \pa_Y \pa_X  \phi^j \big\|_{L^2} \right)   \big\|   \frac{\omega^j}{\sqrt{\pa_Y \Omega +  2 \rho_j}} \big\|_{L^2}  
\end{align*} 
where the second line comes from Hardy's inequality. Using that $\|\pa_X \phi^j\|_{L^2} \approx \|\phi^{j+1}\|_{L^2}$, we have  for any sequence $(a_j)$
$$ \sum_j   \frac{1}{(j!)^{3/2} \nu^{j/2}}  a_j   \| \pa_Y \pa_X \phi_{j}\|_{L^2} \approx  \sum_j    \frac{1}{(j!)^{3/2} \nu^{j/2}}   \nu^{1/2} (j+1)^{3/2} a_{j-1}  \| \pa_Y \phi_{j} \|_{L^2}  $$
In other words, at Gevrey $\frac{3}{2}$ regularity, $\: a_j \|\pa_Y \pa_X \phi^j \|_{L^2}$ behaves like $\: \nu^{1/2} (j+1)^{3/2} a_{j-1} \|\pa_Y \phi^j \|_{L^2}$. Combining this with a control of  $\|\na \phi_j\|_{L^2}$ by $\displaystyle \|\omega_j/\sqrt{\pa_Y \Omega + 2 \rho_j}\|_{L^2}$, with a precise statement to be given in Section  \ref{sec:vorticity:pure:slip}, the previous bound is in the same spirit as  
$$ "\int \pa_X \phi^j  \frac{2\rho_j}{\pa_Y \Omega +  2 \rho_j} \omega^j  \le  C \frac{\nu^{1/2}(j+1)^{3/2}}{\lambda_{j-1}}   \big\|   \frac{\omega^j}{\sqrt{\pa_Y \Omega +  2 \rho_j}} \big\|_{L^2}^2" $$  
which allows a control by the left-hand side of \eqref{calculations.toy} as soon as $ (j+1)^{1/2} \lesssim \lambda_j $. Hence, the choice $\lambda_{j} = (j+1)^{1/2}$. 

Of course, the elements above provide only glimpses of the approach carried in the first step of our stability study. The full study of the Orr-Sommerfeld equation with artificial boundary conditions is given in Section \ref{sec:vorticity:pure:slip}. 

\subsection{Recovery of the right boundary conditions}
We give again a few elements on the toy model \eqref{eq.toy}. The analysis of the complete model is carried in Section \ref{sec.boundary}. After the first step, one has a solution of system \eqref{eq.toy.2}, with the same initial condition and same boundary condition  $\phi\vert_{Y=0} = 0$ as in \eqref{eq.toy}, but not the same boundary condition on the derivative: $h := \pa_Y \phi\vert_{Y=0}   \neq 0$. Note that by the first step and trace theorem, one is able to get a Gevrey bound for $h$: as shown rigorously in the next sections, one may get an estimate of the form 
\begin{align*}
 \cA h \cA_{bc} & :=  \sum_{j \le \nu^{-1/2}}  \frac{1}{(j!)^{3/2} \nu^{j/2}} \|h^j\|_{L^2( (0, \frac{1}{K\nu^{1/2}}) ; L^2_X )} \\
  & \le \frac{C}{K^{1/4}} \left(  \|\na \phi_0\|_{L^2} +  C \sum_{j \le \nu^{-1/2}}  \frac{1}{(j!)^{3/2} \nu^{j/2}} ||\omega_0^j\|_{L^2_{X,Y}} \right) 
\end{align*}
where $\phi_0$ and $\omega_0 := - \Delta \phi_0$ are the initial data for the stream function and vorticity

 Working in Gevrey, regularity, the point is then to solve: 
\begin{align}\label{eq.toy.3}
\begin{split}
& (\pa_\tau - \nu^{\frac12} \Delta) \omega + \pa_X \phi \,  \pa_Y \Omega =  0,\qquad \tau>0,~X\in \T_\nu, ~Y>0,\\
& \phi|_{Y=0} = 0 \quad \pa_Y \phi|_{Y=0} = h, \quad \phi\vert_{t=0} = 0. 
\end{split} 
\end{align}
The main idea is to use the following scheme: 
\begin{itemize} 
\item[a)] We solve the approximate Stokes equation
\begin{align}\label{eq.stokes}
\begin{split}
& (\pa_\tau - \nu^{\frac12} \Delta) \Delta \phi =  0,\qquad \tau>0,~X\in \T_\nu, ~Y>0,\\
& \phi|_{Y=0} = 0 \quad \pa_Y \phi|_{Y=0} = h, \quad \phi\vert_{t=0} = 0. 
\end{split} 
\end{align}
and obtain in this way a solution $\phi_a = \phi_a[h]$. 
\item[b)] We correct the stretching term created by the previous approximation, by considering the full equation with artificial boundary condition: 
\begin{align}\label{eq.toy.4}
\begin{split}
& (\pa_\tau - \nu^{\frac12} \Delta) \omega + \pa_X \phi \,  \pa_Y \Omega =  - \pa_X \phi_a \,  \pa_Y \Omega  ,\qquad \tau>0,~X\in \T_\nu, ~Y>0,\\
& \phi|_{Y=0} = 0 \quad \Delta \phi|_{Y=0} = 0, \quad \phi\vert_{t=0} = 0. 
\end{split} 
\end{align}
We denote by $\phi_b = \phi_b[h]$ the solution of such system. It can be seen as a functional of $h$ through $\phi_a$. 
\item[c)] At the end of the steps a) and b), the function $\phi - \phi_a - \phi_b$ solves formally  the same system as $\phi$, replacing $h$ by $R_{bc} h := - \pa_Y \phi_b[h]\vert_{Y=0}$. The point is to show that for $K$ large enough,
\begin{equation} \label{estim_h}
\cA R_{bc} h \cA_{bc} \le \frac{1}{2} \cA  h \cA_{bc}. 
 \end{equation}
which allows to solve \eqref{eq.toy.3} by iteration. 
\end{itemize}
Obviously, to establish \eqref{estim_h}, one must have careful Gevrey stability estimates for systems \eqref{eq.stokes} and \eqref{eq.toy.4}.  The estimates  for  \eqref{eq.toy.4}  follows from the same  ideas as those described in the first step to treat \eqref{eq.toy.2} (the initial condition is just replaced by a source term).  
As regards \eqref{eq.stokes},  the initial data being zero, one can take Laplace transform in $\tau$ and Fourier transform in $X$ and solve explicitly the resulting ordinary differential equation in $Y$. It leads to sharp $L^2$ estimates on $\phi$ and its derivatives on the Fourier-Laplace side, which transfer to $L^2$ estimates in the physical space by Plancherel theorem. 

\mspace
All the analysis in the framework of the Orr-Sommerfeld equation is provided in Section \ref{sec.boundary}. In this setting, the iteration scheme mentioned above has to be modified, because the advection term creates extra difficulties. Namely, one has to add an intermediate step between steps a) and b) above, see Section \ref{sec.boundary} for details. 

\mspace
Of course, we have indicated here key ideas for the stability analysis of the linearized system \eqref{ps}. One has then to go from these estimates to the nonlinear Theorem \ref{thm.main.nl}. This will be achieved in Section  \ref{sec.nl}.  Finally we introduce the simplified notation
\begin{align*}
\| f\| = \| f\|_{L^2_{X,Y}}, \qquad \langle f,g\rangle = \langle f,g\rangle_{L^2_{X,Y}} 
\end{align*}
for convenience.

\section{Vorticity estimate under artificial boundary condition} \label{sec:vorticity:pure:slip}

In accordance with the strategy described in the previous section,  we consider here  the solution to the system
\begin{align}\label{eq.mOS}
\begin{split}
&\nu^\frac12 \Delta^2 \phi - \pa_\tau \Delta \phi - V \cdot \nabla \Delta \phi + \nabla^\bot \phi \cdot \nabla \Omega =  {\rm rot}\, F + G,\qquad \tau>0,~X\in \T_\nu, ~Y>0,\\
& \phi|_{Y=0} =\Delta \phi|_{Y=0}=0,\qquad \phi|_{\tau=0}=\phi_0.
\end{split} 
\end{align}
The goal of this section is to establish estimates for the vorticity $\omega = -\Delta \phi$, where $\phi$ is the streamfunction uniquely determined in the class $\phi\in \dot{H}_0^1(\T\times \R_+)$.
For ${\bf j}=(j_1,j_2)$ with $j_1+j_2=j$, we set 
\begin{align}\label{def.omegaj}
\omega^{\bf j} = e^{-K\tau\nu^\frac12 (j+1)}B_{j_2}\pa_X^{j_1} \omega, \qquad (\nabla \phi)^{\bf j} =  e^{-K\tau\nu^\frac12 (j+1)}B_{j_2}\pa_X^{j_1} \nabla \phi,
\end{align}
and similary, $(\Delta \omega)^{\bf j}= e^{-K\tau\nu^\frac12 (j+1)}B_{j_2}\pa_X^{j_1} \Delta \omega$. 
We also set 
\begin{align}\label{def.Vj}
V^{\bf j} =  e^{-K\tau\nu^\frac12 j}B_{j_2}\pa_X^{j_1} V, \qquad (\nabla \Omega)^{\bf j} =  e^{-K\tau\nu^\frac12 j} B_{j_2}\pa_X^{j_1} \nabla \Omega.
\end{align}
From the first equation of \eqref{eq.mOS} we observe that $\omega^{\bf j}$ satisfies, by setting ${\bf l}=(l-l_2,l_2)$,
\begin{align}\label{eq.omegaj}
\begin{split}
& -\nu^\frac12  (\Delta \omega)^{\bf j}  + (\pa_\tau + K\nu^\frac12 (j+1) + V\cdot \nabla ) \omega^{\bf j} + (\nabla^\bot \phi)^{\bf j} \cdot \nabla \Omega \\
&\quad  =  -V_2 [B_{j_2}, \pa_Y] e^{-K\tau\nu^\frac12 (j+1)} \pa_X^{j_1} \omega\\
& \qquad - \sum_{l=0}^{j-1} \sum_{\max\{0, l+j_2-j\} \leq l_2\leq \min\{l,j_2\}} \binom{j_2}{l_2} \, \binom{j-j_2}{l-l_2} V^{{\bf j-l}} \cdot (\nabla \omega)^{\bf l} \\
& \qquad -\quad  \sum_{l=0}^{j-1} \sum_{\max\{0, l+j_2-j\} \leq l_2\leq \min\{l,j_2\}} \binom{j_2}{l_2} \, \binom{j-j_2}{l-l_2} (\nabla^\bot \phi)^{{\bf l}} \cdot (\nabla \Omega)^{\bf j-l} \\
& \qquad +  {\rm rot}\, F^{\bf j} - [B_{j_2}, \pa_Y] \pa_X^{j_1} e^{-K\tau\nu^\frac12 (j+1)} F_1 + G^{\bf j}.
\end{split}
\end{align}
Here the sum $\displaystyle \sum_{l=0}^{j-1}$ is defined as $0$ for $j=0$, and  the definitions of $F^{\bf j}$ and $G^{\bf j}$ are straightforward.

To simplify notations let us introduce weighted seminorms; for a given nonnegative smooth function $\xi_j=\xi_j(\tau,X,Y)$, we set
\begin{align}\label{def.weightM}
M_{p,j, \xi_j}[\omega] = \sup_{j_2=0,\cdots,j}  \| \xi_j \omega^{(j-j_2,j_2)} \|_{L^p_\tau (0,\frac{1}{K\nu^\frac12}; L^2_{X,Y})}.
\end{align}
and also set with the definition ${\bf \xi}=(\xi_j)_{j=0}^\infty$,
\begin{align}\label{def.Norm'}
\cA F \cA_{p,{\bf \xi}}' = \sum_{j=0}^{\nu^{-\frac12}} \frac{\nu^\frac{1}{2p}(j+1)^\frac1p} {(j!)^\frac32 \nu^\frac{j}{2}} M_{p,j,\xi_j}[F].
\end{align}
Note that 
\begin{align}
\cA F\cA_{\infty,{\bf 1}}'=\cA F\cA_\infty,\qquad {\bf 1}=(1,1,\cdots).
\end{align}
The choice of $\xi_j$ is essential in the stability estimate for $\omega^{\bf j}$.
We will take
\begin{align}\label{def.xi}
\xi_j = \frac{1}{\sqrt{\pa_Y \Omega + 2 \rho_j}},
\end{align}
where 
\begin{align}\label{def.rhoj}
\rho_j = K^\frac14 C_* (1+ (j+1)^\frac12 Y)^{-2} + C_* \big ( (1+\frac{Y}{\nu^\frac14})^{-2} + \nu^\frac12 (1+Y)^{-2} + \nu \big ).
\end{align}
See Section \ref{sec:strategy} for more on the origin of this weight.  
%The weight \eqref{def.xi} is compatible in handling the term $(\pa_X \phi)^{\bf j}\pa_Y \Omega$ in \eqref{eq.omegaj}, which is the key term for the stability/instability of the vorticity. To glimpse the core idea let us see the computation of the $L^2$ inner product of  $\pa_X \phi \pa_Y\Omega$ with $\frac{1}{\pa_Y \Omega} \omega$ as a model case:
%\begin{align*}
%\langle \pa_X \phi \pa_Y\Omega, \frac{1}{\pa_Y \Omega} \omega \rangle = -\langle \pa_X \phi, \Delta \phi\rangle = \langle \pa_X \nabla \phi,\nabla \phi\rangle=0.
%\end{align*}
%Here we have used the relation $\omega=-\Delta \phi$ and the integration by parts.
%A similar cancellation is available in the leading part also for the case $(\pa_X \phi)^{\bf j} \pa_Y \Omega$.
%However, the difficulty in applying this idea to our problem is that the simple choice of the weight $\frac{1}{\sqrt{\pa_Y \Omega}}$ is not allowed here since $\pa_Y \Omega$ might be nonpositive near $Y=0$, and
%the viscosity term $\nu^\frac12 \Delta^2\phi$ can give rise to a stronger singularity after the integration by parts.
%The introduction of the weight $\rho_j$ in \eqref{def.xi} enables to keep the weight positive and to handle the viscosity term by balancing the derivative loss structure with the singularity arising from $\pa_Y \xi_j$.
We also introduce the norm of the boundary trace as 
\begin{align}
\cA \pa_Y \phi|_{Y=0} \cA_{bc} & = \sum_{j=0}^{\nu^{-\frac12}} \frac{\nu^\frac14 (j+1)^\frac12 }{(j!)^\frac32 \nu^\frac{j}{2}} \| e^{-K\tau\nu^\frac12 (j+1)}\pa_X^j \pa_Y \phi|_{Y=0} \|_{L^2(0,\frac{1}{K\nu^\frac12}; L^2_X)},
\end{align}
and we denote by $\dot{H}^{-1}$ the dual space of the homogeneous Sobolev space $\dot{H}_0^1(\T_\nu \times \R_+)$ (here, the subsprict $0$ means the zero boundary trace).

The main result of this section is:

\begin{proposition}\label{prop.mOS1} There exists $\kappa_1\in (0,1]$ such that the following statement holds for any $\kappa\in (0,\kappa_1]$. There exists $K_1=K_1(\kappa, C_*,C^*_j)\geq 1$ such that if $K\geq K_1$ then the system \eqref{eq.mOS} admits a unique solution  $\phi\in C([0,\infty); \dot{H}^1_0(\T_\nu \times \R_+))$ with $\omega=-\Delta \phi\in C([0,\infty); L^2(\T_\nu \times \R_+))$ satisfying
\begin{align}\label{est.prop.mOS1.1}
\begin{split}
& \cA \omega \cA_{\infty,\xi}'   + K^\frac12 \cA \omega \cA_{2,\xi}' + K^\frac14 \cA \nabla \phi \cA_{2,{\bf 1}}' + K^\frac14  \cA \pa_Y \phi|_{Y=0} \cA_{bc} \\
& \leq C \Big (  \| \nabla \phi_0 \|_{L^2_{X,Y}} + \nu^{-\frac12} [\| \Delta \phi_0 \|]    \\
& \qquad + (C_2^* +1) \nu^{-\frac12} \cA F\cA_{2,\tilde\xi^{(1)}}' +  \frac{1}{K^\frac12 \nu^\frac12} \cA G\cA_{2,\tilde \xi^{(2)}}'  +  \frac{1}{K^{\frac12}\nu^\frac14} \| G \|_{L^2(0,\frac{1}{K\nu^\frac12}; \dot{H}^{-1})}  \Big ).
\end{split}
\end{align} 
Here $C>0$ is a universal constant, while the weight $\tilde \xi^{(k)}$ is defined as 
\begin{align*}
\tilde \xi^{(k)} & = ( \frac{\xi_j}{(j+1)^\frac{k}{2}})_{j=0}^\infty.
\end{align*}
\end{proposition}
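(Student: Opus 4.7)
The plan is to perform a weighted $L^2$ energy estimate on the equation \eqref{eq.omegaj} for $\omega^{\bf j}$, test against $\xi_j^2\omega^{\bf j}$ with $\xi_j$ as in \eqref{def.xi}, and then sum the resulting bounds over ${\bf j}=(j_1,j_2)$ with the Gevrey weight $\frac{\nu^{1/(2p)}(j+1)^{1/p}}{(j!)^{3/2}\nu^{j/2}}$ to reconstruct $\cA\omega\cA_{p,\xi}'$. The key algebraic device is the cancellation $\int \pa_X\phi^{\bf j}\,\omega^{\bf j}=0$ (up to lower-order terms from $B_{j_2}$), which is what makes the stretching term $(\nabla^\bot\phi)^{\bf j}\cdot\nabla\Omega$ manageable under only the mild concavity of Assumption \ref{assume} (iv). The choice of $\rho_j$ in \eqref{def.rhoj} is dictated by exactly the balance spelled out in the toy-model discussion of Section \ref{sec:strategy}: the $(1+(j+1)^{1/2}Y)^{-2}$ piece compensates degeneracy of $\pa_Y\Omega$ on the scale $\lambda_j=(j+1)^{1/2}$, while the sublayer and $\nu$-size pieces absorb the contributions of the Euler part and of lower-order boundary-layer correctors.

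\textbf{Step 1: weighted energy identity.} Multiplying \eqref{eq.omegaj} by $\xi_j^2\omega^{\bf j}$ and integrating by parts, I obtain, after using $V|_{Y=0}=0$ and the slip boundary conditions,
\begin{equation*}
\tfrac12\tfrac{d}{d\tau}\|\xi_j\omega^{\bf j}\|^2+K\nu^{\frac12}(j+1)\|\xi_j\omega^{\bf j}\|^2+\nu^{\frac12}\|\xi_j\nabla\omega^{\bf j}\|^2=\sum_{k}I_k^{\bf j},
\end{equation*}
where the $I_k^{\bf j}$ collect (a) the time and space commutators coming from $\pa_\tau\xi_j$, $\nabla\xi_j$ and $V\cdot\nabla\xi_j^{-2}$; (b) the stretching term $-\langle(\nabla^\bot\phi)^{\bf j}\cdot\nabla\Omega,\xi_j^2\omega^{\bf j}\rangle$; (c) the RHS commutators $V_2[B_{j_2},\pa_Y]\cdots$, $V^{\bf j-l}\cdot(\nabla\omega)^{\bf l}$ and $(\nabla^\bot\phi)^{\bf l}\cdot(\nabla\Omega)^{\bf j-l}$; and (d) the forcing contributions from ${\rm rot}\,F^{\bf j}$, $[B_{j_2},\pa_Y]\pa_X^{j_1}F_1$, and $G^{\bf j}$.

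\textbf{Step 2: the stretching term.} The only contribution that is not obviously small comes from $-\pa_X\phi^{\bf j}\pa_Y\Omega\cdot\xi_j^2\omega^{\bf j}$. Writing $\pa_Y\Omega\,\xi_j^2=1-2\rho_j\xi_j^2$ and using the exact identity $\langle\pa_X\phi^{\bf j},\omega^{\bf j}\rangle=0$ (modulo $[B_{j_2},\pa_Y^2]$ commutators which fall into group (a)), the remaining piece is
\begin{equation*}
\Bigl|\langle\pa_X\phi^{\bf j},2\rho_j\xi_j^2\omega^{\bf j}\rangle\Bigr|\le C\bigl(\nu^{\frac12}\|\pa_X\phi^{\bf j}\|+\tfrac{1}{\lambda_j}\|\pa_Y\pa_X\phi^{\bf j}\|\bigr)\|\xi_j\omega^{\bf j}\|,
\end{equation*}
using the pointwise bound $\rho_j\xi_j\le C(\sqrt{\nu}+(\lambda_j Y)^{-1})$ and Hardy's inequality, exactly as in \eqref{calculations.toy}. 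The piece $-\pa_Y\phi^{\bf j}\pa_X\Omega\cdot\xi_j^2\omega^{\bf j}$ is absorbed using $\|\xi_j\pa_X\Omega\|_{L^\infty}$ from Assumption \ref{assume} (iii)--(iv) and the elliptic regularity $\|\nabla\phi^{\bf j}\|\lesssim\|\xi_j\omega^{\bf j}\|$, which itself follows from Poincaré-type estimates and the fact that $\xi_j^{-2}\le C\nu^{-1/2}$.

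\textbf{Step 3: commutators, Hardy, and weighted gradient.} The weight commutator terms such as $\nu^{\frac12}\langle\frac{\nabla\pa_Y\Omega}{(\pa_Y\Omega+2\rho_j)^2}\cdot\nabla\omega^{\bf j},\omega^{\bf j}\rangle$ are split at $Y=1/\lambda_j$; on $\{Y\le 1/\lambda_j\}$ the weight is bounded, on $\{Y\ge1/\lambda_j\}$ the extra $\lambda_j$ produced by $1/Y$ is balanced by the $Y\nabla\pa_Y\Omega/\sqrt{\pa_Y\Omega+2\rho}$ bound \eqref{est.assume.4}; the result is $C\nu^{\frac12}\lambda_j\|\xi_j\nabla\omega^{\bf j}\|\,\|\xi_j\omega^{\bf j}\|$, absorbed by the dissipation once $\lambda_j\le(j+1)^{1/2}$. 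The convolution-type commutators in group (c) are controlled via Assumption \ref{assume} (ii): each factor $V^{\bf j-l}$ gets distributed onto the Gevrey weight $\frac{1}{((j-l)!)^{3/2}\nu^{(j-l)/2}}$, and a discrete Young-type inequality (in the spirit of the product rule in Gevrey spaces) bounds the resulting convolutions in $\ell^1_j$ by $C_0^*\cA\omega\cA_{2,\xi}'$; the losses $(j+1)^{1/2}$ arising from the Gevrey factorial are exactly those that are absorbed by $K\nu^{1/2}(j+1)$ for $K$ large.

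\textbf{Step 4: summing in $j$ and closing.} Multiplying the differential inequality by $\frac{\nu^{1/p}(j+1)^{1/p}}{((j!)^{3/2}\nu^{j/2})^2}$ (with $p=\infty$ and $p=2$), taking $L_\tau^p$ norms on $(0,\tfrac{1}{K\nu^{1/2}})$, and summing over ${\bf j}$ yields
\begin{equation*}
\cA\omega\cA_{\infty,\xi}'+K^{\frac12}\cA\omega\cA_{2,\xi}'+K^{\frac14}\cA\nabla\phi\cA_{2,\mathbf{1}}'\le(\text{data})+\tfrac{C}{K^{1/4}}(\text{same norms}),
\end{equation*}
where the data terms match the RHS of \eqref{est.prop.mOS1.1}, including the forcing bounds which use $\|G\|_{\dot H^{-1}}$ via the pairing $\langle G^{\bf j},\xi_j^2\omega^{\bf j}\rangle=\langle G^{\bf j},\xi_j^2\omega^{\bf j}-[\text{test}]\rangle$ together with $\|\nabla(\xi_j^2\omega^{\bf j})\|$ controlled by the dissipation. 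Choosing $K\ge K_1(\kappa,C_*,C_j^*)$ absorbs the error, giving \eqref{est.prop.mOS1.1} except the boundary trace.

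\textbf{Step 5: boundary trace and well-posedness.} The trace $\pa_Y\phi^{\bf j}|_{Y=0}$ with $\phi^{\bf j}|_{Y=0}=0$ is controlled by an anisotropic trace inequality $\|\pa_Y\phi|_{Y=0}\|_{L^2_\tau L^2_X}^2\lesssim\|\nabla\phi\|_{L^2_\tau L^2_{X,Y}}\|\nabla\pa_Y\phi\|_{L^2_\tau L^2_{X,Y}}$; using $\Delta\phi=-\omega$ and the bounds on $\cA\omega\cA_{2,\xi}'$ and $\cA\nabla\phi\cA_{2,\mathbf{1}}'$, one extracts the $\nu^{1/4}(j+1)^{1/2}$ prefactor and the $K^{1/4}$ gain, producing the fourth term of \eqref{est.prop.mOS1.1}. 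Existence and uniqueness in $C([0,\infty);\dot H^1_0)$ with $\omega\in C([0,\infty);L^2)$ follow from a Galerkin approximation combined with the above a priori estimate, together with energy uniqueness for the linear biharmonic-type equation \eqref{eq.mOS} under slip boundary conditions.

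\textbf{Main obstacles.} The delicate points I expect to wrestle with are (i) the simultaneous management of the $\lambda_j=(j+1)^{1/2}$ scale in $\rho_j$, the Gevrey weight $(j!)^{3/2}$, and the time weight $e^{-K\tau\nu^{1/2}(j+1)}$ — these must match so that all error terms close; (ii) the commutators $[B_{j_2},\pa_Y]$ and $[B_{j_2},V\cdot\nabla]$, which produce $\kappa$-factors that must be absorbed by taking $\kappa$ small, together with boundary contributions of $V_2/\chi_\nu$ that are controlled precisely by the second term of \eqref{est.assume.1}; and (iii) the non-self-adjoint convection $V\cdot\nabla$, which forces the use of weighted Hardy inequalities on the singular weight $\xi_j$ near $Y=0$ to prevent loss of derivatives.
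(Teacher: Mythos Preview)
Your overall strategy matches the paper's: test \eqref{eq.omegaj} against $\xi_j^2\omega^{\bf j}$, use the cancellation in the stretching term via $\pa_Y\Omega\,\xi_j^2=1-2\rho_j\xi_j^2$, control commutators with Assumption~\ref{assume} and Hardy, and sum in~$j$. Steps~1--3 and~5 are essentially what the paper does in Lemmas~\ref{lem.prop.1}--\ref{lem.prop.6} and in the trace argument of Section~4.3.

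There is, however, a concrete gap in how you close the loop between vorticity and velocity, and it is tied to your treatment of the $\|G\|_{\dot H^{-1}}$ term. Your ``elliptic regularity $\|\nabla\phi^{\bf j}\|\lesssim\|\xi_j\omega^{\bf j}\|$'' is not available as stated. What the weighted Poisson estimate actually gives (this is the paper's Proposition~\ref{prop.stream.mOS}) is
\[
\cA\nabla\phi\cA_{2,\mathbf 1}'\;\le\;C\,(K^{1/4}C_*+C_1^*)^{1/2}\,\cA\omega\cA_{2,\xi}'\;+\;C\,\nu^{1/4}\,\|\nabla\phi^{(0,0)}\|_{L^2_\tau L^2_{X,Y}},
\]
with two features you omit: the constant grows like $K^{1/8}$ (this is exactly what allows $K^{1/4}\cA\nabla\phi\cA'$ to be absorbed by $K^{1/2}\cA\omega\cA_{2,\xi}'$), and there is a residual zeroth-order term $\|\nabla\phi^{(0,0)}\|$ that your recursion cannot close. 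The paper handles this residual by a \emph{separate} energy estimate at level $j=0$ (Proposition~\ref{prop.zero.mOS}), obtained by testing the original equation \eqref{eq.mOS} against $\phi$ itself rather than against $\xi_0^2\omega^{(0,0)}$. It is precisely in that pairing $\langle G,\phi\rangle$ that the $\dot H^{-1}$ norm of $G$ enters. Your Step~4 instead claims $\|G\|_{\dot H^{-1}}$ arises from $\langle G^{\bf j},\xi_j^2\omega^{\bf j}\rangle$ via $\|\nabla(\xi_j^2\omega^{\bf j})\|$; that pairing actually produces the other $G$-term $\cA G\cA_{2,\tilde\xi^{(2)}}'$ (paper's Lemma~\ref{lem.prop.6}), not the $\dot H^{-1}$ one, and the cost of $\|\nabla(\xi_j^2\omega^{\bf j})\|$ would not carry the needed $K^{-1/2}$ factor.

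A minor related point: your justification ``$\xi_j^{-2}\le C\nu^{-1/2}$'' is not the right inequality here; one has $\xi_j^{-2}=\pa_Y\Omega+2\rho_j\le C(C_1^*+K^{1/4}C_*)$, and the velocity bound comes from the Hardy-type argument $\|\phi^{\bf j}/\xi_j\|\lesssim K^{1/8}\|\pa_Y\phi^{\bf j}\|+\nu^{1/2}\|\phi^{\bf j}\|$, not from a crude pointwise bound on $\xi_j^{-1}$. Once you insert the missing zeroth-order velocity estimate and track the $K$-dependence of the Poisson bound, your outline becomes the paper's proof.
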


\begin{remark}\label{rem.prop.mOS1}{\rm (1) From the bound $\frac{1}{\xi_j} \le (C_1^*+8K^\frac14C_*)^\frac12$ in \eqref{est.lem.rhoj.4} below, we have 
\begin{align}\label{est.rem.prop.mOS1.1}
K^\frac{3}{16} \cA \omega \cA_{2,{\bf 1}}' \le K^\frac{3}{16}  (C_1^*+8K^\frac14C_*)^\frac12 \cA \omega \cA_{2,\xi}' \le K^\frac12 \cA \omega \cA_{2,\xi}'
\end{align}
if $K$ is large enough further depending only on $C_1^*$ and $C_*$.
Estimates \eqref{est.rem.prop.mOS1.1} and \eqref{est.prop.mOS1.1} gives the estimate of  $K^\frac{3}{16} \cA \omega \cA_{2,{\bf 1}}'$.

\noindent (2) By the deifnition of \eqref{def.Norm'}, we have 
\begin{align*}
\nu^{-\frac12} \cA F\cA_{2,\tilde\xi^{(1)}}'  = \nu^{-\frac14}  \sum_{j=0}^{\nu^{-\frac12}} \frac{M_{2,j,\xi_j}[F]}{(j!)^\frac32 \nu^\frac{j}{2}} ,\qquad 
 \nu^{-\frac12} \cA G\cA_{2,\tilde \xi^{(2)}}'   = \nu^{-\frac14}  \sum_{j=0}^{\nu^{-\frac12}} \frac{ M_{2,j,\xi_j}[G]}{(j!)^\frac32 \nu^\frac{j}{2}(j+1)^\frac12}.
 \end{align*}
Since $\xi_j \leq \frac{1}{\sqrt{\rho_j}}\leq \frac{1}{C_*\nu^\frac12}$ holds by the definitions \eqref{def.xi}-\eqref{def.rhoj} with the monotonicity condition \eqref{est.assume.3}, we have 
\begin{align}\label{est.rem.prop.mOS1.2}
\nu^{-\frac12} \cA F \cA_{2,\tilde\xi^{(1)}}'  \leq \frac{\cA F \cA_2}{C_*\nu^\frac34}.
\end{align}
}
\end{remark}

\

Before going into the details of the proof of Proposition \ref{prop.mOS1}, let us give a lemma for the weight $\xi_j$ and $\rho_j$, which will be used frequently. By the concavity condition on $\pa_Y \Omega$ in Assumption \ref{assume} (iv) and the definition of $\rho_j$ we have 
\begin{lemma}\label{lem.rhoj} There exists $C>0$ such that the following estimates hold for any $j\geq 0$.
\begin{align}\label{est.lem.rhoj.1}
\begin{split}
\xi_j^2 \leq \frac{1}{\rho_j} & \leq \frac{1}{C_* \max \{ K^\frac14  (1+ (j+1)^\frac12 Y)^{-2}, \nu \}} \qquad {\rm for}~~Y\geq 0,\\
\frac{1}{\rho_j} & \leq \frac{4}{K^\frac14 C_*} \qquad {\rm for}~~0\leq Y\leq (j+1)^{-\frac12}.
\end{split}
\end{align}
In particular, 
\begin{align}\label{est.lem.rhoj.2}
\| \frac{1+\nu^\frac12 Y}{1+Y} \xi_j\|_{L^\infty} + \| \frac{1+\nu^\frac12Y}{Y} \xi_j \|_{L^\infty (\{Y\geq (j+1)^{-\frac12}\})} \leq C (j+1)^\frac12.
\end{align}
Moreover, 
\begin{align}\label{est.lem.rhoj.3}
\| \rho_j \|_{L^\infty}  \leq 4 K^\frac14 C_*, \qquad \| \frac{Y \pa_Y \rho_j}{\rho_j} \|_{L^\infty}  \leq 2.
\end{align}
and 
\begin{align}\label{est.lem.rhoj.4}
\| \frac{1}{\xi_j}\|_{L^\infty} \leq  (C_1^* + 8K^\frac14 C_*)^\frac12, \qquad \sup_{j\geq 1} \| \frac{\xi_j}{\xi_{j-1}}\|_{L^\infty} \leq C.
\end{align} 
\end{lemma}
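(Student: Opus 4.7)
The lemma collects elementary pointwise bounds on $\xi_j$, $\rho_j$ and their ratios. My plan is to take the five inequalities in order, each reducing to a case split in $Y$ against the explicit formula \eqref{def.rhoj}. For \eqref{est.lem.rhoj.1}, the bound $\xi_j^2 \leq 1/\rho_j$ follows from $\rho_j \geq \rho$ (read off from the definition of $\rho_j$) together with the monotonicity assumption \eqref{est.assume.3}: indeed $\pa_Y\Omega + 2\rho_j \geq (\pa_Y\Omega + \rho) + \rho_j \geq \rho_j$. The upper bound on $1/\rho_j$ is immediate from the two lower bounds $\rho_j \geq C_* K^\frac14 (1+(j+1)^\frac12 Y)^{-2}$ and $\rho_j \geq C_* \nu$ that one reads off \eqref{def.rhoj}, and the sharper inequality on $\{0\leq Y\leq (j+1)^{-\frac12}\}$ uses $(1+(j+1)^\frac12 Y)^{-2} \geq 1/4$ on that set.

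For \eqref{est.lem.rhoj.2}, I would split $Y\in[0,\infty)$ into three regions governed by the scales $(j+1)^{-\frac12}$ and $\nu^{-\frac12}$. On $\{Y\leq(j+1)^{-\frac12}\}$ the previous step gives $\xi_j$ uniformly bounded, and the factor $(1+\nu^\frac12 Y)/(1+Y)\leq 1$ concludes. On the intermediate region $\{(j+1)^{-\frac12}\leq Y\leq \nu^{-\frac12}\}$ I use $1+(j+1)^\frac12 Y \leq 2(j+1)^\frac12 Y$ to get $\rho_j \geq C_* K^\frac14/(4(j+1) Y^2)$, hence $\xi_j \leq 2Y (j+1)^\frac12/(K^{1/8}\sqrt{C_*})$; combined with $1/(1+Y) \leq 1$ or with $1/Y$, and with $1+\nu^\frac12 Y \leq 2$, this produces a bound of order $(j+1)^\frac12$. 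On $\{Y\geq \nu^{-\frac12}\}$ I fall back on $\rho_j \geq C_*\nu$ and $1+\nu^\frac12 Y \leq 2\nu^\frac12 Y$ to obtain a uniform constant.

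For \eqref{est.lem.rhoj.3}, the $L^\infty$ bound on $\rho_j$ follows by summing the four summands (using $K\geq 1$, $\nu\leq 1$). The logarithmic derivative bound uses that each nonconstant summand of $\rho_j$ has the algebraic form $A(1+BY)^{-2}$, for which $|Y\pa_Y(A(1+BY)^{-2})| = (2BY/(1+BY)) \cdot A(1+BY)^{-2} \leq 2 A(1+BY)^{-2}$; the constant $C_* \nu$ contributes zero to $\pa_Y \rho_j$, so summing yields $|Y\pa_Y\rho_j|\leq 2\rho_j$. For \eqref{est.lem.rhoj.4}, $\|1/\xi_j\|_{L^\infty}$ combines $|\pa_Y\Omega|\leq C_1^*$ (from \eqref{est.assume.2}, using $(1+Y)/(1+\nu^\frac12 Y)\geq 1$ for $\nu \leq 1$) with the bound $\|\rho_j\|_{L^\infty} \leq 4K^\frac14 C_*$ just established. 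For the ratio I write $\xi_j^2/\xi_{j-1}^2 = 1 + 2(\rho_{j-1}-\rho_j)/(\pa_Y\Omega+2\rho_j)$; only the first summand of $\rho_j$ depends on $j$, so the numerator is bounded by $2K^\frac14 C_*(1+j^\frac12 Y)^{-2}$, while the denominator exceeds $K^\frac14 C_*(1+(j+1)^\frac12 Y)^{-2}$. The problem thus reduces to controlling $(1+(j+1)^\frac12 Y)^2/(1+j^\frac12 Y)^2$, and the elementary inequality $(1+at)/(1+t)\leq a$ for $a\geq 1$, $t\geq 0$ bounds this by $(j+1)/j \leq 2$ when $j\geq 1$.

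The only mildly delicate part of the argument is the bookkeeping in \eqref{est.lem.rhoj.2}: the factor $(j+1)^\frac12$ on the right is sharp, and the correct lower bound on $\rho_j$ must be selected in each of the three $Y$-regions so that the different scales $K^\frac14$, $(j+1)^{-\frac12}$ and $\nu^{-\frac12}$ present in \eqref{def.rhoj} cooperate. All other estimates are direct consequences of the algebraic form of $\rho_j$ and Assumption \ref{assume}.
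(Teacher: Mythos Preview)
Your proposal is correct and matches the paper's approach exactly: the paper states that the proof ``is straightforward from the definitions of $\xi_j$ and $\rho_j$, so we omit the details,'' and your argument is precisely such a direct verification from \eqref{def.xi}, \eqref{def.rhoj}, and Assumption~\ref{assume}. The case splits you choose (the three $Y$-regions for \eqref{est.lem.rhoj.2}, the termwise bound $|Y\pa_Y(A(1+BY)^{-2})|\le 2A(1+BY)^{-2}$ for \eqref{est.lem.rhoj.3}, and the ratio computation for \eqref{est.lem.rhoj.4}) are the natural ones and all go through cleanly.
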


The proof of Lemma \ref{lem.rhoj} is straightforward from the definitions of $\xi_j$ and $\rho_j$, so we omit the details.

\subsection{Vorticity estimate for the modified system}\label{subsec.lem.prop}

In this subsection we collect lemmas for the solution to \eqref{eq.omegaj} and give the estimate for the vorticity. The main result of this subsection is as follows.

\begin{proposition}\label{prop.mOS2} There exists $\kappa_1'\in (0,1]$ such that the following statement holds for any $\kappa\in (0,\kappa_1']$.  There exists $K_1'=K_1'(\kappa, C_*, C_j^*)\geq 1$ such that if $K\geq K_1'$ then the system \eqref{eq.mOS} admits a unique solution $\phi\in C([0,\infty); \dot{H}^1_0(\T_\nu \times \R_+))$ with $\omega=-\Delta \phi\in C([0,\infty); L^2(\T_\nu \times \R_+))$ satisfying
\begin{align}\label{est.prop.mOS2}
\begin{split}
& \cA \nabla \omega \cA_{2,\tilde\xi^{(1)}}' + \cA \omega \cA_{2,\xi}' + K^\frac12 \cA \omega \cA_{2,\xi}' \\
& \leq C\Big ( \nu^{-\frac12} [\| \Delta \phi_0 \|]  +  \frac{C_2^*+1}{\nu^\frac12} \cA F\cA_{2,\tilde\xi^{(1)}}' +  \frac{1}{K^\frac12 \nu^\frac12} \cA G \cA_{2,\tilde \xi^{(2)}}'   +  \cA \nabla \phi \cA_{2,{\bf 1}}' \Big ).
\end{split}
\end{align} 
Here $C>0$ is a universal constant.
\end{proposition}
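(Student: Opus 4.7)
The plan is to run a weighted $L^2$ energy estimate directly on \eqref{eq.omegaj}, mimicking the toy-model calculation \eqref{calculations.toy} from Section \ref{sec:strategy}. For each admissible multi-index ${\bf j}=(j_1,j_2)$ with $j_1+j_2=j\leq\nu^{-\frac12}$, I take the $L^2_{X,Y}$ inner product of \eqref{eq.omegaj} with $\xi_j^2\,\omega^{\bf j}$. The boundary term produced by the diffusion vanishes because $\omega^{\bf j}|_{Y=0}=0$: when $j_2=0$ this follows from the Navier-slip condition $\Delta\phi|_{Y=0}=0$, and when $j_2\geq 1$ from $\chi_\nu|_{Y=0}=0$. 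The outcome is the schematic identity
\begin{equation*}
\tfrac12 \tfrac{d}{d\tau}\|\xi_j\omega^{\bf j}\|^2 + K\nu^\frac12(j+1)\|\xi_j\omega^{\bf j}\|^2 + \nu^\frac12\|\xi_j\nabla\omega^{\bf j}\|^2 = \sum_k R^{\bf j}_k,
\end{equation*}
where the remainders $R_k^{\bf j}$ collect: the $\tau$-derivative of $\xi_j^2$; the residual from moving $V\cdot\nabla$ past $\xi_j^2\omega^{\bf j}$; the main stretching contribution $\langle(\nabla^\bot\phi)^{\bf j}\cdot\nabla\Omega,\xi_j^2\omega^{\bf j}\rangle$; the commutators $[B_{j_2},\pa_Y]$ from the transport and diffusion; the two Leibniz convolution sums already displayed in \eqref{eq.omegaj}; and the forcing terms coming from $F^{\bf j}$ and $G^{\bf j}$.

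The algebraic core is the main stretching term. Decomposing $\nabla^\bot\phi\cdot\nabla\Omega=\pa_Y\phi\,\pa_X\Omega-\pa_X\phi\,\pa_Y\Omega$ and using the identity $\pa_Y\Omega\,\xi_j^2=1-2\rho_j\xi_j^2$, it splits as
\begin{equation*}
-\langle(\pa_X\phi)^{\bf j},\omega^{\bf j}\rangle+2\langle\rho_j\,(\pa_X\phi)^{\bf j},\xi_j^2\omega^{\bf j}\rangle+\langle(\pa_Y\phi)^{\bf j}\,\pa_X\Omega,\xi_j^2\omega^{\bf j}\rangle+\textrm{(comm.)}.
\end{equation*}
The first bracket is, up to $[\Delta,B_{j_2}]$-commutators absorbable in a small fraction of $\nu^\frac12\|\xi_j\nabla\omega^{\bf j}\|^2$, the exact toy-model cancellation $\int (\pa_X\phi)^{\bf j}\,(-\Delta\phi^{\bf j})=\tfrac12\int\pa_X|\nabla\phi^{\bf j}|^2=0$. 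For the second bracket, Lemma \ref{lem.rhoj} gives $\rho_j\xi_j\lesssim\sqrt{\nu}+K^\frac18((j+1)^\frac12 Y)^{-1}$; the $\sqrt{\nu}$ piece feeds directly into $\cA\nabla\phi\cA_{2,{\bf 1}}'$ on the right of \eqref{est.prop.mOS2}, while the singular piece, combined with Hardy's inequality applied to $(\pa_X\phi)^{\bf j}/Y$ (allowed because $\phi|_{Y=0}=0$), trades the $(j+1)^{-1/2}$ weight for one extra $\pa_Y$-derivative; after the Gevrey $\frac32$ summation this is exactly the mechanism that forces the $K^\frac14$ weight in \eqref{est.prop.mOS2}. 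The third bracket is handled by Cauchy--Schwarz using $\|\xi_j\pa_X\Omega\|_{L^\infty}\lesssim 1$, which follows from $\|\pa_X\Omega\|_{L^\infty}\lesssim\sqrt{\nu}$ in \eqref{est.assume.2} together with $\xi_j\lesssim\nu^{-1/2}$ from $\rho_j\geq C_*\nu$.

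The main technical obstacle is the Leibniz convolution remainders of \eqref{eq.omegaj}, coupling all orders ${\bf l}$ with $l<j$. I split each sum at $l\sim j/2$: for $l\leq j/2$ the $L^\infty$ Gevrey bounds of \eqref{est.assume.1} on $V^{{\bf j-l}}$ and $(\nabla\Omega)^{{\bf j-l}}$ pair against $\xi_j^2\omega^{\bf j}$; for $l>j/2$ the roles swap and one controls $(\nabla\omega)^{\bf l}$ in $L^\infty$ via an anisotropic Sobolev embedding that charges one extra derivative to the Gevrey weight. The $(j!)^{3/2}\nu^{j/2}$ scale is exactly what makes these discrete convolutions close; Assumption \ref{assume}(iv), specifically \eqref{est.assume.4}, is invoked on the stretching convolution when $B_{j_2}$-derivatives land on $\Omega$, and it is precisely that bound which pins the argument at the Gevrey $\frac32$ threshold. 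The $[B_{j_2},\pa_Y]$ commutators gain $\kappa\nu^\frac12$ from $\pa_Y\chi_\nu=\kappa\nu^\frac12 e^{-\kappa\nu^\frac12 Y}$, which is why $\kappa\in(0,\kappa_1']$ must be small. The forcing is moved to the right of \eqref{est.prop.mOS2} by pairing $F^{\bf j}$ against $\xi_j^2\omega^{\bf j}$ in the $\tilde\xi^{(1)}$ norm, and $G^{\bf j}$ either against $\xi_j^2\omega^{\bf j}$ in $\tilde\xi^{(2)}$ or, after one integration by parts, against $\nabla(\xi_j^2\omega^{\bf j})$ in the $\dot{H}^{-1}$ duality.

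Summing against the Gevrey weights $\frac{\nu^\frac14(j+1)^\frac12}{(j!)^\frac32\nu^{j/2}}$ over $j_2=0,\dots,j$ and $0\leq j\leq\nu^{-\frac12}$, and integrating in $\tau\in(0,\frac{1}{K\nu^\frac12})$, produces the left-hand side of \eqref{est.prop.mOS2}. Every remainder not already on the right carries either a small factor in $\kappa$ or a negative power of $K$ (from $K^\frac14$ inside $\rho_j$, from the Hardy step, or from the damping $K\nu^\frac12(j+1)$ itself), hence is absorbed for $\kappa\leq\kappa_1'$ small and $K\geq K_1'(\kappa,C_*,C^*_j)$ large. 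Existence and uniqueness of $\phi\in C([0,\infty);\dot{H}^1_0(\T_\nu\times\R_+))$ with $\omega\in C([0,\infty);L^2)$ then follow from these a priori bounds via a standard Galerkin approximation. The heaviest bookkeeping lies in tracking the interplay of $\nu^\frac14$, $(j+1)^\frac12$, $\kappa$ and $K^\frac14$ through the Leibniz convolutions when several $B_{j_2}$'s land on the coefficient $\Omega$ rather than on $\omega$.
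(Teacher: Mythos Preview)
Your overall strategy---testing \eqref{eq.omegaj} against $\xi_j^2\omega^{\bf j}$, using the identity $\pa_Y\Omega\,\xi_j^2=1-2\rho_j\xi_j^2$, and recovering the toy-model cancellation up to $[B_{j_2},\pa_Y]$ errors---is the paper's approach, and your treatment of the $\rho_j$-residual via Hardy is essentially Lemma~\ref{lem.prop.3}. But your handling of the Leibniz convolutions has a genuine gap.

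For the sum $\sum_{l\le j-1}V^{{\bf j}-{\bf l}}\cdot(\nabla\omega)^{\bf l}$ you propose, when $l>j/2$, to swap roles and place $(\nabla\omega)^{\bf l}$ in $L^\infty$ via an anisotropic Sobolev embedding. This cannot close: the only control available on $\nabla\omega$ is the weighted $L^2_\tau L^2_{X,Y}$ quantity $\cA\nabla\omega\cA_{2,\tilde\xi^{(1)}}'$ that you are in the process of establishing, and the embedding would charge derivatives you do not possess; conversely, Assumption~\ref{assume}(ii) supplies only $L^\infty$ Gevrey bounds on $V$, not $L^2$ ones, so the dual placement of $V^{{\bf j}-{\bf l}}$ in $L^2$ is unavailable. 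The paper therefore \emph{never} swaps: in Lemma~\ref{lem.prop.4} it keeps $V^{{\bf j}-{\bf l}}$ in $L^\infty$ and $(\nabla\omega)^{\bf l}$ in weighted $L^2$ for every $0\le l\le j-1$. What makes the counting work is the multiplicity bound $\#\{l_2\}\le\min\{l+1,j-l+1\}$ together with the elementary combinatorics \eqref{proof.lem.prop.4.3}--\eqref{proof.lem.prop.4.4}, namely that $\min\{l+1,j-l+1\}\big(\tfrac{(j-l)!\,l!}{j!}\big)^{1/2}$ stays uniformly bounded, and that the weight ratio costs only $\xi_j/\xi_l\lesssim(j-l+1)^{1/2}$. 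The same mechanism treats the second convolution (Lemma~\ref{lem.prop.5}), with $(\nabla\Omega)^{{\bf j}-{\bf l}}$ always in $L^\infty$; only the borderline term $l=j-1$ genuinely requires \eqref{est.assume.4}.

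Two smaller points. The forcing enters as ${\rm rot}\,F^{\bf j}$, not $F^{\bf j}$, so after integrating by parts one must control $F^{\bf j}\cdot\nabla^\perp(\xi_j^2\omega^{\bf j})$; the factor $\nabla^\perp(\xi_j^2)$ needs the same $Y\lessgtr(j+1)^{-1/2}$ splitting and the use of \eqref{est.assume.4} that already appears in the diffusion term (this is Lemma~\ref{lem.prop.6}, and the analogous diffusion computation is Lemma~\ref{lem.prop.1}, which you do not explicitly address). Finally, no $\dot{H}^{-1}$ duality for $G$ enters \eqref{est.prop.mOS2}; that contribution appears only later, in Proposition~\ref{prop.mOS1}, through the zeroth-order velocity estimate of Proposition~\ref{prop.zero.mOS}.
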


\

Since the unique solvability of the linear system \eqref{eq.mOS} itself follows from the standard theory of parabolic equations, we focus on establishing the estimate \eqref{est.prop.mOS2}.   
Then the core part of the proof of Proposition \ref{prop.mOS2} consists of the calculation of the inner product for each term in \eqref{eq.omegaj} with $\xi_j^2 \omega^{\bf j}$, where ${\bf j}=(j_1,j_2)$ with $j_1+j_2=j$ and the weight $\xi_j$ is defined as in \eqref{def.xi}.  
Let us start from the following lemma. The number $\tau_0\in (0,\frac{1}{K\nu^\frac12}]$ is taken arbitrary below.
\begin{lemma}\label{lem.prop.1} There exists $K_{1,1}=K_{1,1} (C^*_1, C_*)\geq 1$ such that if $K\geq K_{1,1}$ then we have 
\begin{align*}
& \int_0^{\tau_0} \langle  -\nu^\frac12  (\Delta \omega)^{\bf j}, \xi_j^2\omega^{{\bf j}} \rangle \, d \tau \\
& \geq \frac{\nu^\frac12}{2} \|\xi_j  (\nabla \omega)^{\bf j} \|_{L^2(0,\tau_0; L^2_{X,Y})}^2  \\
& \qquad - C\nu^\frac12 (\kappa \nu^\frac12 j_2)^2  M_{2,j-1,\xi_{j-1}}[\pa_Y \omega]^2  -C(C^*_2 +1) \nu^\frac12 (j+1) \|\xi_j \omega^{\bf j} \|_{L^2(0,\tau_0; L^2_{X,Y})}^2.
\end{align*}
Here $C>0$ is a universal constant.
\end{lemma}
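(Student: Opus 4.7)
My plan is to split the left-hand side into a principal diffusion term and a commutator remainder, extracting $\nu^{\frac12}\|\xi_j\nabla\omega^{\bf j}\|^2$ by integration by parts from the former. Set $\Psi^{\bf j}:=e^{-K\tau\nu^{\frac12}(j+1)}\pa_X^{j_1}\omega$, so that $\omega^{\bf j}=B_{j_2}\Psi^{\bf j}$ and $(\Delta\omega)^{\bf j}=\Delta\omega^{\bf j}+[B_{j_2},\pa_Y^2]\Psi^{\bf j}$, with $[B_{j_2},\pa_Y^2]=-2(\pa_Y\chi_\nu^{j_2})\pa_Y^{j_2+1}-(\pa_Y^2\chi_\nu^{j_2})\pa_Y^{j_2}$ and the sharp bounds $|\pa_Y^k\chi_\nu^{j_2}|\le C(\kappa\nu^{\frac12})^kj_2^k\chi_\nu^{j_2-k}$. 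The key fact that makes every integration by parts harmless is $\omega^{\bf j}|_{Y=0}=0$: this holds because $\chi_\nu(0)=0$ kills the trace when $j_2\ge 1$, and because $\omega=-\Delta\phi$ vanishes at $Y=0$ by the boundary condition on $\phi$ when $j_2=0$. Integration by parts on the principal term yields $-\nu^{\frac12}\langle\Delta\omega^{\bf j},\xi_j^2\omega^{\bf j}\rangle=\nu^{\frac12}\|\xi_j\nabla\omega^{\bf j}\|^2+2\nu^{\frac12}\langle\xi_j\nabla\omega^{\bf j},(\nabla\xi_j)\omega^{\bf j}\rangle$.

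The cross term is the real work, because $\nabla\xi_j=-\tfrac12\xi_j^3\nabla(\pa_Y\Omega+2\rho_j)$ involves $\nabla\pa_Y\Omega$ which is large near the boundary. I would split $\R_+$ at $Y=(j+1)^{-\frac12}$. In the outer region $Y\ge(j+1)^{-\frac12}$, Lemma \ref{lem.rhoj} gives $\|(1+\nu^{\frac12}Y)\xi_j/Y\|_\infty\le C(j+1)^{\frac12}$; combined with $\xi_j\le(\pa_Y\Omega+2\rho)^{-\frac12}$ (since $\rho_j\ge\rho$) and the weighted bounds on $\pa_{XY}^2\Omega,\pa_Y^2\Omega$ from \eqref{est.assume.4}, this yields $|\nabla\xi_j\cdot\omega^{\bf j}|\le CC_2^*(j+1)^{\frac12}\xi_j|\omega^{\bf j}|$. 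In the inner sublayer $Y\le(j+1)^{-\frac12}$, $\xi_j$ is merely $L^\infty$-bounded by \eqref{est.lem.rhoj.1}, so I use the Dirichlet condition $\omega^{\bf j}|_{Y=0}=0$ together with Hardy's inequality $\|\omega^{\bf j}/Y\|\le 2\|\pa_Y\omega^{\bf j}\|$ to convert the residual $1/Y$ from \eqref{est.assume.4} into a gradient which, after Cauchy--Schwarz, is absorbed into a small multiple of the principal $\nu^{\frac12}\|\xi_j\nabla\omega^{\bf j}\|^2$. The $\nabla\rho_j$ contribution is immediate from $|Y\pa_Y\rho_j|/\rho_j\le 2$ in \eqref{est.lem.rhoj.3}. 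Cauchy--Schwarz with a small parameter finally produces $2\nu^{\frac12}|\langle\xi_j\nabla\omega^{\bf j},(\nabla\xi_j)\omega^{\bf j}\rangle|\le\tfrac{\nu^{\frac12}}{4}\|\xi_j\nabla\omega^{\bf j}\|^2+C(C_2^*+1)\nu^{\frac12}(j+1)\|\xi_j\omega^{\bf j}\|^2$, the factor $(C_2^*)^2\nu$ arising from squaring being absorbed into the linear one using the constraint on $K$.

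For the commutator contribution $\nu^{\frac12}\langle[B_{j_2},\pa_Y^2]\Psi^{\bf j},\xi_j^2\omega^{\bf j}\rangle$, I would integrate by parts once in $Y$ on $\pa_Y^{j_2+1}\Psi^{\bf j}=\pa_Y(\pa_Y^{j_2}\Psi^{\bf j})$; the boundary terms vanish as before. The identity $\chi_\nu^{j_2-1}\pa_Y^{j_2}\Psi^{\bf j}=e^{-K\tau\nu^{\frac12}}(\pa_Y\omega)^{(j_1,j_2-1)}$ then recognises the inner factor as exactly $(\pa_Y\omega)^{(j_1,j_2-1)}$ up to an exponential damping, and Cauchy--Schwarz combined with $\|\xi_j/\xi_{j-1}\|_\infty\le C$ from \eqref{est.lem.rhoj.4} produces the desired error $C\nu^{\frac12}(\kappa\nu^{\frac12}j_2)^2M_{2,j-1,\xi_{j-1}}[\pa_Y\omega]^2$, with the side terms (arising when the distributed $\pa_Y$ falls on $\xi_j^2$ or $\omega^{\bf j}$) reabsorbed either into $\tfrac{\nu^{\frac12}}{4}\|\xi_j\nabla\omega^{\bf j}\|^2$ or into the $C(C_2^*+1)\nu^{\frac12}(j+1)\|\xi_j\omega^{\bf j}\|^2$ contribution generated above. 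The singular factor $1/\chi_\nu$ that appears for $j_2\ge 2$ when $\pa_Y$ hits $\chi_\nu^{j_2-1}$ is tamed by $\chi_\nu\ge c\kappa\nu^{\frac12}Y$ on bounded $Y$ and again resolved via Hardy.

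The main obstacle is the middle paragraph: Assumption \eqref{est.assume.4} controls derivatives of $\pa_Y\Omega$ only against $\sqrt{\pa_Y\Omega+2\rho}$, not against $\sqrt{\pa_Y\Omega+2\rho_j}$, so one must carefully leverage the $j$-dependent extra weight $K^{\frac14}C_*(1+(j+1)^{\frac12}Y)^{-2}$ built into $\rho_j$, together with the dichotomy at $Y=(j+1)^{-\frac12}$ and the Dirichlet condition on $\omega^{\bf j}$, to gain exactly one power of $(j+1)^{\frac12}$ without loss. Once this is in hand, $K_{1,1}$ is taken large enough that all absolute constants as well as the $C_1^*$ that enters through the $\pa_Y\rho$ part of \eqref{est.lem.rhoj.3} are swallowed.
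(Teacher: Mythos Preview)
Your approach is correct and runs along essentially the same lines as the paper's proof: the splitting at $Y=(j+1)^{-1/2}$, the use of \eqref{est.assume.4} in the outer region and of Hardy plus the $K^{1/4}$-smallness of $\xi_j$ in the inner region, and the treatment of $\pa_Y\rho_j$ via \eqref{est.lem.rhoj.3} are exactly what the paper does. Two organisational differences are worth noting.

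First, the paper uses the \emph{first-order} commutator identity
\[
(\Delta\omega)^{\bf j}=\nabla\cdot(\nabla\omega)^{\bf j}-\frac{\nu^{1/2}j_2\chi_\nu'}{\chi_\nu}(\pa_Y\omega)^{\bf j},
\qquad
\nabla\omega^{\bf j}=(\nabla\omega)^{\bf j}+\nu^{1/2}j_2\chi_\nu' e^{-K\tau\nu^{1/2}}(\pa_Y\omega)^{(j_1,j_2-1)}{\bf e}_2,
\]
so that integration by parts produces $\|\xi_j(\nabla\omega)^{\bf j}\|^2$ directly, with a single cross term $\langle(\nabla\omega)^{\bf j}\cdot\nabla(\xi_j^2),\omega^{\bf j}\rangle$ and a single commutator remainder $2\nu^{1/2}j_2e^{-K\tau\nu^{1/2}}\langle\xi_j(\nabla\omega)^{\bf j},\chi_\nu'\xi_j(\pa_Y\omega)^{(j_1,j_2-1)}\rangle$. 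Your route via the second-order commutator $[B_{j_2},\pa_Y^2]$ extracts $\|\xi_j\nabla\omega^{\bf j}\|^2$ instead; you then need the elementary conversion $\|\xi_j\nabla\omega^{\bf j}\|^2\ge\tfrac23\|\xi_j(\nabla\omega)^{\bf j}\|^2-2(\kappa\nu^{1/2}j_2)^2\|\xi_{j-1}(\pa_Y\omega)^{(j_1,j_2-1)}\|^2$ to match the stated lower bound. This is harmless but worth making explicit. Your additional integration by parts on $\pa_Y^{j_2+1}\Psi$ also generates more side terms than the paper's route; in particular the ``singular $1/\chi_\nu$'' you worry about is actually a non-issue, since pairing with $\omega^{\bf j}=\chi_\nu^{j_2}\pa_Y^{j_2}\Psi$ always supplies enough powers of $\chi_\nu$ to recombine into $((\pa_Y\omega)^{(j_1,j_2-1)})^2$ without invoking Hardy. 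The paper's first-order organisation avoids even the appearance of this problem.

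Second, a small inaccuracy: the $C_1^*$-dependence of $K_{1,1}$ does not enter through \eqref{est.lem.rhoj.3} (that bound is universal). It comes in through the inner-sublayer estimate, where after Hardy you use \eqref{proof.lem.prop.1.5} and the bound $\|\xi_j^{-1}\|_{L^\infty}\le(C_1^*+8K^{1/4}C_*)^{1/2}$ from \eqref{est.lem.rhoj.4}; the resulting coefficient $C C_1^*(C_1^*+K^{1/4}C_*)^{1/2}/(K^{1/4}C_*)^{3/2}$ is what forces $K_{1,1}$ to depend on $C_1^*,C_*$.
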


\begin{proof} Let us write $\chi_\nu'=(\chi') (\nu^\frac12 Y)= \kappa e^{-\kappa \nu^\frac12 Y}$. We will frequently use the identity 
\begin{align}\label{proof.lem.prop.1.1}
[B_{j_2},\pa_Y] = -\nu^\frac12 j_2 \chi_\nu' B_{j_2-1}\pa_Y=-\frac{\nu^\frac12 j_2 \chi_\nu'}{\chi_\nu} B_{j_2}.
\end{align}
Then we observe that 
\begin{align}\label{proof.lem.prop.1.2}
(\Delta \omega)^{\bf j} = e^{-K\tau \nu^\frac12 (j+1)} B_{j_2}\pa_X^{j_1} \Delta \omega=\nabla \cdot (\nabla \omega)^{\bf j} -\frac{\nu^\frac12 j_2 \chi_\nu'}{\chi_\nu} (\pa_Y \omega)^{{\bf j}}
\end{align}
and 
\begin{align}\label{proof.lem.prop.1.3}
\nabla \omega^{\bf j} = (\nabla \omega)^{\bf j} +\nu^\frac12 j_2\chi_\nu' e^{-K\tau \nu^\frac12} (\pa_Y \omega)^{(j_1,j_2-1)}{\bf e}_2,\qquad \omega^{\bf j} =\chi_\nu e^{-K\tau \nu^\frac12}  (\pa_Y \omega)^{(j_1,j_2-1)}.
\end{align}
Here ${\bf e}_2=(0,1)$.
Hence the integration by parts gives 
\begin{align*}
\int_0^{\tau_0} -\nu^\frac12 \langle  (\Delta \omega)^{\bf j}, \xi_j^2\omega^{{\bf j}} \rangle \, d\tau  
& = \nu^\frac12 \int_0^{\tau_0}\Big (  \| \xi_j (\nabla \omega)^{\bf j} \|^2  + 2 \nu^\frac12 j_2 e^{-K\tau \nu^\frac12} \langle \xi_j (\nabla \omega)^{\bf j}, \chi_\nu' \xi_j (\pa_Y \omega)^{(j_1,j_2-1)}\rangle \\
& \qquad + \langle (\nabla \omega)^{\bf j}\cdot \nabla (\xi_j^2), \omega^{\bf  j}\rangle \Big ) \, d\tau\\
& \geq \frac{3\nu^\frac12}{4} \| \xi_j (\nabla \omega)^{\bf j} \|_{L^2(0,\tau_0; L^2)}^2 - C \nu^\frac12 (\kappa \nu^\frac12 j_2)^2 \| \xi_{j-1} (\pa_Y \omega)^{(j_1,j_2-1)} \|_{L^2 (0,\tau_0; L^2)}^2 \\
& \quad - \nu^\frac12 \int_0^{\tau_0} | \langle (\nabla \omega)^{\bf j}\cdot \nabla (\xi_j^2), \omega^{\bf  j}\rangle| \, d\tau .
\end{align*}
Here we have used $\|\frac{\xi_j}{\xi_{j-1}}\|_{L^\infty}\leq C$ in the last line as stated in Lemma \ref{lem.rhoj}. When $j_2=0$ the term $(\pa_Y \omega)^{(j_1,j_2-1)}$ is defined as $0$ for convenience.
It suffices to estimate $\langle (\nabla \omega)^{\bf j}\cdot \nabla (\xi_j^2),  \omega^{\bf  j}\rangle$.
We have 
\begin{align}\label{proof.lem.prop.1.4}
\nabla (\xi_j^2) = - \frac{\nabla \pa_Y \Omega + 2\nabla \rho_j}{\sqrt{\pa_Y \Omega + 2\rho_j}} \, \xi_j^3,
\end{align} 
which yields 
\begin{align*}
|\langle (\nabla \omega)^{\bf j}\cdot \nabla (\xi_j^2),  \omega^{\bf  j}\rangle| & \leq \| \xi_j (\nabla \omega)^{\bf j}\| \, \big ( \|  \frac{\nabla \pa_Y \Omega}{\sqrt{\pa_Y \Omega + 2\rho_j}} \, \xi_j ^2 \omega^{\bf j} \| + \| \frac{2\pa_Y \rho_j}{\sqrt{\pa_Y \Omega + 2\rho_j}} \, \xi_j ^2 \omega^{\bf j}\| \big ).
\end{align*}
To estimate  $\| \frac{\nabla \pa_Y \Omega}{\sqrt{\pa_Y \Omega + 2\rho_j}} \, \xi_j ^2 \omega^{\bf j} \|$ we decompose the integral about $Y$ into $0\leq Y\leq (j+1)^{-\frac12}$ and $Y\geq (j+1)^{-\frac12}$. Then we see from Lemma \ref{lem.rhoj} with $\frac{\xi_j^2}{\sqrt{\pa_Y \Omega + 2\rho_j}}=\xi_j^3\leq \frac{1}{\rho_j^\frac32}$,
\begin{align*}
\| \frac{\nabla \pa_Y \Omega}{\sqrt{\pa_Y \Omega + 2\rho_j}} \, \xi_j ^2 \omega^{\bf j} \|_{L^2(\{0<Y<(j+1)^{-\frac12}\})}& \leq  \|\frac{1}{\rho_j^\frac32}\|_{L^\infty (\{0<Y<(j+1)^{-\frac12}\})}  \| \nabla \pa_Y \Omega \, \omega^{\bf j}\|_{L^2(\{0<Y<(j+1)^{-\frac12}\})} \\
& \leq \frac{2}{(K^\frac14 C_*)^\frac32} \| \frac{Y}{1+\nu^\frac12 Y} \nabla \pa_Y \Omega\|_{L^\infty} \|  \frac{\omega^{\bf j}}{Y} \|\\
&\leq  \frac{C C^*_1}{(K^\frac14 C_*)^\frac32} \| \pa_Y \omega^{\bf j} \|.
\end{align*}
Here we have used Assumption \ref{assume} (iii) and the Hardy inequality $\|\frac{\omega^{\bf j}}{Y}\|\leq 4 \| \pa_Y \omega^{\bf j}\|$.
Then by using \eqref{proof.lem.prop.1.3} for $\pa_Y \omega^{\bf j}$ and \eqref{est.lem.rhoj.4} we have 
\begin{align}\label{proof.lem.prop.1.5}
\|\pa_Y\omega^{\bf j}\| &\leq \|(\pa_Y \omega)^{\bf j} \| + \kappa \nu^\frac12 j_2 \| (\pa_Y \omega)^{(j_1,j_2-1)}\|  \nonumber \\
& \leq \| \frac{1}{\xi_j} \|_{L^\infty} \| \xi_j (\pa_Y \omega)^{\bf j}\| + \kappa\nu^\frac12 j_2 \|\frac{1}{\xi_{j-1}}\|_{L^\infty} \| \xi_{j-1} (\pa_Y\omega)^{(j_1,j_2-1)} \|  \nonumber \\
& \leq C (C^*_1 + K^\frac14 C_*)^\frac12 \Big (  \| \xi_j (\pa_Y \omega)^{\bf j}\| +  \kappa \nu^\frac12 j_2  \| \xi_{j-1} (\pa_Y\omega)^{(j_1,j_2-1)} \|\Big ).
\end{align}
On the other hand, we have from Assumption \ref{assume} (iv) and \eqref{est.lem.rhoj.2} in Lemma \ref{lem.rhoj}, 
\begin{align*}
& \| \frac{\nabla \pa_Y \Omega}{\sqrt{\pa_Y \Omega + 2\rho_j}} \, \xi_j ^2 \omega^{\bf j} \|_{L^2(\{Y\geq (j+1)^{-\frac12}\})} \\
& \leq  \| \frac{Y\nabla \pa_Y \Omega}{(1+\nu^\frac12 Y) \sqrt{\pa_Y \Omega + 2\rho_j}}\| \, \| \frac{1+\nu^\frac12Y}{Y} \xi_j\|_{L^\infty (\{Y\geq (j+1)^{-\frac12}\})} \, \| \xi_j  \omega^{\bf j}\| \\
& \leq C C^*_2 (j+1)^\frac12 \| \xi_j \omega^{\bf j} \|.
\end{align*}
Next we estimate the term $\| \frac{2\pa_Y \rho_j}{\sqrt{\pa_Y \Omega + 2\rho_j}} \, \xi_j ^2 \omega^{\bf j}\|$. 
To this end we observe that 
\begin{align*}
|\pa_Y \rho_j| & \leq 2(j+1)^\frac12 K^\frac14 C_* (1+ (j+1)^\frac12 Y)^{-3} + 2C_* \nu^\frac12 (1+Y)^{-3} + 2C_* \nu^{-\frac14}(1+\frac{Y}{\nu^\frac14})^{-3} \\
& \leq 
\begin{cases}
& \displaystyle 2 (j+1)^\frac12 \rho_j + \frac{2 C_*}{Y}, \qquad 0<Y<(j+1)^{-\frac12},\\
& \displaystyle 2 (j+1)^\frac12 \rho_j + \frac{2\rho_j}{Y}, \qquad Y\geq (j+1)^{-\frac12},
\end{cases}
\end{align*}
which gives from Lemma \ref{lem.rhoj},
\begin{align*}
& \| \frac{2\pa_Y \rho_j}{\sqrt{\pa_Y \Omega + 2\rho_j}} \, \xi_j ^2 \omega^{\bf j}\|_{L^2} \\
& \leq 4(j+1)^\frac12 \| \xi_j \omega^{\bf j}\| + 2C_* \| \frac{\xi_j^3\omega^{\bf j}}{Y} \|_{L^2(\{0<Y<(j+1)^{-\frac12}\})} + 2 \| \frac{\rho_j \xi_j^3\omega^{\bf j}}{Y} \|_{L^2(\{Y\geq (j+1)^{-\frac12}\})} \\
& \leq  4(j+1)^\frac12 \| \xi_j \omega^{\bf j}\| + \frac{2C_*}{(K^\frac14 C_*)^\frac32} \| \frac{\omega^{\bf j}}{Y} \| + 2(j+1)^\frac12 \| \xi_j \omega^{\bf j} \|.
\end{align*}
Then we apply the Hardy inequality $\|\frac{\omega^{\bf j}}{Y}\|\leq 4 \| \pa_Y \omega^{\bf j}\|$ and then use \eqref{proof.lem.prop.1.5}.
Collecting these, we obtain 
\begin{align*}
& |\langle (\nabla \omega)^{\bf j}\cdot \nabla (\xi_j^2),  \omega^{\bf  j}\rangle|\\
& \leq \| \xi_j (\nabla \omega)^{\bf j}\| \, \Big ( \frac{C(C_1^* +1) (C_1^*+K^\frac14 C_*)^\frac12}{(K^\frac14 C_*)^\frac32} \big ( \| \xi_j (\pa_Y \omega)^{\bf j}\| +  \kappa \nu^\frac12 j_2  \| \xi_{j-1} (\pa_Y\omega)^{(j_1,j_2-1)} \|\big )\\
& \qquad  + C (C_2^*+1) (j+1)^\frac12 \| \xi_j \omega^{\bf j} \|\Big ).
\end{align*}
Thus, by taking $K$ large enough depending only on $C^*_1$ and $C_*$, we obtain the desired estimate as stated in Lemma \ref{lem.prop.2}. The proof is complete.
\end{proof}

\

\begin{lemma}\label{lem.prop.2} There exists $K_{1,2}=K_{1,2} (C^*_1, C_*) \geq 1$ such that if $K\geq K_{1,2}$ then we have 
\begin{align*}
& \int_0^{\tau_0} \langle  \big (\pa_\tau + K\nu^\frac12 (j+1) + V\cdot \nabla \big )\omega^{\bf j} , \xi_j^2\omega^{{\bf j}} \rangle \, d \tau \\
& \geq \frac12 \|\xi_j   \omega^{\bf j} (\tau_0) \|_{L^2_{X,Y}}^2 -\frac12\|\xi_j  \omega^{\bf j} (0) \|_{L^2_{X,Y}}^2 +\frac{K}{2} \nu^\frac12 (j+1) \|\xi_j \omega^{\bf j}\|_{L^2(0,\tau_0; L^2_{X,Y})}^2 \\
& \quad - \frac{CC_1^*\nu^\frac12}{K^\frac14C_*}\Big (  \|\xi_j  (\pa_Y \omega)^{\bf j} \|_{L^2(0,\tau_0; L^2_{X,Y})}^2 +  (\kappa \nu^\frac12 j)^2  M_{2,j-1,\xi_{j-1}} [\pa_Y \omega]^2\Big ).
\end{align*}
Here $C>0$ is a universal constant.
\end{lemma}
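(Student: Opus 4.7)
The plan is to expand $\langle (\pa_\tau + K\nu^\frac12(j+1) + V\cdot\na)\omega^{\bf j}, \xi_j^2\omega^{\bf j}\rangle$ into three pieces.  Integration by parts in time for the $\pa_\tau$ piece yields
\[
\frac12\|\xi_j\omega^{\bf j}(\tau_0)\|^2-\frac12\|\xi_j\omega^{\bf j}(0)\|^2-\frac12\int_0^{\tau_0}\!\int (\pa_\tau\xi_j^2)(\omega^{\bf j})^2\, d\tau,
\]
and since $\rho_j$ is $\tau$-independent, $\pa_\tau\xi_j^2 = -\xi_j^4 \pa_\tau\pa_Y\Omega$.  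The amplification piece contributes $K\nu^\frac12(j+1)\|\xi_j\omega^{\bf j}\|_{L^2(0,\tau_0;L^2)}^2$, of which I retain half and reserve half to absorb errors.  For the transport piece, $\na\cdot V=0$ together with $V|_{Y=0}=0$ allows integration by parts in $(X,Y)$, giving $-\frac12\int_0^{\tau_0}\!\int V\cdot\na(\xi_j^2)(\omega^{\bf j})^2\, d\tau$ with
\[
V\cdot\na(\xi_j^2) = -\xi_j^4\bigl(V_1\pa^2_{XY}\Omega + V_2\pa_Y^2\Omega + 2V_2\pa_Y\rho_j\bigr).
\]

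The task reduces to controlling four error integrals, and I split each at $Y=(j+1)^{-1/2}$.  On the outer region $\{Y\geq(j+1)^{-1/2}\}$, Lemma \ref{lem.rhoj} furnishes $\xi_j(1+\nu^\frac12 Y)/Y\leq C(j+1)^\frac12$ and $\xi_j(1+\nu^\frac12 Y)/(1+Y)\leq C(j+1)^\frac12$; paired with the weighted Assumption \ref{assume} (iii) bounds on $\pa_\tau\pa_Y\Omega$, $\pa^2_{XY}\Omega$, $\pa_Y^2\Omega$ (with an auxiliary split at $Y=\nu^{-1/2}$ for $V_2\pa_Y^2\Omega$, where the weight $(1+\nu^\frac12 Y)^3/(Y(1+Y)^2)$ is itself of order $\nu^\frac12$), each integrand is bounded by $C(C_1^*)^2\nu^\frac12(j+1)\xi_j^2(\omega^{\bf j})^2$, and the resulting contribution is absorbed into the reserved $\frac{K}{2}\nu^\frac12(j+1)\|\xi_j\omega^{\bf j}\|^2$ for $K\geq K_{1,2}(C_1^*, C_*)$.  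On the inner region $\{Y\leq (j+1)^{-1/2}\}$, \eqref{est.lem.rhoj.1} gives $\xi_j^2\leq 4/(K^\frac14 C_*)$.  The crucial observation is that the Dirichlet condition $V|_{Y=0}=0$, combined with the Assumption (iii) bounds on $\pa_X V_1$ and $\pa_Y V_1$, forces $|V_1|\leq C_1^* Y$ and $|V_2|\leq C_1^*\nu^\frac12 Y$: this extra $Y$-factor exactly cancels the $1/Y$ singularity in the Assumption (iii) bounds on $\pa^2_{XY}\Omega$ and $\pa_Y^2\Omega$, so each transport integrand is pointwise at most $C(C_1^*)^2\nu^\frac12\xi_j^4(\omega^{\bf j})^2$, and using $\|\omega^{\bf j}\|\leq C(K^\frac14 C_*)^\frac12\|\xi_j\omega^{\bf j}\|$ from \eqref{est.lem.rhoj.4} the contribution is absorbed into the amplification reserve.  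The $V_2\pa_Y\rho_j$ term is similarly handled via $\|Y\pa_Y\rho_j/\rho_j\|_\infty\leq 2$ from \eqref{est.lem.rhoj.3}, which gives $\xi_j^4 V_2\pa_Y\rho_j \leq CC_1^*\nu^\frac12\xi_j^2$.

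The unique error that does not absorb into the amplification reserve is the inner contribution from $\pa_\tau\pa_Y\Omega$, where there is no $V$-factor to cancel the singularity.  There, $|\pa_\tau\pa_Y\Omega|\leq C_1^*\nu^\frac12((1+\nu^\frac12 Y)/Y)^2$ and $\xi_j^4\leq 16/(K^\frac14 C_*)^2$ yield an integrand $\leq 16C_1^*\nu^\frac12/(K^\frac14 C_*)^2\cdot (\omega^{\bf j})^2/Y^2$.  Hardy's inequality $\|\omega^{\bf j}/Y\|\leq 4\|\pa_Y\omega^{\bf j}\|$, valid because $\omega^{\bf j}|_{Y=0}=0$ (since $\omega=-\Delta\phi$ vanishes at $Y=0$ and $\chi_\nu|_{Y=0}=0$), then bounds this by $CC_1^*\nu^\frac12/(K^\frac14 C_*)^2\|\pa_Y\omega^{\bf j}\|^2$, and identities \eqref{proof.lem.prop.1.3}--\eqref{proof.lem.prop.1.5} convert $\|\pa_Y\omega^{\bf j}\|^2$ into $\|\xi_j(\pa_Y\omega)^{\bf j}\|^2+(\kappa\nu^\frac12 j_2)^2\|\xi_{j-1}(\pa_Y\omega)^{(j_1,j_2-1)}\|^2$ at the cost of a factor $C(C_1^*+K^\frac14 C_*)\leq CK^\frac14 C_*$ (for $K$ large), producing precisely the remainder $\frac{CC_1^*\nu^\frac12}{K^\frac14 C_*}\bigl(\|\xi_j(\pa_Y\omega)^{\bf j}\|^2+(\kappa\nu^\frac12 j)^2 M_{2,j-1,\xi_{j-1}}[\pa_Y\omega]^2\bigr)$ claimed by the lemma.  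The main obstacle is to spot the compensating $Y$-factor from $V|_{Y=0}=0$: without it, a naive estimate of the inner transport errors would contribute an additional $(C_1^*)^2\nu^\frac12/(K^\frac14 C_*)\|\xi_j(\pa_Y\omega)^{\bf j}\|^2$ with a genuine extra factor of $C_1^*$, incompatible with the stated universal constant $CC_1^*$.
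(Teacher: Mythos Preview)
Your proof is correct and follows essentially the same route as the paper's: integration by parts to isolate the error $-\frac12\int\langle(\pa_\tau+V\cdot\na)(\xi_j^2),(\omega^{\bf j})^2\rangle$, a split at $Y=(j+1)^{-1/2}$, Hardy's inequality for the inner $\pa_\tau\pa_Y\Omega$ contribution (converted via \eqref{proof.lem.prop.1.5}), and absorption of the remaining transport errors into the $K\nu^{1/2}(j+1)$ reserve using the vanishing of $V$ at $Y=0$.  The only cosmetic difference is that the paper bounds the transport term globally via the single $L^\infty$ estimate $\|\tfrac{V\cdot\na(\pa_Y\Omega+2\rho_j)}{\pa_Y\Omega+2\rho_j}\|_{L^\infty}\leq C(C_1^*)^2\nu^{1/2}(j+1)$ (itself established by an inner/outer split of $V_1/(Y(1+Y)\rho_j)$), whereas you split the integral first; the underlying arithmetic is identical.
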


\begin{proof} The integration by parts yields 
\begin{align*}
& \int_0^{\tau_0} \langle  \big (\pa_\tau + K\nu^\frac12 (j+1) + V\cdot \nabla \big )\omega^{\bf j} , \xi_j^2\omega^{{\bf j}} \rangle \, d \tau \\
& = \frac12 \|\xi_j   \omega^{\bf j} (\tau_0) \|_{L^2_{X,Y}}^2 -\frac12\|\xi_j  \omega^{\bf j} (0) \|_{L^2_{X,Y}}^2 + K \nu^\frac12 (j+1) \|\xi_j \omega^{\bf j}\|_{L^2(0,\tau_0; L^2_{X,Y})}^2 \\
& \quad - \frac12 \int_0^{\tau_0} \langle \pa_\tau (\xi_j^2) + V\cdot \nabla (\xi_j^2), (\omega^{\bf j})^2\rangle\, d\tau.
\end{align*}
As for the term $\langle \pa_\tau (\xi_j^2), (\omega^{\bf j})^2\rangle$, we decompose the integral about $Y$ into $\{0<Y<(j+1)^{-\frac12}\}$ and $\{Y\geq (j+1)^{-\frac12}\}$ and compute as follows:
\begin{align}\label{proof.lem.prop.2.1}
& |\langle \pa_\tau (\xi_j^2), (\omega^{\bf j})^2\rangle|  \nonumber \\
& \leq \| (\frac{Y}{1+\nu^\frac12 Y})^2 \pa_\tau \pa_Y \Omega \|_{L^\infty} \| (\frac{1+\nu^\frac12 Y}{Y}) \xi_j^2 \omega^{\bf j} \|^2 \nonumber \\
& \leq C_1^* \nu^\frac12 \Big ( \| (1+\nu^\frac12 Y) \xi_j^2  \|_{L^\infty(\{0<Y<(j+1)^{-\frac12}\})}^2 \| \frac{\omega^{\bf j}}{Y}\|^2 + \| (\frac{1+\nu^\frac12 Y}{Y}) \xi_j \|_{L^\infty(\{Y\geq (j+1)^{-\frac12}\})}^2 \| \xi_j\omega^{\bf j}\|^2\Big ) \nonumber \\
& \leq C_1^* \nu^\frac12 \Big ( \frac{C}{(K^\frac14 C_*)^2} \| \pa_Y \omega^{\bf j}\|^2 + C(j+1) \|\xi_j \omega^{\bf j} \|^2\Big ). \quad ({\rm by ~the ~Hardy ~inequaity ~and ~Lemma \, \ref{lem.rhoj}})
\end{align}
Next we have 
\begin{align*}
|\langle V\cdot \nabla (\xi_j^2), (\omega^{\bf j})^2\rangle| \leq \| \frac{V\cdot \nabla (\pa_Y \Omega + 2\rho_j)}{\pa_Y \Omega + 2\rho_j} \|_{L^\infty} \| \xi_j \omega^{\bf j} \|^2
\end{align*} 
Then we have from Assumption \ref{assume} (iii) and Lemma \ref{lem.rhoj}, 
\begin{align*}
\| \frac{V_1\pa_Y \pa_X \Omega }{\pa_Y \Omega + 2\rho_j} \|_{L^\infty} 
&\leq \| \frac{Y(1+Y)}{(1+\nu^\frac12 Y)^2} \pa_X \pa_Y \Omega\|_{L^\infty} \| \frac{V_1 (1+\nu^\frac12 Y)^2}{Y(1+Y) \rho_j} \|_{L^\infty} \\
& \leq C_1^* \nu^\frac12 \Big ( 2  \| \frac{V_1}{Y(1+Y)\rho_j} \|_{L^\infty} + 2 \|V_1\|_{L^\infty} \| \frac{(\nu^\frac12 Y)^2}{Y(1+Y)\rho_j} \|_{L^\infty} \Big ) \\
& \leq C (C_1^*)^2\nu^\frac12 (j+1).
\end{align*}
Here we have computed as, using $V_1|_{Y=0}=0$,
\begin{align*}
 \| \frac{V_1}{Y(1+Y)\rho_j} \|_{L^\infty} & \leq \|\pa_Y V_1\|_{L^\infty}\| \frac{1}{\rho_j}\|_{L^\infty (\{0<Y<(j+1)^{-\frac12}\})} + \| V_1\|_{L^\infty} \| \frac{1}{Y(1+Y)\rho_j} \|_{L^\infty (\{Y\geq (j+1)^{-\frac12}\})}\\
 & \leq C_1^* (j+1).
 \end{align*}
 Similarly, 
 \begin{align*}
 \| \frac{V_2 (\pa_Y^2 \Omega + 2\pa_Y \rho_j)}{\pa_Y \Omega + 2\rho_j} \|_{L^\infty} 
&\leq \| \frac{Y(1+Y)^2}{(1+\nu^\frac12 Y)^3} \pa_Y^2 \Omega\|_{L^\infty} \| \frac{V_2 (1+\nu^\frac12 Y)^3}{Y(1+Y)^2 \rho_j} \|_{L^\infty} + \| \frac{V_2}{Y} \|_{L^\infty} \| \frac{Y\pa_Y \rho_j}{\rho_j} \|_{L^\infty} \\
& \leq C C_1^* \Big ( \| \frac{V_2}{Y(1+Y)^2\rho_j} \|_{L^\infty} +  \|V_2\|_{L^\infty} \| \frac{(\nu^\frac12 Y)^3}{Y(1+Y)^2\rho_j} \|_{L^\infty} \Big ) + 2 C_1^* \nu^\frac12 \\
& \leq C C_1^* \Big ( \| \pa_Y V_2\|_{L^\infty} \| \frac{1}{(1+Y)^2\rho_j} \|_{L^\infty} + \|V_2\|_{L^\infty} \, \nu^\frac32 \| \frac{1}{\rho_j}\|_{L^\infty}\Big ) + 2C_1^*\nu^\frac12 \\
& \leq CC_1^*(C_1^*+1) \nu^\frac12 (j+1).\qquad ({\rm by ~Lemma \, \ref{lem.rhoj}})
\end{align*}
Note that we have also used $\|\pa_Y V_2\|_{L^\infty}=\|\pa_X V_1\|_{L^\infty} \leq  C_1^* \nu^\frac12$.
Collecting these and applying the identity \eqref{proof.lem.prop.1.3} for $\pa_Y \omega^{\bf j}$ in \eqref{proof.lem.prop.2.1} (that is, we use \eqref{proof.lem.prop.1.5}), we obtain the desired estimate by taking $K$ large enough depending only on $C_1^*$ and $C_*$. The proof is complete. 
\end{proof}

\

\begin{lemma}\label{lem.prop.3} It follows that
\begin{align}\label{est.lem.prop.3.1}
\int_0^{\tau_0} |\langle   (\nabla^\bot \phi)^{\bf j} \cdot \nabla \Omega, \xi_j^2 \omega^{{\bf j}} \rangle | \, d \tau \leq \frac{C \big (R_{j, Lem\ref{lem.prop.3}}[\nabla \phi] \big )^2}{\nu^\frac12 (j+1)} + \frac{K}{8} \nu^\frac12 (j+1) \| \xi_j \omega^{\bf j}\|_{L^2(0,\tau_0; L^2_{X,Y})}^2, 
\end{align}
where 
\begin{align*}
& R_{j, Lem\ref{lem.prop.3}}[\nabla \phi]  \\
& := (\frac{C^*_1}{K^\frac12} + \frac{(K^\frac14 C_*)^\frac12}{K^\frac12} + \kappa^\frac12 ) \nu^\frac12 (j+1) M_{2,j}[\nabla \phi]  \, + \, \frac{(K^\frac12 C_*)^\frac12}{K^\frac12} \delta_{j\leq \nu^{-\frac12}-1} \frac{M_{2,j+1}[\pa_Y \phi]}{(j+1)^\frac12}.
\end{align*}
Here $\delta_{j\leq \nu^{-\frac12}-1}=1$ for $0\leq j\leq \nu^{-\frac12}-1$ and $0$ for $j=\nu^{-\frac12}$.
Moreover, there exists $K_{1,3}=K_{1,3}(C_1^*,C_*)\geq 1$ such that if $K\geq K_{1,3}$ then 
\begin{align}\label{est.lem.prop.3.2}
\sum_{j=0}^{\nu^{-\frac12}} \frac{R_{j, Lem\ref{lem.prop.3}}[\nabla \phi]}{(j!)^\frac32 \nu^\frac{j}{2} \nu^\frac14 (j+1)^\frac12}  \leq C \cA \nabla \phi \cA_{2,{\bf 1}}'.
\end{align}
Here $C>0$ is a universal constant.
\end{lemma}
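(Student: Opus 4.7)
The plan is to decompose $(\nabla^\bot\phi)^{\bf j}\cdot\nabla\Omega = (\pa_Y\phi)^{\bf j}\pa_X\Omega - (\pa_X\phi)^{\bf j}\pa_Y\Omega$ and, on the second summand, to use the algebraic identity $\pa_Y\Omega\,\xi_j^2 = 1 - 2\rho_j\xi_j^2$ in order to isolate a \emph{cancellation piece} involving $\int(\pa_X\phi)^{\bf j}\omega^{\bf j}$ and a $\rho_j$-\emph{piece}. The left-hand side of \eqref{est.lem.prop.3.1} then splits into three contributions: $(A)=\int_0^{\tau_0}\langle(\pa_Y\phi)^{\bf j}\pa_X\Omega,\xi_j^2\omega^{\bf j}\rangle\,d\tau$, $(B)=-\int_0^{\tau_0}\langle(\pa_X\phi)^{\bf j},\omega^{\bf j}\rangle\,d\tau$, and $(C)=2\int_0^{\tau_0}\langle(\pa_X\phi)^{\bf j},\rho_j\xi_j^2\omega^{\bf j}\rangle\,d\tau$. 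Each contribution will be bounded by a multiple of $R_{j, Lem\ref{lem.prop.3}}\cdot\|\xi_j\omega^{\bf j}\|_{L^2_\tau L^2_{XY}}$, after which Young's inequality with parameter $\epsilon=K\nu^{1/2}(j+1)/4$ gives \eqref{est.lem.prop.3.1}.

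For $(A)$, Assumption \ref{assume} (iii) supplies $|\pa_X\Omega|\le C^*_1\nu^{1/2}(1+\nu^{1/2}Y)/(1+Y)$, which combined with \eqref{est.lem.rhoj.2} yields $\|\pa_X\Omega\,\xi_j\|_{L^\infty_{\tau,X,Y}}\le CC^*_1\nu^{1/2}(j+1)^{1/2}$; Cauchy-Schwarz then absorbs $(A)$ into the first piece of $R_{j, Lem\ref{lem.prop.3}}$. For $(B)$, since $(\pa_X\phi)^{\bf j}=\pa_X\phi^{\bf j}$ and $\phi^{\bf j}|_{Y=0}=0$, writing $\omega^{\bf j}=-\Delta\phi^{\bf j}-e^{-K\tau\nu^{1/2}(j+1)}[B_{j_2},\pa_Y^2]\pa^{j_1}_X\phi$ allows the main piece $\int\pa_X\phi^{\bf j}\cdot\Delta\phi^{\bf j}$ to vanish via integration by parts and $X$-periodicity, leaving only the commutator. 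Expansion of $[B_{j_2},\pa_Y^2]=-2\nu^{1/2}j_2\chi_\nu'B_{j_2-1}\pa_Y^2+O(\kappa\nu j_2)$ via \eqref{proof.lem.prop.1.1}, followed by one integration by parts in $Y$ and Hardy's inequality, produces a correction of order $\kappa\nu^{1/2}(j+1)M_{2,j}[\nabla\phi]\cdot\|\xi_j\omega^{\bf j}\|$, fitting the $\kappa^{1/2}\nu^{1/2}(j+1)$-piece of $R_{j, Lem\ref{lem.prop.3}}$ when $\kappa$ is chosen small.

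For $(C)$, I use $\rho_j\xi_j^2\le\xi_j\sqrt{\rho_j}$ and split $\sqrt{\rho_j}$ into its four constituents from \eqref{def.rhoj}. The critical constituent is $\sqrt{K^{1/4}C_*}/(1+(j+1)^{1/2}Y)$: on $\{Y\ge(j+1)^{-1/2}\}$ it is bounded by $\sqrt{K^{1/4}C_*}/((j+1)^{1/2}Y)$, and Hardy's inequality $\|(\pa_X\phi)^{\bf j}/Y\|\le 2\|\pa_Y(\pa_X\phi)^{\bf j}\|$ (valid since $(\pa_X\phi)^{\bf j}|_{Y=0}=0$) converts the $1/Y$ into a derivative; on $\{Y<(j+1)^{-1/2}\}$ the pointwise bound $|(\pa_X\phi)^{\bf j}(X,Y)|\le Y^{1/2}\|\pa_Y(\pa_X\phi)^{\bf j}(X,\cdot)\|_{L^2_Y}$ supplies the same $(j+1)^{-1/2}$-gain. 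Relating $\pa_Y(\pa_X\phi)^{\bf j}$ to $(\pa_Y\phi)^{(j+1-j_2,j_2)}$ modulo a $\kappa\nu^{1/2}(j+1)$-commutator, this contribution matches the second summand of $R_{j, Lem\ref{lem.prop.3}}$, active only when $j+1\le\nu^{-1/2}$ (whence $\delta_{j\le\nu^{-1/2}-1}$); the endpoint $j=\nu^{-1/2}$ is handled directly using the strongest exponential $e^{-K\tau\nu^{1/2}(j+1)}$. The remaining three constituents of $\rho_j$ ($C_*^{1/2}\nu^{1/4}/(1+Y/\nu^{1/4})$, $C_*^{1/2}\nu^{1/4}/(1+Y)$ and $C_*^{1/2}\nu^{1/2}$) all fall into the first summand of $R_{j, Lem\ref{lem.prop.3}}$ by the same split-and-Hardy argument or by trivial bounds.

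For \eqref{est.lem.prop.3.2}, the first summand of $R_{j, Lem\ref{lem.prop.3}}$ contributes, after matching with the definition \eqref{def.Norm'}, exactly $(C^*_1/K^{1/2}+(K^{1/4}C_*)^{1/2}/K^{1/2}+\kappa^{1/2})\cA\nabla\phi\cA_{2,{\bf 1}}'$. For the second summand, the substitution $j\mapsto j-1$ together with the identity $(j!)^{3/2}=j^{3/2}((j-1)!)^{3/2}$ produces the factor $\nu^{1/4}j^{1/2}/((j!)^{3/2}\nu^{j/2})$, so that the sum is bounded by $(C_*^{1/2}/K^{3/8})\cA\pa_Y\phi\cA_{2,{\bf 1}}'\le(C_*^{1/2}/K^{3/8})\cA\nabla\phi\cA_{2,{\bf 1}}'$. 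Choosing $\kappa$ small and $K\ge K_{1,3}(C^*_1,C_*)$ large makes each prefactor a universal constant, which yields \eqref{est.lem.prop.3.2}. The main obstacle is the careful bookkeeping of the $[B_{j_2},\pa_Y^2]$-commutators appearing in both $(B)$ and $(C)$: each lower-order correction of the form $(\kappa\nu^{1/2}j)\cdot(\text{level-}j\text{ norm})$ must be shown compatible with the $\kappa^{1/2}\nu^{1/2}(j+1)M_{2,j}[\nabla\phi]$-allowance built into the first piece of $R_{j, Lem\ref{lem.prop.3}}$, which is ultimately what forces $\kappa$ to be taken small independently of $K$.
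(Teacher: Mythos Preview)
Your overall architecture — the decomposition $(A)+(B)+(C)$ via $\pa_Y\Omega\,\xi_j^2 = 1-2\rho_j\xi_j^2$, followed by Young's inequality — is exactly the paper's route, and your treatments of $(A)$ and of \eqref{est.lem.prop.3.2} are correct.

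There is, however, an inaccuracy in your handling of $(B)$. After the cancellation $\int \pa_X\phi^{\bf j}\,\Delta\phi^{\bf j}=0$, the commutator remainder pairs $(\pa_X\phi)^{\bf j}$ against derivatives of $\phi$, \emph{not} against $\xi_j\omega^{\bf j}$; there is no mechanism that produces your claimed factor $\|\xi_j\omega^{\bf j}\|$. The paper instead writes $\omega^{\bf j}=-\nabla\cdot(\nabla\phi)^{\bf j}+\nu^{1/2}j_2\tfrac{\chi_\nu'}{\chi_\nu}(\pa_Y\phi)^{\bf j}$ directly, integrates by parts once, and obtains the exact identity
\[
\langle(\pa_X\phi)^{\bf j},\omega^{\bf j}\rangle \;=\; -2\nu^{1/2}j_2\,\langle\chi_\nu'(\pa_Y\phi)^{(j_1+1,j_2-1)},(\pa_Y\phi)^{\bf j}\rangle,
\]
so that $(B)$ is bounded by $2\kappa\nu^{1/2}j_2\,M_{2,j}[\nabla\phi]^2$. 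This already has the form $C(R_j)^2/(\nu^{1/2}(j+1))$ with $R_j\supset\kappa^{1/2}\nu^{1/2}(j+1)M_{2,j}[\nabla\phi]$ and requires no Young splitting with $\omega$. Your misstatement is not fatal to the final inequality, but the route you describe does not lead where you say it does.

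For $(C)$ you are working harder than necessary. The paper bundles all four pieces of $\rho_j$ into the single pointwise bound $\xi_j\rho_j\le\sqrt{\rho_j}\le C(K^{1/4}C_*)^{1/2}(1+(j+1)^{1/2}Y)^{-1}+C\nu^{1/2}$ (using $\nu^{1/2}(j+1)\le 2$), then applies Hardy \emph{globally} via $(1+(j+1)^{1/2}Y)^{-1}\le(j+1)^{-1/2}Y^{-1}$, avoiding any near/far split. At the endpoint $j=\nu^{-1/2}$ the paper does not invoke the time exponential at all: since $\nu^{1/2}(j+1)\sim 1$ there, the crude bound $\|\xi_j\rho_j(\pa_X\phi)^{\bf j}\|\le\|\sqrt{\rho_j}\|_{L^\infty}\|(\pa_X\phi)^{\bf j}\|\le C(K^{1/4}C_*)^{1/2}M_{2,j}[\pa_X\phi]$ already sits inside the first summand of $R_j$.
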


\begin{proof} It suffices to show 
\begin{align}\label{proof.lem.prop.3.-1} 
& \int_0^{\tau_0} |\langle  (\pa_X\phi)^{\bf j} , \omega^{{\bf j}} \rangle | \, d \tau  \leq 2\kappa \nu^\frac12 j_2 (M_{2,j} [\nabla \phi])^2, 
\end{align}
\begin{align}\label{proof.lem.prop.3.-2} 
& \int_0^{\tau_0} |\langle \rho_j  (\pa_X\phi)^{\bf j} , \xi_j^2 \omega^{{\bf j}} \rangle | \, d \tau \\
& \leq 
\begin{cases}
& \displaystyle C (K^\frac14 C_*)^\frac12 \Big (  \frac{M_{2,j+1}[\pa_Y \phi]}{(j+1)^\frac12}  + \kappa \nu^\frac12 (j+1)^\frac12 \, M_{2,j}[\nabla \phi] \Big ) \, \| \xi_j \omega^{\bf j} \|_{L^2(0,\tau_0; L^2_{X,Y})},\\
& \qquad \qquad \qquad \qquad \qquad \qquad 0\leq j\leq \nu^{-\frac12}-1,\\
& \displaystyle C (K^\frac14 C_*)^\frac12  M_{2,j}[\pa_X\phi]\, \| \xi_j \omega^{\bf j} \|_{L^2(0,\tau_0; L^2_{X,Y})}, \qquad \qquad j=\nu^{-\frac12},
\end{cases}
\end{align}
and 
\begin{align}\label{proof.lem.prop.3.-3} 
& \int_0^{\tau_0} |\langle  (\pa_Y\phi)^{\bf j} \pa_X \Omega , \xi_j^2 \omega^{{\bf j}} \rangle | \, d \tau 
\leq C C^*_1 \nu^\frac12  (j+1)^\frac12 M_{2,j}[\pa_Y \phi] \| \xi_j \omega^{\bf j} \|_{L^2(0,\tau_0; L^2_{X,Y})}.
\end{align}

Let us start from \eqref{proof.lem.prop.3.-1}.
To compute $\langle (\pa_X \phi)^{\bf j}, \omega^{\bf j}\rangle$ we firstly observe that 
\begin{align}
\omega^{\bf j} = \nabla \cdot (\nabla \phi)^{\bf j} - \frac{\nu^\frac12 j_2 \chi_\nu'}{\chi_\nu} (\pa_Y \phi)^{\bf j}.
\end{align}
Then we have from the integration by parts and $[B_{j_2},\pa_Y]=-\frac{\nu^\frac12 j_2 \chi_\nu'}{\chi_\nu} B_{j_2}$,
\begin{align*}
\langle (\pa_X \phi)^{\bf j}, \omega^{\bf j}\rangle & = -\langle \nabla (\pa_X \phi)^{\bf j}, (\nabla \phi)^{\bf j}\rangle - \nu^\frac12 j_2 \langle (\pa_Y\phi)^{(j_1+1, j_2-1)},  \chi_\nu' (\pa_Y \phi)^{\bf j}\rangle\\
& =  -\langle  \pa_X (\nabla \phi)^{\bf j}, (\nabla \phi)^{\bf j}\rangle - 2 \nu^\frac12 j_2 \langle \chi_\nu' (\pa_Y \phi)^{(j_1+1,j_2-1)}, (\pa_Y\phi)^{\bf j}\rangle\\
& = - 2 \nu^\frac12 j_2 \langle \chi_\nu' (\pa_Y \phi)^{(j_1+1,j_2-1)}, (\pa_Y\phi)^{\bf j}\rangle .
\end{align*}
Hence we have from $\|\chi'_\nu\|_{L^\infty}=\kappa$,
\begin{align}\label{proof.lem.prop.3.1} 
\int_0^{\tau_0} |\langle (\pa_X \phi)^{\bf j}, \omega^{\bf j}\rangle|\, d\tau \leq 2\kappa \nu^\frac12 j_2 M_{2,j} [\pa_Y \phi]^2.
\end{align}
To estimate $\int_0^{\tau_0} |\langle \rho_j  (\pa_X\phi)^{\bf j} , \xi_j^2 \omega^{{\bf j}} \rangle | \, d \tau$ the key inequality from the definition \eqref{def.rhoj} is 
\begin{align}
\xi_j \rho_j \leq \sqrt{\rho_j} \leq C (K^\frac14 C_*)^\frac12 (1+(j+1)^\frac12 Y)^{-1} + C \nu^\frac12,
\end{align}
where $\nu^\frac12 (j+1)\leq 2$ is used. 
Thus we have from the Hardy inequality,
\begin{align}\label{proof.lem.prop.3.2} 
\int_0^{\tau_0} | \langle \rho_j  (\pa_X\phi)^{\bf j} , \xi_j^2 \omega^{{\bf j}} \rangle | \, d \tau 
& \leq \int_0^{\tau_0} \| \xi_j \rho_j (\pa_X\phi)^{\bf j} \| \| \xi_j \omega^{\bf j} \|\, d \tau\nonumber  \\
& \leq \frac{C (K^\frac14 C_*)^\frac12}{(j+1)^\frac12} \int_0^{\tau_0} \| \frac{(\pa_X\phi)^{\bf j}}{Y}  \|\,  \| \xi_j \omega^{\bf j} \|\, d\tau \nonumber \\
& \qquad + C \nu^\frac12 \| (\pa_X\phi )^{\bf j}\|_{L^2(0,\tau_0; L^2)} \| \xi_j \omega^{\bf j}\|_{L^2(0,\tau_0; L^2)}\nonumber \\
\begin{split}
& \leq \frac{C (K^\frac14 C_*)^\frac12}{(j+1)^\frac12} \| \pa_Y (\pa_X\phi)^{\bf j}  \|_{L^2(0,\tau_0; L^2)}\,  \| \xi_j \omega^{\bf j} \|_{L^2(0,\tau_0; L^2)} \\
& \qquad + C \nu^\frac12 \| (\pa_X\phi )^{\bf j}\|_{L^2(0,\tau_0; L^2)} \| \xi_j \omega^{\bf j}\|_{L^2(0,\tau_0; L^2)}
\end{split}
\end{align}
Then the desired estimate for $0\leq j\leq \nu^{-\frac12}-1$ follows from $K\tau \nu^\frac12 \leq 1$ and 
\begin{align}\label{proof.lem.prop.3.3} 
\pa_Y (\pa_X\phi)^{\bf j}=e^{K\tau \nu^\frac12} (\pa_Y \phi)^{(j_1+1,j_2)} + \nu^\frac12 j_2\chi_\nu' (\pa_Y \phi)^{(j_1+1,j_2-1)}.
\end{align}
On the other hand, the estimate for $j=\nu^{-\frac12}$ easily follows from 
\begin{align*}
\| \xi_j \rho_j (\pa_X\phi)^{\bf j} \| \leq \| \sqrt{\rho_j} \|_{L^\infty} \| (\pa_X\phi)^{\bf j}\| \leq C(K^\frac14 C_*)^\frac12 \| (\pa_X \phi)^{\bf j}\|.
\end{align*} 
Finally we have from Assumption \ref{assume} (iii) and Lemma \ref{lem.rhoj},
\begin{align*}
\| \xi_j (\pa_Y \phi)^{\bf j} \pa_X \Omega \| & \leq \| \frac{1+Y}{1+\nu^\frac12 Y} \pa_X \Omega\|_{L^\infty} \| \frac{1+\nu^\frac12 Y}{1+Y} \xi_j \|_{L^\infty} \| (\pa_Y \phi)^{\bf j}\| \\
& \leq C C_1^* \nu^{\frac12} (j+1)^\frac12 \| (\pa_Y \phi)^{\bf j}\|,
\end{align*} 
which gives 
\begin{align*}
\int_0^{\tau_0} |\langle  (\pa_Y\phi)^{\bf j} \pa_X \Omega , \xi_j^2 \omega^{{\bf j}} \rangle | \, d \tau 
& \leq C C^*_1 \nu^\frac12  (j+1)^\frac12 M_{2,j}[\pa_Y \phi] \| \xi_j \omega^{\bf j} \|_{L^2(0,\tau_0; L^2_{X,Y})}.
\end{align*}
Collecting these, we obtain \eqref{est.lem.prop.3.1}, for  the identity $\pa_Y \Omega \, \xi_j^2 = \frac{\pa_Y \Omega}{\pa_Y \Omega +2\rho_j} = 1 - 2\rho_j\xi_j^2$ holds.
The estimate \eqref{est.lem.prop.3.2} is verified from the definition $\cA \nabla \phi \cA_{2,{\bf 1}}'=\sum_{j=0}^{\nu^{-\frac12}} \frac{\nu^{\frac14} (j+1)^\frac12}{(j!)^\frac32 \nu^\frac{j}{2}} M_{2,j}[\nabla \phi]$ and 
\begin{align*}
\sum_{j=0}^{\nu^{-\frac12}-1} \frac{M_{2,j+1}[\pa_Y \phi]}{(j!)^\frac32 \nu^\frac{j}{2}\nu^\frac14 (j+1)}
& =  \sum_{j=0}^{\nu^{-\frac12}-1} \frac{\nu^\frac12 (j+1)^\frac32 M_{2,j+1}[\nabla \phi]}{((j+1)!)^\frac32\nu^{\frac{j+1}{2}} \nu^\frac14 (j+1)} \\
& \leq \sum_{j=1}^{\nu^{-\frac12}} \frac{\nu^\frac14 j^\frac12 M_{2,j}[\nabla \phi]}{(j!)^\frac32 \nu^\frac{j}{2}} \leq \cA \nabla \phi \cA_{2,{\bf 1}}'.
\end{align*} 
The proof is complete.
\end{proof}

\begin{lemma}\label{lem.prop.3'} Let $j_2\geq 1$. Then it follows that 
\begin{align}
\int_0^{\tau_0} |\langle V_2[B_{j_2},\pa_Y] e^{-K\tau\nu^\frac12 (j+1)} \pa_X^{j_1} \omega, \xi_j^2 \omega^{\bf j}\rangle|\, d\tau \leq C C_1^* \nu^\frac12 j_2 \, \| \xi_j \omega^{\bf j} \|_{L^2(0,\tau_0; L^2_{X,Y})}^2.   
\end{align}
Here $C>0$ is a universal constant.
\end{lemma}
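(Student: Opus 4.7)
The plan is to exploit the commutator identity already established in the proof of Lemma \ref{lem.prop.1}, namely
\begin{equation*}
[B_{j_2}, \pa_Y] \: = \: - \frac{\nu^{1/2} j_2 \, \chi_\nu'}{\chi_\nu} \, B_{j_2},
\end{equation*}
which is a pure multiplication operator and so commutes with $\pa_X^{j_1}$ and with the exponential weight $e^{-K\tau \nu^{1/2}(j+1)}$. Applied to $\pa_X^{j_1} \omega$ with the weight in front, this immediately rewrites the quantity inside the bracket as
\begin{equation*}
V_2 [B_{j_2},\pa_Y] e^{-K\tau\nu^\frac12 (j+1)} \pa_X^{j_1} \omega \: = \: - \frac{\nu^{1/2} j_2 \, \chi_\nu' V_2}{\chi_\nu} \, \omega^{\bf j}.
\end{equation*}
Thus the integrand becomes a weighted $L^2$ inner product of $\omega^{\bf j}$ against $\xi_j^2 \omega^{\bf j}$, and the whole estimate reduces to a uniform $L^\infty$ bound on the multiplier $\chi_\nu' V_2 / \chi_\nu$.

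The key point, and the main (mild) technical step, is the bound
\begin{equation*}
\Big\| \frac{\chi_\nu' V_2}{\chi_\nu} \Big\|_{L^\infty_{\tau,X,Y}} \: \leq \: C C_1^*,
\end{equation*}
which must be uniform in $\nu$ and must not cost a factor $\kappa^{-1}$. Recall $\chi_\nu' = \kappa e^{-\kappa \nu^{1/2} Y}$ and $\chi_\nu = 1 - e^{-\kappa \nu^{1/2} Y}$. I would split into two regions: on $\{\kappa \nu^{1/2} Y \geq 1\}$, the denominator $\chi_\nu$ is bounded away from zero, so $|\chi_\nu' V_2/\chi_\nu| \leq C \kappa \| V_2 \|_{L^\infty} \leq C \kappa C_1^*$ directly from Assumption \ref{assume} (iii). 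On the complementary region $\{\kappa \nu^{1/2} Y \leq 1\}$, I use the elementary inequality $1 - e^{-t} \geq c t$ for $t \in [0,1]$ to get $\chi_\nu \geq c \kappa \nu^{1/2} Y$, so that
\begin{equation*}
\Big| \frac{\chi_\nu' V_2}{\chi_\nu} \Big| \: \leq \: \frac{1}{c \nu^{1/2}} \Big\| \frac{V_2}{Y} \Big\|_{L^\infty_{\tau,X,Y}}.
\end{equation*}
Using the Dirichlet condition $V_2|_{Y=0}=0$ from \eqref{div} together with the divergence-free relation $\pa_Y V_2 = - \pa_X V_1$, one gets $\| V_2/Y\|_{L^\infty} \leq \| \pa_Y V_2\|_{L^\infty} = \| \pa_X V_1 \|_{L^\infty} \leq C_1^* \nu^{1/2}$ from Assumption \ref{assume} (iii). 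The two factors of $\nu^{1/2}$ cancel, giving $C C_1^*$ uniformly, and the $\kappa$ cancellation is the reason we avoid a $\kappa^{-1}$.

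Combining, the integrand is pointwise bounded by $C C_1^* \nu^{1/2} j_2 \, \xi_j^2 (\omega^{\bf j})^2$, and integrating over $(0,\tau_0)\times \T_\nu \times \R_+$ yields exactly the claimed estimate. No derivative of $\omega^{\bf j}$ appears, so there is nothing to absorb into the viscous term and no smallness on $K$ is required. I expect the only subtlety to be the careful splitting near $Y=0$ to keep the constant independent of $\kappa$; everything else is a direct substitution.
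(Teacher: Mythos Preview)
Your proof is correct and follows essentially the same approach as the paper: both use the commutator identity $[B_{j_2},\pa_Y]=-\frac{\nu^{1/2}j_2\chi_\nu'}{\chi_\nu}B_{j_2}$ to reduce to bounding the multiplier $\chi_\nu' V_2/\chi_\nu$, then control it via $\|V_2/Y\|_{L^\infty}\le\|\pa_X V_1\|_{L^\infty}\le C_1^*\nu^{1/2}$. The only cosmetic difference is that the paper replaces your two-region splitting by the single global inequality $\kappa\nu^{1/2}Ye^{-\kappa\nu^{1/2}Y}\le C\chi_\nu$ (i.e.\ $te^{-t}\le C(1-e^{-t})$ for all $t\ge0$), which packages both of your cases at once.
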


\begin{proof} The estimate directly follows from \eqref{proof.lem.prop.1.1} and 
\begin{align*}
|V_2 \chi_\nu'| \leq \|\frac{V_2}{Y}\|_{L^\infty} | Y \chi_\nu'| \leq \|\pa_X V_1\|_{L^\infty}  | Y \chi_\nu'| \leq C_1^* \nu^\frac12 |Y\chi_\nu'| \leq C C_1^* \chi_\nu
\end{align*}
by Assumption \ref{assume} (iii) and $\kappa\nu^\frac12 Y e^{-\kappa\nu^\frac12 Y}\leq C \chi_\nu$ for a universal constant $C>0$. The proof is complete. 
\end{proof}

\

\begin{lemma}\label{lem.prop.4} Let $j\geq 1$. It follows that
\begin{align*}
& \int_0^{\tau_0} |\langle  \sum_{l=0}^{j-1} \sum_{\max\{0, l+j_2-j\} \leq l_2\leq \min\{l,j_2\}}  \binom{j_2}{l_2} \, \binom{j-j_2}{l-l_2} V^{{\bf j-l}} \cdot (\nabla \omega)^{\bf l}, \xi_j^2 \omega^{\bf j} \rangle |\, d \tau\\
& \leq \frac{C}{\kappa} R_{j, Lem\ref{lem.prop.4}}[\omega]\,  \| \xi_j \omega^{\bf j} \|_{L^2(0,\tau_0; L^2_{X,Y})},
\end{align*}
where
\begin{align*}
R_{j, Lem\ref{lem.prop.4}}[\omega] := \sum_{l=0}^{j-1} (j-l+1)^\frac12 \min\{ l+1, j-l+1\} \, \binom{j}{l} \, N_{\infty,j-l} [V] \, M_{2,l+1,\xi_l}[\omega],
\end{align*}
and 
\begin{align*}
N_{\infty,j}[V] := \sup_{j_2=0,\cdots,j} (\| B_{j_2}\pa_X^{j-j_2} V_1\|_{L^\infty (0,\frac{1}{K\nu^\frac12}; L^\infty_{X,Y})} + \kappa \| \frac{\pa_X^j V_2}{\chi_\nu} \|_{L^\infty (0,\frac{1}{K\nu^\frac12}; L^\infty_{X,Y})} ).
\end{align*} 
Moreover, 
\begin{align}\label{est.lem.prop.4.1}
\sum_{j=0}^{\nu^{-\frac12}} \frac{R_{j, Lem\ref{lem.prop.4}}[\omega]}{(j!)^\frac32 \nu^\frac{j}{2} \nu^\frac14 (j+1)^\frac12}  \leq CC_0^*  \cA \omega \cA_{2,\xi}'.
\end{align}
Here $C>0$ is a universal constant.
\end{lemma}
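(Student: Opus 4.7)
\medskip\noindent
The plan is to bound each cross term by Cauchy--Schwarz, absorb the weight mismatch between $\xi_j$ and $\xi_l$ into the combinatorial factor that appears in $R_{j,Lem\ref{lem.prop.4}}[\omega]$, and then verify the summability using the Gevrey-$\tfrac{3}{2}$ convolution structure. Fix $l\in\{0,\dots,j-1\}$ and $l_2$ in the admissible range, and split $V\cdot\nabla\omega=V_1\pa_X\omega+V_2\pa_Y\omega$. For the $V_1$ piece, I would apply Cauchy--Schwarz with $\|V_1^{\bf j-l}\|_{L^\infty}$ in one factor and the $L^2$ pairing $\xi_l(\pa_X\omega)^{\bf l}\cdot(\xi_j/\xi_l)\xi_j\omega^{\bf j}$ in the other. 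For the $V_2$ piece, the trick is to write $V_2=\chi_\nu\cdot(V_2/\chi_\nu)$ and move the extra $\chi_\nu$ onto $(\pa_Y\omega)^{\bf l}$ via the identity $\chi_\nu(\pa_Y\omega)^{(l_1,l_2)}=e^{K\tau\nu^{1/2}}\omega^{(l_1,l_2+1)}$, converting it into a total-order-$(l+1)$ derivative of $\omega$ controlled by $M_{2,l+1,\xi_l}[\omega]$. The $\kappa^{-1}$ prefactor in the lemma arises exactly because $N_{\infty,j-l}[V]$ contains the weighted norm $\kappa\|V_2/\chi_\nu\|_{L^\infty}$.

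\medskip\noindent
Next I would estimate $\|\xi_j/\xi_l\|_{L^\infty}$, exploiting that the only $j$-dependence of $\rho_j$ in \eqref{def.rhoj} is through the term $K^{1/4}C_*(1+(j+1)^{1/2}Y)^{-2}$. Taking the supremum (attained in the bulk regime where this term dominates $\rho_j$) gives $\|\xi_j/\xi_l\|_{L^\infty}\le C\sqrt{(j+1)/(l+1)}$. A short case analysis on whether $l+1\le j-l+1$ or $l+1>j-l+1$ then shows that $\sqrt{(j+1)/(l+1)}\le C(j-l+1)^{1/2}\min\{l+1,j-l+1\}$, which is exactly the combinatorial prefactor in $R_{j,Lem\ref{lem.prop.4}}[\omega]$. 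The inner $l_2$-sum collapses via Vandermonde $\sum_{l_2}\binom{j_2}{l_2}\binom{j-j_2}{l-l_2}=\binom{j}{l}$, and using $\|V^{\bf j-l}\|_{L^\infty}\le N_{\infty,j-l}[V]$ delivers the stated integrated bound.

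\medskip\noindent
For the summability estimate \eqref{est.lem.prop.4.1}, I would insert $R_j$ into the outer sum, substitute $m=j-l$, and apply the Gevrey-$\tfrac{3}{2}$ identity
\[
\frac{\binom{l+m}{l}}{((l+m)!)^{3/2}}=\frac{\binom{l+m}{l}^{-1/2}}{(l!)^{3/2}(m!)^{3/2}}.
\]
I would then upgrade $M_{2,l+1,\xi_l}[\omega]\le M_{2,l+1,\xi_{l+1}}[\omega]$ (valid since $\rho_{l+1}\le\rho_l$ forces $\xi_{l+1}\ge\xi_l$ pointwise, with comparability from \eqref{est.lem.rhoj.4}). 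After estimating $\min\{l+1,m+1\}(l+m+1)^{-1/2}\le\sqrt{\min(l+1,m+1)}$ and invoking the algebra property $\sum_l\binom{l+m}{l}^{-1/2}=O(1)$ characteristic of Gevrey-$\tfrac{3}{2}$, the double sum factorizes into the Gevrey-$\tfrac{3}{2}$ norm of $V$, bounded by $C_0^*$ through Assumption \ref{assume}(ii) and the definition of $N_{\infty,m}[V]$, times a shifted copy of $\cA\omega\cA_{2,\xi}'$.

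\medskip\noindent
The principal technical difficulty is precisely the weight mismatch: since $\rho_j$ is monotonically decreasing in $j$, one has $\xi_j\ge\xi_l$ pointwise for $j\ge l$, with supremum ratio as large as $\sqrt{(j+1)/(l+1)}$, so a naive Cauchy--Schwarz would not close. The proof works because this ratio is absorbed by the combinatorial coefficient $(j-l+1)^{1/2}\min\{l+1,j-l+1\}$ built into $R_{j,Lem\ref{lem.prop.4}}[\omega]$; the delicate balance between the structure of the weight $\rho_j$ and the Gevrey-$\tfrac{3}{2}$ convolution is what ultimately allows the estimate to close with constant $C_0^*$.
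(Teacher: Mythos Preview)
Your treatment of the integral bound is correct, but the route differs from the paper's. The paper does \emph{not} invoke Vandermonde; instead it bounds each product $\binom{j_2}{l_2}\binom{j-j_2}{l-l_2}$ termwise by $\binom{j}{l}$ and separately bounds the cardinality of the $l_2$-range by $\min\{l+1,j-l+1\}$, so that the $\min$ factor in $R_j$ comes from counting, not from the weight ratio. The weight ratio is then estimated directly by $\xi_j/\xi_l\le C(j-l+1)^{1/2}$ via $(j+1)/(l+1)\le j-l+1$. Your approach---Vandermonde for the binomial sum, then inflating $\sqrt{(j+1)/(l+1)}$ up to $(j-l+1)^{1/2}\min\{l+1,j-l+1\}$ to match $R_j$---also works, though the inflation is gratuitous: your Vandermonde bound is actually sharper than the paper's termwise-plus-cardinality bound, so you could have proved the lemma with the $\min$ factor removed from $R_j$.

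Your summability argument for \eqref{est.lem.prop.4.1} has a genuine gap. The substitution
\[
\min\{l+1,m+1\}\,(l+m+1)^{-1/2}\ \le\ \sqrt{\min\{l+1,m+1\}}
\]
throws away the factor $(j+1)^{-1/2}=(l+m+1)^{-1/2}$ from the outer denominator, and this factor is essential. After you shift $l\mapsto l+1$ to land on $\cA\omega\cA'_{2,\xi}$, you pick up $(l+1)^{3/2}/(l+2)^{1/2}\sim l+1$, so your residual combinatorial prefactor is
\[
(m+1)^{1/2}\sqrt{\min\{l+1,m+1\}}\,(l+1)\,\binom{l+m}{l}^{-1/2}.
\]
At $l=0$ this equals $(m+1)^{1/2}$, unbounded in $m$; at $m=1$ and $l$ large it grows like $l^{1/2}$. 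So Young's convolution does not close. The paper instead keeps the full prefactor intact and verifies directly that
\[
(j-l+1)^{1/2}\min\{l+1,j-l+1\}\,\frac{(l+1)^{3/2}}{(j+1)^{1/2}(l+2)^{1/2}}\,\binom{j}{l}^{-1/2}\ \le\ C,\qquad 0\le l\le j-1,
\]
using the elementary bound $\binom{j}{l}^{-1}\le C(j+1)^{-(1+k)}$ for $1+k\le l\le j-1-k$; only then does the $\ell^1$ convolution factorize. Separately, your stated ``algebra property'' $\sum_l\binom{l+m}{l}^{-1/2}=O(1)$ for fixed $m$ is false for $m\le 2$; what is true, and what underlies the paper's bound, is that $\sum_{l=0}^{j}\binom{j}{l}^{-1/2}$ is bounded uniformly in $j$.
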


\begin{proof} We first observe that 
\begin{align}\label{proof.lem.prop.4.1}
\binom{j_2}{l_2}\,  \binom{j-j_2}{l-l_2}  \leq \binom{j}{l}\,, \qquad 0\leq j_2\leq l_2\leq l\leq j,
\end{align}
and 
\begin{align}\label{proof.lem.prop.4.2}
\# \Big \{ l_2\in \N\cup\{0\}~|~ \max\{0,l+j_2-j  \}\leq l_2 \leq \min\{l,j_2\} \Big \} & \leq \min\{ l+1, j-l+1\}.
\end{align}
Hence we have 
\begin{align*}
& \int_0^{\tau_0} |\langle  \sum_{l=0}^{j-1} \sum_{\max\{0, l+j_2-j\} \leq l_2\leq \min\{l,j_2\}} \binom{j_2}{l_2} \, \binom{j-j_2}{l-l_2} V^{{\bf j-l}} \cdot (\nabla \omega)^{\bf l}, \xi_j^2 \omega^{\bf j} \rangle |\, d \tau\\
& \leq \sum_{l=0}^{j-1} \binom{j}{l} \min \{l+1,j-l+1\} \| \xi_j V^{\bf j-l}\cdot (\nabla \omega)^{\bf l}\|_{L^2(0,\tau_0;L^2)} \,  \| \xi_j \omega^{\bf j} \|_{L^2(0,\tau_0; L^2)}.
\end{align*}
From the definition of $\xi_j$, we see for $0\leq l\leq j-1$,
\begin{align*}
\frac{\xi_j}{\xi_l}\leq  \sqrt{1 + \frac{(1+(j+1)^\frac12 Y)^{-2}}{(1+(l+1)^\frac12 Y)^{-2}}} \leq C (j+l-1)^\frac12,
\end{align*}
where $C>0$ is a universal constant, and thus,
\begin{align*}
 \| \xi_j V^{\bf j-l}\cdot (\nabla \omega)^{\bf l}\|_{L^2(0,\tau_0;L^2)} \leq C (j+l-1)^\frac12 \| \xi_{l} V^{\bf j-l} \cdot (\nabla \omega)^{\bf l}\|_{L^2(0,\tau_0; L^2)}.
 \end{align*}
Next we have 
\begin{align*}
\| \xi_l V_1^{\bf j-l} (\pa_X \omega)^{\bf l} \|_{L^2(0,\tau_0; L^2)} & \le \| \frac{\xi_l}{\xi_{l+1}}\|_{L^\infty} \|V_1^{\bf j-l}\|_{L^\infty} \| \xi_{l+1} \omega^{(l_1+1,l_2)} \|_{L^2(0,\tau_0; L^2)} \\
& \le C N_{\infty,j-l}[V] ~ M_{2,l+1,\xi_{l+1}}[\omega],
\end{align*}
and similarly, 
\begin{align*}
\| \xi_l V_2^{\bf j-l} (\pa_Y \omega)^{\bf l} \|_{L^2(0,\tau_0; L^2)} & \le \| \frac{\xi_l}{\xi_{l+1}}\|_{L^\infty} \| \frac{V_2^{\bf j-l}}{\chi_\nu} \|_{L^\infty} \| \xi_{l+1} \omega^{(l_1,l_2+1)} \|_{L^2(0,\tau_0; L^2)} \\
& \le \frac{C}{\kappa} N_{\infty,j-l}[V] ~ M_{2,l+1,\xi_{l+1}}[\omega],
\end{align*}
Here we have used from $\pa_XV_1+\pa_Y V_2=0$ that $\frac{V_2^{\bf j-l}}{\chi_\nu}  = (\pa_Y V_2)^{(j_1-l_1,j_2-l_2-1)} = - V_1^{(j_1-l_1+1,j_2-l_2-1)}$ for $j_2-l_2\geq 1$, which verifies $\|\frac{V_2^{\bf j-l}}{\chi_\nu} \|_{L^\infty} \leq C N_{\infty,j-l}[V]$.
The estimate \eqref{est.lem.prop.4.1} follows from 
\begin{align*}
& \sum_{j=0}^{\nu^{-\frac12}} \frac{1}{(j!)^\frac32 \nu^\frac{j}{2} \nu^\frac14 (j+1)^\frac12} \sum_{l=0}^{j-1} (j-l+1)^\frac12 \min\{ l+1, j-l+1\} \, \binom{j}{l} \, \{(j-l)! (l+1)!\}^\frac32 \nu^{\frac{j+1}{2}} \,\\
& \qquad \times  \frac{N_{\infty,j-l} [V]}{((j-l)!)^\frac32\nu^\frac{j-l}{2}}  \, \frac{M_{2,l+1,\xi_l}[\omega]}{((l+1)!)^\frac32 \nu^\frac{l+1}{2}}\\
& \leq \sum_{j=0}^{\nu^{-\frac12}}  \sum_{l=0}^{j-1} (j-l+1)^\frac12 \min\{ l+1, j-l+1\} \frac{(l+1)^\frac32}{(j+1)^\frac12(l+2)^\frac12} \big (\frac{(j-l)! l!}{j!} \big )^\frac12  \\
& \qquad \times \frac{N_{\infty,j-l} [V]}{((j-l)!)^\frac32\nu^\frac{j-l}{2}}  \, \frac{\nu^\frac14 (l+2)^\frac12 M_{2,l+1,\xi_l}[\omega]}{((l+1)!)^\frac32 \nu^\frac{l+1}{2}}\\
& \leq C \sum_{j=0}^{\nu^{-\frac12}}  \sum_{l=0}^{j-1} \frac{N_{\infty,j-l} [V]}{((j-l)!)^\frac32\nu^\frac{j-l}{2}}  \, \frac{\nu^\frac14 (l+2)^\frac12 M_{2,l+1,\xi_l}[\omega]}{((l+1)!)^\frac32 \nu^\frac{l+1}{2}}.
\end{align*}
Here we have used for $j\geq 1$,
\begin{align}\label{proof.lem.prop.4.3}
 (j-l+1)^\frac12 \min\{ l+1, j-l+1\} \frac{(l+1)^\frac32}{(j+1)^\frac12(l+2)^\frac12} \big (\frac{(j-l)! l!}{j!} \big )^\frac12  \leq C,\qquad 0\leq l\leq j-1,
 \end{align}
 with a universal constant $C>0$. Here the key is the following estimate for each $k=0,1,2,3$:
 \begin{align}\label{proof.lem.prop.4.4}
 \frac{(j-l)!l!}{j!} \leq  \frac{C}{(j+1)^{1+k}} \qquad {\rm for}~~1+k\leq l\leq j-1-k.
 \end{align}
 Then we obtain \eqref{est.lem.prop.4.1} from the Young inequality by convolution in the $l^1$ space.
 The proof is complete.
\end{proof}

\

\begin{lemma}\label{lem.prop.5} Let $j\geq 1$. It follows that
\begin{align*}
& \int_0^{\tau_0} |\langle  \quad  \sum_{l=0}^{j-1} \sum_{\max\{0, l+j_2-j\} \leq l_2\leq \min\{l,j_2\}}\binom{j_2}{l_2} \, \binom{j-j_2}{l-l_2} (\nabla^\bot \phi)^{{\bf l}} \cdot (\nabla \Omega)^{\bf j-l} , \xi_j^2 \omega^{\bf j}\rangle |\, d\tau \\
& \leq  C R_{j, Lem\ref{lem.prop.5}} [\nabla \phi] \, \| \xi_j \omega^{\bf j}\|_{L^2(0,\tau_0; L^2_{X,Y})},
\end{align*}
where
\begin{align*}
R_{j, Lem\ref{lem.prop.5}}[\nabla \phi] & := C^*_2 \nu^\frac12 j \big ( M_{2,j}[\nabla \phi] + \nu^\frac12 j M_{2,j-1}[\nabla \phi] \big ) \\
&  \quad + (j+1)^\frac12 \sum_{l=0}^{j-2}  \min\{ l+1, j-l+1\}  \, \binom{j}{l} \, N_{\infty, j-l} [\nabla \Omega] \, \\
& \qquad \qquad \qquad  \times \big ( M_{2,l+1}[\pa_Y \phi]  + \nu^\frac12 (l+1) M_{2,l} [\nabla \phi] \big ) \\
&  \quad + \nu^\frac12 (j+1)^\frac32 \,N_{\infty, 1} [\nabla \Omega]\,  M_{2,j-1}[\pa_Y \phi],
\end{align*}
and 
\begin{align*}
& N_{\infty,j-l}[\nabla \Omega] \\
&:= \sup_{j_2=0,\cdots,j} \Big ( \| (\frac{1+Y}{1+\nu^\frac12 Y})^2 (\pa_Y \Omega)^{\bf j}\|_{L^2(0,\frac{1}{K\nu^\frac12}; L^2_{X,Y})} + \nu^{-\frac12} \| \frac{1+Y}{1+\nu^\frac12 Y} (\pa_X \Omega)^{\bf j}\|_{L^2(0,\frac{1}{K\nu^\frac12}; L^2_{X,Y})} \Big ).
\end{align*}
Here the second term in the right-hand side is defined as zero when $j=1$.
Moreover, 
\begin{align}\label{est.lem.prop.5}
\sum_{j=0}^{\nu^{-\frac12}} \frac{R_{j, Lem\ref{lem.prop.5}}[\nabla \phi]}{(j!)^\frac32 \nu^\frac{j}{2} \nu^\frac14 (j+1)^\frac12}  \leq C(C_0^* + C^*_2) \cA \nabla \phi \cA_{2,{\bf 1}}'.
\end{align}
\end{lemma}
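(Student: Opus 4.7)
I would mimic the structure of the proof of Lemma~\ref{lem.prop.4}. First, as in \eqref{proof.lem.prop.4.1}--\eqref{proof.lem.prop.4.2}, the inner sum over $l_2$ is absorbed into the combinatorial factor $\min\{l+1,j-l+1\}\binom{j}{l}$. Then Cauchy--Schwarz factors out $\|\xi_j\omega^{\bf j}\|_{L^2(0,\tau_0;L^2)}$, leaving for each $l$ the task of estimating $\|\xi_j(\nabla^\bot\phi)^{\bf l}\cdot(\nabla\Omega)^{\bf j-l}\|_{L^2(0,\tau_0;L^2_{X,Y})}$. I decompose this into its scalar components $-(\pa_Y\phi)^{\bf l}(\pa_X\Omega)^{\bf j-l}$ and $(\pa_X\phi)^{\bf l}(\pa_Y\Omega)^{\bf j-l}$, and handle them according to how many derivatives sit on $\nabla\Omega$.

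\textbf{Two regimes.} For the bulk indices $0\le l\le j-2$, at least two derivatives fall on $\nabla\Omega$, and I would place $\xi_j(\nabla\Omega)^{\bf j-l}$ in $L^\infty_{\tau,X,Y}$ via Assumption~\ref{assume}(ii). The weights $\tfrac{1+Y}{1+\nu^{1/2}Y}$ and $\bigl(\tfrac{1+Y}{1+\nu^{1/2}Y}\bigr)^2$ appearing there combine with $\xi_j$ through the companion bound $\bigl\|\tfrac{1+\nu^{1/2}Y}{1+Y}\xi_j\bigr\|_{L^\infty}\le C(j+1)^{1/2}$ of Lemma~\ref{lem.rhoj}, which is what will produce the prefactor $(j+1)^{1/2}$ on the bulk line of $R_{j,\mathrm{Lem}\ref{lem.prop.5}}$. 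The remaining $L^2$ norm of $(\nabla^\bot\phi)^{\bf l}$ is then rewritten, via the commutation identity $\pa_Y\phi^{\bf l}=(\pa_Y\phi)^{\bf l}+\nu^{1/2}l_2\chi'_\nu(\pa_Y\phi)^{(l_1,l_2-1)}$ (cf.\ \eqref{proof.lem.prop.3.3}) and its $\pa_X$-analogue, in terms of $M_{2,l+1}[\pa_Y\phi]+\nu^{1/2}(l+1)M_{2,l}[\nabla\phi]$. For the endpoint $l=j-1$ only a single derivative lies on $\nabla\Omega$, and the weighted $L^\infty_{\tau,X,Y}$ bound from Assumption~(ii) is no longer sharp enough: instead I would keep $\xi_j=(\pa_Y\Omega+2\rho_j)^{-1/2}$ paired directly with $\pa^2_{XY}\Omega$ or $\pa^2_Y\Omega$, letting the weights $Y/(1+\nu^{1/2}Y)$ and $Y(1+Y)/(1+\nu^{1/2}Y)^2$ be absorbed in the $\sqrt{\pa_Y\Omega+2\rho}$ denominator via Assumption~\ref{assume}(iv). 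This is the origin of the $C^*_2$ prefactor on the first line of $R_j$; the third line $\nu^{1/2}(j+1)^{3/2}N_{\infty,1}[\nabla\Omega]M_{2,j-1}[\pa_Y\phi]$ will collect the remainder of this endpoint analysis, where Hardy's inequality $\|f/Y\|\le 4\|\pa_Y f\|$ allows the $\pa_X\phi$-factor to be converted into the $\pa_Y\phi$-factor that appears in it.

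\textbf{Summation and main obstacle.} Finally, I would sum the per-$j$ estimate against the Gevrey weight $1/\bigl((j!)^{3/2}\nu^{j/2}\nu^{1/4}(j+1)^{1/2}\bigr)$. The bulk contribution is rearranged by writing $\binom{j}{l}=j!/((j-l)!\,l!)$ and using the combinatorial inequalities \eqref{proof.lem.prop.4.3}--\eqref{proof.lem.prop.4.4} to absorb the polynomial overhead $(j+1)^{1/2}\min\{l+1,j-l+1\}$; the resulting double sum then factors as a discrete $\ell^1$-convolution of the Gevrey coefficients of $\nabla\Omega$ with those of $\nabla\phi$, and Young's convolution inequality closes the estimate by $C_0^*\,\cA\nabla\phi\cA_{2,\mathbf{1}}'$ thanks to Assumption~\ref{assume}(ii). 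The $C^*_2$ and $N_{\infty,1}$ terms, which are concentrated on a bounded number of endpoint summands, are controlled directly by $\cA\nabla\phi\cA_{2,\mathbf{1}}'$. The main obstacle is precisely the endpoint $l=j-1$: without the concavity weight $\sqrt{\pa_Y\Omega+2\rho_j}$ supplied by Assumption~\ref{assume}(iv), peeling the extra derivative off $\Omega$ would cost a factor $j+1$ instead of $(j+1)^{1/2}$, which sits exactly at the Gevrey-$3/2$ threshold and would destroy the convolution summation. This is the place where the (possibly degenerate) concavity hypothesis on the boundary layer enters in an essential way.
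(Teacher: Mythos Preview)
Your proposal is correct and follows essentially the same route as the paper: Cauchy--Schwarz plus the combinatorial reductions \eqref{proof.lem.prop.4.1}--\eqref{proof.lem.prop.4.2}, then a bulk/endpoint split with Assumption~\ref{assume}(ii) and Lemma~\ref{lem.rhoj} for $0\le l\le j-2$, Assumption~\ref{assume}(iv) for $l=j-1$, and Young's convolution for the summation. One small correction: the third line $\nu^{1/2}(j+1)^{3/2}N_{\infty,1}[\nabla\Omega]\,M_{2,j-1}[\pa_Y\phi]$ of $R_j$ is not a Hardy-converted residue of the $(\pa_X\phi)(\pa_Y\Omega)$ endpoint; it is simply the $(\pa_Y\phi)^{\bf l}(\pa_X\Omega)^{\bf j-l}$ contribution at $l=j-1$, estimated by the uniform bound $\|\xi_j(\pa_Y\phi)^{\bf l}(\pa_X\Omega)^{\bf j-l}\|\le C\nu^{1/2}(j+1)^{1/2}N_{\infty,j-l}[\nabla\Omega]\,M_{2,l}[\pa_Y\phi]$ and then multiplied by $\binom{j}{j-1}\min\{j,2\}\sim j$.
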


\begin{proof} As in the proof of Lemma \ref{lem.prop.4}, we have from \eqref{proof.lem.prop.4.1} and \eqref{proof.lem.prop.4.2},
\begin{align*}
& \int_0^{\tau_0} |\langle  \quad  \sum_{l=0}^{j-1} \sum_{\max\{0, l+j_2-j\} \leq l_2\leq \min\{l,j_2\}}\binom{j_2}{l_2} \, \binom{j-j_2}{l-l_2} (\nabla^\bot \phi)^{{\bf l}} \cdot (\nabla \Omega)^{\bf j-l} , \xi_j^2 \omega^{\bf j}\rangle |\, d\tau \\
& \leq  \sum_{l=0}^{j-1}  {}_jC_l \min\{l+1,j-l+1\} \| \xi_j (\nabla ^\bot \phi)^{\bf l}\cdot (\nabla\Omega)^{\bf j-l} \|_{L^2(0,\tau_0; L^2)} \| \xi_j \omega^{\bf j} \|_{L^2(0,\tau_0; L^2)}.
\end{align*}
Then we have from Lemma \ref{lem.rhoj},
\begin{align*}
\| \xi_j (\pa_Y \phi)^{\bf l} (\pa_X\Omega)^{\bf j-l} \|_{L^2(0,\tau_0; L^2)} 
& \leq \| \frac{(1+\nu^\frac12Y)\xi_j}{1+Y}\|_{L^\infty} \| \frac{1+Y}{1+\nu^\frac12 Y} (\pa_X \Omega)^{\bf j-l}\|_{L^\infty} \| (\pa_Y \phi)^{\bf l}\|_{L^2(0,\tau_0;L^2)}\\
& \leq C\nu^\frac12 (j+1)^\frac12 N_{\infty,j-l}[\nabla \Omega] \, M_{2,l}[\pa_Y \phi].
\end{align*}
Let $j\geq 2$ and $0\leq l\leq j-2$. Then,
\begin{align*}
& \| \xi_j (\pa_X \phi)^{\bf l} (\pa_Y\Omega)^{\bf j-l} \|_{L^2(0,\tau_0; L^2)} \\
& \leq \| \frac{(1+\nu^\frac12Y)\xi_j}{1+Y}\|_{L^\infty} \| (\frac{1+Y}{1+\nu^\frac12 Y})^2 (\pa_Y \Omega)^{\bf j-l}\|_{L^\infty} \| \frac{1+\nu^\frac12 Y}{1+Y} (\pa_X \phi)^{\bf l}\|_{L^2(0,\tau_0;L^2)}\\
& \leq  C(j+1)^\frac12 N_{\infty,j-l}[\nabla \Omega] \big ( \|\pa_Y (\pa_X\phi)^{\bf l} \|_{L^2(0,\tau_0; L^2)} + \nu^\frac12 \| (\pa_X \phi)^{\bf l}\|_{L^2(0,\tau_0; L^2)}\big ),
\end{align*}
where the Hardy inequality is applied in the last line. Then \eqref{proof.lem.prop.3.3} gives 
\begin{align*}
& \| \xi_j (\pa_X \phi)^{\bf l} (\pa_Y\Omega)^{\bf j-l} \|_{L^2(0,\tau_0; L^2)} \\
& \leq C(j+1)^\frac12 N_{\infty,j-l}[\nabla \Omega] \big ( M_{2,l+1}[\pa_Y \phi] + \kappa \nu^\frac12 (l+1) M_{2,l}[\nabla  \phi] \big ),\qquad 0\leq l\leq j-2.
\end{align*}
As for the case $l=j-1$, we rather compute as, by recalling $\xi_j\leq \frac{1}{\sqrt{\pa_Y \Omega + 2\rho}}$,
\begin{align*}
\|\xi_j (\pa_X \phi)^{\bf l} (\pa_Y\Omega)^{\bf j-l} \|_{L^2(0,\tau_0; L^2)} 
& \leq \|  \frac{Y}{1+\nu^\frac12 Y}  \xi_j (\pa_Y \Omega)^{\bf j-l} \|_{L^\infty} \| \frac{1+\nu^\frac12 Y}{Y} (\pa_X \phi)^{\bf l}\|_{L^2(0,\tau_0;L^2)}\\
& \leq  C \big ( \|\frac{Y}{1+\nu^\frac12 Y}  \frac{\pa^2_{XY} \Omega}{\sqrt{\pa_Y \Omega + 2\rho}} \|_{L^\infty} + \| \frac{Y}{1+\nu^\frac12 Y} \frac{\chi_\nu \pa_Y^2\Omega}{\sqrt{\pa_Y \Omega +2\rho}} \|_{L^\infty} \big ) \\
& \qquad \times \big ( \|\pa_Y (\pa_X\phi)^{\bf l} \|_{L^2(0,\tau_0; L^2)} + \nu^\frac12 \| (\pa_X \phi)^{\bf l}\|_{L^2(0,\tau_0; L^2)}\big ).
\end{align*}
Here we have used that, when $l=j-1$,  either $(\pa_Y\Omega)^{\bf j-l}=\pa_{XY}^2\Omega$ or $\chi_\nu \pa_Y^2\Omega$ holds, and that the Hardy inequality. Then, by using $\|\frac{1+\nu^\frac12 Y}{Y} \chi_\nu \|_{L^\infty} \leq C\nu^\frac12$, Assumption \ref{assume} (iii), and \eqref{proof.lem.prop.3.3}, we have 
\begin{align*}
\|\xi_j (\pa_X \phi)^{\bf l} (\pa_Y\Omega)^{\bf j-l} \|_{L^2(0,\tau_0; L^2)} \leq  CC_2^*  \nu^\frac12 \big ( M_{2,l+1}[\pa_Y \phi] + \kappa \nu^\frac12 (l+1) M_{2,l}[\nabla \phi] \big ),\qquad l=j-1.
\end{align*}
Collecting these, we obtain the term $R_{j,Lem\ref{lem.prop.5}}[\nabla \phi]$ by noticing ${}_jC_l=j$ for $l=j-1$, as desired. The estimate \eqref{est.lem.prop.5} is proved as in \eqref{est.lem.prop.4.1} but by also using the Young inequality for convolution in the $l^1$ space together with the estimates for $j\geq 2$,
\begin{align*}
(j+1)^\frac12 \min\{l+1,j-l+1\} \frac{(l+1)^\frac32}{(j+1)^\frac12 (l+2)^\frac12} \big ( \frac{(j-l)! l!}{j!}\big )^\frac12 &\leq C, \qquad 0\leq l\leq j-2,\\
(j+1)^\frac12 \min\{l+1,j-l+1\} \frac{l+1}{(j+1)^\frac12 (l+1)^\frac12} \big ( \frac{(j-l)! l!}{j!}\big )^\frac12 &\leq C, \qquad 0\leq l\leq j-2.
\end{align*}
Note that the condition $l\leq j-2$ is crucial here, for we apply \eqref{proof.lem.prop.4.4}.
 We omit the details. The proof is complete.
\end{proof}

\begin{lemma}\label{lem.prop.6} There exists $K_{1,4}=K_{1,4}(C_1^*, C_*)\geq 1$ such that for $K\geq K_{1,4}$, 
\begin{align*}
& \int_0^{\tau_0} \langle {\rm rot}\, F^{\bf j}-[B_{j_2},\pa_Y] \pa_X^{j_1} e^{-K\tau\nu^\frac12 (j+1)} F_1, \xi_j^2\omega^{\bf j}\rangle \, d\tau \\
& \quad \leq C (C_2^*+1)  M_{2,j,\xi_j}[F]  \\
& \qquad \times \Big ( \| \xi_j (\nabla \omega)^{\bf j}\|_{L^2(0,\tau_0; L^2_{X,Y})} + \kappa \nu^\frac12 j M_{2,j-1,\xi_{j-1}}[\pa_Y \omega] + (j+1)^\frac12 \| \xi_j \omega^{\bf j} \|_{L^2(0,\tau_0; L^2_{X,Y})} \Big ),
\end{align*}
and 
\begin{align*}
\int_0^{\tau_0} \langle G^{\bf j}, \xi_j^2 \omega^{\bf j}\rangle \, d\tau \leq M_{2,j,\xi_j} [G]\, \| \xi_j \omega^{\bf j} \|_{L^2(0,\tau_0; L^2_{X,Y})}.
\end{align*}
Here $C>0$ is a universal constant.
\end{lemma}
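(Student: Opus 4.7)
The two estimates are essentially independent; the second (for $G$) is trivial, while the first (for $F$) relies on an integration by parts.

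The plan for the $G$-estimate is immediate: by Cauchy--Schwarz in space-time,
$$ \int_0^{\tau_0} \langle G^{\bf j}, \xi_j^2 \omega^{\bf j}\rangle \, d\tau \leq \| \xi_j G^{\bf j}\|_{L^2(0,\tau_0; L^2_{X,Y})} \| \xi_j \omega^{\bf j}\|_{L^2(0,\tau_0; L^2_{X,Y})} \leq M_{2,j,\xi_j}[G] \, \| \xi_j \omega^{\bf j}\|_{L^2(0,\tau_0; L^2_{X,Y})}, $$
using $\xi_j G^{\bf j}(\tau) \leq (\sup_{j_2} \|\xi_j G^{(j-j_2,j_2)}(\tau)\|_{L^2_{X,Y}})$ at each $\tau$.

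For the $F$-estimate, the key is to recast the forcing as a genuine rotation of the vector $F^{\bf j}$. From $B_{j_2}\pa_X^{j_1}\pa_Y F_1 = \pa_Y B_{j_2} \pa_X^{j_1} F_1 + [B_{j_2},\pa_Y]\pa_X^{j_1} F_1$, multiplying by $e^{-K\tau\nu^{1/2}(j+1)}$ gives
$$ \mathrm{rot}\, F^{\bf j} - [B_{j_2}, \pa_Y] \pa_X^{j_1} e^{-K\tau\nu^{1/2}(j+1)} F_1 \: = \: \pa_X F_2^{\bf j} - \pa_Y F_1^{\bf j}. $$
Then I would integrate by parts in $X$ (periodic, no boundary term) and in $Y$. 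The boundary trace at $Y=0$ vanishes in both cases: for $j_2 \geq 1$ because $\chi_\nu\vert_{Y=0}=0$ kills $F_1^{\bf j}$, and for $j_2 = 0$ because $\omega\vert_{Y=0} = -\Delta\phi\vert_{Y=0} = 0$ under the artificial boundary condition of \eqref{eq.mOS}. This yields
$$ \int_0^{\tau_0} \langle \pa_X F_2^{\bf j} - \pa_Y F_1^{\bf j}, \xi_j^2 \omega^{\bf j}\rangle \, d\tau \: = \: \int_0^{\tau_0} \Big( -\langle F_2^{\bf j}, \xi_j^2 \pa_X \omega^{\bf j}\rangle + \langle F_1^{\bf j}, \xi_j^2 \pa_Y \omega^{\bf j}\rangle + \langle F_1^{\bf j}, \pa_Y(\xi_j^2)\omega^{\bf j}\rangle \Big) d\tau. $$

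The first term is bounded by $\| \xi_j F_2^{\bf j}\|_{L^2} \| \xi_j (\pa_X\omega)^{\bf j}\|_{L^2} \leq M_{2,j,\xi_j}[F]\, \|\xi_j(\nabla\omega)^{\bf j}\|_{L^2}$ since $\pa_X$ commutes with $B_{j_2}$. For the second, I would use the decomposition \eqref{proof.lem.prop.1.3}, which gives $\| \xi_j \pa_Y \omega^{\bf j}\|_{L^2} \leq \|\xi_j (\nabla\omega)^{\bf j}\|_{L^2} + C\kappa\nu^{1/2} j_2\,  \|\xi_{j-1}(\pa_Y\omega)^{(j_1,j_2-1)}\|_{L^2}$ using $\|\chi'_\nu\|_{L^\infty}\leq \kappa$ and $\xi_j \leq C\xi_{j-1}$ from Lemma \ref{lem.rhoj}, producing precisely the first two terms in the stated right-hand side.

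The main obstacle is the third term involving $\pa_Y(\xi_j^2)$. Here I would proceed exactly as in Lemma \ref{lem.prop.1}: write $\xi_j^{-1}\pa_Y(\xi_j^2) = -\xi_j^2 (\pa_Y^2 \Omega + 2\pa_Y \rho_j)/\sqrt{\pa_Y\Omega+2\rho_j}$, split the $Y$-integral at $Y \sim (j+1)^{-1/2}$, apply the Hardy inequality on the inner region, and control the outer region through Assumption \ref{assume} (iv) via the bound $\| Y(1+Y)(1+\nu^{1/2}Y)^{-2} \pa_Y^2 \Omega/\sqrt{\pa_Y\Omega+2\rho}\|_{L^\infty} \leq C^*_2$ combined with $\| (1+\nu^{1/2}Y)\xi_j/Y\|_{L^\infty(\{Y\geq (j+1)^{-1/2}\})} \leq C(j+1)^{1/2}$ and the estimates \eqref{est.lem.rhoj.3} for $\pa_Y\rho_j/\rho_j$. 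This yields a contribution of size $C(C^*_2+1)(j+1)^{1/2}\|\xi_j \omega^{\bf j}\|_{L^2}$, where the $+1$ absorbs the $\rho_j$-part. Absorbing the resulting Hardy terms into $\|\xi_j(\nabla\omega)^{\bf j}\|_{L^2}$ and the lower-order commutator via \eqref{proof.lem.prop.1.5} at the cost of fixing $K$ large depending only on $C^*_1, C_*$, I recover the stated bound.
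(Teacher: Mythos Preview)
Your approach is essentially the paper's own proof: integrate by parts to move the derivatives off $F^{\bf j}$, then handle the weight-derivative contribution via the region-split / Assumption~\ref{assume}(iv) / Hardy argument already used in Lemma~\ref{lem.prop.1}. Two minor slips worth noting: (i) your displayed identity drops a commutator contribution---in the paper's convention ${\rm rot}\, F^{\bf j}$ already means $\pa_X F_2^{\bf j}-\pa_Y F_1^{\bf j}$, so subtracting $[B_{j_2},\pa_Y]\pa_X^{j_1}e^{-K\tau\nu^{1/2}(j+1)}F_1$ leaves an extra term $\nu^{1/2}j_2\langle \chi_\nu' F_1^{\bf j},\xi_j^2 e^{-K\tau\nu^{1/2}}(\pa_Y\omega)^{(j_1,j_2-1)}\rangle$, which is exactly what produces (another copy of) the $\kappa\nu^{1/2}j\,M_{2,j-1,\xi_{j-1}}[\pa_Y\omega]$ piece; and (ii) integrating $\pa_X F_2^{\bf j}$ by parts also generates a $\pa_X(\xi_j^2)$ term, since $\xi_j$ depends on $X$ through $\Omega$---the paper treats $F^{\bf j}\cdot\nabla^\bot(\xi_j^2)$ with the full gradient and bounds the $\pa_{XY}^2\Omega$ part via the same estimate in Assumption~\ref{assume}(iv), so the argument goes through unchanged.
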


\begin{proof} The estimate about $G^{\bf j}$ is straightforward and we focus on the estimate about $F^{\bf j}$. The integration by parts and also \eqref{proof.lem.prop.1.1} yield
\begin{align*}
& \int_0^{\tau_0} \langle {\rm rot}\, F^{\bf j}-[B_{j_2},\pa_Y] \pa_X^{j_1} e^{-K\tau\nu^\frac12 (j+1)} F_1, \xi_j^2\omega^{\bf j}\rangle \, d\tau \\
& = \int_0^{\tau_0} \langle F^{\bf j}, \nabla^\bot (\xi_j^2 \omega^{\bf j})\rangle +\nu^\frac12 j_2 \langle \chi_\nu' F_1^{\bf j}, \xi_j^2 e^{-K\tau\nu^\frac12} (\pa_Y \omega)^{(j_1,j_2-1)}\rangle \, d\tau. 
\end{align*}
The second term is bounded from above by $C \kappa \nu^\frac12 j_2 \| \xi_j F_1^{\bf j} \|_{L^2(0,\tau_0; L^2)}M_{2,j-1,\xi_{j-1}}[\pa_Y \omega]$, and thus we focus on the first term:
\begin{align*}
& \int_0^{\tau_0} \langle F^{\bf j}, \nabla^\bot (\xi_j^2 \omega^{\bf j})\rangle \, d\tau \\
& = \int_0^{\tau_0} \langle F^{\bf j}\cdot \nabla^\bot (\xi_j^2), \omega^{\bf j} \rangle + \langle F^{\bf j},\xi_j^2 (\nabla^\bot\omega)^{\bf j}\rangle +\nu^\frac12 j_2 \langle F_1^{\bf j}, \xi_j^2 \chi_\nu' e^{-K\tau\nu^\frac12} (\pa_Y\omega)^{(j_1,j_2-1)}\rangle\, d\tau\\
& \leq  \int_0^{\tau_0} \langle F^{\bf j}\cdot \nabla^\bot (\xi_j^2), \omega^{\bf j} \rangle\, d\tau + M_{2,j,\xi_j}[F] \| \xi_j (\nabla \omega)^{\bf j} \|_{L^2(0,\tau_0; L^2)} \\
& \qquad + C \kappa \nu^\frac12 j_2 \, M_{2,j,\xi_j}[F] M_{2,j-1,\xi_{j-1}}[\pa_Y \omega].
\end{align*}
Then we have from Assumption \ref{assume} (iv) and Lemma \ref{lem.rhoj}, by recalling $\nabla^\bot (\xi_j^2)=-\frac{\nabla^\bot (\pa_Y \Omega+2\rho_j)}{\sqrt{\pa_Y \Omega + 2\rho_j}} \xi_j^3= -\frac{\nabla^\bot \pa_Y \Omega}{\sqrt{\pa_Y \Omega + 2\rho_j}} \xi_j^3 -2(\nabla^\bot \rho_j) \xi_j^4$,
\begin{align*}
& \langle F^{\bf j}\cdot \nabla^\bot (\xi_j^2), \omega^{\bf j} \rangle \\
& \leq \| \xi_j F^{\bf j} \|\, \Big ( \| \frac{Y \nabla (\pa_Y\Omega+2\rho_j)}{(1+\nu^\frac12 Y)\sqrt{\pa_Y \Omega+2\rho_j}} \xi_j^2\|_{L^\infty(\{0<Y<(j+1)^{-\frac12}\})} \|\frac{1+\nu^\frac12Y}{Y}\omega^{\bf j}\|_{L^2(\{0<Y<(j+1)^{-\frac12}\})}  \\
& \quad +  \| \frac{Y \nabla \pa_Y\Omega}{(1+\nu^\frac12 Y)\sqrt{\pa_Y \Omega+2\rho_j}} \|_{L^\infty(\{Y\geq (j+1)^{-\frac12}\})}\| \frac{1+\nu^\frac12 Y}{Y} \xi_j\|_{L^\infty (\{Y\geq (j+1)^{-\frac12}\})}\|\xi_j \omega^{\bf j}\|  \\
& \qquad + \| Y \pa_Y\rho_j\, \xi_j^2\|_{L^\infty} \|\frac{1}{Y} \|_{L^\infty(\{Y\ge (j+1)^{-\frac12}\})}   \|\xi_j \omega^{\bf j}\|  \Big )\\
& \le C \|\xi_j F^{\bf j} \| \Big ( \big ( C_2^*\| \xi_j^2\|_{L^\infty (\{0<Y<(j+1)^{-\frac12}\})} + \|Y \pa_Y \rho_j \, \xi_j^2\|_{L^\infty} \|\xi_j \|_{L^\infty (\{0<Y<(j+1)^{-\frac12}\})} \big )  \| \frac{\omega^{\bf j}}{Y} \| \\
& \qquad + (C_2^* + 1)(j+1)^\frac12  \| \xi_j \omega^{\bf j} \| \Big )\\
& \leq C \|\xi_j F^{\bf j} \| \Big ( (\frac{C_2^*}{K^\frac14 C_*} + \frac{1}{(K^\frac14 C_*)^\frac12} ) \| \pa_Y \omega^{\bf j}\| + (C_2^* + 1)(j+1)^\frac12  \| \xi_j \omega^{\bf j} \| \Big ). 
\end{align*} 
Thus, the estimate \eqref{proof.lem.prop.1.5} for $\pa_Y\omega^{\bf j}$ yields the desired estimate by taking $K$ large enough depending only on $C_1^*$ and $C_*$. The proof is complete.
\end{proof}

\

We are now in position to prove Proposition \ref{prop.mOS2}.
Lemmas \ref{lem.prop.1}-\ref{lem.prop.6} imply that, by taking the supremum over $j_2=0,\cdots,j$, 
\begin{align*}
& \nu^\frac14 M_{2,j, \xi_j}[\nabla \omega] + M_{\infty,j,\xi_j}[\omega] + (K\nu^\frac12 (j+1))^\frac12 M_{2,j,\xi_j}[\omega] \\
& \leq C \Big ( \sup_{j_2=0,\cdots,j} \|\xi_j \omega^{\bf j}(0)\| + \kappa \nu^\frac14 \nu^\frac12 j M_{2,j-1, \xi_{j-1}}[\nabla \omega]  \\
& \quad + \frac{R_{j,Lem\ref{lem.prop.3}}[\nabla \phi]}{\nu^\frac14 (j+1)^\frac12} + \frac{\kappa^{-1}R_{j,Lem\ref{lem.prop.4}}[\omega] + R_{j,Lem\ref{lem.prop.5}}[\nabla \phi] + M_{2,j,\xi_j}[G]}{(K\nu^\frac12 (j+1))^\frac12} + (C_2^*+1)\nu^{-\frac14} M_{2,j,\xi_j}[F]\Big )
\end{align*}
for $j=0,1,\cdots, \nu^{-\frac12}$. Here $K\geq 1$ is taken large enough depending only on $C_*$ and $C_j^*$, while $C>0$ is a universal constant.
Hence, by taking the sum $\sum_{j=0}^{\nu^{-\frac12}}$ with the factor $\frac{1}{(j!)^\frac32 \nu^\frac{j}{2}}$, we obtain 
\begin{align*}
& \cA \nabla \omega \cA_{2,\tilde \xi^{(1)}}' + \cA \omega \cA_{\infty,\xi}' + K^\frac12 \cA \omega \cA_{2,\xi}' \\
& \leq C \Big ( \sum_{j=0}^{\nu^{-\frac12}} \frac{1}{(j!)^\frac32 \nu^\frac{j}{2}} \sup_{j_2=0,\cdots,j} \|\xi_j \omega^{\bf j}(0)\|  \, + \, \kappa \cA \nabla \omega \cA_{2,\tilde \xi^{(1)}}'  + \frac{C_0^*}{K^\frac12\kappa} \cA \omega \cA_{2,\xi}' \\
& \qquad +  ( 1+ \frac{C_0^* + C_2^*}{K^\frac12}) \cA \nabla \phi \cA_{2,{\bf 1}}' + \frac{1}{K^\frac12 \nu^\frac12} \cA G\cA_{2,\tilde \xi^{(2)}}' + \frac{C_2^* +1}{\nu^\frac12} \cA F \cA_{2,\tilde \xi^{(1)}}'\Big ).
\end{align*}
Thus we obtain  \eqref{est.prop.mOS2} by first taking $\kappa>0$ small enough and then by taking $K$ large enough, and also by using $\xi_j\leq \frac{1}{C_*\nu^\frac12}\leq \frac{1}{\nu^\frac12}$ to bound $\|\xi_j \omega^{\bf j}(0)\|$. Note that the required smallness on $\kappa$ is independent of $\nu$, $K$, $C_*$, and $C_j^*$, while the required largeness of $K$ depends only on $\kappa$, $C_*$, $C_j^*$. The proof of Proposition \ref{prop.mOS2} is complete.

\subsection{Estimate for the velocity in terms of the vorticity}

In this subsection we give the estimate of the streamfunction $\phi$ in terms of the vorticity $\omega$. We remind that $\omega=-\Delta \phi$ with the boundary condition $\phi|_{Y=0}=0$.
\begin{proposition}\label{prop.stream.mOS} There exists $\kappa_2\in (0,1]$ such that for any $K\geq 1$, $\kappa\in (0,\kappa_2]$, and $p\in [1,\infty]$,
\begin{align*}
\cA \nabla \phi \cA_{p,{\bf 1}}'  \leq C( K^\frac14 C_* + C_1^*)^\frac12  \cA \omega \cA_{p,\xi}' + C\nu^\frac{1}{2p} \| \nabla \phi^{(0,0)}\|_{L^p(0,\frac{1}{K\nu^\frac12}; L^2_{X,Y})}.
\end{align*}
Here $C>0$ is a universal constant.
\end{proposition}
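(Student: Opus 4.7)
The plan is a weighted elliptic estimate for $-\Delta\phi=\omega$ with $\phi|_{Y=0}=0$, carried out layer by layer in the multi-index ${\bf j}=(j_1,j_2)$. For each such ${\bf j}$ with $j_1+j_2=j$, introduce the test function
\begin{equation*}
\psi^{\bf j} := e^{-K\tau\nu^{1/2}(j+1)}B_{j_2}\pa_X^{j_1}\phi,
\end{equation*}
which vanishes at $Y=0$ (by the Dirichlet condition on $\phi$ when $j_2=0$; by $\chi_\nu^{j_2}|_{Y=0}=0$ when $j_2\geq 1$). Since $\pa_X^{j_1}$ commutes with $\Delta$ while only $\pa_Y^2$ interacts nontrivially with $B_{j_2}=\chi_\nu^{j_2}\pa_Y^{j_2}$, one has $-\Delta\psi^{\bf j}=\omega^{\bf j}+E^{\bf j}$, where the commutator remainder $E^{\bf j}$ coming from $[\pa_Y^2, B_{j_2}]$ involves $\nu^{1/2}j_2\chi_\nu'$-type coefficients, all small in $\kappa$ via $\|\chi_\nu'\|_{L^\infty}=\kappa$.

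The core step is the energy identity $\|\nabla\psi^{\bf j}\|^2=\langle\omega^{\bf j},\psi^{\bf j}\rangle+\langle E^{\bf j},\psi^{\bf j}\rangle$ (no boundary term at $Y=0$), combined with Cauchy--Schwarz
\begin{equation*}
|\langle\omega^{\bf j},\psi^{\bf j}\rangle|\leq \|\xi_j\omega^{\bf j}\|\cdot\|\psi^{\bf j}/\xi_j\|.
\end{equation*}
I would control $\|\psi^{\bf j}/\xi_j\|$ by splitting $Y$ into an inner region $Y\leq (j+1)^{-1/2}$, where the uniform bound $\|1/\xi_j\|_{L^\infty}\leq (C_1^*+8K^{1/4}C_*)^{1/2}$ from Lemma~\ref{lem.rhoj} together with Hardy $\|\psi^{\bf j}/Y\|\leq 2\|\pa_Y\psi^{\bf j}\|$ yields a $(j+1)^{-1/2}$ gain, and an outer region $Y\geq (j+1)^{-1/2}$, where the $(1+(j+1)^{1/2}Y)^{-2}$ piece of $\rho_j$ provides effective $1/((j+1)Y^2)$ decay and \eqref{est.lem.rhoj.2} allows an analogous Hardy-type argument. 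Together these give, for $j\geq 1$, a bound of the form $\|\psi^{\bf j}/\xi_j\|\leq C(K^{1/4}C_*+C_1^*)^{1/2}(j+1)^{-1/2}\|\nabla\psi^{\bf j}\|$ modulo a residual $\sqrt{C_*\nu}\|\psi^{\bf j}\|$ coming from the non-decaying constant piece $C_*\nu$ of $\rho_j$. Absorbing the $\|\nabla\psi^{\bf j}\|$ on the left, converting $\nabla\psi^{\bf j}$ back to $(\nabla\phi)^{\bf j}$ via the $\nu^{1/2}j_2$-small commutator $[\nabla,B_{j_2}]$, taking the $L^p_\tau$ norm on $(0,\tfrac{1}{K\nu^{1/2}})$ and summing in $j$ against the Gevrey weights $\nu^{1/(2p)}(j+1)^{1/p}/((j!)^{3/2}\nu^{j/2})$ via Young's convolution inequality in $\ell^1$ then yields the stated inequality for the $j\geq 1$ contribution.

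The main obstacle is the failure of Poincaré on $\R_+$ in $Y$: the residual $\sqrt{C_*\nu}\|\psi^{\bf j}\|$ above cannot be controlled by $\|\nabla\psi^{\bf j}\|$ directly, since stream functions on the half-plane need not decay at infinity. For $j\geq 1$ this is absorbed by exploiting either mean-freeness of $\pa_X^{j_1}\phi$ in $X\in\T_\nu$ (Poincaré on $\T_\nu$ has constant $\nu^{-1/2}$, which cancels the $\sqrt{\nu}$ prefactor and yields a term absorbable one level down in the Gevrey hierarchy) or, when $j_1=0$ and $j_2\geq 1$, the $\chi_\nu^{j_2}$ factor in $\psi^{\bf j}$ which permits an iterated Hardy step. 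For $j=0$ no such trick is available; this is precisely why the proposition retains the additional term $C\nu^{1/(2p)}\|\nabla\phi^{(0,0)}\|_{L^p L^2}$ on the right-hand side, which tautologically bounds the $j=0$ contribution to $\cA\nabla\phi\cA_{p,\mathbf{1}}'$. Finally, the smallness of $\kappa\leq\kappa_2$ absorbs the commutator errors from $E^{\bf j}$ and $[\nabla,B_{j_2}]$ into the main term.
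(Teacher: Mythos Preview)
Your overall strategy matches the paper's: pair $\omega^{\bf j}$ with $\phi^{\bf j}$, use weighted Cauchy--Schwarz $\langle\omega^{\bf j},\phi^{\bf j}\rangle\le\|\xi_j\omega^{\bf j}\|\,\|\phi^{\bf j}/\xi_j\|$, control $\|\phi^{\bf j}/\xi_j\|$ by Hardy, absorb the $[B_{j_2},\pa_Y]$ commutators through smallness of $\kappa$, and keep the $j=0$ piece as the extra term on the right. Two points where the paper proceeds more directly than you do:

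The paper does not split $Y$ into inner/outer zones. It simply writes $\|\phi^{\bf j}/\xi_j\|=\|\sqrt{\pa_Y\Omega+2\rho_j}\,\phi^{\bf j}\|$, bounds the $\pa_Y\Omega$ part via Assumption~(iii) and Hardy, and bounds the $\rho_j$ part via $\sqrt{\rho_j}\lesssim K^{1/8}C_*^{1/2}(1+(j+1)^{1/2}Y)^{-1}+C_*^{1/2}\nu^{1/2}$ followed by Hardy. The uniform $(j+1)^{-1/2}$ gain you claim holds only on the $\rho_j$ piece, not on the $\pa_Y\Omega$ piece, but it is not needed anyway since the Gevrey weights on both sides carry the same $(j+1)^{1/p}$.

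Your handling of the residual $\nu^{1/2}\|\psi^{\bf j}\|$ via Poincar\'e on $\T_\nu$ is misstated and, as written, does not close. Poincar\'e gives $\nu^{1/2}\|\psi^{\bf j}\|\le\|\pa_X\psi^{\bf j}\|=\|(\pa_X\phi)^{\bf j}\|$, which lands at the \emph{same} level $j$ with coefficient of order $\sqrt{C_*}$ --- not small, so it cannot be absorbed. What actually works (and what the paper does) is the trivial observation that for $j\ge1$, $\phi^{\bf j}$ already contains one derivative of $\phi$: if $j_1\ge1$ then $\phi^{\bf j}=e^{-K\tau\nu^{1/2}}(\pa_X\phi)^{(j_1-1,j_2)}$, and if $j_2\ge1$ then $\phi^{\bf j}=\chi_\nu e^{-K\tau\nu^{1/2}}(\pa_Y\phi)^{(j_1,j_2-1)}$. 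Hence $M_{p,j,1}[\phi]\le M_{p,j-1,1}[\nabla\phi]$, and the prefactor $\nu^{1/2}$ exactly compensates the level shift in the Gevrey sum, producing a coefficient $\le\tfrac18$ that is absorbed into the left side. Your phrase ``one level down'' suggests you had this in mind; drop the Poincar\'e/iterated-Hardy language and state that inequality directly.
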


\begin{proof} It suffices to show 
\begin{align}\label{proof.prop.stream.mOS.1}
\sum_{j=1}^{\nu^{-\frac12}} \frac{\nu^\frac{1}{2p} (j+1)^\frac1p}{(j!)^\frac32\nu^\frac{j}{2}} M_{p,j,1}[\nabla \phi]  \leq C( K^\frac14 C_* + C_1^*)^\frac12 \cA \omega \cA_{p,\xi}' + C\nu^\frac{1}{2p} \| \nabla \phi^{(0,0)}\|_{L^p(0,\frac{1}{K\nu^\frac12}; L^2_{X,Y})}.
\end{align}
Let $j\geq 1$ and let us recall that $\omega^{\bf j}=e^{-K\tau \nu^\frac12 (j+1)} B_{j_2}\pa_X^{j-j_2} \omega$ with $\omega=-\Delta \phi$. Computations similar to those in \eqref{proof.lem.prop.1.2} imply $\omega^{\bf j} = - \nabla \cdot (\nabla \phi)^{\bf j} + \frac{\nu^\frac12 j_2\chi_\nu'}{\chi_\nu} (\pa_Y \phi)^{\bf j}$. Then the integration by parts together with the identity $\nabla \phi^{\bf j}=(\nabla \phi)^{\bf j} + \nu^\frac12 j_2 \chi_\nu' e^{-K\tau \nu^\frac12} (\pa_Y \phi)^{(j-j_2,j_2-1)}{\bf e}_2$ yields 
\begin{align}\label{proof.prop.stream.mOS.2}
\langle \omega^{\bf j}, \phi^{\bf j}\rangle = \| (\nabla \phi)^{\bf j}\|^2 + 2\nu^\frac12 j_2 e^{-K\tau \nu^\frac12 }\langle \chi_\nu' (\pa_Y\phi)^{\bf j}, (\pa_Y \phi)^{(j-j_2,j_2-1)}\rangle.
\end{align}
Then $\langle \omega^{\bf j},\phi^{\bf j}\rangle\leq \| \xi_j\omega^{\bf j}\|\, \| \frac{\phi^{\bf j}}{\xi_j} \|$ and the definition of $\xi_j$ in \eqref{def.xi} gives
\begin{align*}
\| \frac{\phi^{\bf j}}{\xi_j} \| = \| \sqrt{\pa_Y \Omega+2\rho_j} \, \phi^{\bf j} \| 
& \leq \| (\frac{1+Y}{1+\nu^\frac12 Y})^2\pa_Y \Omega\|_{L^\infty}^\frac12 \| \frac{1+\nu^\frac12 Y}{1+Y} \phi^{\bf j} \| + \sqrt{2} \| \sqrt{\rho_j} \phi^{\bf j} \| \\
& \le (C_1^*)^\frac12 \big ( C\| \pa_Y \phi^{\bf j} \| + \nu^\frac12 \| \phi^{\bf j} \| \big ) + \sqrt{2} \| \sqrt{\rho_j} \phi^{\bf j} \|.
\end{align*}
Here we have used Assumption \ref{assume} (iii) and the Hardy inequality.
Next the definition of $\rho_j$ in \eqref{def.rhoj} implies 
\begin{align*}
\sqrt{\rho_j} \leq K^\frac18 C_*^\frac12 (1+(j+1)^\frac12 Y)^{-1} + C_*^\frac12 \Big ( (1+\frac{Y}{\nu^\frac14})^{-1} + \nu^\frac14 (1+Y)^{-1} + \nu^\frac12 \Big ),
\end{align*}
which gives from the Hardy inequality and $\nu^\frac12 (j+1)\leq 2$ and $K\geq 1$,
\begin{align*}
\| \sqrt{\rho_j} \phi^{\bf j} \| \leq C K^\frac18 C_*^\frac12 (j+1)^{-\frac12} \|\pa_Y \phi^{\bf j} \| +  C_*^\frac12 \nu^\frac12 \| \phi^{\bf j}\|.
\end{align*} 
Thus we have 
\begin{align*}
\| \frac{\phi^{\bf j}}{\xi_j} \| & \le C \big ( C_1^* + K^\frac14 C_* \big )^\frac12 \|\pa_Y \phi^{\bf j} \| + C \big ( C_1^* + C_* \big )^\frac12 \nu^\frac12 \|\phi^{\bf j} \|.
\end{align*}
Thus \eqref{proof.prop.stream.mOS.2} and the identity $\pa_Y \phi^{\bf j}=(\pa_Y \phi)^{\bf j} + \nu^\frac12 j_2 \chi_\nu' e^{-K\tau \nu^\frac12} (\pa_Y \phi)^{(j-j_2,j_2-1)}$ finally give 
\begin{align*}
\| (\nabla \phi)^{\bf j}\|\le  C \big ( C_1^* + K^\frac14 C_* \big )^\frac12 \| \xi_j\omega^{\bf j} \| + C \kappa \nu^\frac12 j_2 \| (\pa_Y \phi)^{(j-j_2,j_2-1)} \| + \frac{1}{16}\nu^\frac12 \|\phi^{\bf j}\|.
\end{align*}
Here $C>0$ is a universal constant. Taking the supremum about $j_2=0,\cdots,j$ yields
\begin{align*}
M_{p,j,1}[\nabla \phi]\leq C \big ( C_1^* + K^\frac14 C_* \big )^\frac12  M_{p,j,\xi_j}[\omega] + C\kappa \nu^\frac12 j M_{p,j-1,1}[\nabla \phi]  + \frac{1}{16} \nu^\frac12 M_{p,j,1}[\phi].
\end{align*}
Thus we have from $M_{p,j,1}[\phi]\le M_{p,j-1,1}[\nabla \phi]$ and $\frac{j+1}{j}\leq 2$ for $j \geq 1$,
\begin{align*}
& \sum_{j=1}^{\nu^{-\frac12}} \frac{\nu^\frac{1}{2p} (j+1)^\frac1p}{(j!)^\frac32\nu^\frac{j}{2}} M_{p,j,1}[\nabla \phi]  \\
& \leq C( K^\frac14 C_* + C_1^*)^\frac12 \cA \omega \cA_{p,\xi}' + \big ( C \kappa + \frac{1}{8} \big )\sum_{j=0}^{\nu^{-\frac12}} \frac{\nu^\frac{1}{2p} (j+1)^\frac1p}{(j!)^\frac32\nu^\frac{j}{2}} M_{p,j,1}[\nabla \phi].
\end{align*}
Here $C>0$ is a universal constant. By taking $\kappa$ small enough we obtain \eqref{proof.prop.stream.mOS.1}. The proof is complete.
\end{proof}

\

In view of the estimate in Proposition \ref{prop.stream.mOS} our next task is to show the estimate of the {\it zero}-th order term $\nabla \phi^{(0,0)}$.
\begin{proposition}\label{prop.zero.mOS} Let $\kappa_2\in (0,1]$ be the number in Proposition \ref{prop.stream.mOS}. There exists $K_2=K_2(C_*, C_1^*)\geq 1$ such that for any $K\geq K_2$ and $\kappa\in (0,\kappa_2]$,
\begin{align}\label{est.prop.zero.mOS.1}
\begin{split}
& \nu^\frac14 \| \omega^{(0,0)}\|_{L^2(0,\frac{1}{K\nu^\frac12}; L^2_{X,Y})} + \| \nabla \phi^{(0,0)} \|_{L^\infty (0,\frac{1}{K\nu^\frac12}; L^2_{X,Y})} + K^\frac12 \nu^\frac14 \| \nabla \phi^{(0,0)} \|_{L^2(0,\frac{1}{K\nu^\frac12}; L^2_{X,Y})} \\
& \leq C \Big ( \| \nabla \phi (0) \|_{L^2_{X,Y}} + \frac{1}{K^\frac12 \nu^\frac14} \| F \|_{L^2(0,\frac{1}{K\nu^\frac12}; L^2_{X,Y})} + \frac{1}{K^{\frac12} \nu^{\frac14}} \| G \|_{L^2(0,\frac{1}{K\nu^\frac12}; \dot{H}^{-1})} +  \cA \omega \cA_{2,\xi}'\Big ),
\end{split}
\end{align} 
Here $C>0$ is a universal constant.
\end{proposition}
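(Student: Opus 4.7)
The plan is to test the modified Orr–Sommerfeld equation \eqref{eq.mOS} against the zeroth-level stream function $\phi^{(0,0)} = e^{-K\tau\nu^{1/2}}\phi$ and extract an $\dot H^1$-type energy identity. After multiplying by $e^{-K\tau\nu^{1/2}}$ and using the boundary conditions $\phi|_{Y=0}=\Delta\phi|_{Y=0}=0$, several integrations by parts produce
\begin{align*}
\tfrac{1}{2}\pa_\tau\|\nabla\phi^{(0,0)}\|^2+\nu^{\tfrac12}\|\omega^{(0,0)}\|^2+K\nu^{\tfrac12}\|\nabla\phi^{(0,0)}\|^2
\;=\; \int V\!\cdot\!\nabla\phi^{(0,0)}\,\omega^{(0,0)} + \langle({\rm rot}\,F)^{(0,0)},\phi^{(0,0)}\rangle+\langle G^{(0,0)},\phi^{(0,0)}\rangle.
\end{align*}
The key structural fact is that the stretching term contributes \emph{nothing}: $\int\nabla^\bot\phi^{(0,0)}\!\cdot\!\nabla\Omega\,\phi^{(0,0)}=0$ by the Jacobian cancellation $J(\phi^{(0,0)},\Omega)\cdot\phi^{(0,0)}=-\tfrac{1}{2}\nabla\cdot(\Omega\nabla^\bot(\phi^{(0,0)})^2)$ together with $\phi^{(0,0)}|_{Y=0}=0$.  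This is exactly what sidesteps the main obstruction discussed in Section \ref{sec:strategy} and makes it sensible to seek an estimate valid on $[0,\tfrac{1}{K\nu^{1/2}}]$ from the energy method at this single level.

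Next I would integrate in $\tau\in[0,\tau_0]$ and deal with the two forcing terms in the standard way: $|\langle({\rm rot}\,F)^{(0,0)},\phi^{(0,0)}\rangle|\le\|F\|\,\|\nabla\phi^{(0,0)}\|$ (by IBP, boundary term vanishing thanks to $\phi|_{Y=0}=0$), and $|\langle G^{(0,0)},\phi^{(0,0)}\rangle|\le\|G\|_{\dot H^{-1}}\|\nabla\phi^{(0,0)}\|$ by duality; a Young inequality with the dissipative term $K\nu^{1/2}\|\nabla\phi^{(0,0)}\|^2_{L^2_\tau L^2}$ gives precisely the $\frac{1}{K^{1/2}\nu^{1/4}}\|F\|$ and $\frac{1}{K^{1/2}\nu^{1/4}}\|G\|_{\dot H^{-1}}$ weights appearing in \eqref{est.prop.zero.mOS.1}. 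The initial contribution $\tfrac{1}{2}\|\nabla\phi_0\|^2$ is carried along unchanged.

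The main obstacle is the advection term $\int V\cdot\nabla\phi^{(0,0)}\omega^{(0,0)}$. Since $V$ is divergence-free with $V|_{Y=0}=0$, one IBP gives the identity $\int V\!\cdot\!\nabla\phi^{(0,0)}\omega^{(0,0)}=\int \pa_iV_k\,\pa_i\phi^{(0,0)}\pa_k\phi^{(0,0)}$, and the only dangerous piece is $\int\pa_YV_1\,\pa_X\phi^{(0,0)}\pa_Y\phi^{(0,0)}$, because Assumption \ref{assume} (ii)–(iii) only yields $\|\pa_YV_1\|_{L^\infty}\le C_1^*$. The remedy is to integrate by parts once more in $Y$, writing $\pa_YV_1\pa_Y\phi^{(0,0)}=\pa_Y(V_1\pa_Y\phi^{(0,0)})-V_1\pa_Y^2\phi^{(0,0)}$ and then integrate by parts in $X$ the resulting $V_1$-terms to transfer one derivative onto $V_1$, wherever possible, and invoking elliptic regularity $\|\pa_X\pa_Y\phi^{(0,0)}\|_{L^2}+\|\pa_Y^2\phi^{(0,0)}\|_{L^2}\le C\|\omega^{(0,0)}\|_{L^2}$ (valid in the half-plane with the Dirichlet condition $\phi|_{Y=0}=0$). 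The boundary contributions all vanish thanks to $V_1|_{Y=0}=\phi^{(0,0)}|_{Y=0}=0$, and using the decay weight $\|(1+Y)(1+\nu^{1/2}Y)^{-1}\pa_YV_1\|_{L^\infty}\le C_1^*$ together with the Hardy inequality $\|\phi^{(0,0)}/Y\|\le 2\|\pa_Y\phi^{(0,0)}\|$ the upshot is
$$
\Big|\int V\!\cdot\!\nabla\phi^{(0,0)}\omega^{(0,0)}\Big|\le CC_1^*\bigl(\|\omega^{(0,0)}\|\,\|\pa_Y\phi^{(0,0)}\|+\nu^{\tfrac12}\|\nabla\phi^{(0,0)}\|^2\bigr).
$$
The second piece is absorbed into $K\nu^{1/2}\|\nabla\phi^{(0,0)}\|^2$ for $K\ge CC_1^*$; for the first, after time-integration and Cauchy–Schwarz one uses the pointwise bound $\|1/\xi_0\|_{L^\infty}\le(C_1^*+8K^{1/4}C_*)^{1/2}$ from Lemma \ref{lem.rhoj} to write $\|\omega^{(0,0)}\|_{L^2_\tau L^2}\le\|1/\xi_0\|_{L^\infty}M_{2,0,\xi_0}[\omega]$, which inserts $\cA\omega\cA'_{2,\xi}$. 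A Young inequality, splitting against both $\nu^{1/2}\|\omega^{(0,0)}\|^2_{L^2_\tau L^2}$ and $K\nu^{1/2}\|\nabla\phi^{(0,0)}\|^2_{L^2_\tau L^2}$ and choosing $K\ge K_2(C_*,C_1^*)$ large enough, yields the advection bound with a universal constant in front of $\cA\omega\cA'_{2,\xi}$. Combining all estimates and taking the square root gives \eqref{est.prop.zero.mOS.1}. The delicate step — and the one I expect to be the main obstacle — is the sharp control of $\int\pa_YV_1\pa_X\phi^{(0,0)}\pa_Y\phi^{(0,0)}$, because a naive Cauchy–Schwarz would demand $K\gtrsim(C_1^*)^2/\nu$, and only the careful IBP plus elliptic regularity plus the use of the weighted $\cA\omega\cA'_{2,\xi}$ norm allow one to close the estimate with a $K$ independent of $\nu$.
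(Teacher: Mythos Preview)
Your overall strategy matches the paper's: test \eqref{eq.mOS} against $\phi^{(0,0)}$, observe that the stretching term $\int\nabla^\bot\phi^{(0,0)}\cdot\nabla\Omega\,\phi^{(0,0)}$ vanishes by the Jacobian identity, and handle $F$, $G$ by duality plus Young against the damping $K\nu^{1/2}\|\nabla\phi^{(0,0)}\|^2$. The treatment of the dangerous advection piece $\int\pa_YV_1\,\pa_X\phi^{(0,0)}\pa_Y\phi^{(0,0)}$, however, has a genuine gap.

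Your extra integration by parts in $Y$, followed by elliptic regularity $\|\pa_{XY}^2\phi^{(0,0)}\|+\|\pa_Y^2\phi^{(0,0)}\|\lesssim\|\omega^{(0,0)}\|$, produces the stated bound $CC_1^*\|\omega^{(0,0)}\|\,\|\nabla\phi^{(0,0)}\|$. But this bound is too weak to close with $K$ independent of $\nu$: whichever way you apply Young's inequality, absorbing one factor into $\nu^{1/2}\|\omega^{(0,0)}\|^2$ forces the other to require $K\nu^{1/2}\gtrsim(C_1^*)^2\nu^{-1/2}$, i.e.\ $K\gtrsim(C_1^*)^2\nu^{-1}$. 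Alternatively, routing $\|\omega^{(0,0)}\|_{L^2_\tau L^2}$ through $\|1/\xi_0\|_{L^\infty}M_{2,0,\xi_0}[\omega]\lesssim K^{1/8}\nu^{-1/4}\cA\omega\cA'_{2,\xi}$ and then Young against $K\nu^{1/2}\|\nabla\phi^{(0,0)}\|^2_{L^2_\tau L^2}$ leaves a coefficient $\sim K^{-3/8}\nu^{-1/2}$ in front of $\cA\omega\cA'_{2,\xi}$, which is not universal.

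The paper avoids this by \emph{not} invoking elliptic regularity at that point. Instead it writes
\[
\big|\langle\pa_Y\phi^{(0,0)},(\pa_YV_1)\pa_X\phi^{(0,0)}\rangle\big|\le\Big\|\tfrac{1+Y}{1+\nu^{1/2}Y}\pa_YV_1\Big\|_{L^\infty}\|\pa_Y\phi^{(0,0)}\|\,\Big\|\tfrac{1+\nu^{1/2}Y}{1+Y}\pa_X\phi^{(0,0)}\Big\|,
\]
and applies Hardy to the last factor to obtain $C_1^*\|\pa_Y\phi^{(0,0)}\|\big(\|\pa_{XY}^2\phi^{(0,0)}\|+\nu^{1/2}\|\pa_X\phi^{(0,0)}\|\big)$. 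The crucial point is that $\|\pa_{XY}^2\phi^{(0,0)}\|_{L^2_\tau L^2}$ is a level-$j=1$ quantity: since $\|(\pa_Y\phi)^{(1,0)}\|_{L^2_\tau L^2}\le M_{2,1,1}[\pa_Y\phi]$ and the $j=1$ weight in $\cA\,\cdot\,\cA'_{2,{\bf 1}}$ is $\nu^{1/4}\cdot 2^{1/2}\nu^{-1/2}=\sqrt{2}\,\nu^{-1/4}$, one gets $\|\pa_{XY}^2\phi^{(0,0)}\|_{L^2_\tau L^2}\lesssim\nu^{1/4}\cA\pa_Y\phi\cA'_{2,{\bf 1}}$. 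This $\nu^{1/4}$ gain is exactly what cancels the $\nu^{-1/2}$ in $\frac{(C_1^*)^2}{K\nu^{1/2}}\|\pa_{XY}^2\phi^{(0,0)}\|^2_{L^2_\tau L^2}$, leaving $\frac{(C_1^*)^2}{K}(\cA\pa_Y\phi\cA'_{2,{\bf 1}})^2$; then Proposition~\ref{prop.stream.mOS} converts $\cA\pa_Y\phi\cA'_{2,{\bf 1}}$ into $\cA\omega\cA'_{2,\xi}$ plus an absorbable remainder. In short: the mechanism that makes the estimate $\nu$-uniform is the Gevrey shift to level $j=1$, and your use of elliptic regularity $\|\pa_{XY}^2\phi^{(0,0)}\|\lesssim\|\omega^{(0,0)}\|$ discards precisely that structure.
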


\begin{proof} It suffices to show 
\begin{align}\label{proof.prop.zero.mOS.1}
\begin{split}
& \nu^\frac14 \| \omega^{(0,0)}\|_{L^2(0,\frac{1}{K\nu^\frac12}; L^2_{X,Y})} + \| \nabla \phi^{(0,0)} \|_{L^\infty (0,\frac{1}{K\nu^\frac12}; L^2_{X,Y})} + K^\frac12 \nu^\frac14 \| \nabla \phi^{(0,0)} \|_{L^2(0,\frac{1}{K\nu^\frac12}; L^2_{X,Y})} \\
& \leq C \Big ( \| \nabla \phi (0) \|_{L^2_{X,Y}} + \frac{1}{K^\frac12 \nu^\frac14} \| F \|_{L^2(0,\frac{1}{K\nu^\frac12}; L^2_{X,Y})} + \frac{1}{K^{\frac12} \nu^{\frac14}} \| G \|_{L^2(0,\frac{1}{K\nu^\frac12}; \dot{H}^{-1})} \\
& \qquad + \frac{C^*_1}{K^\frac12} \cA \pa_Y \phi \cA_{2,{\bf 1}}'\Big ).
\end{split}
\end{align}  
Indeed, estimate \eqref{est.prop.zero.mOS.1} is a direct consequence of \eqref{proof.prop.zero.mOS.1} and Proposition \ref{prop.stream.mOS} by taking $K$ large enough depending only on $C_1^*$ and $C_*$.
To prove \eqref{est.prop.zero.mOS.1} let us go back to \eqref{eq.mOS}, and we take the inner product with $\eta_R \phi$ for \eqref{eq.mOS}, where $\eta_R=\eta(Y/R)$ with a smooth cut-off $\eta$ such that $\eta=1$ for $0\leq Y\leq 1$ and $\eta=0$ for $Y\geq 1$. Then, taking the limit $R\rightarrow \infty$ after the integration by parts verifies the identity 
\begin{align}
\begin{split}
& \nu^\frac12 \| \omega^{(0,0)} \|^2 +\frac12 \frac{d}{d\tau} \| \nabla \phi^{(0,0)} \|^2 + K\nu^\frac12 \| \nabla \phi^{(0,0)} \|^2  \\
& = - \langle \Delta \phi^{(0,0)}, V \cdot \nabla \phi^{(0,0)} \rangle +  \langle F^{(0,0)}, \nabla^\bot \phi^{(0,0)} \rangle + \langle G^{(0,0)}, \phi^{(0,0)}\rangle,\qquad \tau>0.
\end{split}
\end{align}
Note that $|\langle F^{(0,0)}, \nabla^\bot \phi^{(0,0)}\rangle|\leq \|F\|\, \|\nabla \phi^{(0,0)}\|$ and $|\langle G^{(0,0)}, \phi^{(0,0)}\rangle|\leq \|G\|_{\dot{H}^{-1}}\| \nabla \phi^{(0,0)}\|$. Thus it suffices to focus on the term $- \langle \Delta \phi^{(0,0)}, V \cdot \nabla \phi^{(0,0)} \rangle$. The integration by parts and $\nabla \cdot V=0$ imply 
\begin{align*}
- \langle \Delta \phi^{(0,0)}, V \cdot \nabla \phi^{(0,0)} \rangle & =\langle \pa_X \phi^{(0,0)}, (\pa_X V) \cdot \nabla \phi^{(0,0)}\rangle + \langle \pa_Y \phi^{(0,0)}, (\pa_Y V) \cdot \nabla \phi^{(0,0)}\rangle\\
& =\langle \pa_X \phi^{(0,0)}, (\pa_X V) \cdot \nabla \phi^{(0,0)}\rangle - \langle \pa_Y \phi^{(0,0)}, (\pa_X V_2) \pa_Y \phi^{(0,0)}\rangle \\
& \quad +  \langle \pa_Y \phi^{(0,0)}, (\pa_Y V_1) \pa_X \phi^{(0,0)}\rangle\\
& \leq 2C_1^* \nu^\frac12 \| \nabla \phi^{(0,0)} \|^2 +\langle \pa_Y \phi^{(0,0)}, (\pa_Y V_1) \pa_X \phi^{(0,0)}\rangle.
\end{align*}
Here we have used Assumption \ref{assume} (ii). 
Then the last term is estimated as 
\begin{align*}
\langle \pa_Y \phi^{(0,0)}, (\pa_Y V_1) \pa_X \phi^{(0,0)}\rangle & \leq \| \frac{1+Y}{1+\nu^\frac12 Y}\pa_Y V_1\|_{L^\infty}  \| \pa_Y \phi^{(0,0)} \|\, \| \frac{1+\nu^\frac12 Y}{1+Y} \pa_X \phi^{(0,0)} \| \\
& \le C_1^* \| \pa_Y \phi^{(0,0)} \|  \Big ( C \| \pa_{XY}^2 \phi^{(0,0)} \| + \nu^\frac12 \| \pa_X\phi^{(0,0)} \| \Big ).
\end{align*}
Here we have used Assumption \ref{assume} (ii) and the Hardy inequality.
Hence by taking $K$ large enough depending only on $C_1^*$ we obtain 
\begin{align*}
 \nu^\frac12 \| \omega^{(0,0)} \|^2 +\frac12 \frac{d}{d\tau} \| \nabla \phi^{(0,0)} \|^2 + K\nu^\frac12 \| \nabla \phi^{(0,0)} \|^2   & \le \frac{C (C_1^*)^2}{K\nu^\frac12} \| \pa_X \pa_Y\phi^{(0,0)} \|^2 + C (\| F\|^2 + \|G\|_{\dot{H}^{-1}}^2).
\end{align*}
Integrating about $\tau$ shows \eqref{proof.prop.zero.mOS.1}, for $\nu^{-\frac12} \| \pa_X \pa_Y\phi^{(0,0)} \|_{L^2(0,\frac{1}{K\nu^\frac12}; L^2_{X,Y})}^2\leq (\cA \pa_Y \phi^{(0,0)} \cA_{2,{\bf 1}}')^2$ holds. 
The proof is complete.
\end{proof}

\subsection{Proof of Proposition \ref{prop.mOS1}}

Propositions \ref{prop.stream.mOS} and \ref{prop.zero.mOS} yield
\begin{align}\label{proof.prop.mOS1.1}
\begin{split}
K^\frac14 \cA \nabla \phi \cA_{2,{\bf 1}}' 
& \leq C \Big ( K^\frac14 (K^\frac14 C_* + C_1^* )^\frac12  \cA \omega \cA_{2,{\bf \xi}}' \\
& \qquad + \| \nabla \phi (0) \|_{L^2_{X,Y}} +  \frac{1}{K^\frac12\nu^\frac14} \| F \|_{L^2(0,\frac{1}{K\nu^\frac12}; L^2_{X,Y})} + \frac{1}{K^{\frac12}\nu^\frac14} \| G \|_{L^2(0,\frac{1}{K\nu^\frac12}; \dot{H}^{-1})} \Big ).
\end{split}
\end{align} 
Then \eqref{proof.prop.mOS1.1} and Proposition \ref{prop.mOS2}  give 
\begin{align}\label{proof.prop.mOS1.2}
\begin{split}
& \cA \omega \cA_{\infty,\xi}'   + K^\frac12 \cA \omega \cA_{2,\xi}' + K^\frac14 \cA \nabla \phi \cA_{2,{\bf 1}}'  \\
& \leq C \Big (  \| \nabla \phi_0 \|_{L^2_{X,Y}} + \nu^{-\frac12} [\| \Delta \phi_0 \|]  \\
& \qquad + (C_2^*+1) \nu^{-\frac12} \cA F\cA_{2,\tilde\xi^{(1)}}' +  \frac{1}{K^\frac12 \nu^\frac12} \cA G\cA_{2,\tilde \xi^{(2)}}'  +  \frac{1}{K^{\frac12}\nu^\frac14} \| G \|_{L^2(0,\frac{1}{K\nu^\frac12}; \dot{H}^{-1})}  \Big ).
\end{split}
\end{align} 
It remains to estimate the boundary trace $\cA \pa_Y \phi|_{Y=0}\cA_{bc}$.
By the interpolation inequality we have 
\begin{align*}
|\pa_X^j \pa_Y\phi (\tau, X,0)|\leq C \| \pa_X^j \pa_Y^2 \phi (\tau, X,\cdot) \|_{L^2_Y}^\frac12  \| \pa_X^j \pa_Y \phi (\tau, X,\cdot) \|_{L^2_Y}^\frac12,
\end{align*}
which implies 
\begin{align}\label{proof.prop.mOS1.3}
K^\frac14 \| \pa_Y\phi^{(j,0)}|_{Y=0} \|_{L^2(0,\frac{1}{K\nu^\frac12}; L^2_X)} 
& \leq CK^\frac14  \|  \pa_Y^2 \phi^{(j,0)} \|_{L^2(0,\frac{1}{K\nu^\frac12}; L^2_{X,Y})}^\frac12  \|  \pa_Y \phi^{(j,0)} \|_{L^2(0,\frac{1}{K\nu^\frac12}; L^2_{X,Y})}^\frac12 \nonumber \\
& \leq C \big (K^\frac14 \| \omega^{(j,0)} \|_{L^2(0,\frac{1}{K\nu^\frac12}; L^2_{X,Y})} \big )^\frac12  \big ( K^\frac14 \| \pa_Y \phi^{(j,0)} \|_{L^2(0,\frac{1}{K\nu^\frac12}; L^2_{X,Y})}\big )^\frac12.
\end{align}
Here we used the Calder{\'o}n-Zygmund inequality.
Since  \eqref{est.lem.rhoj.4}  yields $\| \omega^{(j,0)} \|_{L^2(0,\frac{1}{K\nu^\frac12}; L^2_{X,Y})} \leq (C_1^* + 8 K^\frac14 C_*)^\frac12 M_{2,j,.\xi_j} [\omega]$, we have from \eqref{proof.prop.mOS1.2} that, by taking $K$ further large enough if necessary,
\begin{align*}
K^\frac14 \cA \pa_Y \phi|_{Y=0}\cA_{bc} & \leq C (K^\frac12 \cA \omega\cA_{2,\xi}')^\frac12 (K^\frac14 \cA \nabla \phi \cA_{2,{\bf 1}}')^\frac12 \\
& \le C \Big (  \| \nabla \phi_0 \|_{L^2_{X,Y}} + \nu^{-\frac12} [\| \Delta \phi_0 \|]  \\
& \qquad + (C_2^*+1) \nu^{-\frac12} \cA F\cA_{2,\tilde\xi^{(1)}}' +  \frac{1}{K^\frac12 \nu^\frac12} \cA G\cA_{2,\tilde \xi^{(2)}}'  +  \frac{1}{K^{\frac12}\nu^\frac14} \| G \|_{L^2(0,\frac{1}{K\nu^\frac12}; \dot{H}^{-1})}  \Big ).
\end{align*}
The proof of Proposition \ref{prop.mOS1} is complete.

\section{Construction of the boundary corrector}\label{sec.boundary}

In the previous section, we constructed a solution to the Orr-Sommerfeld equation with arbitrary initial data, but artificial boundary conditions: we replaced condition $\pa_Y \phi\vert_{Y=0} = 0$ by $\Delta \phi \vert_{Y=0} = 0$.  Hence, to prove Theorem \ref{thm.main1}, we still need to understand how to correct the Neumann condition, that is how to construct solutions for  systems of the following type 
\begin{align}\label{eq.bc}
\begin{split}
&\nu^\frac12 \Delta^2 \phi - \pa_\tau \Delta \phi - V \cdot \nabla \Delta \phi + \nabla^\bot \phi \cdot \nabla \Omega = 0,\qquad \tau>0,~X\in \T_\nu, ~Y>0,\\
& \phi|_{Y=0} =0,\qquad \pa_Y \phi|_{Y=0}=h,\qquad \phi|_{\tau=0}=0. 
\end{split} 
\end{align}
Such construction will be performed through an iteration, with first approximation given by the  Stokes equation. 
%The leading term of the vorticity field will be constructed so as to possess the structure of the boundary layer.
%In the Gevrey class $\frac32$ the first approximation is taken as the solution to the simple Stokes system; see Subsection \ref{subsec.Stokes}. In fact, from the knowledge for the case of the time-independent shear boundary layer, the Gevrey class $\frac32$ is the critical regularity so that the Stokes system describes the leading part of this boundary corrector.

\subsection{Stokes estimate}\label{subsec.Stokes}

In this subsection we consider the solution to the Stokes equations (in terms of the streamfunction):
\begin{align}\label{eq.s}
\begin{split}
& \nu^\frac12 \Delta^2 \phi - \pa_\tau \Delta \phi =0\,, \qquad \tau>0,~X\in \T_\nu,~Y>0,\\
& \phi|_{Y=0} = 0,\qquad \pa_Y \phi|_{Y=0} =h, \qquad \phi|_{\tau=0} =0.
\end{split}
\end{align}
Here $h$ is a given boundary data satisfying $h(\tau)=0$ for $\tau =0$ and $\tau \geq \frac{1}{K\nu^\frac12}$, and the bound
\begin{align}
\cA h\cA_{bc} = \sum_{j=0}^{\nu^{-\frac12}} \frac{\nu^\frac14 (j+1)^\frac12}{(j!)^\frac32 \nu^\frac{j}{2}} \| e^{-K\tau\nu^\frac12 (j+1)} \pa_X^j h\|_{L^2(0,\frac{1}{K\nu^\frac12}; L^2_X)}<\infty.
\end{align}

Set $\psi = e^{-K\tau\nu^\frac12 (j+1)} \pa_X^{j_1} \phi$, $0\le j_1\le j$, with the zero extension for $\tau\leq 0$ and let $\hat{\psi}=\hat{\psi} (\lambda,\alpha,Y)$ be the Fourier (in $X$ and $\tau$) transform of $\psi$.
Then $\hat{\psi}$ obeys the ODE
\begin{align}
\begin{split}
& \nu^\frac12 (\pa_Y^2-\alpha^2)^2 \hat{\psi} - (i\lambda +K\nu^\frac12 (j+1)) ( \pa_Y^2-\alpha^2) \hat{\psi}=0,\qquad Y>0,\\
& \hat{\psi}|_{Y=0}=0,\qquad \pa_Y\hat{\psi}|_{Y=0}=\hat{g}^{(j_1)},
\end{split}
\end{align}
where $\lambda\in \R$ and $\hat{g}^{(j_1)}$ is the Fourier transform of $g^{(j_1)}:=e^{-K\tau\nu^\frac12 (j+1)}\pa_X^{j_1} h$.
We note that 
\begin{align}
\alpha = \nu^\frac12 n,
\end{align}
where $n$ is the $n$th Fourier mode in the original variable $x\in \T$.
Assuming the decay of $(|\alpha| \hat{\psi},\pa_Y\hat{\psi})$ and the boundedness of $\hat{\psi}$, we obtain the formula
\begin{align}\label{formula.ve}
\begin{split}
\hat{\psi} (\lambda,\alpha,Y) & = - \frac{e^{-\gamma\, Y} - e^{-|\alpha| Y}}{\gamma - |\alpha|} \hat{g}^{(j_1)} (\lambda,\alpha),\\
\gamma  =\gamma_j (\lambda,\alpha,\nu,K) & = \sqrt{\alpha^2 + K(j+1) + \frac{i\lambda}{\nu^\frac12}},
\end{split}
\end{align}
where the square root is taken so that the real part is positive, and it follows that 
\begin{align}\label{est.gamma}
|\alpha|\le \sqrt{\alpha^2 + K(j+1)} \le \Reel (\gamma)\le |\gamma| \le \sqrt{2} \Reel (\gamma).
\end{align}
This inequality will be freely used. 
We can also check the identity
\begin{align}\label{formula.ve'}
\begin{split}
\pa_Y \hat{\psi} (\lambda,\alpha,Y) & = -e^{-\gamma Y} \hat{g}^{(j_1)} (\lambda,\alpha) + \sgn (\alpha) \alpha \hat{\psi} (\lambda,\alpha,Y).
\end{split}
\end{align}
We also have from \eqref{formula.ve}, 
\begin{align}\label{formula.vo}
-(\pa_Y^2-\alpha^2) \hat{\psi} & = \big (\gamma + |\alpha|\big ) e^{-\gamma\, Y} \hat{g}^{(j_1)}.
\end{align}
This formula will be used in estimating the vorticity field.

\begin{lemma}\label{lem.Stokes} There exists $\kappa'\in (0,1]$ such that the following statement holds for any $\kappa\in (0,\kappa']$. Let $j_1=0,\cdots, j$ and $j_2 = j-j_1$. Then 
\begin{align}\label{est.lem.Stokes.1}
|B_{j_2} i\alpha \hat{\psi} (\lambda,\alpha,Y)|  & \le    \frac{C \nu^\frac{j_2}{2} j_2! \, |\alpha \hat{g}^{(j_1)}| }{j_2+1} \Big ( Y e^{-\frac{\Reel (\gamma)}{2}Y}+  e^{-\frac{|\alpha|}{2}Y} \, \big |\frac{1-e^{-(\gamma-|\alpha|)Y}}{\gamma-|\alpha|}\big | \Big ),\\
|B_{j_2} \pa_Y \hat{\psi}(\lambda,\alpha,Y)| & \le \frac{C \nu^\frac{j_2}{2} j_2! \, |\hat{g}^{(j_1)}| }{j_2+1}  e^{-\frac{\Reel (\gamma)}{2}Y}. 
\end{align}
As a consequence,
\begin{align}\label{est.lem.Stokes.2}
\begin{split}
& \Big ( \sum_{\alpha\in \nu^\frac12 \Z} \|B_{j_2} i\alpha \hat{\psi} (\cdot,\alpha,\cdot)\|_{L^2_{\lambda,Y}}^2  + \|B_{j_2} \pa_Y \hat{\psi}(\cdot,\alpha,\cdot)\|_{L^2_{\lambda,Y}}^2 \Big )^\frac12 \\
& \leq \frac{C\nu^\frac{j_2}{2}j_2!}{K^\frac14 (j+1)^\frac14(j_2+1)} \Big ( \sum_{\alpha \in \nu^\frac12 \Z} \| \hat{g}^{(j_1)} (\cdot,\alpha)\|_{L^2_{\lambda}}^2\Big )^\frac12.
\end{split}
\end{align} 
We also have 
\begin{align}\label{est.lem.Stokes.3}
\Big ( \sum_{\alpha\in \nu^\frac12 \Z} \|\frac{1}{1+Y} B_{j_2} i\alpha \hat{\psi} (\cdot,\alpha,\cdot)\|_{L^2_{\lambda,Y}}^2  \Big )^\frac12 \leq \frac{C\nu^\frac{j_2}{2}j_2!}{K^\frac12 (j+1)^\frac12(j_2+1)} \Big ( \sum_{\alpha \in \nu^\frac12 \Z} \| \alpha \hat{g}^{(j_1)} (\cdot,\alpha)\|_{L^2_{\lambda}}^2\Big )^\frac12.
\end{align} 
Here $C>0$ is a universal constant.
\end{lemma}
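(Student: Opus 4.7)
The proof should be essentially a direct computation on the explicit formula \eqref{formula.ve} for $\hat\psi$, so the plan is to set up that computation in a form that cleanly isolates the two decay structures appearing in \eqref{est.lem.Stokes.1}.  My first step would be to rewrite the divided difference as an integral,
\[
\frac{e^{-\gamma Y}-e^{-|\alpha| Y}}{\gamma-|\alpha|} \;=\; -Y\int_0^1 e^{-\gamma_s Y}\, ds,\qquad \gamma_s := (1-s)|\alpha|+s\gamma,
\]
and observe from \eqref{est.gamma} that $|\gamma_s|\le |\gamma|\le \sqrt 2\,\Reel(\gamma)$ and that $\Reel(\gamma_s)\ge s\,\Reel(\gamma)$. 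Applying $\pa_Y^{j_2}$ by Leibniz to the product $Y\cdot \int_0^1 e^{-\gamma_s Y}\,ds$ produces exactly two contributions: $-Y\int_0^1(-\gamma_s)^{j_2}e^{-\gamma_s Y}\,ds$ and $-j_2\int_0^1(-\gamma_s)^{j_2-1}e^{-\gamma_s Y}\,ds$. Splitting the $s$-integral at $s=1/2$ then sorts the exponential decay into a "$\gamma$-branch" (for $s\ge 1/2$, where $\Reel(\gamma_s)\ge \Reel(\gamma)/2$) and an "$|\alpha|$-branch" (for $s\le 1/2$, where $\Reel(\gamma_s)\ge |\alpha|/2$). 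The latter branch naturally recombines to $e^{-|\alpha| Y/2}\bigl|\frac{1-e^{-(\gamma-|\alpha|)Y}}{\gamma-|\alpha|}\bigr|$, explaining the two-term structure on the right-hand side of \eqref{est.lem.Stokes.1}.

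Passing to the pointwise bounds, I would multiply by $\chi_\nu^{j_2}$ and use $\chi_\nu(Y)\le \kappa\nu^{1/2}Y$. Powers $(|\gamma_s|Y)^{j_2}$ are converted into factorials via the elementary inequality $\sup_{t\ge 0} t^{m}e^{-t/2} \le 2^{m} m!$; the decisive point is to keep an extra power of $Y$ when invoking this sup, i.e. to bound $Y\cdot(|\gamma_s|Y)^{j_2} e^{-\Reel(\gamma_s)Y}$ as $(j_2+1)!\cdot Y e^{-\Reel(\gamma_s)Y/2}/(j_2+1)$ after writing $Y(\text{Re}(\gamma_s)Y)^{j_2} = (\text{Re}(\gamma_s)Y)^{j_2+1}/\text{Re}(\gamma_s)$ and using one level of Stirling. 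This produces precisely the $\nu^{j_2/2} j_2!/(j_2+1)$ prefactor, and the geometric constants $C^{j_2}$ are absorbed by choosing $\kappa$ small. The bound on $B_{j_2}\pa_Y\hat\psi$ then follows from the identity \eqref{formula.ve'}, reducing it to the already proven bound on $B_{j_2}(\alpha\hat\psi)$ plus a direct estimate on $\chi_\nu^{j_2}(-\gamma)^{j_2}e^{-\gamma Y}$, which is controlled by exactly the same $t^m e^{-t/2}$ inequality.

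For the $L^2$ statements, I would integrate the pointwise bounds squared in $Y$, using $\int_0^\infty Y^{2} e^{-\Reel(\gamma)Y}\,dY \le C\,\Reel(\gamma)^{-3}$ for the $\gamma$-branch and a parallel computation for the $|\alpha|$-branch (where $|1-e^{-(\gamma-|\alpha|)Y}|^2/|\gamma-|\alpha||^2 \le Y^2$ collapses to the same form). This gives a prefactor $|\alpha|^2/\Reel(\gamma)^3$ multiplying $|\hat g^{(j_1)}|^2$; combined with the uniform lower bound $\Reel(\gamma)\ge\sqrt{K(j+1)}$ from \eqref{est.gamma}, we get $|\alpha|^2/\Reel(\gamma)^3 \le 1/\sqrt{K(j+1)}$, so that summing in $\alpha$ and integrating in $\lambda$ by Plancherel yields the claimed $K^{-1/4}(j+1)^{-1/4}$ factor in \eqref{est.lem.Stokes.2}. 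Estimate \eqref{est.lem.Stokes.3} is the same computation with the weight $\frac{1}{1+Y}$ canceling one factor of $Y$, which replaces $\Reel(\gamma)^{-3}$ by $\Reel(\gamma)^{-2}$ and, paired with the $|\alpha|^2$ kept on the right-hand side, produces the improved $(K(j+1))^{-1/2}$ gain.

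The main obstacle I anticipate is the combinatorial bookkeeping of the factor $\nu^{j_2/2} j_2!/(j_2+1)$: a naive split that bounds $e^{-\gamma Y}$ and $e^{-|\alpha|Y}$ separately and estimates $(\gamma^{j_2}-|\alpha|^{j_2})/(\gamma-|\alpha|)$ using $\sum_{k=0}^{j_2-1}|\gamma|^k|\alpha|^{j_2-1-k}\le j_2|\gamma|^{j_2-1}$ recovers only $j_2!$ rather than $j_2!/(j_2+1)$. The extra division by $j_2+1$ is what forces the use of the integral representation $-Y\int_0^1 e^{-\gamma_s Y}\,ds$: the explicit factor $Y$ there is exactly the additional power that shifts $\sup t^{j_2}e^{-t/2}$ up to $\sup t^{j_2+1}e^{-t/2}/\Reel(\gamma_s)$, and so shifts $j_2!$ to $(j_2+1)!/\Reel(\gamma_s)$ which, after redistribution with the remaining $Y$-factor, becomes $j_2!/(j_2+1)$ times $Y$.
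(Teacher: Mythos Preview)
Your integral-representation route to the pointwise bound is a legitimate alternative, but two things go wrong.

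\textbf{The $j_2!/(j_2+1)$ factor does not require the integral trick.} The paper does exactly the ``naive'' split you reject: it differentiates each exponential separately, rewrites
\[
B_{j_2}\hat\psi = -\frac{(-\gamma)^{j_2}-(-|\alpha|)^{j_2}}{\gamma-|\alpha|}\chi_\nu^{j_2}e^{-\gamma Y}\hat g^{(j_1)} + (-|\alpha|)^{j_2}\chi_\nu^{j_2}e^{-|\alpha|Y}\hat g^{(j_1)}\cdot\frac{1-e^{-(\gamma-|\alpha|)Y}}{\gamma-|\alpha|},
\]
and bounds the divided difference by $j_2(3|\gamma|)^{j_2-1}$ via the binomial expansion. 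The factor $1/(j_2+1)$ then comes \emph{entirely} from the smallness of $\kappa$: once $6\sqrt 2\,\kappa\le 1/2$, the geometric factor $(C\kappa)^{j_2}$ absorbs any polynomial in $j_2$. Your integral representation is a valid alternative, but it is not forced.

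\textbf{Your $L^2$ argument for the $|\alpha|$-branch fails for small $|\alpha|$.} The claim that $|1-e^{-(\gamma-|\alpha|)Y}|^2/|\gamma-|\alpha||^2\le Y^2$ makes the $|\alpha|$-branch ``collapse to the same form'' as the $\gamma$-branch is wrong: after this bound the integrand is $\alpha^2 Y^2 e^{-|\alpha|Y}$, and $\int_0^\infty \alpha^2 Y^2 e^{-|\alpha|Y}\,dY = 2/|\alpha|$, which diverges as $|\alpha|\to 0$. You do \emph{not} obtain $|\alpha|^2/\Reel(\gamma)^3$. The exact structure $e^{-|\alpha|Y/2}\bigl|\tfrac{1-e^{-(\gamma-|\alpha|)Y}}{\gamma-|\alpha|}\bigr|$ in \eqref{est.lem.Stokes.1} is precisely what rescues this case, because it admits the alternative bound $\bigl|\tfrac{1-e^{-(\gamma-|\alpha|)Y}}{\gamma-|\alpha|}\bigr|\le 2/|\gamma-|\alpha||$. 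The paper then splits: for $|\alpha|\le\tfrac12\sqrt{K(j+1)}$ one has $|\gamma-|\alpha||\gtrsim\sqrt{K(j+1)}$, so
\[
\Bigl\|\alpha\, e^{-|\alpha|Y/2}\Bigr\|_{L^2_Y}\cdot\frac{2}{|\gamma-|\alpha||}\;\lesssim\;\frac{|\alpha|^{1/2}}{\sqrt{K(j+1)}}\;\le\;\frac{1}{(K(j+1))^{1/4}},
\]
while for $|\alpha|\ge\tfrac12\sqrt{K(j+1)}$ your bound by $Y$ works, giving $\|\alpha Y e^{-|\alpha|Y/2}\|_{L^2_Y}\lesssim|\alpha|^{-1/2}\le(K(j+1))^{-1/4}$. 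Without this case distinction neither \eqref{est.lem.Stokes.2} nor \eqref{est.lem.Stokes.3} follows. Incidentally, your claim that the $s\le 1/2$ part of the integral ``naturally recombines'' to this structure is not automatic either: after differentiating and bounding $(|\gamma_s|Y)^{j_2}e^{-\Reel(\gamma_s)Y/2}$, only $e^{-|\alpha|Y/4}$ survives, and the divided-difference factor is lost. To recover it you would have to compute $\int_0^{1/2}e^{-\gamma_s Y}\,ds$ \emph{before} estimating, then apply Leibniz to the product $e^{-|\alpha|Y}\cdot\frac{1-e^{-(\gamma-|\alpha|)Y/2}}{\gamma-|\alpha|}$ and route the cross terms back to the $\gamma$-branch --- considerably more work than your sketch suggests, and essentially reproducing the paper's decomposition.
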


\begin{proof} We first show \eqref{est.lem.Stokes.1} for $B_{j_2}i\alpha \hat{\psi}$. 
It suffices to consider the case $j_2\ge 1$, for the case $j_2=0$ is trivial from \eqref{formula.ve}.
We observe from \eqref{formula.ve} that 
\begin{align}\label{proof.lem.Stokes.1}
B_{j_2} \hat{\psi} & = - \frac{\hat{g}^{(j_1)}\chi_\nu^{j_2}}{\gamma-|\alpha|} \Big ( (-\gamma)^{j_2} e^{-\gamma Y} - (-|\alpha|)^{j_2} e^{-|\alpha|Y} \Big ) \nonumber \\
& =  - \frac{(-\gamma)^{j_2} - (-|\alpha|)^{j_2}}{\gamma-|\alpha|} \chi_\nu^{j_2} e^{-\gamma Y} \hat{g}^{(j_1)}  + (-|\alpha|)^{j_2} \chi_\nu^{j_2} e^{-|\alpha|Y}  \hat{g}^{(j_1)}\, \frac{1- e^{-(\gamma -|\alpha|) Y} }{\gamma-|\alpha|}.
\end{align} 
Since 
\begin{align*}
 (-\gamma)^{j_2} - (-|\alpha|)^{j_2} = (-1)^{j_2} \sum_{l_2=0}^{j_2-1} \binom{j_2}{l_2} (\gamma-|\alpha|)^{j_2-l_2} |\alpha|^{l_2},
 \end{align*}
we have from $\binom{j_2}{l_2} \le j_2 \binom{j_2-1}{l_2} $ for $0\le l_2\leq j_2-1$,
\begin{align*}
\big |\frac{(-\gamma)^{j_2} - (-|\alpha|)^{j_2}}{\gamma-|\alpha|}\big | \le \sum_{l_2=0}^{j_2-1} \binom{j_2}{l_2}   \big | \gamma-|\alpha| \big |^{j_2-l_2-1} |\alpha|^{l_2} 
& \le j_2 \sum_{l_2=0}^{j_2-1} \binom{j_2-1}{l_2}  \big | \gamma-|\alpha| \big |^{j_2-l_2-1} |\alpha|^{l_2} \\
& = j_2 (\big |\gamma - |\alpha|\big | + |\alpha|)^{j_2-1} \\
& \le j_2 (3|\gamma|)^{j_2-1}.
\end{align*}
Here we have used $|\alpha|\le |\gamma|$ by \eqref{est.gamma}.
Then the ineuqlaity $\chi_\nu=1-e^{-\kappa\nu^\frac12 Y} \le \kappa \nu^\frac12 Y$ implies
\begin{align*}
\big | \frac{(-\gamma)^{j_2} - (-|\alpha|)^{j_2}}{\gamma-|\alpha|} \chi_\nu^{j_2} e^{-\gamma Y}\big |
& \le j_2 \kappa \nu^\frac12 Y \big ( 3 \kappa \nu^\frac12 |\gamma| Y )^{j_2-1} e^{-\Reel (\gamma) Y} \\
& \le j_2 \kappa \nu^\frac{j_2}{2} Y\big ( 3\sqrt{2} \kappa \Reel (\gamma) Y )^{j_2-1} e^{-\Reel (\gamma) Y} \quad {\rm (by~\eqref{est.gamma})}.
\end{align*}
From the bound $r^k e^{-r}\le (\frac{k}{e})^k$ and the Stirling bound $(\frac{k}{e})^k \le (2\pi)^{-\frac12} k^{-\frac12} k!$ for $k\in \N$, we have 
\begin{align*}
(\frac12\Reel (\gamma) Y)^{j_2-1} e^{-\frac12\Reel(\gamma) Y}\le \frac{(j_2-1)!}{\sqrt{2\pi} (j_2-1)^\frac12},\qquad j_2\geq 2.
\end{align*}
This gives when $6\sqrt{2}\kappa\le \frac12$,
\begin{align*}
\big | \frac{(-\gamma)^{j_2} - (-|\alpha|)^{j_2}}{\gamma-|\alpha|} \chi_\nu^{j_2} e^{-\gamma Y}\big |
& \le \frac{\nu^\frac{j_2}{2} j_2!}{(j_2+1)} Y e^{-\frac12 \Reel(\gamma) Y}\,, \qquad j_2\geq 1.
\end{align*}
Similarly, we have for $j_2\ge 1$,
\begin{align*}
\big |(-|\alpha|)^{j_2} \chi_\nu^{j_2} e^{-|\alpha|Y} \big |\le \frac{\nu^\frac{j_2}{2}j_2!}{j_2+1} e^{-\frac12|\alpha|Y}.
\end{align*}
Hence \eqref{est.lem.Stokes.1} for $B_{j_2}i\alpha \hat{\psi}$ follows by collecting these with \eqref{proof.lem.Stokes.1}. The estimate for $B_{j_2} \pa_Y \hat{\psi}$ is proved in the same manner in view of \eqref{formula.ve'}, and we omit the details. Estimate \eqref{est.lem.Stokes.2} follows from \eqref{est.lem.Stokes.1} and the Plancherel theorem, by observing the estimates for the multipliers
\begin{align}
\| \alpha Y e^{-\frac{\Reel(\gamma)}{2} Y} \|_{L^2_Y} & \le \frac{C}{K^\frac14 (j+1)^\frac14}, \label{proof.lem.Stokes.2}\\
\| \alpha  e^{-\frac{|\alpha|}{2}Y} \, \big |\frac{1-e^{-(\gamma-|\alpha|)Y}}{\gamma-|\alpha|}\big | \|_{L^2_Y} & \le \frac{C}{K^\frac14 (j+1)^\frac14}.\label{proof.lem.Stokes.3}
\end{align} 
Here $C>0$ is a universal constant.
Estimate \eqref{proof.lem.Stokes.2} is a consequence of \eqref{est.gamma}.
As for \eqref{proof.lem.Stokes.3}, we devide into two cases. (i) The case $|\alpha|\le \frac12 K^\frac12 (j+1)^\frac12$: in this case we have from \eqref{est.gamma},
\begin{align*}
|\gamma-|\alpha|\ge |\gamma|-|\alpha| \ge \frac{|\alpha| + K^\frac12 (j+1)^\frac12}{C}
\end{align*}
with a universal constant $C>0$, which gives 
\begin{align*}
\| \alpha  e^{-\frac{|\alpha|}{2}Y} \, \big |\frac{1-e^{-(\gamma-|\alpha|)Y}}{\gamma-|\alpha|}\big | \|_{L^2_Y}  \le \frac{C}{|\alpha| + K^\frac12 (j+1)^\frac12} \| \alpha  e^{-\frac{|\alpha|}{2}Y} \|_{L^2_Y} 
& \le \frac{C|\alpha|^\frac12}{|\alpha|+K^\frac12 (j+1)^\frac12}\\
& \le \frac{C}{K^\frac14 (j+1)^\frac14}.
\end{align*}
(ii) The case $|\alpha|\ge \frac12 K^\frac12 (j+1)^\frac12$:  In this case we used the bound 
\begin{align*}
\sup_{\Reel(z)>0} \big | \frac{1-e^{-z}}{z} \big |\le C,
\end{align*} 
which gives 
\begin{align*}
\| \alpha  e^{-\frac{|\alpha|}{2}Y} \, \big |\frac{1-e^{-(\gamma-|\alpha|)Y}}{\gamma-|\alpha|}\big | \|_{L^2_Y}  \le C \| \alpha Y  e^{-\frac{|\alpha|}{2}Y} \|_{L^2_Y} 
& \le \frac{C}{|\alpha|^\frac12} \le \frac{C}{K^\frac14 (j+1)^\frac14}.
\end{align*}
The proof of \eqref{proof.lem.Stokes.3} is complete, and \eqref{est.lem.Stokes.2} is proved.
Estimate \eqref{est.lem.Stokes.3} is proved similarly by using \eqref{est.lem.Stokes.1}, the Plancherel theorem, and
\begin{align}
\| \frac{Y}{1+Y} e^{-\frac{\Reel(\gamma)}{2} Y} \|_{L^2_Y} & \le \frac{C}{K^\frac34 (j+1)^\frac34},\label{proof.lem.Stokes.4}\\
\| \frac{1}{1+Y} e^{-\frac{|\alpha|}{2}Y} \, \big |\frac{1-e^{-(\gamma-|\alpha|)Y}}{\gamma-|\alpha|}\big | \|_{L^2_Y} & \le \frac{C}{K^\frac12 (j+1)^\frac12}.\label{proof.lem.Stokes.5}
\end{align} 
Here $C>0$ is a universal constant. Indeed, \eqref{proof.lem.Stokes.4} is straightforward, while in \eqref{proof.lem.Stokes.5}, the reason why the estimate becomes worse is due to the case $|\alpha|\le \frac12 K^\frac12 (j+1)^\frac12$ with $|\alpha|\ll 1$, where we compute as 
\begin{align*}
  \| \frac{1}{1+Y} e^{-\frac{|\alpha|}{2}Y} \, \big |\frac{1-e^{-(\gamma-|\alpha|)Y}}{\gamma-|\alpha|}\big | \|_{L^2_Y}  \le \frac{C}{|\alpha| + K^\frac12 (j+1)^\frac12} \| \frac{1}{1+Y} e^{-\frac{|\alpha|}{2}Y} \|_{L^2_Y} 
& \le \frac{C}{K^\frac12 (j+1)^\frac12}.
\end{align*}
Here we essentially use the factor $\frac{1}{1+Y}$ to obtain the uniform estimate in $\alpha$. 
The proof is complete.
\end{proof}

\

In Propositions \ref{prop.Stokes.velocity} and \ref{prop.Stokes.vorticity} below we give estimates for the solution to \eqref{eq.bc} given by the formula as above in terms of the Fourier transform.
We always take $\kappa$ small enough so that $\kappa\in (0,\kappa']$ as in Lemma \ref{lem.Stokes}.
\begin{proposition}[Estimate for velocity]\label{prop.Stokes.velocity} It follows that
\begin{align}
\begin{split}
& \sum_{j=0}^{\nu^{-\frac12}} \frac{\nu^\frac14 (j+1)^\frac34}{(j!)^\frac32 \nu^\frac{j}{2}} M_{2,j,1} [\nabla \phi]   + \sum_{j=0}^{\nu^{-\frac12}} \frac{1}{(j!)^\frac32 \nu^{\frac{j}{2}+\frac14} (j+1)^\frac12} M_{2,j,\frac{1}{1+Y}} [\pa_X \phi]   \leq \frac{C}{K^\frac14} \cA h \cA_{bc}.
\end{split}
\end{align} 
Here $C>0$ is a universal constant.
\end{proposition}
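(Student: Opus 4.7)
\medskip
\noindent\textbf{Proof plan.} The heart of the matter has already been prepared in Lemma~\ref{lem.Stokes}: the explicit Fourier--Laplace formula \eqref{formula.ve} reduces the estimate for a fixed multi-index to an ODE bound on the half-line, and the lemma packages this into the crucial pointwise estimates \eqref{est.lem.Stokes.1}, together with their integrated versions \eqref{est.lem.Stokes.2}--\eqref{est.lem.Stokes.3}. The proposition should therefore follow by Plancherel in $(\tau,X)$ on both sides, combined with a clean manipulation of the Gevrey weights. I would proceed as follows.

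\medskip
\noindent\emph{Step 1: Reduction to the Fourier--Laplace estimates.} Fix $j\in\{0,\ldots,\nu^{-\frac12}\}$ and $j_2\in\{0,\ldots,j\}$, set $j_1=j-j_2$, and define $\psi = e^{-K\tau\nu^\frac12(j+1)}\partial_X^{j_1}\phi$ extended by zero for $\tau\notin(0,(K\nu^{1/2})^{-1})$. Then
\[
(\nabla\phi)^{(j_1,j_2)} \;=\; \bigl(B_{j_2}\partial_X\psi,\; B_{j_2}\partial_Y\psi\bigr),
\]
so by Plancherel in $(\tau,X)$,
\[
\|(\nabla\phi)^{(j_1,j_2)}\|_{L^2_{\tau,X,Y}}^2
\;=\;\sum_{\alpha\in\nu^\frac12\Z}\Bigl(\|B_{j_2}i\alpha\hat\psi\|_{L^2_{\lambda,Y}}^2 + \|B_{j_2}\partial_Y\hat\psi\|_{L^2_{\lambda,Y}}^2\Bigr).
\]
Applying \eqref{est.lem.Stokes.2} and Plancherel again on the right (with $g^{(j_1)}=e^{-K\tau\nu^\frac12(j+1)}\partial_X^{j_1}h$) yields
\[
\|(\nabla\phi)^{(j_1,j_2)}\|_{L^2_{\tau,X,Y}} \;\le\; \frac{C\,\nu^\frac{j_2}{2}\,j_2!}{K^\frac14(j+1)^\frac14(j_2+1)}\;\|e^{-K\tau\nu^\frac12(j_1+1)}\partial_X^{j_1}h\|_{L^2_{\tau,X}},
\]
where I used $e^{-K\tau\nu^\frac12(j+1)}\le e^{-K\tau\nu^\frac12(j_1+1)}$ to replace the decay weight by the one used in $\cA h\cA_{bc}$. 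The weighted second term is handled identically from \eqref{est.lem.Stokes.3}, which produces $\partial_X h$ at the price of an extra gain $K^{-\frac14}(j+1)^{-\frac14}$ together with the factor $\frac{1}{1+Y}$.

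\medskip
\noindent\emph{Step 2: Summation via a combinatorial bound.} Taking the supremum over $j_2$, then multiplying by $\frac{\nu^\frac14(j+1)^\frac34}{(j!)^\frac32\nu^\frac{j}{2}}$ and summing in $j$, the estimate is dominated by
\[
\frac{C}{K^\frac14}\sum_{j_1\ge 0}\sum_{j_2\ge 0}\frac{\nu^\frac14(j_1+j_2+1)^\frac12}{((j_1+j_2)!)^\frac32\,\nu^{j_1/2}}\cdot\frac{j_2!}{j_2+1}\;\|e^{-K\tau\nu^\frac12(j_1+1)}\partial_X^{j_1}h\|_{L^2_{\tau,X}},
\]
and the goal is to bound the coefficient in front of each fixed $j_1$ by a constant multiple of $\frac{\nu^\frac14(j_1+1)^\frac12}{(j_1!)^\frac32\nu^{j_1/2}}$ (the $\cA h\cA_{bc}$ weight). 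This reduces to checking
\[
\sum_{j_2\ge 0}\frac{j_2!\,(j_1!)^\frac32}{((j_1+j_2)!)^\frac32}\cdot\frac{(j_1+j_2+1)^\frac12}{(j_1+1)^\frac12(j_2+1)} \;\le\; C,
\]
uniformly in $j_1$. Using $\binom{j_1+j_2}{j_2}\ge 1$ to get $(j_1+j_2)!\ge j_1!\,j_2!$, and the elementary inequality $j_1+j_2+1\le(j_1+1)(j_2+1)$, each summand is controlled by $\frac{1}{(j_2!)^{1/2}(j_2+1)^{1/2}}$, whose sum converges. The second term in the proposition is treated analogously after shifting $j_1\mapsto j_1+1$, exploiting the extra $(j+1)^{-1/2}$ and the weaker $\nu$ power in the outside factor.

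\medskip
\noindent\emph{Main obstacle.} Neither step is technically deep; the content is the algebraic matching between the $\frac{1}{(j_2+1)}$ decay produced by the ODE analysis of Lemma~\ref{lem.Stokes}, the Gevrey $\frac32$ weights $\tfrac{1}{(j!)^{3/2}\nu^{j/2}}$, and the auxiliary $(j+1)$-powers appearing in both norms. The delicate point is verifying that the convolution-type sum in $j_2$ is summable uniformly in $j_1$ despite the sup (not sum) in $j_2$ hidden inside $M_{2,j,\cdot}$; this is what makes the inequality $\binom{j_1+j_2}{j_2}\ge 1$ and the super-exponential decay $(j_2!)^{-1/2}$ indispensable. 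Once this bookkeeping is done, the global factor $K^{-\frac14}$ (respectively $K^{-\frac12}$) is read off directly from Lemma~\ref{lem.Stokes}.
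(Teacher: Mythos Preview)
Your proposal is correct and close in spirit to the paper's proof: both reduce, via Plancherel, to the bounds \eqref{est.lem.Stokes.2}--\eqref{est.lem.Stokes.3} of Lemma~\ref{lem.Stokes}, and then verify a combinatorial compatibility between the Gevrey weights. The organizational difference is that you over-bound the $\sup_{j_2}$ inside $M_{2,j,\cdot}$ by the full sum $\sum_{j_2}$, regroup the double sum by the tangential index $j_1$, and show that for each fixed $j_1$ the coefficient series in $j_2$ converges (the key being the factor $(j_2!)^{-1/2}$ coming from $(j_1+j_2)!\ge j_1!\,j_2!$). The paper instead introduces, for each $j$, the maximizer $j_1[j]$ realizing the sup, splits the single sum over $j$ into the cases $j_1[j]=j$ and $j_1[j]\le j-1$, and in the latter case pulls out $\sup_k \tfrac{\nu^{1/4}(k+1)^{1/2}}{(k!)^{3/2}\nu^{k/2}}\|h^{(k)}\|$ before summing the residual coefficients. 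Your route is arguably cleaner since it avoids the selector $j_1[j]$ and the case split; the paper's route avoids the (harmless) over-bounding of a sup by a sum.

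One point to tighten for the second term: your shift $j_1\mapsto j_1+1$ fails at the endpoint $j=\nu^{-1/2}$ with $j_2=0$, because $\cA h\cA_{bc}$ only controls derivatives up to order $\nu^{-1/2}$. The paper handles this by treating $j=\nu^{-1/2}$ separately, observing that there $\nu^{-1/4}(j+1)^{-1/2}\le \nu^{1/4}(j+1)^{3/4}$, so the weighted $M_{2,j,\frac{1}{1+Y}}[\partial_X\phi]$ term is dominated by the first sum, which is already controlled. You should add this one-line endpoint remark.
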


\begin{proof} Assume that $M_{2,j,1}[\nabla \phi] = \| (\nabla \phi)^{\bf j} \|_{L^2(0,\frac{1}{K\nu^\frac12}; L^2_{X,Y})}$ for some ${\bf j}=(j_1,j_2)$ with $j_1+j_2=j$.
Note that this $j_1$ depends on $j$, and we write $j_1[j]$ if necessary.
By the Plancherel theorem the estimate \eqref{est.lem.Stokes.2} implies 
\begin{align*}
 \| (\nabla \phi)^{\bf j} \|_{L^2(0,\frac{1}{K\nu^\frac12}; L^2_{X,Y})}
 & \le \frac{C\nu^\frac{j-j_1[j]}{2}(j-j_1[j])!}{K^\frac14 (j+1)^\frac14 (j-j_1[j]+1)} \| h^{(j_1)} \|_{L^2(0,\frac{1}{K\nu^\frac12}; L^2_X)}, \\
 \qquad h^{(j_1)} & = e^{-K\tau \nu^\frac12 (j_1+1)} \pa_X^{j_1} h.
\end{align*}
Thus we have 
\begin{align*}
& \sum_{j=0}^{\nu^{-\frac12}} \frac{\nu^\frac14 (j+1)^\frac34}{(j!)^\frac32 \nu^\frac{j}{2}} M_{2,j,1} [\nabla \phi]  \\
& \le \frac{C}{K^\frac14} \sum_{j=0}^{\nu^{-\frac12}}  \frac{(j-j_1[j])!j_1[j]!}{j!(j-j_1[j]+1)} (\frac{j_1[j]!}{j!})^\frac12  (\frac{j+1}{j_1[j]+1})^\frac12  \Big ( \frac{\nu^\frac14 (j_1[j]+1)^\frac12}{(j_1[j]!)^\frac32\nu^\frac{j_1[j]}{2}}  \| h^{(j_1[j])} \|_{L^2(0,\frac{1}{K\nu^\frac12}; L^2_X)} \Big ).
\end{align*}
We decompose the summation in the right-hand side as $\sum_{j_1[j]=j}$ (i.e., $j$'s such that $0\le j\le\nu^{-\frac12}$ and $j_1[j]=j$) and $\sum_{j_1[j]\le j-1}$ (i.e., $j$'s such that $0\le j\le \nu^{-\frac12}$ and $j_1[j]\le j-1$). 
Then the sum of $\sum_{j_1[j]=j}$ is bounded from above by $\cA h\cA_{bc}$, while the sum of $\sum_{j_1[j]\le j-1}$ is bounded as
\begin{align*}
& \sum_{j_1[j]\le j-1}  \frac{(j-j_1[j])!j_1[j]!}{j!(j-j_1[j]+1)} (\frac{j_1[j]!}{j!})^\frac12  (\frac{j+1}{j_1[j]+1})^\frac12  \Big ( \frac{\nu^\frac14 (j_1[j]+1)^\frac12}{(j_1[j]!)^\frac32\nu^\frac{j_1[j]}{2}}  \| h^{(j_1[j])} \|_{L^2(0,\frac{1}{K\nu^\frac12}; L^2_X)} \Big ) \\
& \le \sum_{j_1[j]\le j-1} \frac{(j-j_1[j])!j_1[j]!}{j!(j-j_1[j]+1)} (\frac{j_1[j]!}{j!})^\frac12  (\frac{j+1}{j_1[j]+1})^\frac12  \, \sup_{0\le k\le\nu^{-\frac12}}   \Big ( \frac{\nu^\frac14 (k+1)^\frac12}{(k!)^\frac32\nu^\frac{k}{2}}  \| h^{(k)} \|_{L^2(0,\frac{1}{K\nu^\frac12}; L^2_X)} \Big ) \\
& \le C \cA h\cA_{bc}.
\end{align*}
Indeed, it suffices to use 
\begin{align}
\sum_{j_1[j] \le j-1}  \frac{(j-j_1[j])!j_1[j]!}{j!(j-j_1[j]+1)} (\frac{j_1[j]!}{j!})^\frac12  (\frac{j+1}{j_1[j]+1})^\frac12  & \le  C \sum_{j_1[j]\le  j-1} (j+1)^{-\frac32} \le C.
\end{align} 
Next we prove the estimate about $M_{2,j,\frac{1}{1+Y}}[\pa_X\phi]$.
Arguming as above, we have from \eqref{est.lem.Stokes.3} that, for $0\le j\le \nu^{-\frac12}-1$,
\begin{align*}
M_{2,j,\frac{1}{1+Y}}[\pa_X \phi] \le \frac{C\nu^\frac{j-j_1[j]}{2}(j-j_1[j])!}{K^\frac12 (j+1)^\frac12 (j-j_1[j]+1)}  \| \pa_X h^{(j_1[j])} \|_{L^2(0,\frac{1}{K\nu^\frac12}; L^2_X)},
\end{align*}
where $j_1[j]$ is taken similarly as in the above argument.
Thus we have 
\begin{align*}
& \sum_{j=0}^{\nu^{-\frac12}} \frac{1}{(j!)^\frac32 \nu^{\frac{j}{2}+\frac14} (j+1)^\frac12} M_{2,j,\frac{1}{1+Y}} [\pa_X \phi]  \\
& \le \frac{C}{K^\frac12} \sum_{j=0}^{\nu^{-\frac12}-1} \frac{(j-j_1[j])!}{(j!)^\frac32 \nu^{\frac{j_1}{2}+\frac14} (j+1)(j-j_1[j]+1)}  \| \pa_X h^{(j_1)} \|_{L^2(0,\frac{1}{K\nu^\frac12}; L^2_X)}  \\
& \qquad + \frac{M_{2,j,\frac{1}{1+Y}} [\pa_X \phi]  }{(j!)^\frac32 \nu^{\frac{j}{2}+\frac14} (j+1)^\frac12} \Big |_{j=\nu^{-\frac12}}\\
& \le \frac{C}{K^\frac12} \sum_{j=0}^{\nu^{-\frac12}-1} \frac{(j-j_1[j])!}{(j!)^\frac32 \nu^{\frac{j_1}{2}+\frac14} (j+1)(j-j_1[j]+1)}  \| h^{(j_1+1)} \|_{L^2(0,\frac{1}{K\nu^\frac12}; L^2_X)} \\
& \qquad  + \frac{ \nu^\frac14 (j+1)^\frac34 M_{2,j,1} [\pa_X \phi]  }{(j!)^\frac32 \nu^{\frac{j}{2}}} \Big |_{j=\nu^{-\frac12}}.
\end{align*}
The second term is bounded from above by $\frac{C}{K^\frac14} \cA h \cA_{bc}$, as we have shown as above.
As for the first term, we again decompose the summation $\sum_{j=0}^{\nu^{-\frac12}-1}$ into $\sum_{j_1[j]=j}$ and $\sum_{j_1[j]\le j-1}$, as we have done previously.
Then the sum of  $\sum_{j_1[j]=j}$ is bounded from above by $C\cA h\cA_{bc}$, while the sum of $\sum_{j_1[j]\le j-1}$ is estimated as
\begin{align*}
& \sum_{j_1[j]\le j-1}  \frac{(j-j_1[j])!}{(j!)^\frac32 \nu^{\frac{j_1[j]}{2}+\frac14} (j+1)(j-j_1[j]+1)}  \| h^{(j_1[j]+1)} \|_{L^2(0,\frac{1}{K\nu^\frac12}; L^2_X)} \\
& \le \sum_{j_1[j]\le j-1} \frac{(j-j_1[j])!(j_1[j]+1)!}{j!} (\frac{(j_1[j]+1)!}{j!})^\frac12   \frac{1}{(j+1)(j_1[j]+1)^\frac12 (j-j_1[j]+1)}\\
& \qquad \qquad \qquad \times   \, \sup_{0\le k\le\nu^{-\frac12}}   \Big ( \frac{\nu^\frac14 (k+1)^\frac12}{(k!)^\frac32\nu^\frac{k}{2}}  \| h^{(k)} \|_{L^2(0,\frac{1}{K\nu^\frac12}; L^2_X)} \Big )\\
& \le C \sum_{j=0}^{\nu^{-\frac12}} \frac{1}{(j+1)^\frac32} \, \sup_{0\le k\le\nu^{-\frac12}}   \Big ( \frac{\nu^\frac14 (k+1)^\frac12}{(k!)^\frac32\nu^\frac{k}{2}}  \| h^{(k)} \|_{L^2(0,\frac{1}{K\nu^\frac12}; L^2_X)} \Big ) \le C \cA h \cA_{bc}.
\end{align*}
The proof is complete.
\end{proof}

\

Next we show the estimate for the vorticity field.
The argument is similar to the one for the velocity.
\begin{lemma}\label{lem.Stokes.vo} There exists $\kappa''\in (0,1]$ such that the following statement holds for any $\kappa\in (0,\kappa'']$. Let $j_1=0,\cdots, j$ and $j_2 = j-j_1$. Then 
\begin{align}\label{est.lem.Stokes.1'}
|B_{j_2} (\pa_Y^2-\alpha^2) \hat{\psi} (\lambda,\alpha,Y)| + | Y B_{j_2} \pa_Y (\pa_Y^2-\alpha^2) \hat{\psi} (\lambda,\alpha,Y)|   \le  \frac{C \nu^\frac{j_2}{2} j_2!}{j_2+1} |\gamma| \,  e^{-\frac{\Reel (\gamma)}{2}Y}|\hat{g}^{(j_1)}| .
\end{align}
As a consequence, for $\theta'\in [-\frac12,2]$,
\begin{align}\label{est.lem.Stokes.2'}
\begin{split}
& \Big ( \sum_{\alpha\in \nu^\frac12 \Z} \| Y^{1+\theta'} B_{j_2}  (\pa_Y^2-\alpha^2) \hat{\psi} (\cdot,\alpha,\cdot)\|_{L^2_{\lambda,Y}}^2   \\
& \quad  + \| Y^{2+\theta'} B_{j_2} \alpha (\pa_Y^2-\alpha^2) \hat{\psi} (\cdot,\alpha,\cdot)\|_{L^2_{\lambda,Y}}^2 +  \| Y^{2+\theta'} B_{j_2} \pa_Y (\pa_Y^2-\alpha^2) \hat{\psi} (\cdot,\alpha,\cdot)\|_{L^2_{\lambda,Y}}^2 \Big )^\frac12 \\
& \leq \frac{C\nu^\frac{j_2}{2}j_2!}{K^{\frac{\theta'}{2}+\frac14} (j+1)^{\frac{\theta'}{2}+\frac14} (j_2+1)} \Big ( \sum_{\alpha \in \nu^\frac12 \Z} \| \hat{g}^{(j_1)} (\cdot,\alpha)\|_{L^2_{\lambda}}^2\Big )^\frac12.
\end{split}
\end{align} 
Here $C>0$ is a universal constant.
\end{lemma}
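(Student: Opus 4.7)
The strategy parallels that of Lemma \ref{lem.Stokes}: use the explicit formula \eqref{formula.vo}, apply $B_{j_2}$ directly, and then extract the decay in $Y$ and factorial growth in $j_2$ via Stirling's formula.

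First, from \eqref{formula.vo} I would write
\begin{align*}
B_{j_2}(\pa_Y^2-\alpha^2)\hat\psi &= -(\gamma+|\alpha|)(-\gamma)^{j_2}\chi_\nu^{j_2}\, e^{-\gamma Y}\, \hat g^{(j_1)}, \\
B_{j_2}\pa_Y(\pa_Y^2-\alpha^2)\hat\psi &= \gamma\,(\gamma+|\alpha|)(-\gamma)^{j_2}\chi_\nu^{j_2}\, e^{-\gamma Y}\, \hat g^{(j_1)}.
\end{align*}
The pointwise bound \eqref{est.lem.Stokes.1'} then reduces to the two claims
\begin{align*}
\chi_\nu^{j_2}|\gamma|^{j_2}\, e^{-\Reel (\gamma)Y} &\le \frac{C\nu^{j_2/2}\, j_2!}{j_2+1}\, e^{-\Reel (\gamma) Y/2}, \\
Y|\gamma|\, \chi_\nu^{j_2}|\gamma|^{j_2}\, e^{-\Reel (\gamma)Y} &\le \frac{C\nu^{j_2/2}\, j_2!}{j_2+1}\, e^{-\Reel (\gamma) Y/2}.
\end{align*}
Both follow from three ingredients: $\chi_\nu\le \kappa\nu^{1/2} Y$, $|\gamma|\le \sqrt{2}\Reel (\gamma)$ from \eqref{est.gamma}, and the classical inequality $(rY/2)^k\, e^{-rY/2}\le (k/e)^k \le k!/\sqrt{2\pi k}$ (Stirling) applied with $k=j_2$, resp.\ $k=j_2+1$. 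These produce a geometric factor $(2\sqrt{2}\kappa)^{j_2}$, resp.\ $(4\sqrt{2}\kappa)^{j_2}$, which for $\kappa$ small enough dominates the polynomial factor $(j_2+1)^{3/2}$ and leaves the announced $j_2!/(j_2+1)$. Combined with $|\gamma+|\alpha||\le 2|\gamma|$, this yields \eqref{est.lem.Stokes.1'}.

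For the $L^2$ bound \eqref{est.lem.Stokes.2'}, I would use $|\alpha|\le|\gamma|$ to bound each of the three integrands pointwise by a constant multiple of
$$\frac{\nu^{j_2/2}\, j_2!}{j_2+1}\, |\gamma|\, Y^{1+\theta'}\, e^{-\Reel (\gamma)Y/2}\, |\hat g^{(j_1)}|,$$
absorbing an extra factor $|\gamma|Y$ into the exponential exactly as in the pointwise step for the $\pa_Y$-term and for the $\alpha$-multiplied term. The $L^2_Y$ norm is then controlled by the elementary Gamma integral
$$\int_0^\infty Y^{2(1+\theta')}\, e^{-\Reel (\gamma) Y}\,dY = \frac{\Gamma(2\theta'+3)}{\Reel (\gamma)^{2\theta'+3}},$$
which converges because $\theta'\ge -\tfrac12$. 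Combining with $|\gamma|^2/\Reel (\gamma)^{2\theta'+3}\le C/\Reel (\gamma)^{2\theta'+1}$ and $\Reel (\gamma)\ge \sqrt{K(j+1)}$ from \eqref{est.gamma} yields the factor $(K(j+1))^{-\theta'/2-1/4}$. A sum over $\alpha\in \nu^{1/2}\Z$ together with Plancherel in $(\tau,X)$ then transforms the fiberwise bound into \eqref{est.lem.Stokes.2'}.

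The (mild) main obstacle is the bookkeeping of $\kappa$-geometric versus $(j_2+1)$-polynomial factors required to recover the denominator $j_2+1$ despite Stirling contributing only $(j_2+1)^{1/2}$ in the derivative case. Once the pattern established in Lemma \ref{lem.Stokes} is identified, the rest is mechanical; the upper bound $\theta'\le 2$ plays no role beyond ensuring finiteness of the resulting constants.
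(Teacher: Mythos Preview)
Your proposal is correct and follows essentially the same approach as the paper. The paper's own proof is even terser: it simply refers back to the argument of Lemma~\ref{lem.Stokes} for the pointwise bound \eqref{est.lem.Stokes.1'}, and for \eqref{est.lem.Stokes.2'} records the multiplier estimate $\|Y^{1+m}|\gamma|e^{-\Reel(\gamma)Y/2}\|_{L^2_Y}\le C\Reel(\gamma)^{-m-1/2}$ for $m\in[-\tfrac12,3]$ together with $\Reel(\gamma)\ge(|\alpha|+K^{1/2}(j+1)^{1/2})$, which is exactly your Gamma-integral computation.
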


\begin{proof} Estimate \eqref{est.lem.Stokes.1'} follows from \eqref{formula.vo} by arguing as in the proof of \eqref{est.lem.Stokes.1}. 
Estimates \eqref{est.lem.Stokes.2'}  then follows from \eqref{est.lem.Stokes.2'}, the Plancherel theorem, and 
\begin{align*}
\| Y^{1+m} |\gamma| e^{-\frac{\Reel(\gamma)}{2}Y} \|_{L^2_Y} \le \frac{C}{\big (\Reel (\gamma)\big )^{m+\frac12}} \le \frac{C}{\big (|\alpha|+ K^\frac12 (j+1)^\frac12 \big )^{m+\frac12}} \qquad {\rm by ~\,  \eqref{est.gamma}}
\end{align*}
for $m\in [-\frac12,3]$. The details are omitted here.
The proof is complete.
\end{proof}

\

\begin{proposition}[Estimate for vorticity]\label{prop.Stokes.vorticity} Let $\theta\in [0,2]$. It follows that
\begin{align}
\sum_{j=0}^{\nu^{-\frac12}} \frac{(j+1)^\frac14}{(j!)^\frac32 \nu^\frac{j}{2}} \nu^\frac14 (j+1)^\frac12 \Big (M_{2,j,Y} [\Delta \phi]  + M_{2,j,Y^2}[\nabla \Delta \phi] \Big ) & \le \frac{C}{K^\frac14} \cA h \cA_{bc},\label{est.prop.Stokes.vorticity.1} \\
\sum_{j=0}^{\nu^{-\frac12}} \frac{(j+1)^{\frac{\theta-1}{2}}}{(j!)^\frac32 \nu^{\frac{j}{2}+\frac14}}  \Big ( M_{2,j,Y^{\frac32+\theta}} [\pa_X \Delta \phi] + \nu^\frac12 M_{2,j,Y^{\frac32+\theta}} [\pa_Y \Delta \phi] \Big ) & \leq \frac{C}{K^\frac{\theta}{2}} \cA h \cA_{bc}.\label{est.prop.Stokes.vorticity.2} 
\end{align} 
Here $C>0$ is a universal constant.
\end{proposition}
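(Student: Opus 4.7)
The plan is to mimic the Fourier--Laplace argument used for Proposition \ref{prop.Stokes.velocity}, simply replacing the velocity-side symbol bounds of Lemma \ref{lem.Stokes} by the vorticity-side bounds of Lemma \ref{lem.Stokes.vo}, and then reusing the same combinatorial summation in $j$. Fix $j$ and a split ${\bf j}=(j_1,j_2)$ with $j_1+j_2=j$ realizing the supremum in $M_{2,j,\cdot}[\cdot]$, and set $\psi=e^{-K\tau\nu^{1/2}(j+1)}\pa_X^{j_1}\phi$. On the Fourier--Laplace side the symbols $(\pa_Y^2-\alpha^2)\hat\psi$, $\alpha(\pa_Y^2-\alpha^2)\hat\psi$ and $\pa_Y(\pa_Y^2-\alpha^2)\hat\psi$ correspond to $(\Delta\phi)^{\bf j}$, $(\pa_X\Delta\phi)^{\bf j}$ and $(\pa_Y\Delta\phi)^{\bf j}$ respectively (up to unimodular constants), so that Parseval in $(\tau,X)$ converts the three quantities controlled in \eqref{est.lem.Stokes.2'} into $L^2(\tau,X,Y)$ norms of those physical-side objects.

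For \eqref{est.prop.Stokes.vorticity.1} I apply Lemma \ref{lem.Stokes.vo} with $\theta'=0$, combined with the trace inequality $\|g^{(j_1)}\|_{L^2}\leq \|e^{-K\tau\nu^{1/2}(j_1+1)}\pa_X^{j_1}h\|_{L^2}\leq \frac{(j_1!)^{3/2}\nu^{j_1/2}}{\nu^{1/4}(j_1+1)^{1/2}}\cA h\cA_{bc}$. The powers of $\nu$ then cancel against the prefactor $\nu^{1/4}(j+1)^{3/4}(j!)^{-3/2}\nu^{-j/2}$, and the residual combinatorial factor is of the exact shape already summed in the proof of Proposition \ref{prop.Stokes.velocity}, yielding the bound $CK^{-1/4}\cA h\cA_{bc}$.

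For the $\pa_Y\Delta\phi$ piece of \eqref{est.prop.Stokes.vorticity.2} the same scheme works verbatim with $\theta'=\theta-1/2$ in the third term of \eqref{est.lem.Stokes.2'}: the explicit factor $\nu^{1/2}$ in front of $M_{2,j,Y^{3/2+\theta}}[\pa_Y\Delta\phi]$ is precisely what balances the extra $\nu^{-1/4}$ in the prefactor $(j!)^{-3/2}\nu^{-j/2-1/4}$. The harder part is the $\pa_X\Delta\phi$ contribution, for which a direct use of the second term of \eqref{est.lem.Stokes.2'} applied to $\psi$ falls short by exactly a factor $\nu^{1/2}$. My plan is to use the same device employed in the $\pa_X\phi$ half of Proposition \ref{prop.Stokes.velocity}, namely to reparametrize with $\tilde\psi:=e^{-K\tau\nu^{1/2}(j+2)}\pa_X^{j_1+1}\phi$. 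Since $e^{-K\tau\nu^{1/2}(j+1)}\leq e\,e^{-K\tau\nu^{1/2}(j+2)}$ on $[0,1/(K\nu^{1/2})]$, one has the pointwise bound $|(\pa_X\Delta\phi)^{\bf j}|\leq e\,|B_{j_2}\Delta\tilde\psi|$, and applying Lemma \ref{lem.Stokes.vo} to $\tilde\psi$ with parameters $(\tilde j,\tilde j_1,\tilde j_2)=(j+1,j_1+1,j_2)$ now places the trace on $\pa_X^{j_1+1}h$; its $\cA h\cA_{bc}$-bound contributes the factor $\nu^{(j_1+1)/2}=\nu^{1/2}\nu^{j_1/2}$, supplying exactly the missing $\nu^{1/2}$.

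Once these pointwise-in-$j$ bounds are in place, the final summation over $j$ is carried out as in the proof of Proposition \ref{prop.Stokes.velocity} by splitting into the diagonal contribution $j_1=j$ and the off-diagonal contribution $j_1\leq j-1$. In the latter, the residual combinatorial factor (up to irrelevant powers of $(j+1)$) is majorised by
\[
\sum_{j_1\le j-1}\frac{(j-j_1)!(j_1!)^{3/2}}{(j!)^{3/2}(j-j_1+1)(j_1+1)^{1/2}}\le C(j+1)^{-3/2},
\]
so that the sum over $j$ converges. The main obstacle is therefore the $\pa_X\Delta\phi$ term in \eqref{est.prop.Stokes.vorticity.2}, where the reparametrization trick is essential to avoid a spurious loss of $\nu^{-1/2}$; once that is implemented everything reduces to the combinatorial bookkeeping already validated in Proposition \ref{prop.Stokes.velocity}.
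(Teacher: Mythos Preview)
Your strategy coincides with the paper's: use Lemma~\ref{lem.Stokes.vo} with $\theta'=0$ for \eqref{est.prop.Stokes.vorticity.1}, with $\theta'=\theta-\tfrac12$ for the $\pa_Y\Delta\phi$ piece of \eqref{est.prop.Stokes.vorticity.2}, and for the $\pa_X\Delta\phi$ piece shift to level $j+1$ (the paper writes this as $M_{2,j,Y^{3/2+\theta}}[\pa_X\Delta\phi]\le C\,M_{2,j+1,Y^{3/2+\theta}}[\Delta\phi]$ and then applies the \emph{first} term of \eqref{est.lem.Stokes.2'} with $\theta'=\theta+\tfrac12$, which is exactly your reparametrisation expressed differently). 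The summation bookkeeping you invoke from Proposition~\ref{prop.Stokes.velocity} is the same.

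There is, however, one genuine gap. Your reparametrisation sends $(j,j_1,j_2)$ to $(j+1,j_1+1,j_2)$, and you then bound $\|h^{(j_1+1)}\|$ via $\cA h\cA_{bc}$. But $\cA h\cA_{bc}$ only controls derivatives up to order $\nu^{-1/2}$, so when $j=\nu^{-1/2}$ and the supremum in $M_{2,j,\cdot}$ is attained at $j_2=0$ (hence $j_1=\nu^{-1/2}$), your trace lands on $\pa_X^{\nu^{-1/2}+1}h$, which is not in the norm. The paper resolves this by treating the single term $j=\nu^{-1/2}$ separately: there one has $\nu^{-1/2}\le j+1$, so
\[
\frac{(j+1)^{\frac{\theta-1}{2}}}{(j!)^{3/2}\nu^{\frac{j}{2}+\frac14}}
\;\le\;
\frac{\nu^{\frac14}(j+1)^{\frac{\theta+1}{2}}}{(j!)^{3/2}\nu^{\frac{j}{2}}},
\]
and the \emph{second} term of \eqref{est.lem.Stokes.2'} with $\theta'=\theta-\tfrac12$, applied directly at level $j$, closes with the correct power $K^{-\theta/2}$. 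Once you insert this endpoint argument, your proof is complete and identical in substance to the paper's.
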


\begin{proof} Estimate \eqref{est.prop.Stokes.vorticity.1} is a consequence of \eqref{est.lem.Stokes.2'} with $\theta'=0$, by introducing $j_1[j]$ as in the proof of Proposition \ref{prop.Stokes.velocity}. As for \eqref{est.prop.Stokes.vorticity.2}, we have from \eqref{est.lem.Stokes.2'} with $\theta'=\theta-\frac12$ that 
\begin{align*}
\sum_{j=0}^{\nu^{-\frac12}} \frac{\nu^\frac14(j+1)^{\frac{\theta-1}{2}}}{(j!)^\frac32 \nu^{\frac{j}{2}}}  M_{2,j,Y^{\frac32+\theta}} [\pa_Y \Delta \phi]& \leq \frac{C}{K^\frac{\theta}{2}} \cA h \cA_{bc}.
\end{align*}
Next we have from $M_{2,j,Y^{\frac32+\theta}}[\pa_X\Delta \phi] \le C M_{2,j+1,Y^{\frac32+\theta}}[\Delta \phi]$ that 
\begin{align*}
\sum_{j=0}^{\nu^{-\frac12}-1} \frac{(j+1)^{\frac{\theta-1}{2}}}{(j!)^\frac32 \nu^{\frac{j}{2}+\frac14}}  M_{2,j,Y^{\frac32+\theta}} [\pa_X \Delta \phi] 
& \le C \sum_{j=0}^{\nu^{-\frac12}-1} \frac{(j+1)^{\frac{\theta-1}{2}}}{(j!)^\frac32 \nu^{\frac{j}{2}+\frac14}}  M_{2,j+1,Y^{\frac32+\theta}} [\Delta \phi] \\
& =C \sum_{j=0}^{\nu^{-\frac12}-1} \frac{\nu^\frac14 (j+1)^{\frac32+\frac{\theta-1}{2}}}{((j+1)!)^\frac32 \nu^{\frac{j+1}{2}}}  M_{2,j+1,Y^{\frac32+\theta}} [\Delta \phi] \\
& = C \sum_{j=1}^{\nu^{-\frac12}} \frac{\nu^\frac14 j^{\frac{\theta}{2}+1}}{(j!)^\frac32 \nu^{\frac{j}{2}}}  M_{2,j,Y^{\frac32+\theta}} [\Delta \phi]. 
\end{align*}
By arguing as in the proof of Proposition \ref{prop.Stokes.velocity}, the application of  \eqref{est.lem.Stokes.2'} gives
\begin{align*}
C \sum_{j=1}^{\nu^{-\frac12}} \frac{\nu^\frac14 j^{\frac{\theta}{2}+1}}{(j!)^\frac32 \nu^{\frac{j}{2}}}  M_{2,j,Y^{\frac32+\theta}} [\Delta \phi]\le \frac{C}{K^\frac{\theta+1}{2}} \sum_{j=1}^{\nu^{-\frac12}} \frac{\nu^\frac14 (j+1)^\frac12}{(j!)^\frac32 \nu^\frac{j}{2}} \| e^{-K\tau\nu^\frac12 (j+1)} \pa_X^j h \|_{L^2(0,\frac{1}{K\nu^\frac12}; L^2_X)},
\end{align*}
where the smoothing factor $(j+1)^{-\frac{\theta'}{2}-\frac14}$ with $\theta'=\theta+\frac12$ in \eqref{est.lem.Stokes.2'} plays a key role.
When $j=\nu^{-\frac12}$ we have 
\begin{align*}
\frac{(j+1)^{\frac{\theta-1}{2}}}{(j!)^\frac32 \nu^{\frac{j}{2}+\frac14}}  M_{2,j,Y^{\frac32+\theta}} [\pa_X \Delta \phi] \Big |_{j=\nu^{-\frac12}} 
& \le 
\frac{\nu^\frac14(j+1)^{\frac{\theta+1}{2}}}{(j!)^\frac32 \nu^{\frac{j}{2}}}  M_{2,j,Y^{\frac32+\theta}} [\pa_X \Delta \phi] \Big |_{j=\nu^{-\frac12}} \\
& \le  \frac{C}{K^\frac{\theta}{2}} \sum_{j=0}^{\nu^{-\frac12}} \frac{\nu^\frac14(j+1)^\frac12}{(j!)^\frac32 \nu^\frac{j}{2}} \| e^{-K\tau\nu^\frac12 (j+1)} \pa_X^j h \|_{L^2(0,\frac{1}{K\nu^\frac12}; L^2_X)}\\ 
& \qquad \qquad\qquad \qquad \qquad  ({\rm by~\eqref{est.lem.Stokes.2'}~{\rm with}~} \theta'=\theta-\frac12)\\
& \le \frac{C}{K^\frac{\theta}{2}}\cA h \cA_{bc}.
\end{align*}
The proof is complete.
\end{proof}

\subsection{Vorticity transport estimate}\label{subsec.Airy}

Propositions  \ref{prop.Stokes.velocity} and \ref{prop.Stokes.vorticity} of the previous paragraph reflect a strong difference between the weighted fields $(\na \phi)^{\bf j}$ and $(\Delta \phi)^{\bf j}$ associated to the Stokes solution $\phi$ of \eqref{eq.bc} : the former is not localized near the boundary, while the latter is, at scale $(K (j+1))^{-\frac12}$. This is due to a harmonic non-localized part in $\phi$, see expression \eqref{formula.ve}. As a consequence, as shown in Proposition \ref{prop.Stokes.vorticity}, for the vorticity field the weight $Y^\theta$ gives a gain  $(j+1)^{-\frac\theta2}$. In particular, the transport term $V\cdot \nabla \Delta \phi$ shares similar properties. When working in the Gevrey class $\frac32$, this term can be seen to be formally of the same size as the Stokes term $\nu^\frac12 \Delta^2 \phi-\pa_\tau \Delta \phi$. Hence, we need to add one step to our iteration in which we solve the heat-transport equations:
\begin{align}\label{eq.airy}
\begin{split}
& \nu^\frac12 \Delta^2 \phi - \pa_\tau \Delta \phi  - V\cdot \nabla \Delta \phi=H,\qquad \tau>0,~X\in \T_\nu,~Y>0,\\
& \phi|_{Y=0} = \Delta\phi|_{Y=0}=0,\qquad \phi|_{\tau=0} =0.
\end{split}
\end{align}
with $H$ will be the transport term created by the Stokes approximation. A key point in dealing with this equation rather than with the full Orr-Sommerfeld equation is that we will be able to propagate weighted estimates with weight $Y^{\theta}$, which is crucial to have sharp bounds. In the last step of our iteration, we will correct non-local stretching terms using the Orr-Sommerfeld equation with artificial boundary conditions, using the bounds of Section \ref{sec:vorticity:pure:slip}. The main result of this paragraph is

%and the transport term can be handled as a perturbation only by taking $K$ large enough. On the other hand, the nonlocal term $\nabla^\bot \phi\cdot \nabla \Omega$ is formally in the small order for the boundary corrector. It should be noted here that this idea is verified only for the boundary corrector; for the {\it non}boundary corrector part the nonlocal term plays a central role in the stability property. Taking this observation into account,  in this subsection we consider the solution to 
%
%
%
%As shown in Proposition \ref{prop.Stokes.vorticity}, for the vorticity field of the boundary corrector the weight $Y^\theta$ gives the gain of the factor $(j+1)^{-\frac\theta2}$. The advantage of \eqref{eq.airy} is that we can keep this scaling property for the solution to \eqref{eq.airy}, which is essential in working in the Gevrey $\frac32$ class since it is the borderline of the regularity.
\begin{proposition}\label{prop.Airy}  There exists $K_3=K_3(C_1^*)\geq 1$ such that if $K\geq K_3$ then the system \eqref{eq.airy} admits a unique solution $\phi\in C([0,\infty); \dot{H}^1_0(\T_\nu \times \R_+))$ with $\omega=-\Delta \phi\in C([0,\infty); L^2(\T_\nu \times \R_+))$ satisfying, for $0\le j\le \nu^{-\frac12}$, $\kappa\in (0,1]$, and $\theta=0,1,2$,
\begin{align}\label{est.prop.Airy}
\begin{split}
& \nu^\frac14 M_{2,j,Y^\theta} [\nabla \omega] + M_{\infty,j,Y^\theta}[\omega] + K^\frac12 \nu^\frac14 (j+1)^\frac12 M_{2,j,Y^\theta}[\omega]\\
& \leq C \Big ( \kappa \nu^\frac34 j  M_{2,j-1,Y^\theta}[\nabla \omega] + \nu^\frac14 \theta M_{2,j,Y^{\theta-1}}[\omega]  + + \frac{1}{K^\frac14\nu^\frac14(j+1)^\frac14} M_{2,j,Y^{\theta+\frac12}}[H]\\
& \quad + \frac{1}{\kappa K^\frac12 \nu^\frac14 (j+1)^\frac12} \sum_{l=0}^{j-1} \min\{l+1, j-l+1\} \,  \binom{j}{l} \, N_{\infty,j-l} [V] \, M_{2,l+1,Y^\theta} [\omega] \Big ).
\end{split}
\end{align}
Here $C>0$ is a universal constant.
\end{proposition}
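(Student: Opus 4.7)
My plan is to closely parallel the scheme of Section~\ref{subsec.lem.prop}, replacing the concavity weight $\xi_j$ by the simpler polynomial weight $Y^\theta$. This simplification will be available precisely because \eqref{eq.airy} omits the stretching term $\na^\bot\phi\cdot\na\Omega$, so the concavity of $\Omega$ plays no role here and $Y^\theta$ propagates cleanly through the Laplacian and transport. Concretely, I will apply $e^{-K\tau\nu^\frac12(j+1)}B_{j_2}\pa_X^{j_1}$ to \eqref{eq.airy} to produce an equation for $\omega^{\bf j}$ of the form \eqref{eq.omegaj} but with $(\na^\bot\phi)^{\bf j}\cdot\na\Omega$ dropped and with source $H^{\bf j}$, then test against $Y^{2\theta}\omega^{\bf j}$ and integrate over $(0,\tau_0)$ for $\tau_0\in(0,\frac{1}{K\nu^\frac12}]$. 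Using the vanishing initial data together with $\omega^{\bf j}|_{Y=0}=0$ (which holds since $\omega=-\Delta\phi$ with $\Delta\phi|_{Y=0}=0$, while $\chi_\nu|_{Y=0}=0$ takes care of $j_2\ge 1$), the LHS produces the coercive trio $\tfrac12\|Y^\theta\omega^{\bf j}(\tau_0)\|^2$, $K\nu^\frac12(j+1)\|Y^\theta\omega^{\bf j}\|_{L^2_\tau L^2}^2$ and $\nu^\frac12\|Y^\theta\na\omega^{\bf j}\|_{L^2_\tau L^2}^2$. Integrating the Laplacian by parts twice against the $Y$-dependent weight will produce an extra LHS term $-\theta(2\theta-1)\nu^\frac12\|Y^{\theta-1}\omega^{\bf j}\|_{L^2_\tau L^2}^2$ which, for $\theta\in\{0,1,2\}$, moves to the RHS and becomes the $\nu^\frac14\theta M_{2,j,Y^{\theta-1}}[\omega]$ contribution in \eqref{est.prop.Airy}.

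The commutator and transport contributions will be bounded in direct analogy with Lemmas~\ref{lem.prop.1}--\ref{lem.prop.6}, with $\xi_j$ systematically replaced by $Y^\theta$. The diffusion commutator $[B_{j_2},\pa_Y]$ will yield the $\kappa\nu^\frac34 j M_{2,j-1,Y^\theta}[\na\omega]$ term as in Lemma~\ref{lem.prop.1}. The pure transport $V\cdot\na\omega^{\bf j}$, after integration by parts using $\na\cdot V=0$ and $V|_{Y=0}=0$, reduces to $-\theta\langle V_2,\, Y^{2\theta-1}|\omega^{\bf j}|^2\rangle$; using $|V_2|\le\|\pa_Y V_2\|_{L^\infty}Y=\|\pa_X V_1\|_{L^\infty}Y\le C_1^*\nu^\frac12 Y$ (divergence-free and Assumption~\ref{assume}(iii)), this is bounded by $CC_1^*\nu^\frac12\theta\|Y^\theta\omega^{\bf j}\|^2$ and absorbed by the $K$-coercivity for $K\ge K_3(C_1^*)$. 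The boundary commutator $V_2[B_{j_2},\pa_Y]\dots$ is handled as in Lemma~\ref{lem.prop.3'} via $|V_2\chi_\nu'|\le CC_1^*\chi_\nu$. Finally, the transport commutator $\sum_l V^{\bf j-l}\cdot(\na\omega)^{\bf l}$ follows Lemma~\ref{lem.prop.4}: the identity $\chi_\nu(\pa_Y\omega)^{\bf l}=e^{K\tau\nu^\frac12}\omega^{(l_1,l_2+1)}$ combined with the divergence-free consequence $\|V_2^{\bf j-l}/\chi_\nu\|_{L^\infty}\le CN_{\infty,j-l}[V]$ delivers, after weighted Cauchy--Schwarz and one Young step against the $K$-coercivity, exactly the last term of \eqref{est.prop.Airy}.

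The main obstacle will be the sharp bound on the source pairing $\int_0^{\tau_0}\langle H^{\bf j},\, Y^{2\theta}\omega^{\bf j}\rangle d\tau$: the announced RHS contribution $\frac{1}{K^\frac14\nu^\frac14(j+1)^\frac14}M_{2,j,Y^{\theta+\frac12}}[H]$ trades an extra $Y^\frac12$ weight on $H$ against a subpower of $K\nu(j+1)$, a trade-off that will be indispensable in Section~\ref{sec.boundary} because the transport source produced by the Stokes approximation carries strong $Y$-weights near the boundary. To achieve this scaling I will split the pairing as $\langle Y^{\theta+\frac12}H^{\bf j},\, Y^{\theta-\frac12}\omega^{\bf j}\rangle$ and apply the weighted Hardy interpolation
\begin{equation*}
\|Y^{\theta-\frac12}\omega^{\bf j}\|^2\le C\|Y^\theta\omega^{\bf j}\|\,\|Y^\theta\pa_Y\omega^{\bf j}\|,\qquad\theta\in\{0,1,2\},
\end{equation*}
valid because $\omega^{\bf j}|_{Y=0}=0$ (for $\theta=0$ via $\int Y^{-1}|\omega^{\bf j}|^2\le\|\omega^{\bf j}/Y\|\,\|\omega^{\bf j}\|\le 2\|\pa_Y\omega^{\bf j}\|\,\|\omega^{\bf j}\|$ using the standard Hardy inequality; for $\theta\ge 1$ via direct integration by parts on $\int Y^{2\theta-1}|\omega^{\bf j}|^2$). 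After Cauchy--Schwarz in $\tau$ the pairing will be bounded by $M_{2,j,Y^{\theta+\frac12}}[H]\cdot M_{2,j,Y^\theta}[\omega]^{\frac12}M_{2,j,Y^\theta}[\na\omega]^{\frac12}$, at which point the three-factor Young inequality
\begin{equation*}
PQ^\frac12 R^\frac12\le\frac{P^2}{A}+\frac{ABR^2}{8}+\frac{AQ^2}{8B}
\end{equation*}
applied with $A=K^\frac12\nu^\frac12(j+1)^\frac12$ and $B$ of order $K^\frac12(j+1)^\frac12$ absorbs the $\omega$-factors into the coercive $K\nu^\frac12(j+1)M_{2,j,Y^\theta}[\omega]^2$ and diffusive $\nu^\frac12 M_{2,j,Y^\theta}[\na\omega]^2$ terms on the LHS, leaving exactly $\frac{1}{K^\frac12\nu^\frac12(j+1)^\frac12}M_{2,j,Y^{\theta+\frac12}}[H]^2$ on the RHS. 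Combining everything, taking $K\ge K_3(C_1^*)$ large enough to absorb the remaining non-productive contributions, and passing to square roots will give \eqref{est.prop.Airy}; existence and uniqueness of $\phi$ will follow from standard Galerkin/energy theory for linear parabolic equations with smooth coefficients.
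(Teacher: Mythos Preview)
Your proposal is correct and follows essentially the same scheme as the paper: apply $e^{-K\tau\nu^{1/2}(j+1)}B_{j_2}\pa_X^{j_1}$, test against $Y^{2\theta}\omega^{\bf j}$, and bound the diffusion, transport, commutator, and source contributions separately (the paper packages these as Lemmas~\ref{lem.prop.airy1}--\ref{lem.prop.airy5}). The one genuine difference is in the source term for $\theta=1,2$. The paper uses the H\"older interpolation $\|Y^{\theta-\frac12}\omega^{\bf j}\|\le\|Y^{\theta-1}\omega^{\bf j}\|^{1/2}\|Y^\theta\omega^{\bf j}\|^{1/2}$, so that the factor $M_{2,j,Y^{\theta-1}}[\omega]$ produced by Young's inequality simply merges with the term already present on the RHS from the Laplacian; this is why the paper splits the final assembly into the cases $\theta=0$ and $\theta=1,2$. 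Your integration-by-parts interpolation $\|Y^{\theta-\frac12}\omega^{\bf j}\|^2\le C\|Y^\theta\omega^{\bf j}\|\,\|Y^\theta\pa_Y\omega^{\bf j}\|$ instead routes the extra half-power through the diffusive coercivity $\nu^{1/2}\|Y^\theta(\na\omega)^{\bf j}\|^2$, which has the minor advantage of treating all $\theta\in\{0,1,2\}$ uniformly. Both routes yield the same $\frac{1}{K^{1/4}\nu^{1/4}(j+1)^{1/4}}M_{2,j,Y^{\theta+1/2}}[H]$ after square roots. One small point to keep clean in your write-up: $\pa_Y\omega^{\bf j}$ differs from $(\pa_Y\omega)^{\bf j}$ by the commutator $\nu^{1/2}j_2\chi_\nu' e^{-K\tau\nu^{1/2}}(\pa_Y\omega)^{(j_1,j_2-1)}$, so your interpolation actually feeds an extra $\kappa\nu^{1/2}j\,M_{2,j-1,Y^\theta}[\na\omega]$ into the RHS---but this is already one of the announced terms in \eqref{est.prop.Airy}, so no harm done.
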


\begin{remark}\label{rem.prop.Airy}{\rm The solution $\omega=-\Delta\phi$ to \eqref{eq.airy} in Proposition \ref{prop.Airy} has the regularity $(\pa_\tau -\nu^\frac12 \Delta) Y^\theta \omega\in L^2_{loc}([0,\infty); L^2(\T_\nu \times \R_+))$, $\theta=0,1,2$, with the Dirichlet boundary condition. Hence, the maximal regularity for the heat equation implies 
$$\pa_\tau Y^\theta \omega, \, \Delta (Y^\theta \omega) \in L^2_{loc}([0,\infty); L^2(\T_\nu \times \R_+)).$$
}
\end{remark}

To prove Proposition \ref{prop.Airy} let us recall that $\omega^{\bf j} = e^{-K\tau \nu^\frac12 (j+1)} B_{j_2} \pa_X^{j-j_2} \omega$ satisfies 
\begin{align}\label{proof.prop.Airy.1} 
\begin{split}
& -\nu^\frac12 (\Delta \omega)^{\bf j} + \pa_\tau \omega^{\bf j} + K\nu^\frac12 (j+1) \omega^{\bf j} + V \cdot \nabla \omega^{\bf j} \\
& =   -V_2 [B_{j_2}, \pa_Y] e^{-K\tau\nu^\frac12 (j+1)} \pa_X^{j_1} \omega\\
& \quad - \sum_{l=0}^{j-1} \sum_{\max\{0, l+j_2-j\} \leq l_2\leq \min\{l,j_2\}}  \binom{j_2}{l_2}  \binom{j-j_2}{l-l_2} V^{{\bf j-l}} \cdot (\nabla \omega)^{\bf l} \\
& \quad +H^{\bf j}.
\end{split}
\end{align}
Then \eqref{est.prop.Airy} is proved by taking the inner product in \eqref{proof.prop.Airy.1} with $Y^{2\theta} \omega^{\bf j}$ for each $\theta=0,1,2$, and then by taking the supremum about $j_2=0,\cdots,j$ and about $\tau_0\in (0,\frac{1}{K\nu^\frac12}]$. 
Hence the proof proceeds as in the proof of Proposition \ref{prop.mOS2}.

\begin{lemma}\label{lem.prop.airy1} There exists $C>0$  such that for any $K\geq 1$ and $\kappa\in (0,1]$,
\begin{align*}
\int_0^{\tau_0} \langle  -\nu^\frac12 (\Delta\omega)^{\bf j}, Y^{2\theta} \omega^{\bf j}\rangle \, d\tau & \geq \frac34 \nu^\frac12 \| Y^\theta (\nabla \omega)^{\bf j}\|_{L^2(0,\tau_0; L^2_{X,Y})}^2 - C \nu^\frac12 \big (\kappa \nu^\frac12 j_2)^2 \, M_{2,j-1,Y^\theta}[\pa_Y \omega] \big )^2\\
& \quad - C \theta^2 \nu^\frac12 M_{2,j,Y^{\theta-1}} [\omega]^2. 
\end{align*}
\end{lemma}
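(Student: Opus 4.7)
The lemma is the direct analogue, in the $Y^{2\theta}$-weighted setting relevant to the heat-transport equation \eqref{eq.airy}, of Lemma \ref{lem.prop.1}: it extracts a coercive dissipative term $\|Y^\theta (\nabla \omega)^{\bf j}\|^2$ at the cost of lower-order commutator errors. The crucial structural input is that, unlike in Section \ref{sec:vorticity:pure:slip} where $\omega\vert_{Y=0}\ne 0$, here $\omega=-\Delta\phi$ vanishes at $Y=0$ thanks to the artificial boundary condition in \eqref{eq.airy}, so no boundary terms appear in integration by parts.

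The plan is to start from the identity
\begin{equation*}
(\Delta\omega)^{\bf j}=\nabla\cdot(\nabla\omega)^{\bf j}-\frac{\nu^{\frac12}j_2\chi_\nu'}{\chi_\nu}(\pa_Y\omega)^{\bf j},
\end{equation*}
already used in the proof of Lemma \ref{lem.prop.1}, and to integrate by parts the first term against $Y^{2\theta}\omega^{\bf j}$. Leibniz gives $\nabla(Y^{2\theta}\omega^{\bf j})=Y^{2\theta}\nabla\omega^{\bf j}+2\theta Y^{2\theta-1}\omega^{\bf j}{\bf e}_2$, and the relation $\nabla\omega^{\bf j}=(\nabla\omega)^{\bf j}+\nu^{\frac12}j_2\chi_\nu'e^{-K\tau\nu^{\frac12}}(\pa_Y\omega)^{(j_1,j_2-1)}{\bf e}_2$ produces the main quadratic form $\nu^{\frac12}\|Y^\theta(\nabla\omega)^{\bf j}\|^2$ plus two error contributions: one of order $\nu j_2$ from the commutator between $B_{j_2}$ and $\pa_Y$, and one of order $\theta\nu^{\frac12}$ from differentiating the weight $Y^{2\theta}$.

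For the commutator error, I will combine the term coming from integration by parts with the piece $\frac{\nu^{\frac12}j_2\chi_\nu'}{\chi_\nu}(\pa_Y\omega)^{\bf j}$ from the splitting of $(\Delta\omega)^{\bf j}$, using $\omega^{\bf j}/\chi_\nu=e^{-K\tau\nu^{\frac12}}(\pa_Y\omega)^{(j_1,j_2-1)}$ (valid for $j_2\ge1$; the contribution is trivially $0$ for $j_2=0$). The total is bounded by $2\nu j_2\|\chi_\nu'\|_{L^\infty}\|Y^\theta(\pa_Y\omega)^{\bf j}\|_{L^2_\tau L^2}\|Y^\theta(\pa_Y\omega)^{(j_1,j_2-1)}\|_{L^2_\tau L^2}$, and $\|\chi_\nu'\|_{L^\infty}\le\kappa$, so Young's inequality yields $\tfrac{\nu^{\frac12}}{8}\|Y^\theta(\pa_Y\omega)^{\bf j}\|^2+C\nu^{\frac12}(\kappa\nu^{\frac12}j_2)^2M_{2,j-1,Y^\theta}[\pa_Y\omega]^2$. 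The $\theta$-error $2\theta\nu^{\frac12}\langle(\pa_Y\omega)^{\bf j},Y^{2\theta-1}\omega^{\bf j}\rangle$ is treated by Cauchy--Schwarz and Young as $\tfrac{\nu^{\frac12}}{8}\|Y^\theta(\pa_Y\omega)^{\bf j}\|^2+C\theta^2\nu^{\frac12}\|Y^{\theta-1}\omega^{\bf j}\|^2$, where the last factor is controlled by $M_{2,j,Y^{\theta-1}}[\omega]^2$.

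Summing: the dissipation $\nu^{\frac12}\|Y^\theta(\nabla\omega)^{\bf j}\|_{L^2_\tau L^2}^2$ absorbs two copies of $\tfrac{\nu^{\frac12}}{8}\|Y^\theta(\pa_Y\omega)^{\bf j}\|^2\le\tfrac{\nu^{\frac12}}{8}\|Y^\theta(\nabla\omega)^{\bf j}\|^2$, leaving the desired coefficient $\tfrac34\nu^{\frac12}$ in front, while the two error terms match the right-hand side exactly. There is no real obstacle here; the only items requiring care are (i) that the case $j_2=0$ is covered by the usual convention $(\pa_Y\omega)^{(j_1,-1)}\equiv0$, and (ii) the integrability of $Y^{2\theta-1}\omega^{\bf j}\omega^{\bf j}$ and of the boundary trace at $Y=0$, both of which are handled for $\theta\in\{0,1,2\}$ by the Dirichlet condition $\omega\vert_{Y=0}=0$ (see Remark \ref{rem.prop.Airy} for the underlying regularity of $Y^\theta\omega$).
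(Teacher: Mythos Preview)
Your proof is correct and follows exactly the approach the paper indicates: it mirrors the integration-by-parts computation of Lemma~\ref{lem.prop.1}, with the only new contribution being the cross term $2\theta\nu^{\frac12}\langle Y^\theta(\pa_Y\omega)^{\bf j}, Y^{\theta-1}\omega^{\bf j}\rangle$ arising from differentiating the weight $Y^{2\theta}$, which you handle by Cauchy--Schwarz and Young to produce the $C\theta^2\nu^{\frac12}M_{2,j,Y^{\theta-1}}[\omega]^2$ term. One small expository inaccuracy: in Section~\ref{sec:vorticity:pure:slip} the artificial boundary condition is precisely $\Delta\phi\vert_{Y=0}=\omega\vert_{Y=0}=0$, so $\omega$ \emph{does} vanish at the boundary there as well---this does not affect your argument, but the contrast you draw is not the right one.
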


\begin{proof} The proof is similar (and much simpler) to the one of Lemma \ref{lem.prop.1}. Indeed, the only difference is the presence of the weight $Y^{2\theta}$ with $\theta=0,1,2$, which creates the term 
\begin{align*}
2\theta \nu^\frac12 \int_0^{\tau_0} \langle Y^\theta (\pa_Y \omega)^{\bf j},  Y^{\theta-1} \omega^{\bf j}\rangle \, d\tau
\end{align*}
after the integration by parts. This is responsible for the last term in the estimate of this lemma.
The details are omitted. The proof is complete.
\end{proof}

\begin{lemma}\label{lem.prop.airy2} There exists $K_{3,2}=K_{3,2}(C_1^*)\geq 1$ such that if $K\geq K_{3,2}$ then
\begin{align*}
& \int_0^{\tau_0} \langle \pa_\tau \omega^{\bf j} + K\nu^\frac12 (j+1) \omega^{\bf j} + V \cdot \nabla \omega^{\bf j} , Y^{2\theta} \omega^{\bf j}\rangle \, d\tau \\
& \geq \frac12 \| Y^\theta \omega^{\bf j} (\tau_0)\|^2  + \frac34 K\nu^\frac12 (j+1) \|Y^\theta \omega^{\bf j} \|_{L^2(0,\tau_0; L^2_{X,Y})}^2.
\end{align*}
\end{lemma}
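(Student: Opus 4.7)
The claim is the energy identity for an advection–diffusion equation with damping, in which the main (and essentially only) nontrivial task is controlling the transport contribution. My plan is to split the left-hand side into its three pieces and treat each in turn.

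For the time derivative, I use $\int_0^{\tau_0} \langle \pa_\tau \omega^{\bf j}, Y^{2\theta} \omega^{\bf j}\rangle\, d\tau = \tfrac12 \|Y^\theta \omega^{\bf j}(\tau_0)\|^2 - \tfrac12 \|Y^\theta \omega^{\bf j}(0)\|^2$, and then invoke the initial condition $\phi|_{\tau=0}=0$ from \eqref{eq.airy}, which forces $\omega^{\bf j}(0)=0$ and leaves precisely $\tfrac12\|Y^\theta \omega^{\bf j}(\tau_0)\|^2$ (no negative initial-data contribution appears, in contrast with Lemma \ref{lem.prop.2}). The damping piece $K\nu^{1/2}(j+1)\omega^{\bf j}$ produces $K\nu^{1/2}(j+1) \|Y^\theta \omega^{\bf j}\|_{L^2(0,\tau_0;L^2)}^2$, already in the desired form.

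The core of the argument is the transport term $\langle V\cdot\nabla \omega^{\bf j}, Y^{2\theta}\omega^{\bf j}\rangle$. I rewrite it as $\tfrac12 \int V\cdot \nabla(\omega^{\bf j})^2\, Y^{2\theta}$ and integrate by parts in $(X,Y)$. Using $\nabla\cdot V=0$ together with $V_2|_{Y=0}=0$ (so the boundary contribution in $Y$ vanishes) and periodicity in $X$, all that remains is the weight derivative term:
\begin{align*}
\langle V\cdot\nabla \omega^{\bf j}, Y^{2\theta}\omega^{\bf j}\rangle = -\theta \int V_2\, (\omega^{\bf j})^2\, Y^{2\theta-1}.
\end{align*}
Because $V_2|_{Y=0}=0$ and $\pa_Y V_2=-\pa_X V_1$ by incompressibility, Assumption \ref{assume}(iii) gives the Hardy-type bound $\|V_2/Y\|_{L^\infty}\leq \|\pa_Y V_2\|_{L^\infty}=\|\pa_X V_1\|_{L^\infty}\leq C_1^*\nu^{1/2}$. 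Hence, for $\theta\in\{0,1,2\}$,
\begin{align*}
\Big|\int_0^{\tau_0}\langle V\cdot\nabla \omega^{\bf j}, Y^{2\theta}\omega^{\bf j}\rangle\, d\tau\Big| \leq 2 C_1^* \nu^{1/2}\|Y^\theta \omega^{\bf j}\|_{L^2(0,\tau_0;L^2)}^2.
\end{align*}

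Finally, choosing $K \geq K_{3,2}:=8C_1^*$ ensures $2C_1^*\nu^{1/2} \leq \tfrac14 K\nu^{1/2}(j+1)$ (using $j+1\geq 1$), so the transport contribution is absorbed into the damping, leaving $\tfrac34 K\nu^{1/2}(j+1)\|Y^\theta \omega^{\bf j}\|_{L^2}^2$, exactly as asserted. I do not anticipate any real obstacle; the only points needing care are the vanishing of boundary terms at $Y=0$ (ensured by $V_2|_{Y=0}=0$ plus the Dirichlet condition $\omega^{\bf j}|_{Y=0}=0$ from the artificial boundary condition in \eqref{eq.airy}), the decay at $Y\to\infty$ (justified by the regularity recorded in Remark \ref{rem.prop.Airy}), and the Hardy-type estimate controlling the weight derivative $\pa_Y(Y^{2\theta})=2\theta Y^{2\theta-1}$ against $V_2$.
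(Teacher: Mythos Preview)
Your proposal is correct and follows essentially the same approach as the paper's proof: both use the zero initial data, integrate the transport term by parts against the weight to obtain $-\theta\langle \frac{V_2}{Y}\, Y^{2\theta}, (\omega^{\bf j})^2\rangle$, bound $\|V_2/Y\|_{L^\infty}\le \|\pa_X V_1\|_{L^\infty}\le C_1^*\nu^{1/2}$ via incompressibility and Assumption~\ref{assume}(iii), and absorb the result into the damping by choosing $K$ large depending only on $C_1^*$. Your additional remarks on boundary terms and regularity are appropriate technical justifications that the paper leaves implicit.
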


\begin{proof} The proof is a simple modification of the one of Lemma \ref{lem.prop.2}. 
We note that the initial data is taken as zero, and the integration by parts gives 
\begin{align*}
\int_0^{\tau_0} \langle V \cdot \nabla \omega^{\bf j} , Y^{2\theta} \omega^{\bf j}\rangle \, d\tau \le \theta \| \frac{V_2}{Y}\|_{L^\infty} \| Y^\theta\| \omega^{\bf j} \|^2_{L^2(0,\tau_0; L^2)}.
\end{align*}
Then the desired estimate follows by taking $K$ large enough depending  only on $C_1^*$, for $\| \frac{V_2}{Y}\|_{L^\infty} \le \| \pa_Y V_2\|_{L^\infty} =\|\pa_X V_1\|_{L^\infty} \le C_1^*\nu^\frac12$.
The details are omitted. The proof is complete.
\end{proof}

\

\begin{lemma}\label{lem.prop.airy3} Let $j_2\geq 1$. It follows that
\begin{align*}
\int_0^{\tau_0} \langle  -V_2 [B_{j_2}, \pa_Y] e^{-K\tau\nu^\frac12 (j+1)} \pa_X^{j_1} \omega, Y^{2\theta} \omega^{\bf j} \rangle \, d\tau \leq C C_1^* \nu^\frac12 j_2 \| Y^\theta \omega^{\bf j} \|_{L^2(0,\tau_0; L^2_{X,Y})}^2.
\end{align*}
Here $C>0$ is a universal constant.
\end{lemma}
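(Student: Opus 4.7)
My plan is to follow verbatim the strategy used in the proof of Lemma~\ref{lem.prop.3'}, which handles the exact same commutator quantity paired against $\xi_j^2\omega^{\bf j}$ rather than against $Y^{2\theta}\omega^{\bf j}$. The crucial observation is that the weight plays no active role: the commutator $[B_{j_2},\pa_Y]$ acts as a multiplication operator (no new derivatives land on the weight), so only a pointwise $L^\infty$ bound on the resulting prefactor is required.

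First I would rewrite the commutator using the second form of \eqref{proof.lem.prop.1.1}, namely $[B_{j_2},\pa_Y] = -\frac{\nu^\frac12 j_2\chi_\nu'}{\chi_\nu}B_{j_2}$. Since $B_{j_2}\pa_X^{j_1}$ commutes with the scalar factor $e^{-K\tau\nu^\frac12(j+1)}$, this gives $[B_{j_2},\pa_Y]e^{-K\tau\nu^\frac12(j+1)}\pa_X^{j_1}\omega = -\frac{\nu^\frac12 j_2\chi_\nu'}{\chi_\nu}\omega^{\bf j}$, and the integrand of the inner product collapses to $\nu^\frac12 j_2\,\frac{V_2\chi_\nu'}{\chi_\nu}\,Y^{2\theta}\,|\omega^{\bf j}|^2$.

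Next I would bound the coefficient $V_2\chi_\nu'/\chi_\nu$ in $L^\infty$. From $V_2|_{Y=0}=0$ and Assumption~\ref{assume}~(iii) together with the divergence-free relation $\pa_Y V_2 = -\pa_X V_1$, we have $\|V_2/Y\|_{L^\infty} \le \|\pa_X V_1\|_{L^\infty} \le C_1^*\nu^\frac12$. For the ratio $Y\chi_\nu'/\chi_\nu$, the elementary bound $re^{-r}\le C(1-e^{-r})$ applied at $r=\kappa\nu^\frac12 Y$ yields $\kappa\nu^\frac12 Y\,e^{-\kappa\nu^\frac12 Y}\le C\chi_\nu$, i.e.\ $\nu^\frac12 Y\chi_\nu'\le C\chi_\nu$, hence $|Y\chi_\nu'/\chi_\nu|\le C\nu^{-\frac12}$. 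Multiplying these two estimates gives $\|V_2\chi_\nu'/\chi_\nu\|_{L^\infty}\le CC_1^*$, exactly as in Lemma~\ref{lem.prop.3'}.

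Finally I would pull this pointwise bound out of the space integral and then out of the time integral, yielding the stated upper bound $CC_1^*\nu^\frac12 j_2\,\|Y^\theta\omega^{\bf j}\|_{L^2(0,\tau_0;L^2_{X,Y})}^2$. There is no genuine difficulty in this lemma; the only point worth checking is that the argument of Lemma~\ref{lem.prop.3'} carries through without any interaction with the weight $Y^{2\theta}$, which it does precisely because $[B_{j_2},\pa_Y]$ contains no $Y$-derivative, so the weight $Y^{2\theta}$ passes through the estimate unchanged.
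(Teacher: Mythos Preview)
Your proof is correct and follows exactly the approach the paper intends: the paper omits details and says the argument is ``similar to the one of Lemma~\ref{lem.prop.3}'', which is almost certainly a typo for Lemma~\ref{lem.prop.3'} (the commutator lemma you reference), since the quantity here is identical to that in Lemma~\ref{lem.prop.3'} with the weight $\xi_j^2$ replaced by $Y^{2\theta}$. Your observation that the weight plays no role because the commutator acts as pure multiplication is precisely the point.
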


\begin{proof} The proof is similar to the one of Lemma \ref{lem.prop.3}. The details are omitted here. The proof is complete.
\end{proof}

\

\begin{lemma}\label{lem.prop.airy4} Let $j\geq 1$. It follows that
\begin{align*}
& \int_0^{\tau_0} \langle  - \sum_{l=0}^{j-1} \sum_{\max\{0, l+j_2-j\} \leq l_2\leq \min\{l,j_2\}}  \binom{j_2}{l_2} \binom{j-j_2}{l-l_2} V^{{\bf j-l}} \cdot (\nabla \omega)^{\bf l}, Y^{2\theta} \omega^{\bf j} \rangle \, d\tau \\
& \le \frac{C}{\kappa} R_{j,Lem\ref{lem.prop.airy4}}[\omega] \, M_{2,j,Y^\theta}[\omega],
\end{align*}
where 
\begin{align*}
R_{j,Lem\ref{lem.prop.airy4}}[\omega] = \sum_{l=0}^{j-1}  \min\{l+1,j-l+1\} \,  \binom{j}{l}\, N_{\infty,j-l}[V] \, M_{2,l+1,Y^\theta}[\omega].
\end{align*}
Here $C>0$ is a universal constant, and $N_{\infty,j-l}[V]$ is defined as in Lemma \ref{lem.prop.4}.
\end{lemma}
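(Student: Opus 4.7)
The structure will closely mirror the argument for Lemma \ref{lem.prop.4}, with the crucial simplification that the weight $Y^{2\theta}$ depends only on $Y$ (and not on $j$), so no analogue of the ratio $\xi_j/\xi_l$ enters. First I would apply the binomial estimate
\begin{equation*}
\binom{j_2}{l_2}\binom{j-j_2}{l-l_2}\le \binom{j}{l},\qquad 0\le l_2\le\min\{l,j_2\},\; 0\le l-l_2\le j-j_2,
\end{equation*}
together with the counting bound
\begin{equation*}
\#\{l_2\in\N\cup\{0\}\mid \max\{0,l+j_2-j\}\le l_2\le\min\{l,j_2\}\}\le\min\{l+1,j-l+1\},
\end{equation*}
exactly as in \eqref{proof.lem.prop.4.1}--\eqref{proof.lem.prop.4.2}, and then Cauchy--Schwarz to reduce the estimate to controlling, for each $0\le l\le j-1$, the quantity $\|Y^\theta V^{\bf j-l}\cdot (\nabla\omega)^{\bf l}\|_{L^2(0,\tau_0;L^2)}$.

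The key step is then to split this into its two scalar contributions and move the weight $Y^\theta$ onto the vorticity factor. For the horizontal piece I would write
\begin{equation*}
\|Y^\theta V_1^{\bf j-l}(\pa_X\omega)^{\bf l}\|_{L^2(0,\tau_0;L^2)}\le \|V_1^{\bf j-l}\|_{L^\infty}\,\|Y^\theta\omega^{(l_1+1,l_2)}\|_{L^2(0,\tau_0;L^2)}\le C\,N_{\infty,j-l}[V]\,M_{2,l+1,Y^\theta}[\omega],
\end{equation*}
using that $e^{-K\tau\nu^{1/2}}\pa_X\omega^{\bf l}=\omega^{(l_1+1,l_2)}$ and $K\tau\nu^{1/2}\le1$. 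For the vertical piece, the divergence-free condition $\pa_X V_1+\pa_Y V_2=0$ combined with the identity $[B_{j_2-l_2},\pa_Y]$ yields $V_2^{\bf j-l}/\chi_\nu=(\pa_Y V_2)^{(j_1-l_1,j_2-l_2-1)}=-V_1^{(j_1-l_1+1,j_2-l_2-1)}$, so $\|V_2^{\bf j-l}/\chi_\nu\|_{L^\infty}\le C\kappa^{-1}N_{\infty,j-l}[V]$. Writing $\chi_\nu(\pa_Y\omega)^{\bf l}=\omega^{(l_1,l_2+1)}e^{K\tau\nu^{1/2}}$, I obtain
\begin{equation*}
\|Y^\theta V_2^{\bf j-l}(\pa_Y\omega)^{\bf l}\|_{L^2(0,\tau_0;L^2)}\le \Big\|\frac{V_2^{\bf j-l}}{\chi_\nu}\Big\|_{L^\infty}\,\|Y^\theta\omega^{(l_1,l_2+1)}\|_{L^2(0,\tau_0;L^2)}\le \frac{C}{\kappa}N_{\infty,j-l}[V]\,M_{2,l+1,Y^\theta}[\omega].
\end{equation*}
Summing these bounds and taking the supremum over $j_2$ produces the stated estimate with $R_{j,Lem\ref{lem.prop.airy4}}[\omega]$.

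I do not expect a genuine obstacle here: because $Y^\theta$ commutes with every $X$-operation and does not mix with the regularizing weight $\xi_j$, the calculation is strictly simpler than that of Lemma \ref{lem.prop.4}. The only point requiring minor care is the case $j_2=l_2$ (for which $j_2-l_2-1$ is negative); in that case $V_2^{\bf j-l}$ already contains the factor $\chi_\nu$ intrinsically through $B_{j_2-l_2}=\mathrm{Id}$ acting on $V_2$, and one uses $V_2|_{Y=0}=0$ together with $\|V_2/\chi_\nu\|_{L^\infty}\le\|\pa_Y V_2\|_{L^\infty}/\kappa=\|\pa_X V_1\|_{L^\infty}/\kappa\le C\kappa^{-1}N_{\infty,j-l}[V]$ to arrive at the same bound. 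No new smallness of $\kappa$ or largeness of $K$ is required.
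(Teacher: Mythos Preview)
Your proposal is correct and follows exactly the approach the paper intends: the paper's own proof simply reads ``The proof is similar to the one of Lemma \ref{lem.prop.4}. The details are omitted here,'' and your write-up supplies precisely those omitted details, with the correct observation that the weight $Y^\theta$ removes the need for the $\xi_j/\xi_l$ ratio. One very minor remark: in the case $j_2-l_2=0$ there is no need to rederive the bound on $V_2^{\bf j-l}/\chi_\nu$ via $\|\pa_Y V_2\|_{L^\infty}$; the inequality $\|\pa_X^{j-l}V_2/\chi_\nu\|_{L^\infty}\le \kappa^{-1}N_{\infty,j-l}[V]$ is immediate from the definition of $N_{\infty,j-l}[V]$, which already contains the term $\kappa\|\pa_X^{j-l}V_2/\chi_\nu\|_{L^\infty}$.
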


\begin{proof} The proof is similar to the one of Lemma \ref{lem.prop.4}. The details are omitted here. The proof is complete.
\end{proof}

\

\begin{lemma}\label{lem.prop.airy5} It follows that
\begin{align*}
& \int_0^{\tau_0} \langle H^{\bf j}, Y^{2\theta} \omega^{\bf j} \rangle\, d\tau \\
& \leq 
\begin{cases} 
& \displaystyle C M_{2,j,Y^{\theta+\frac12}} [H] \Big ( M_{2,j,Y^\theta} [\pa_Y \omega] + \kappa \nu^\frac12 j M_{2,j-1,Y^\theta}[\nabla\omega]\Big )^\frac12 (M_{2,j,Y^\theta}[\omega])^\frac12,~~\theta=0,\\
& \displaystyle  C M_{2,j,Y^{\theta+\frac12}} [H]  ( M_{2,j,Y^{\theta-1}} [\omega] )^\frac12 (M_{2,j,Y^\theta}[\omega])^\frac12,~~\theta=1,2.
\end{cases}
\end{align*}
Here $C>0$ is a universal constant.
\end{lemma}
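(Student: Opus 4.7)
The approach is to distribute the weight $Y^{2\theta}$ unevenly between $H^{\bf j}$ and $\omega^{\bf j}$ so as to interpolate the latter between the two norms that appear on the right-hand side of the lemma, and then to apply H\"older in $\tau$.

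For $\theta\in\{1,2\}$ I would write
\[
|\langle H^{\bf j},Y^{2\theta}\omega^{\bf j}\rangle| \le \|Y^{\theta+\frac12}H^{\bf j}\|\,\|Y^{\theta-\frac12}\omega^{\bf j}\|,
\]
and bound the second factor by Cauchy--Schwarz in $(X,Y)$ using the factorisation $Y^{2\theta-1}=Y^{\theta-1}\cdot Y^{\theta}$, which gives
\[
\|Y^{\theta-\frac12}\omega^{\bf j}\|_{L^2_{X,Y}}^{2}\le \|Y^{\theta-1}\omega^{\bf j}\|_{L^2_{X,Y}}\,\|Y^{\theta}\omega^{\bf j}\|_{L^2_{X,Y}}.
\]
Integrating in $\tau\in(0,\tau_0)$, applying H\"older in time with exponents $(2,4,4)$, and then taking the supremum over $j_2=0,\dots,j$ yields the claimed estimate.

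The case $\theta=0$ is more delicate because the weight $Y^{-1}$ that would otherwise appear is not locally integrable near $Y=0$. Here I would exploit $\omega=-\Delta\phi$ together with the boundary condition $\Delta\phi|_{Y=0}=0$ to conclude that $\omega|_{Y=0}=0$, so that $\omega^{\bf j}|_{Y=0}=0$ for every $j_2$ (when $j_2\ge 1$ the prefactor $\chi_\nu^{j_2}$ already vanishes at the boundary). Combining $\int_0^\infty f^2/Y\,dY\le \|f/Y\|_{L^2_Y}\,\|f\|_{L^2_Y}$ with the one-dimensional Hardy inequality then produces
\[
\|Y^{-\frac12}\omega^{\bf j}\|_{L^2_{X,Y}}^{2}\le 2\,\|\omega^{\bf j}\|_{L^2_{X,Y}}\,\|\pa_Y\omega^{\bf j}\|_{L^2_{X,Y}},
\]
after which splitting $|\langle H^{\bf j},\omega^{\bf j}\rangle|\le \|Y^{\frac12}H^{\bf j}\|\,\|Y^{-\frac12}\omega^{\bf j}\|$ and applying H\"older in $\tau$ with exponents $(2,4,4)$ reduces matters to estimating $\|\pa_Y\omega^{\bf j}\|$ in terms of the norms appearing in the lemma.

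The step I expect to demand the most care is this last conversion, handled via the commutator identity $\pa_Y\omega^{\bf j}=(\pa_Y\omega)^{\bf j}+\nu^{\frac12}j_2\chi_\nu' e^{-K\tau\nu^{\frac12}}(\pa_Y\omega)^{(j_1,j_2-1)}$ recorded in \eqref{proof.lem.prop.1.3}. Tracking the two contributions and using $\|\chi_\nu'\|_{L^\infty}=\kappa$ together with $M_{2,j-1,1}[\pa_Y\omega]\le M_{2,j-1,1}[\nabla\omega]$, then taking the supremum over $j_2$, produces exactly the combination $M_{2,j,1}[\pa_Y\omega]+\kappa\nu^{\frac12}j\,M_{2,j-1,1}[\nabla\omega]$ under the square root. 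The rest is routine bookkeeping, entirely in the spirit of the treatment of the source term in Lemma~\ref{lem.prop.6}.
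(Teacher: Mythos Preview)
Your proposal is correct and follows essentially the same route as the paper: split the weight as $Y^{2\theta}=Y^{\theta+\frac12}\cdot Y^{\theta-\frac12}$, interpolate $\|Y^{\theta-\frac12}\omega^{\bf j}\|$ between $\|Y^{\theta-1}\omega^{\bf j}\|$ and $\|Y^{\theta}\omega^{\bf j}\|$ via Cauchy--Schwarz, and for $\theta=0$ invoke Hardy's inequality (using $\omega^{\bf j}|_{Y=0}=0$) together with the commutator identity \eqref{proof.lem.prop.1.3} to produce the two terms under the square root. The paper's proof is terser but identical in substance; your explicit mention of the H\"older exponents $(2,4,4)$ in time is a helpful clarification that the paper leaves implicit.
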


\begin{proof} The estimate follows from the inequality 
\begin{align*}
\langle H^{\bf j}, Y^{2\theta} \omega^{\bf j}\rangle \le \| Y^{\theta+\frac12} H^{\bf j}\|\, \|  Y^{\theta-\frac12} \omega^{\bf j} \| \le \| Y^{\theta+\frac12} H^{\bf j} \| \, \| Y^{\theta-1} \omega^{\bf j} \|^\frac12  \, \| Y^{\theta} \omega^{\bf j} \|^\frac12
\end{align*}
and the Hardy inequality for $\theta=0$:
\begin{align*}
\| Y^{-1} \omega^{\bf j} \|\le C \| \pa_Y \omega^{\bf j}\|\le C \Big ( \| (\pa_Y\omega)^{\bf j}\| + \kappa \nu^\frac12 j_2 \| (\pa_Y\omega)^{(j_1,j_2-1)}\|\Big ).
\end{align*}
The proof is complete.
\end{proof}

\

\begin{proof}[Proof of Proposition \ref{prop.Airy}] It suffices to show the estimate \eqref{est.prop.Airy}, but it follows from Lemmas \ref{lem.prop.airy1}-\ref{lem.prop.airy5} by dividing into the case $\theta=0$ and the case $\theta=1,2$. The details are omitted here. The proof is complete.
\end{proof}

\

\begin{corollary}\label{cor.prop.Airy1} There exists $\kappa_3\in (0,1]$ such that the following statement holds for any $\kappa\in (0,\kappa_3]$.  There exists $K_3'=K_3'(\kappa, C_0^*, C_1^*)\geq 1$ such that if $K\geq K_3'$ then the system \eqref{eq.airy} admits a unique solution $\phi\in C([0,\infty); \dot{H}^1_0(\T\times \R_+))$ with $\omega=-\Delta \phi\in C([0,\infty); L^2(\T \times \R_+))$  satisfying, for  $\theta=0,1,2$,
\begin{align}\label{est.cor.prop.Airy1.1}
\begin{split}
& \sum_{j=0}^{\nu^{-\frac12}} \frac{(j+1)^{\frac{\theta}{2}-\frac14}}{(j!)^{\frac32} \nu^\frac{j}{2}} \Big ( \nu^\frac14 M_{2,j,Y^\theta} [\nabla \omega] + M_{\infty,j,Y^\theta}[\omega] + K^\frac12 \nu^\frac14 (j+1)^\frac12 M_{2,j,Y^\theta}[\omega]\Big ) \\
& \leq \frac{C}{K^\frac14} \sum_{\theta'=0}^\theta  \sum_{j=0}^{\nu^{-\frac12}} \frac{1}{(j!)^\frac32 \nu^{\frac{j}{2}+\frac14} (j+1)^\frac{1-\theta'}{2}} M_{2,j,Y^{\theta'+\frac12}}[H],
\end{split}
\end{align}
and 
\begin{align}\label{est.cor.prop.Airy1.2}
\cA \nabla \phi \cA_{2,{\bf 1}}' + \cA \pa_Y \phi|_{Y=0} \cA_{bc} \le \frac{C}{K^{\frac34}} \sum_{\theta'=0}^1 \sum_{j=0}^{\nu^{-\frac12}} \frac{1}{(j!)^\frac32 \nu^{\frac{j}{2}+\frac14}(j+1)^\frac{1-\theta'}{2}} M_{2,j,Y^{\theta'+\frac12}}[H].
\end{align}
Here $C>0$ is a universal constant.
\end{corollary}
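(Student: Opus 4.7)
The plan is two-fold. First, derive \eqref{est.cor.prop.Airy1.1} by summing the pointwise estimate \eqref{est.prop.Airy} of Proposition \ref{prop.Airy} against the Gevrey weight $(j+1)^{\theta/2-1/4}/((j!)^{3/2}\nu^{j/2})$, inducting on $\theta \in \{0,1,2\}$. Second, derive \eqref{est.cor.prop.Airy1.2} from \eqref{est.cor.prop.Airy1.1} via Proposition \ref{prop.stream.mOS}, a direct zero-mode energy estimate on \eqref{eq.airy}, and an interpolation trace inequality.

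For the first stage, the chosen Gevrey weight applied to the left-hand side of \eqref{est.prop.Airy} reproduces exactly the left-hand side of \eqref{est.cor.prop.Airy1.1}. Each term on the right of \eqref{est.prop.Airy} will then be handled in turn: the term $\kappa\nu^{3/4} j M_{2,j-1,Y^\theta}[\nabla\omega]$ reindexes into $C\kappa$ times the ``$\nu^{1/4} M_{2,j,Y^\theta}[\nabla\omega]$'' piece of the left-hand side of \eqref{est.cor.prop.Airy1.1} and is absorbed by choosing $\kappa$ small; the term $\nu^{1/4}\theta M_{2,j,Y^{\theta-1}}[\omega]$ vanishes for $\theta=0$, and for $\theta=1,2$ equals $\theta K^{-1/2}$ times the analogous piece of the left-hand side at level $\theta-1$, absorbed by the induction hypothesis for $K$ large; the source term $\frac{1}{K^{1/4}\nu^{1/4}(j+1)^{1/4}} M_{2,j,Y^{\theta+1/2}}[H]$ produces, after multiplication by the Gevrey weight, precisely the $\theta'=\theta$ summand on the right of \eqref{est.cor.prop.Airy1.1}; the transport convolution is handled by the same $l^1$-Young mechanism as in Lemma \ref{lem.prop.4} (especially the combinatorial bounds \eqref{proof.lem.prop.4.3}-\eqref{proof.lem.prop.4.4}), so that together with Assumption \ref{assume}~(ii) it contributes $\frac{CC_0^*}{\kappa K^{1/2}}$ times the left-hand side of \eqref{est.cor.prop.Airy1.1}, absorbed for $K \geq K_3'(\kappa, C_0^*, C_1^*)$. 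Closing the induction sequentially at $\theta=0,1,2$ yields \eqref{est.cor.prop.Airy1.1}.

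For the second stage, Proposition \ref{prop.stream.mOS} with $p=2$ gives $\cA \nabla\phi\cA_{2,{\bf 1}}' \leq C(K^{1/4}C_* + C_1^*)^{1/2} \cA \omega\cA_{2,\xi}' + C\nu^{1/4}\|\nabla\phi^{(0,0)}\|_{L^2}$. The zeroth mode will be controlled by testing \eqref{eq.airy} against $\eta_R\phi$, letting $R\to\infty$, and rerunning the energy argument of Proposition \ref{prop.zero.mOS} with $F=0$ and $G=H$; the duality bound $\|H\|_{\dot H^{-1}} \leq C\|Y^{1/2}H\|_{L^2}$ (Hardy, using $\phi|_{Y=0}=0$) produces the $M_{2,0,Y^{1/2}}[H]$ contribution. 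For $\cA \omega\cA_{2,\xi}'$, I use the pointwise bound $\xi_j \leq C(1+(j+1)^{1/2}Y)/(K^{1/4}C_*)^{1/2}$ from Lemma \ref{lem.rhoj} to split $M_{2,j,\xi_j}[\omega]$ into two pieces that combine the $\theta=0$ and $\theta=1$ instances of \eqref{est.cor.prop.Airy1.1}, using where needed the Hardy inequality $\|\omega^{\bf j}/Y\| \leq C\|\pa_Y\omega^{\bf j}\|$ (available since $\omega|_{Y=0}=0$ under the artificial boundary condition). Finally, $\cA \pa_Y\phi|_{Y=0}\cA_{bc}$ is recovered by applying the one-dimensional interpolation trace inequality mode by mode, as in \eqref{proof.prop.mOS1.3}, combined with the Calder{\'o}n-Zygmund inequality and the estimates on $\cA \omega\cA_{2,\xi}'$ and $\cA \nabla\phi\cA_{2,{\bf 1}}'$ just obtained.

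The hard part will be the convolution step in the first stage: verifying that the $l^1$-Young convolution actually closes with the shifted exponent $\theta/2-1/4$ uniformly in $\theta \in \{0,1,2\}$, which reduces to checking the analogues of \eqref{proof.lem.prop.4.3}-\eqref{proof.lem.prop.4.4} with the new weight and making sure the combinatorial loss remains summable. A secondary delicate point is in the second stage, where $\cA \omega\cA_{2,\xi}'$ has to be recovered by combining two of the three $\theta$-estimates rather than just one, owing to the mismatch between the $(j+1)^{1/2}$ power appearing in $\cA \cdot\cA_{2,\xi}'$ and the powers $(j+1)^{\theta/2+1/4}$ available from the left-hand side of \eqref{est.cor.prop.Airy1.1}.
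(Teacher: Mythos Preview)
Your first stage is essentially the paper's argument: the paper also sums \eqref{est.prop.Airy} against the weight $(j+1)^{\theta'/2-1/4}/((j!)^{3/2}\nu^{j/2})$ (in fact summing simultaneously over $\theta'\le\theta$ rather than inducting, but this is equivalent), absorbs the $\kappa$--commutator and the $\theta'-1$ term as you describe, and handles the convolution exactly via the $l^1$-Young mechanism of Lemma \ref{lem.prop.4}. So far so good.

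The second stage, however, does not close as written. Your route via Proposition \ref{prop.stream.mOS} introduces the weight $\xi_j$, and the pointwise bound $\xi_j\le C K^{-1/8}(1+(j+1)^{1/2}Y)$ forces you to control $\sum_j (j!)^{-3/2}\nu^{-j/2}\,\nu^{1/4}(j+1)^{1/2}M_{2,j,1}[\omega]$ and $\sum_j (j!)^{-3/2}\nu^{-j/2}\,\nu^{1/4}(j+1)M_{2,j,Y}[\omega]$. But the left-hand side of \eqref{est.cor.prop.Airy1.1} only provides these quantities with exponents $(j+1)^{1/4}$ and $(j+1)^{3/4}$ respectively: there is a uniform $(j+1)^{1/4}$ deficit that neither Hardy nor interpolation between $\theta$-levels removes (the geometric mean of $(j+1)^{1/4}$ and $(j+1)^{3/4}$ is $(j+1)^{1/2}$, still short by $(j+1)^{1/4}$ after accounting for the weight mismatch). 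The $K^{1/8}$ gain from $\xi_j$ is then exactly cancelled by the $(K^{1/4}C_*+C_1^*)^{1/2}$ prefactor in Proposition \ref{prop.stream.mOS}, so nothing is left to pay for the loss. Incidentally, the bound $\|H\|_{\dot H^{-1}}\le C\|Y^{1/2}H\|_{L^2}$ you invoke for the zero mode is also false by scaling; the correct Hardy bound gives $\|YH\|$.

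The paper avoids this entirely by \emph{not} passing through $\xi_j$. Instead it proves a direct weighted embedding (their \eqref{proof.cor.prop.Airy1.2}): testing $\omega^{\bf j}=-\nabla\cdot(\nabla\phi)^{\bf j}+\text{commutator}$ against $\phi^{\bf j}$ and using Hardy on $\|\phi^{\bf j}/Y\|$ yields $\|(\nabla\phi)^{\bf j}\|\le C(\|Y\omega^{\bf j}\|+\kappa\nu^{1/2}j\|(\pa_Y\phi)^{(j_1,j_2-1)}\|)$, hence
\[
\cA\nabla\phi\cA_{2,{\bf 1}}'\le C\sum_j\frac{(j+1)^{1/4}}{(j!)^{3/2}\nu^{j/2}}\,\nu^{1/4}(j+1)^{1/2}M_{2,j,Y}[\omega],
\]
which is \emph{exactly} the $\theta=1$ piece of the left of \eqref{est.cor.prop.Airy1.1} divided by $K^{1/2}$, giving the $K^{-3/4}$ in \eqref{est.cor.prop.Airy1.2} immediately. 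The trace is then obtained by interpolating between the $\theta=0$ and $\theta=1$ levels via \eqref{proof.prop.mOS1.3}, again without $\xi_j$. This direct $Y$-weighted embedding is the missing ingredient in your plan.
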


\begin{proof} Let us first show \eqref{est.cor.prop.Airy1.1}.
In virtue of Proposition \ref{prop.Airy} we have for $\theta=0,1,2$,
\begin{align}
\begin{split}
& \sum_{\theta'=0}^{\theta} \sum_{j=0}^{\nu^{-\frac12}} \frac{(j+1)^{\frac{\theta'}{2}-\frac14}}{(j!)^{\frac32} \nu^\frac{j}{2}} \Big ( \nu^\frac14 M_{2,j,Y^{\theta'}} [\nabla \omega] + M_{\infty,j,Y^{\theta'}}[\omega] + K^\frac12 \nu^\frac14 (j+1) ^\frac12 M_{2,j,Y^{\theta'}}[\omega]\Big ) \\
& \leq C \sum_{\theta'=0}^{\theta} \sum_{j=0}^{\nu^{-\frac12}} \frac{(j+1)^{\frac{\theta'}{2}-\frac14}}{(j!)^{\frac32} \nu^\frac{j}{2}} \\
& \qquad \times  \Big ( \kappa \nu^\frac34 j  M_{2,j-1,Y^{\theta'}}[\nabla \omega] + \nu^\frac14 \theta' M_{2,j,Y^{\theta'-1}}[\omega]  +  \frac{1}{K^\frac14\nu^\frac14(j+1)^\frac14} M_{2,j,Y^{\theta'+\frac12}}[H]\\
& \qquad + \frac{1}{\kappa K^\frac12 \nu^\frac14 (j+1)^\frac12} \sum_{l=0}^{j-1} \min\{l+1, j-l+1\} \, \binom{j}{l}  \, N_{\infty,j-l} [V] \, M_{2,l+1,Y^{\theta'}} [\omega]\Big )
\end{split} \nonumber \\
\begin{split}\label{proof.cor.prop.Airy1.1}
& \leq C \kappa \sum_{\theta'=0}^{\theta} \sum_{j=0}^{\nu^{-\frac12}-1} \frac{(j+1)^{\frac{\theta'}{2}-\frac14}}{(j!)^{\frac32} \nu^\frac{j}{2}} \nu^\frac14 M_{2,j,Y^{\theta'}} [\nabla \omega ] \\
& \quad +  C \sum_{\theta'=0}^{\theta} \theta' \sum_{j=0}^{\nu^{-\frac12}} \frac{(j+1)^{\frac{\theta'-1}{2}-\frac14}}{(j!)^{\frac32} \nu^\frac{j}{2}}  \nu^\frac14 (j+1)^\frac12 M_{2,j,Y^{\theta'-1}} [\omega] \\
& \quad + C \sum_{\theta'=0}^{\theta}  \sum_{j=0}^{\nu^{-\frac12}} \frac{(j+1)^{\frac{\theta'}{2}-\frac14}}{(j!)^{\frac32} \nu^\frac{j}{2}}   \frac{1}{K^\frac14\nu^\frac14(j+1)^\frac14} M_{2,j,Y^{\theta'+\frac12}}[H]\\
& \quad + C \sum_{\theta'=0}^{\theta}  \sum_{j=0}^{\nu^{-\frac12}} \frac{(j+1)^{\frac{\theta'}{2}-\frac14}}{(j!)^{\frac32} \nu^\frac{j}{2}}  \frac{1}{\kappa K^\frac12 \nu^\frac14 (j+1)^\frac12} \sum_{l=0}^{j-1} \min\{l+1, j-l+1\} \,  \binom{j}{l} \, N_{\infty,j-l} [V] \, M_{2,l+1,Y^{\theta'}} [\omega].
\end{split}
\end{align}
Here $C>0$ is a universal constant.
As for the last term in \eqref{proof.cor.prop.Airy1.1}, arguing as at the end of the proof of Lemma  \ref{lem.prop.4},  we find that 
\begin{align*}
&  \sum_{j=0}^{\nu^{-\frac12}} \frac{(j+1)^{\frac{\theta'}{2}-\frac14}}{(j!)^{\frac32} \nu^\frac{j}{2}}  \frac{1}{K^\frac12 \nu^\frac14 (j+1)^\frac12} \sum_{l=0}^{j-1} \min\{l+1, j-l+1\} \, \binom{j}{l}  \, N_{\infty,j-l} [V] \, M_{2,l+1,Y^{\theta'}} [\omega] \\
& \le \frac{C C_0^*}{\kappa K^\frac12}  \sum_{j=0}^{\nu^{-\frac12}} \frac{(j+1)^{\frac{\theta'}{2}-\frac14}}{(j!)^{{\frac32}} \nu^\frac{j}{2}} \nu^\frac14 (j+1)^\frac12 M_{2,j,Y^{\theta'}} [\omega].
 \end{align*}
Hence \eqref{est.cor.prop.Airy1.1} follows by taking $\kappa$ small enough so that $C\kappa\le \frac12$, and then by taking $K$ large enough so that $\frac{CC_0^*}{\kappa K} \le \frac12$.

To show \eqref{est.cor.prop.Airy1.2} it suffices to prove the embedding inequality
\begin{align}
\cA \nabla \phi \cA_{2,{\bf 1}}'  & \le \sum_{j=0}^{\nu^{-\frac12}} \frac{(j+1)^\frac14}{(j!)^{{\frac32}} \nu^\frac{j}{2}}  \nu^\frac14 (j+1)^\frac12 M_{2,j,1}[\nabla \phi] \nonumber \\
& \le C \sum_{j=0}^{\nu^{-\frac12}} \frac{(j+1)^\frac14}{(j!)^{{\frac32}} \nu^\frac{j}{2}}  \nu^\frac14 (j+1)^\frac12 M_{2,j,Y}[\omega]\label{proof.cor.prop.Airy1.2}
\end{align}
and the interpolation inequality
\begin{align}
\cA \pa_Y \phi|_{Y=0} \cA_{bc} & :=  \sum_{j=0}^{\nu^{-\frac12}} \frac{\nu^\frac14 (j+1)^\frac12}{(j!)^{\frac32} \nu^\frac{j}{2}}   \| e^{-K\tau \nu^\frac12 (j+1)} \pa_X^j \pa_Y \phi|_{Y=0} \|_{L^2(0,\frac{1}{K\nu^\frac12}; L^2_X)} \nonumber \\
& \le C\Big (\sum_{j=0}^{\nu^{-\frac12}} \frac{(j+1)^{-\frac14}}{(j!)^{\frac32} \nu^\frac{j}{2}} \nu^\frac14 (j+1)^\frac12 M_{2,j,1}[\omega]\Big )^\frac12 \Big (\sum_{j=0}^{\nu^{-\frac12}} \frac{(j+1)^{\frac14}}{(j!)^{{\frac32}} \nu^\frac{j}{2}} \nu^\frac14 (j+1)^\frac12 M_{2,j,Y}[\omega]\Big )^\frac12.\label{proof.cor.prop.Airy1.3}
\end{align}
Then \eqref{est.cor.prop.Airy1.2} follows from \eqref{est.cor.prop.Airy1.1} with \eqref{proof.cor.prop.Airy1.2} and \eqref{proof.cor.prop.Airy1.3}.
The proof of \eqref{proof.cor.prop.Airy1.2} proceeds as in the proof of Proposition \ref{prop.stream.mOS}.
Indeed, we have from $\omega^{\bf j} = -\nabla \cdot (\nabla \phi)^{\bf j}+\frac{\nu^\frac12 j_2\chi_\nu'}{\chi_\nu} (\pa_Y \phi)^{\bf j}$ and from the integration by parts,
\begin{align*}
\| (\nabla \phi)^{\bf j}\|^2 & = \langle \omega^{\bf j}, \phi^{\bf j}\rangle - 2\nu^\frac12 j_2e^{-K\tau \nu^\frac12} \langle \chi_\nu' (\pa_Y \phi)^{\bf j}, (\pa_Y \phi)^{(j-j_2,j_2-1)}\rangle\\
& \le \| Y\omega^{\bf j} \|\|\frac{\phi^{\bf j}}{Y} \| + 2\nu^\frac12 j_2 \kappa \| (\pa_Y\phi)^{\bf j}\| \| (\pa_Y \phi)^{(j-j_2,j_2-1)}\|\\
& \leq  C \| Y\omega^{\bf j}\|  \|\pa_Y \phi^{\bf j} \| + 2\nu^\frac12 j_2 \kappa \| (\pa_Y\phi)^{\bf j}\| \| (\pa_Y \phi)^{(j-j_2,j_2-1)}\|.
\end{align*}
Here the Hardy inequality ise used in the last line. Then the identity $\pa_Y \phi^{\bf j} = (\pa_Y \phi)^{\bf j} + \nu^\frac12 j_2 \chi_\nu' e^{-K\tau \nu^\frac12} (\pa_Y \phi)^{(j-j_2,j_2-1)}$ yields
\begin{align*}
\| (\nabla \phi)^{\bf j} \|\le C \Big ( \| Y\omega^{\bf j} \| + \nu^\frac12 j_2 \kappa \| (\pa_Y \phi)^{(j-j_2,j_2-1)} \| \Big ).
\end{align*}
This estimate gives 
\begin{align*}
& \sum_{j=0}^{\nu^{-\frac12}} \frac{(j+1)^\frac14}{(j!)^{{\frac32}} \nu^\frac{j}{2}}  \nu^\frac14 (j+1)^\frac12 M_{2,j,1}[\nabla \phi]\\
& \le C  \sum_{j=0}^{\nu^{-\frac12}} \frac{(j+1)^\frac14}{(j!)^{{\frac32}} \nu^\frac{j}{2}}  \nu^\frac14 (j+1)^\frac12  \big ( M_{2,j,Y}[\omega] + \nu^\frac12 j \kappa M_{2,j-1,1}[\nabla \phi]\big ) \\
& \le  C  \sum_{j=0}^{\nu^{-\frac12}} \frac{(j+1)^\frac14}{(j!)^{{\frac32}} \nu^\frac{j}{2}}    \nu^\frac14 (j+1)^\frac12 M_{2,j,Y}[\omega]  + C\kappa \sum_{j=0}^{\nu^{-\frac12}} \frac{(j+1)^\frac14}{(j!)^\gamma \nu^\frac{j}{2}}  \nu^\frac14 (j+1)^\frac12 M_{2,j,1}[\nabla \phi],
\end{align*}
where $C>0$ is a universal constant. This proves \eqref{proof.cor.prop.Airy1.2} if $\kappa$ is small enough so that $C\kappa\le \frac12$. As for \eqref{proof.cor.prop.Airy1.3}, we observe from \eqref{proof.prop.mOS1.3} that 
\begin{align*}
& \| e^{-K\tau \nu^\frac12 (j+1)} \pa_X^j \pa_Y \phi|_{Y=0} \|_{L^2(0,\frac{1}{K\nu^\frac12}; L^2_X)} \\
& \le C \Big ( (j+1)^{-\frac14} \| \omega^{(j,0)} \|_{L^2(0,\frac{1}{K\nu^\frac12}; L^2_X)}\Big )^\frac12 \Big ( (j+1)^\frac14 \| \pa_Y \phi^{(j,0)} \|_{L^2(0,\frac{1}{K\nu^\frac12}; L^2_X)}\Big )^\frac12,
\end{align*}
which implies from the Schwarz inequality,
\begin{align*}
\cA \pa_Y \phi|_{Y=0} \cA_{bc}\le C \Big (\sum_{j=0}^{\nu^{-\frac12}} \frac{(j+1)^{-\frac14}}{(j!)^\gamma \nu^\frac{j}{2}} \nu^\frac14 (j+1)^\frac12 M_{2,j,1}[\omega] \Big )^\frac12 \Big (\sum_{j=0}^{\nu^{-\frac12}} \frac{(j+1)^{\frac14}}{(j!)^\gamma \nu^\frac{j}{2}} \nu^\frac14 (j+1)^\frac12 M_{2,j,1}[\nabla \phi] \Big )^\frac12.
\end{align*} 
Then \eqref{proof.cor.prop.Airy1.2} shows \eqref{proof.cor.prop.Airy1.3}. The proof is complete.
\end{proof}

\

\begin{corollary}\label{cor.prop.Airy2} In Corollary \ref{cor.prop.Airy1}, let $H=V\cdot \nabla \Delta \phi_{1,1}[h]$, where $\phi_{1,1}[h]$ is the solution to \eqref{eq.s} in Propositions \ref{prop.Stokes.velocity}-\ref{prop.Stokes.vorticity}.  Then 
\begin{align}\label{est.cor.prop.Airy2.1}
\begin{split}
& \sum_{j=0}^{\nu^{-\frac12}} \frac{(j+1)^{\frac{\theta}{2}-\frac14}}{(j!)^\frac32 \nu^\frac{j}{2}} \Big ( \nu^\frac14 M_{2,j,Y^\theta} [\nabla \omega] + M_{\infty,j,Y^\theta}[\omega] + K^\frac12 \nu^\frac14 (j+1)^\frac12 M_{2,j,Y^\theta}[\omega]\Big )  \leq \frac{CC_\kappa}{K^{{\frac14}}} \cA h\cA_{bc},
\end{split}
\end{align}
and 
\begin{align}\label{est.cor.prop.Airy2.2}
\cA \nabla \phi \cA_{2,{\bf 1}}' + \cA \pa_Y \phi|_{Y=0} \cA_{bc} \le \frac{CC_\kappa}{K^{{\frac34}}} \cA h\cA_{bc}.
\end{align}
Moreover, we have 
\begin{align}
\sum_{j=0}^{\nu^{-\frac12}}\frac{1}{(j!)^\frac32 \nu^{\frac{j}{2}+\frac14} (j+1)^\frac12} M_{2,j,\frac{1}{1+Y}}[\pa_X \phi]  & \leq \frac{CC_\kappa}{K^{{\frac34}}} \cA h\cA_{bc}.\label{est.cor.prop.Airy2.3}
\end{align}
Here $C>0$ is a universal constant.
\end{corollary}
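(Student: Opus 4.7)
The plan is to apply Corollary \ref{cor.prop.Airy1} with the specific source $H = V\cdot\nabla\Delta\phi_{1,1}[h]$, which reduces \eqref{est.cor.prop.Airy2.1} and \eqref{est.cor.prop.Airy2.2} to proving, for each $\theta' \in \{0,1,2\}$, the source bound
\begin{equation*}
S_{\theta'} := \sum_{j=0}^{\nu^{-\frac12}} \frac{M_{2,j,Y^{\theta'+\frac12}}[H]}{(j!)^{\frac32}\nu^{\frac{j}{2}+\frac14}(j+1)^{\frac{1-\theta'}{2}}} \le C C_\kappa \cA h \cA_{bc}.
\end{equation*}

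To estimate $S_{\theta'}$, I would expand $H^{\bf j}$ by a Leibniz rule analogous to the one used for the transport term in \eqref{proof.prop.Airy.1} and in the proof of Lemma \ref{lem.prop.4}, yielding a double sum over $(l,l_2)$ of products $V^{{\bf j-l}}\cdot(\nabla\Delta\phi_{1,1})^{\bf l}$, plus harmless commutator remainders from $[B_{j_2},\pa_Y]$. Each product is estimated by H\"older: $\|Y^{\theta'+\frac12} V^{{\bf j-l}}\cdot(\nabla\Delta\phi_{1,1})^{\bf l}\|_{L^2} \le \|V^{{\bf j-l}}\|_{L^\infty}\|Y^{\theta'+\frac12}(\nabla\Delta\phi_{1,1})^{\bf l}\|_{L^2}$, where $\|V_1^{{\bf j-l}}\|_{L^\infty}$ and $\kappa\|V_2^{{\bf j-l}}/\chi_\nu\|_{L^\infty}$ are controlled by $N_{\infty,j-l}[V]$ exactly as in the proof of Lemma \ref{lem.prop.4} via the divergence-free identity $V_2^{{\bf j-l}}/\chi_\nu = -V_1^{(j_1-l_1+1,j_2-l_2-1)}$ when $j_2-l_2\ge 1$. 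The weighted vorticity factor $\|Y^{\theta'+\frac12}(\nabla\Delta\phi_{1,1})^{\bf l}\|_{L^2}$ is then controlled via Proposition \ref{prop.Stokes.vorticity}: for $\theta'=1$ (weight $Y^{3/2}$) and $\theta'=2$ (weight $Y^{5/2}$), estimate \eqref{est.prop.Stokes.vorticity.2} with $\theta=0$ resp.\ $\theta=1$ applies directly. The main obstacle is the case $\theta'=0$ (weight $Y^{1/2}$), which does not match any available Stokes vorticity bound; I would overcome this by exploiting that $V_1|_{Y=0}=0$ and that the $\chi_\nu^{j_2-l_2}$ factor in $V^{{\bf j-l}}$ absorbs at least a power of $\chi_\nu\sim\kappa\nu^{\frac12}Y$ near $Y=0$, so one may pull out a factor of $Y$ from $V^{{\bf j-l}}$ on $\{Y\le 1\}$, upgrading the required weight on $(\nabla\Delta\phi_{1,1})^{\bf l}$ from $Y^{1/2}$ to $Y^{3/2}$, for which \eqref{est.prop.Stokes.vorticity.2} with $\theta=0$ applies. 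Finally, summing in $(l,l_2)$ via Young's $\ell^1$-convolution inequality as in the end of the proofs of Lemmas \ref{lem.prop.4}-\ref{lem.prop.5}, using the combinatorial estimates \eqref{proof.lem.prop.4.3}-\eqref{proof.lem.prop.4.4}, converts the double sum into a product of the $V$-Gevrey norm (bounded by $C_0^*$ from Assumption \ref{assume}(ii)) and the Gevrey-weighted Stokes vorticity sum (bounded by $CK^{-1/4}\cA h\cA_{bc}$ from Proposition \ref{prop.Stokes.vorticity}), giving $S_{\theta'}\le CC_\kappa\cA h\cA_{bc}$.

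The remaining estimate \eqref{est.cor.prop.Airy2.3} requires no new source bound. Since $\phi^{\bf j}|_{Y=0}=0$, the Hardy inequality gives $\|\phi^{\bf j}/(1+Y)\|_{L^2}\le C\|\pa_Y\phi^{\bf j}\|_{L^2}$; shifting a factor of $\pa_X$ into the Gevrey index then yields $M_{2,j,1/(1+Y)}[\pa_X\phi]\le CM_{2,j+1,1/(1+Y)}[\phi]\le CM_{2,j+1,1}[\pa_Y\phi]$. Re-indexing $j\mapsto j+1$ transforms the left-hand side of \eqref{est.cor.prop.Airy2.3} into a subseries of $\cA\nabla\phi\cA_{2,{\bf 1}}'$ up to universal constants (exactly as in the last step of the proof of Proposition \ref{prop.Stokes.velocity}), and \eqref{est.cor.prop.Airy2.2} concludes. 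The hardest point of the whole argument is the weight mismatch in the $\theta'=0$ case; every other ingredient is a direct transcription of combinatorial bookkeeping already carried out in Sections \ref{sec:vorticity:pure:slip} and \ref{subsec.Stokes}.
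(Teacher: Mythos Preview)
Your reduction of \eqref{est.cor.prop.Airy2.1}--\eqref{est.cor.prop.Airy2.2} to the source bounds $S_{\theta'}\le CC_\kappa\cA h\cA_{bc}$ via Corollary \ref{cor.prop.Airy1} is correct, and the Leibniz-plus-convolution skeleton is the right one. But two steps do not close as written.

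\emph{First, the ``easy'' cases $\theta'=1,2$ are not easy.} Compare prefactors: $S_{\theta'}$ carries $(j+1)^{(\theta'-1)/2}$, while \eqref{est.prop.Stokes.vorticity.2} with your choice $\theta=\theta'-1$ carries $(l+1)^{(\theta'-2)/2}$. After the $\ell^1$-convolution with $N_{\infty,j-l}[V]$ (which gains nothing in $j-l$), you are left with a factor $(l+1)^{1/2}$ that is unbounded --- the bound fails already at the top term $l=j$. The paper avoids this by pulling the factor $Y$ from $V^{{\bf j-l}}$ for \emph{every} $\theta'$, not just $\theta'=0$: writing $|V^{{\bf j-l}}|\le Y\|\pa_Y V^{{\bf j-l}}\|_{L^\infty}$ upgrades the weight to $Y^{\theta'+3/2}$ (so one applies \eqref{est.prop.Stokes.vorticity.2} with $\theta=\theta'$, matching prefactors exactly) and simultaneously replaces $N_{\infty,j-l}[V]$ by $N_{\infty,j-l}[\nabla V_1]$ as defined in \eqref{proof.cor.prop.Airy2.2}, whose built-in $(j-l+1)^{1/2}$ yields the extra $(j-l+1)^{-1/2}$ needed for the combinatorics $\frac{(l+1)^{(1-\theta')/2}}{(j+1)^{(1-\theta')/2}(j-l+1)^{1/2}}\le C$. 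The divergence-free relation $\pa_Y V_2=-\pa_X V_1$ then supplies the $\nu^{1/2}$ on the $\pa_Y\Delta\phi_{1,1}$ term required by \eqref{est.prop.Stokes.vorticity.2}. So your $\theta'=0$ trick is not a local patch for a degenerate case --- it is the mechanism for all $\theta'$.

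\emph{Second, your route to \eqref{est.cor.prop.Airy2.3} via Hardy loses a derivative.} After $M_{2,j,\frac{1}{1+Y}}[\pa_X\phi]\le CM_{2,j+1,1}[\pa_Y\phi]$ and reindexing, the left side of \eqref{est.cor.prop.Airy2.3} becomes $C\sum_j\frac{j\,\nu^{1/4}}{(j!)^{3/2}\nu^{j/2}}M_{2,j,1}[\pa_Y\phi]$, which exceeds $\cA\nabla\phi\cA_{2,{\bf 1}}'$ by a factor $(j+1)^{1/2}$ (up to $\nu^{-1/4}$), so \eqref{est.cor.prop.Airy2.2} does not conclude. The paper instead invokes the Poisson-formula interpolation of Proposition \ref{prop.stream.a},
\[
M_{2,j,\frac{1}{1+Y}}[\pa_X\phi]\le C(j+1)^{-\frac14}M_{2,j+1,Y}[\omega]+C(j+1)^{\frac14}M_{2,j+1,Y^2}[\omega]+\text{l.o.t.},
\]
which trades the $\frac{1}{1+Y}$ weight on $\pa_X\phi$ for the $Y^\theta$-weighted vorticity norms; these are exactly what \eqref{est.cor.prop.Airy2.1} controls (with $\theta=1,2$), and the resulting powers of $(j+1)$ match. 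This is why the weighted $Y^\theta$ vorticity bounds in \eqref{est.cor.prop.Airy2.1} are proved in the first place.
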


\begin{proof} To show \eqref{est.cor.prop.Airy2.1} and \eqref{est.cor.prop.Airy2.2} it suffices to prove for $\theta'=0,1,2$,
\begin{align}\label{proof.cor.prop.Airy2.1} 
\begin{split}
&  \sum_{j=0}^{\nu^{-\frac12}} \frac{1}{(j!)^\frac32 \nu^{\frac{j}{2}}(j+1)^{\frac{1-\theta'}{2}}} M_{2,j,Y^{\theta'+\frac12}}[H] \\
& \le C C_\kappa   \sum_{j=0}^{\nu^{-\frac12}} \frac{1}{(j!)^\frac32 \nu^{\frac{j}{2}}(j+1)^{\frac{1-\theta'}{2}}} \big ( M_{2,j,Y^{\frac32+\theta'}} [\pa_X \Delta \phi_{1,1}] + \nu^\frac12 M_{2,j,Y^{\frac32+\theta'}} [\pa_Y \Delta \phi_{1,1}]\big ).
\end{split}
 \end{align}
Then \eqref{est.cor.prop.Airy2.1} and \eqref{est.cor.prop.Airy2.2} follow from \eqref{est.cor.prop.Airy1.1}, \eqref{est.cor.prop.Airy1.2}, \eqref{est.prop.Stokes.vorticity.2}  and \eqref{proof.cor.prop.Airy2.1}.
To show \eqref{proof.cor.prop.Airy2.1} we observe that 
\begin{align*}
H^{\bf j} = \sum_{l=0}^j \sum_{\max\{0,l+j_2-j\}\le l_2\le \min\{l,j_2\}}  \binom{j_2}{l_2} \binom{j-j_2}{l-l_2}  V^{\bf j-l} \cdot (\nabla \Delta \phi_{1,1})^{\bf l}.
\end{align*} 
Thus we have 
\begin{align*}
\| Y^{\theta'+\frac12} H^{\bf j} \| 
& \le   \sum_{l=0}^j  \binom{j}{l} \, \sum_{\max\{0,l+j_2-j\}\le l_2\le \min\{l,j_2\}} \big ( \| \pa_Y V_1^{\bf j-l}\|_{L^\infty} \| Y^{\frac32+\theta'} (\pa_X \Delta\phi_{1,1})^{\bf l}\| \\
& \qquad \qquad \qquad \qquad \qquad \qquad \qquad  +\| \pa_Y V_2^{\bf j-l}\|_{L^\infty} \| Y^{\frac32+\theta'}  (\pa_Y \Delta\phi_{1,1})^{\bf l}\| \big ).
\end{align*}
Set 
\begin{align}\label{proof.cor.prop.Airy2.2} 
N_{\infty,j}[\nabla V_1] = (j+1)^\frac12 \sup_{j_2=0,\cdots,j} \Big ( \nu^{-\frac12} \| (\pa_X V_1)^{\bf j} \|_{L^\infty_{\tau,X,Y}} + \| (\pa_Y V_1)^{\bf j}\|_{L^\infty_{\tau,X,Y}} \Big ).
\end{align}
Since 
\begin{align*}
\| \pa_Y V_1^{\bf j-l}\|_{L^\infty} & \le \| (\pa_Y V_1)^{\bf j-l}\|_{L^\infty} + \kappa \nu^\frac12 (j_2-l_2) \| (\pa_Y V_1)^{(j_1-l_1,j_2-l_2-1)} \|_{L^\infty}\\
& \le (j-l+1)^{-\frac12} N_{\infty,j-l}[\nabla V_1] + \kappa \nu^\frac12 (j-l)^\frac12 N_{\infty,j-l-1}[\nabla V_1]
\end{align*}
and similarly
\begin{align*}
\| \pa_Y V_2^{\bf j-l}\|_{L^\infty} & \le \| (\pa_Y V_2)^{\bf j-l}\|_{L^\infty} + \kappa \nu^\frac12 (j_2-l_2) \| (\pa_Y V_2)^{(j_1-l_1,j_2-l_2-1)} \|_{L^\infty}\\
& = \| (\pa_X V_1)^{\bf j-l}\|_{L^\infty} + \kappa \nu^\frac12 (j_2-l_2) \| (\pa_X V_1)^{(j_1-l_1,j_2-l_2-1)} \|_{L^\infty}\\
& \le \nu^\frac12 \Big ( (j-l+1)^{-\frac12} N_{\infty,j-l}[\nabla V_1] + \kappa \nu^\frac12 (j-l)^\frac12 N_{\infty,j-l-1}[\nabla V_1]\Big ),
\end{align*}
we obtain
\begin{align*}
M_{2,j,Y^{\theta'+\frac12}}[H] 
& \le {\sum_{l=0}^j} \binom{j}{l} \, \min\{l+1,j-l+1\} \\
& \quad \times \big ( (j-l+1)^{-\frac12} N_{\infty,j-l}[\nabla V_1] + \kappa \nu^\frac12 (j-l)^\frac12  N_{\infty,j-l-1}[\nabla V_1]\big ) \\
& \qquad \times \big (  M_{2,l,Y^{\frac32+\theta'}}[\pa_X \Delta \phi_{1,1}] + \nu^\frac12  M_{2,l,Y^{\frac32+\theta'}} [\pa_Y \Delta \phi_{1,1}]\big ) 
\end{align*}
Then \eqref{proof.cor.prop.Airy2.1} follows from the Young inequality for convolution in the $l^1$ space.
For example, we have, by using $\frac{(l+1)^{\frac{1-\theta'}{2}}}{(j+1)^\frac{1-\theta'}{2} (j-l+1)^\frac12} \leq C$ for $\theta'=0,1,2$ and $ (\frac{(j-l)! l!}{j!})^\frac12 \min\{l+1,j-l+1\}\le C$,
\begin{align*}
& \sum_{j=0}^{\nu^{-\frac12}} \sum_{l=0}^j \frac{1}{(j+1)^{\frac{1-\theta'}{2}}} (\frac{(j-l)! l!}{j!})^\frac12 \min\{l+1,j-l+1\} (j-l+1)^{-\frac12} (l+1)^{\frac{1-\theta'}{2}} \\
& \quad \times \Big ( \frac{1}{((j-l)!)^\frac32 \nu^\frac{j-l}{2}} N_{\infty,j-l}[\nabla V_1] \Big ) \Big ( \frac{1}{(l!)^\frac32 \nu^\frac{l}{2}(l+1)^{\frac{1-\theta'}{2}}} M_{2,l,Y^{\frac32+\theta'}}[\pa_X\Delta \phi_{1,1}]\Big ) \\
& \le C \sum_{j=0}^{\nu^{-\frac12}} \sum_{l=0}^j \Big ( \frac{1}{((j-l)!)^\frac32 \nu^\frac{j-l}{2}} N_{\infty,j-l}[\nabla V_1] \Big ) \Big ( \frac{1}{(l!)^\frac32 \nu^\frac{l}{2}(l+1)^{\frac{1-\theta'}{2}}} M_{2,l,Y^{\frac32+\theta'}}[\pa_X\Delta \phi_{1,1}]\Big ) \\
& \leq C C_\kappa   \sum_{j=0}^{\nu^{-\frac12}} \frac{1}{(j!)^\frac32 \nu^{\frac{j}{2}}(j+1)^{\frac{1-\theta'}{2}}}  M_{2,j,Y^{\frac32+\theta'}} [\pa_X \Delta \phi_{1,1}].
\end{align*}
The other terms are handled in the same manner and we omit the details.
The proof of \eqref{est.cor.prop.Airy2.1}-\eqref{est.cor.prop.Airy2.2} is complete.
Finally let us prove \eqref{est.cor.prop.Airy2.3}.
The key is to apply the interpolation-type inequality proved in Proposition \ref{prop.stream.a}.
Indeed, Proposition \ref{prop.stream.a} implies 
\begin{align*}
& \sum_{j=0}^{\nu^{-\frac12}}\frac{1}{(j!)^\frac32 \nu^{\frac{j}{2}+\frac14} (j+1)^\frac12} M_{2,j,\frac{1}{1+Y}}[\pa_X \phi]  \\
& \leq C \sum_{\theta=0}^1 \sum_{j=0}^{\nu^{-\frac12}-1} \frac{1}{(j!)^\frac32 \nu^{\frac{j}{2}+\frac14} (j+1)^\frac12} (j+1)^{\frac{\theta}{2}-\frac14} M_{2,j+1,Y^{1+{\theta}}} [\omega] \\
& \quad +  C  \sum_{j=0}^{\nu^{-\frac12}-1} \frac{1}{(j!)^\frac32 \nu^{\frac{j}{2}+\frac14} (j+1)^\frac12} \kappa \nu^\frac12 j \big ( M_{2,j-1,Y}[\omega] + M_{2,j-1,1}[\nabla \phi] \big )\\
& \qquad + \frac{1}{(j!)^\frac32 \nu^{\frac{j}{2}+\frac14} (j+1)^\frac12} M_{2,j,\frac{1}{1+Y}}[\pa_X \phi] \Big |_{j=\nu^{-\frac12}} \\
& \le C \sum_{\theta=0}^1 \sum_{j=0}^{\nu^{-\frac12}} \frac{\nu^\frac14(j+1)^{\frac{{\theta}}{2}+\frac34}}{(j!)^\frac32 \nu^{\frac{j}{2}}} M_{2,j,Y^{1+{\theta}}} [\omega] \\
& \quad +  C  \sum_{j=0}^{\nu^{-\frac12}} \frac{\nu^\frac14 (j+1)^\frac12}{(j!)^\frac32 \nu^{\frac{j}{2}}}  M_{2,j,Y}[\omega]  + C \cA \nabla \phi \cA_{2,{\bf 1}}'\\
& \le \frac{CC_\kappa}{{K^{\frac34}}} \cA h\cA_{bc}.
\end{align*}
Here we have used \eqref{est.cor.prop.Airy2.1} and \eqref{est.cor.prop.Airy2.2} in the last line. The proof is complete.
\end{proof}

\subsection{Full construction of boundary corrector}\label{subsec.Boundary}

We set $\phi_{app,1}=\phi_{app,1}[h]=\phi_{1,1}[h] + \phi_{1,2}[h]$, where $\phi_{1,1}[h]$ is the solution to \eqref{eq.s} in Propositions \ref{prop.Stokes.velocity}-\ref{prop.Stokes.vorticity}, and $\phi_{1,2}[h]$ is the solution to \eqref{eq.airy} with $H=V \cdot \nabla \Delta\phi_{1,1}[h]$ as in Corollary \ref{cor.prop.Airy2}. 
Then the approximate solution $\phi_{app}$ to the full system \eqref{eq.bc} is constructed in the form $\phi_{app}=\phi_{app,1} + \tilde \phi_1$, which leads to the equations for $\tilde \phi_1=\tilde \phi_1[h]$ as 
\begin{align}\label{eq.tilde}
\begin{split}
&\nu^\frac12 \Delta^2 \tilde \phi_1 - \pa_\tau \Delta \tilde \phi_1 - V \cdot \nabla \Delta \tilde \phi_1 + \nabla^\bot \tilde \phi_1 \cdot \nabla \Omega = -\nabla^\bot \phi_{app,1} \cdot \nabla \Omega,\qquad \tau>0,~X\in \T_\nu, ~Y>0,\\
& \tilde \phi_1|_{Y=0} =\Delta \tilde \phi_1|_{Y=0}=0,\qquad \tilde \phi_1|_{\tau=0}=0. 
\end{split} 
\end{align}

Let us first give the estimate for the force term $ -\nabla^\bot \phi_{app,1} \cdot \nabla \Omega$.
\begin{proposition}\label{prop.app} Let $\kappa_3\in (0,1]$ be the number in Corollary \ref{cor.prop.Airy1}.
For any $\kappa\in (0,\kappa_3]$ there exists $K_3'=K_3'(\kappa, C_*,C_j^*)\geq 1$ such that for any $K\ge K_3'$,
\begin{align}\label{est.prop.app.1}
\begin{split}
& \frac{1}{K^\frac12 \nu^\frac12} \cA \nabla^\bot \phi_{app,1}\cdot \nabla \Omega \cA_{2,\tilde \xi^{(2)}}' + \frac{1}{K^\frac12 \nu^\frac14} \| \nabla^\bot \phi_{app,1}\cdot \nabla \Omega \|_{L^2(0,\frac{1}{K\nu^\frac12}; \dot{H}^{-1})} \\
& \leq  \frac{1}{K^{\frac14}} \Big (\sum_{j=0}^{\nu^{-\frac12}} \frac{1}{(j!)^\frac32 \nu^{\frac{j}{2}+\frac14}(j+1)^\frac12} M_{2,j,\frac{1}{1+Y}} [\pa_X \phi_{app,1}] + 2 \cA \nabla \phi_{app,1}\cA_{2,{\bf 1}}'\Big ).
\end{split}
\end{align}
\end{proposition}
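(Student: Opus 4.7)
I would prove the two terms on the left-hand side of \eqref{est.prop.app.1} separately, in both cases exploiting the weighted $L^\infty$ bounds on $\Omega$ and $\nabla\Omega$ provided by Assumption \ref{assume} and the crucial bound $\|\chi_\nu^{-1}(1+\nu^{1/2}Y)(1+Y)^{-1}\xi_j\|_{L^\infty} \leq C(j+1)^{1/2}$ inherited from Lemma \ref{lem.rhoj}. The structure closely parallels the treatment of the stretching term $\nabla^\bot \phi \cdot \nabla \Omega$ in Lemma \ref{lem.prop.5}, but here the goal is to produce a pure $L^2$-type Gevrey norm and a $\dot H^{-1}$ norm directly, rather than the paired quantity $\langle \cdot, \xi_j^2 \omega^{\mathbf j}\rangle$.

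For the Gevrey-weighted norm $\cA \nabla^\bot \phi_{app,1}\cdot \nabla \Omega \cA_{2,\tilde\xi^{(2)}}'$, I would apply the Leibniz formula to $B_{j_2}\partial_X^{j_1}(\partial_Y\phi_{app,1}\partial_X\Omega - \partial_X\phi_{app,1}\partial_Y\Omega)$, obtaining a double sum over $l$ and $l_2$ of products $(\nabla^\bot \phi_{app,1})^{\mathbf l}\cdot(\nabla\Omega)^{\mathbf{j-l}}$, and then estimate $\|\xi_j(\nabla^\bot \phi_{app,1})^{\mathbf l}\cdot (\nabla\Omega)^{\mathbf{j-l}}\|_{L^2(L^2)}/(j+1)$. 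For the $\partial_Y\phi\,\partial_X\Omega$ piece I exploit the $\nu^{1/2}$-smallness from Assumption \ref{assume}~(ii), namely that $\sup_{j_2}\|\frac{1+Y}{1+\nu^{1/2}Y}(\partial_X\Omega)^{\mathbf{j-l}}\|_{L^\infty}$ appears summed Gevrey with factor $\nu^{-1/2}$ and is bounded by $C_0^*$; combined with $(j+1)^{1/2}$ from the weight $\xi_j$, this exactly compensates the $\nu^{-1/2}$ prefactor on the LHS. For the $\partial_X\phi\,\partial_Y\Omega$ piece I use $\|(\frac{1+Y}{1+\nu^{1/2}Y})^2 (\partial_Y\Omega)^{\mathbf{j-l}}\|_{L^\infty}$ together with the elementary inequality $\big(\frac{1+\nu^{1/2}Y}{1+Y}\big)^2 \leq \frac{2}{(1+Y)^2} + 2\nu$, which produces contributions controlled by $M_{2,l,\frac{1}{1+Y}}[\partial_X\phi]$ and $\nu^{1/2}M_{2,l,1}[\partial_X\phi]$, matching exactly the two types of norm on the RHS. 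Summing over $l$ with the Gevrey weights $1/((j!)^{3/2}\nu^{j/2})$ and applying Young's convolution inequality in $\ell^1$, as in the final step of Lemma \ref{lem.prop.5}, the convolution of $(N_{\infty,j-l}[\nabla\Omega])$ (bounded by $C_0^*$) against the Gevrey norms of $\phi_{app,1}$ closes this first estimate.

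For the $\dot H^{-1}$ norm I would invoke the divergence identity $\nabla^\bot \phi \cdot \nabla \Omega = \nabla \cdot(\Omega \nabla^\bot \phi)$, yielding $\|\nabla^\bot \phi_{app,1}\cdot \nabla\Omega\|_{\dot H^{-1}} \leq \|\Omega \nabla^\bot \phi_{app,1}\|_{L^2}$. The weighted bound $\|\frac{1+Y}{1+\nu^{1/2}Y}\Omega\|_{L^\infty} \leq 2 C_1^*$, which follows from Assumption \ref{assume}~(iii) via $\Omega = \partial_X V_2 - \partial_Y V_1$, together with the split $\frac{1+\nu^{1/2}Y}{1+Y} \leq \frac{1}{1+Y} + \nu^{1/2}$, gives
\begin{equation*}
\|\Omega \nabla^\bot \phi_{app,1}\|_{L^2} \leq C\Big(\big\|\tfrac{\nabla\phi_{app,1}}{1+Y}\big\|_{L^2} + \nu^{1/2}\|\nabla\phi_{app,1}\|_{L^2}\Big).
\end{equation*}
Splitting $\|\nabla\phi/(1+Y)\|_{L^2} \leq \|\partial_X\phi/(1+Y)\|_{L^2} + \|\partial_Y\phi\|_{L^2}$, the first part is controlled by the $j=0$ contribution to the first sum in the RHS, while the $\|\partial_Y\phi\|_{L^2}$ and $\nu^{1/2}\|\nabla\phi\|_{L^2}$ pieces are absorbed by the $j=0$ term of $\cA\nabla\phi_{app,1}\cA_{2,\mathbf{1}}'$; the prefactor $1/(K^{1/2}\nu^{1/4})$ on the LHS converts to the required $1/K^{1/4}$ after taking $K \geq K_3'$ sufficiently large. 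The main technical obstacle is precisely the $\|\partial_Y\phi_{app,1}\|_{L^2}$ contribution, which lacks the $(1+Y)^{-1}$ weight enjoyed by the $\partial_X\phi$ part: closing the estimate with coefficient independent of $\nu$ forces a delicate interplay between the short time interval $(0, \frac{1}{K\nu^{1/2}})$, the Gevrey structure, and the choice of $K_3'$ depending on $C_1^*$ and the other structural constants.
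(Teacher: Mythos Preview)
Your plan has the right overall shape but contains two genuine gaps, one in each half of the estimate.

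\textbf{The Gevrey-norm part.} You treat the diagonal contribution $(\partial_X\phi_{app,1})^{\mathbf j}\,\partial_Y\Omega$ the same way as the off-diagonal terms, extracting $\|(\tfrac{1+Y}{1+\nu^{1/2}Y})^2\partial_Y\Omega\|_{L^\infty}$ and then paying $\|\tfrac{1+\nu^{1/2}Y}{1+Y}\xi_j\|_{L^\infty}\le C(j+1)^{1/2}$. For the off-diagonal terms $l\le j-1$ this factor is harmless, because the combinatorial weight $(\tfrac{(j-l)!\,l!}{j!})^{1/2}\min\{l+1,j-l+1\}$ absorbs it (this is exactly the mechanism in Lemma~\ref{lem.prop.5}). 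But for $l=j$ there is no combinatorial gain, and the extra $(j+1)^{1/2}$ survives: after summing you obtain $\tfrac{1}{K^{1/2}}\sum_j \tfrac{1}{(j!)^{3/2}\nu^{j/2+1/4}}M_{2,j,\frac{1}{1+Y}}[\partial_X\phi_{app,1}]$, which is a factor $(j+1)^{1/2}$ (up to $\nu^{-1/4}$) larger than the target right-hand side. The paper avoids this by not separating $\xi_j$ from $\partial_Y\Omega$ on the diagonal: since $\xi_j\,\partial_Y\Omega=\tfrac{\partial_Y\Omega}{\sqrt{\partial_Y\Omega+2\rho_j}}$, one has $|\xi_j\,\partial_Y\Omega|\le C(|\partial_Y\Omega|^{1/2}+\sqrt{\rho_j})$, which carries only a \emph{single} power of $\tfrac{1+\nu^{1/2}Y}{1+Y}$, giving directly $C(C_1^*+K^{1/4}C_*)^{1/2}M_{2,j,\frac{1}{1+Y}}[\partial_X\phi_{app,1}]$ with no $(j+1)^{1/2}$.

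\textbf{The $\dot H^{-1}$ part.} Your divergence-form bound $\|\nabla^\bot\phi\cdot\nabla\Omega\|_{\dot H^{-1}}\le\|\Omega\,\nabla^\bot\phi\|_{L^2}$ is too lossy. The $\partial_Y\phi$ contribution produces a term of size $C_1^*\|\partial_Y\phi_{app,1}\|_{L^2_\tau L^2}$ (Hardy is unavailable since $\partial_Y\phi_{app,1}|_{Y=0}\ne 0$), and $\tfrac{1}{K^{1/2}\nu^{1/4}}\|\partial_Y\phi_{app,1}\|_{L^2_\tau L^2}\sim \tfrac{1}{K^{1/2}\nu^{1/2}}\cA\nabla\phi_{app,1}\cA_{2,\mathbf 1}'$, which cannot be bounded by $\tfrac{C}{K^{1/4}}\cA\nabla\phi_{app,1}\cA_{2,\mathbf 1}'$ uniformly in $\nu$. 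You flag this as ``the main technical obstacle'' but do not overcome it. The paper's remedy is to work at the level of the duality pairing: for a test function $\eta\in\dot H^1_0$ one writes $\eta=\tfrac{\eta}{1+\nu^{1/2}Y}+\tfrac{\nu^{1/2}Y\,\eta}{1+\nu^{1/2}Y}$ and integrates by parts only in the second piece. This yields
\[
\|\nabla^\bot\phi_{app,1}\cdot\nabla\Omega\|_{\dot H^{-1}}\le C\,C_1^*\big(\nu^{1/2}\|\nabla\phi_{app,1}\|+\|\partial_X\partial_Y\phi_{app,1}\|\big),
\]
and the $X$-derivative in the second term shifts the Gevrey index by one, recovering exactly the missing factor $\nu^{1/4}$ so that $\tfrac{1}{K^{1/2}\nu^{1/4}}\|\partial_X\partial_Y\phi_{app,1}\|_{L^2_\tau L^2}\le \tfrac{C\,C_1^*}{K^{1/2}}\cA\nabla\phi_{app,1}\cA_{2,\mathbf 1}'$.
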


\begin{proof} Let us recall that 
\begin{align*}
& \frac{1}{\nu^\frac12} \cA \nabla^\bot \phi_{app,1}\cdot \nabla \Omega \cA_{2,\tilde \xi^{(2)}}' \\
& = \sum_{j=0}^{\nu^{-\frac12}} \frac{1}{(j!)^\frac32 \nu^\frac{j}{2} \nu^\frac14 (j+1)^\frac12} \sup_{j_2=0,\cdots,j} \| \xi_j e^{-K\tau\nu^\frac12 (j+1)} B_{j_2}\pa_X^{j-j_2} (\nabla^\bot \phi_{app,1}\cdot \nabla \Omega) \|_{L^2(0,\frac{1}{K\nu^\frac12}; L^2_{X,Y})}.
\end{align*} 
Thus we consider the estimate of 
\begin{align*}
& e^{-K\tau\nu^\frac12 (j+1)} B_{j_2}\pa_X^{j-j_2} (\nabla^\bot \phi_{app,1}\cdot \nabla \Omega) \\
& = (\nabla^\bot \phi_{app,1})^{\bf j} \cdot \nabla \Omega + \sum_{l=0}^{j-1} \sum_{\max\{0,l+j_2-j\} \le l_2\le \min\{l,j_2\}} \binom{j_2}{l_2}  \binom{j-j_2}{l-l_2}  (\nabla^\bot \phi_{app,1})^{\bf l} \cdot (\nabla \Omega)^{{\bf j-l}},
\end{align*}
where ${\bf j}=(j-j_2,j_2)$ and ${\bf l}=(l-l_2,l_2)$.
We observe that, from the definition of $\rho_j$ in \eqref{def.rhoj},  {point iii) in Assumption \ref{assume}}  and $K\geq 1$,
\begin{align*}
\|\xi_j \pa_X \phi_{app,1}^{\bf j} \pa_Y \Omega\|  & = \|\frac{\pa_Y \Omega}{\sqrt{\pa_Y \Omega + 2\rho_j}} \pa_X \phi_{app,1}^{\bf j} \| \\
& \leq C \|(|\pa_Y \Omega|^\frac12 + \sqrt{\rho_j}) \pa_X \phi_{app,1}^{\bf j} \|\\
& \le C \| (\frac{1+Y}{1+\nu^\frac12 Y})^2 \pa_Y \Omega\|_{L^\infty}^\frac12 \| \frac{1+\nu^\frac12 Y}{1+Y} \pa_X\phi_{app,1}^{\bf j} \| \\
& \quad + C (K^\frac14 C_*)^\frac12 \| \frac{1}{1+Y} \pa_X \phi_{app,1}^{\bf j} \| + C C_*^\frac12 \nu^\frac12 \| \pa_X \phi_{app,1}^{\bf j} \|\\
& \le C(C_1^* + K^\frac14 C_*)^\frac12  \| \frac{1}{1+Y} \pa_X \phi_{app,1}^{\bf j}\| + C (C_1^*+C_*)^\frac12 \nu^\frac12 \| \pa_X \phi_{app,1}^{\bf j} \|. 
\end{align*}
Here we have
On the other hand,
\begin{align*}
\|\xi_j (\pa_Y \phi_{app,1})^{\bf j} \pa_X \Omega\| & \le \| \frac{1+Y}{1+\nu^\frac12 Y} \pa_X \Omega \|_{L^\infty} \| \frac{1+\nu^\frac12 Y}{1+Y} \xi_j \|_{L^\infty} \| (\pa_Y \phi_{app,1})^{\bf j} \|\\
& \le C C_1^* \nu^\frac12 (j+1)^\frac12 \| (\pa_Y \phi_{app,1})^{\bf j}\|.
\end{align*}
Here we have used \eqref{est.lem.rhoj.2} {and point iii) in Assumption \ref{assume}}. Thus we have from $C_*\geq 1$,
\begin{align}\label{proof.prop.app.1}
\begin{split}
& \| \xi_j (\nabla^\bot \phi_{app,1})^{\bf j} \cdot \nabla \Omega \|_{L^2(0,\frac{1}{K\nu^\frac12}; L^2_{X,Y})}\\
& \le C(C_1^* + K^\frac14 C_*)^\frac12 M_{2,j,\frac{1}{1+Y}} [\pa_X \phi_{app,1}] + C (C_1^* + C_*) \nu^\frac12  (j+1)^\frac12 M_{2,j,1}[\pa_Y \phi_{app,1}].
\end{split}
\end{align} 
Next we see 
\begin{align*}
& \| \xi_j \sum_{l=0}^{j-1} \sum_{\max\{0,l+j_2-j\} \le l_2\le \min\{l,j_2\}} \binom{j_2}{l_2}  \binom{j-j_2}{l-l_2}  (\nabla^\bot \phi_{app,1})^{\bf l} \cdot (\nabla \Omega)^{{\bf j-l}}\| \\
& \le \sum_{l=0}^{j-1}  \binom{j}{l}   \sum_{\max\{0,l+j_2-j\} \le l_2\le \min\{l,j_2\}} \|\xi_j  (\nabla^\bot \phi_{app,1})^{\bf l} \cdot (\nabla \Omega)^{{\bf j-l}}\|,
\end{align*}
and 
\begin{align*}
 \|\xi_j  (\nabla^\bot \phi_{app,1})^{\bf l} \cdot (\nabla \Omega)^{{\bf j-l}}\| & \le \| (\frac{1+Y}{1+\nu^\frac12 Y})^2 (\pa_Y \Omega)^{\bf j-l} \|_{L^\infty} \| \frac{1+\nu^\frac12 Y}{1+Y} \xi_j \|_{L^\infty} \| \frac{1+\nu^\frac12 Y}{1+Y} \pa_X \phi_{app,1}^{\bf l} \| \\
 & \quad +  \| \frac{1+Y}{1+\nu^\frac12 Y} (\pa_X \Omega)^{\bf j-l} \|_{L^\infty} \| \frac{1+\nu^\frac12 Y}{1+Y} \xi_j \|_{L^\infty} \|  (\pa_Y \phi_{app,1})^{\bf l} \|\\
 & \le C  (j+1)^\frac12 N_{\infty, j-l, (\frac{1+Y}{1+\nu^\frac12 Y})^2}[\pa_Y \Omega]  \, \| \frac{1}{1+Y} \pa_X \phi_{app,1}^{\bf l} \| \\
 & \quad + C  \nu^\frac12 (j+1)^\frac12 N_{\infty,j-l} [\nabla \Omega] \, \| (\nabla \phi_{app,1})^{\bf l} \|.
\end{align*}
Thus we have
\begin{align}\label{proof.prop.app.2}
\begin{split}
& \| \xi_j \sum_{l=0}^{j-1} \sum_{\max\{0,l+j_2-j\} \le l_2\le \min\{l,j_2\}} \binom{j_2}{l_2} \binom{j-j_2}{l-l_2}  (\nabla^\bot \phi_{app,1})^{\bf l} \cdot (\nabla \Omega)^{{\bf j-l}}\|_{L^2(0,\frac{1}{K\nu^\frac12}; L^2_{X,Y})} \\
& \le C (j+1)^\frac12 \sum_{l=0}^{j-1} \min \{l+1,j-l+1\} \binom{j}{l}   \, N_{\infty,j-l} [\nabla \Omega] \\
& \qquad \qquad \times \Big ( M_{2,l,\frac{1}{1+Y}}[\pa_X \phi_{app,1}] + \nu^\frac12 M_{2,l,1} [\nabla \phi_{app,1}] \Big ).
\end{split}
\end{align}
We note that 
\begin{align*}
(j+1)^\frac12  \min \{l+1,j-l+1\}  \, (\frac{(j-l)! l!}{j!})^\frac12 \leq C\,, \qquad 1\le l\le j-1.
\end{align*}
Taking into account this uniform bound (by decomposing the sum $\sum_{l=0}^{j-1}$ into the term of $''l=0''$ and $\sum_{l=1}^{j-1}$) and collecting \eqref{proof.prop.app.1} and \eqref{proof.prop.app.2}, we obtain from the Young inequality for convolution in the $l^1$ space, 
\begin{align}
\begin{split}
& \frac{1}{K^\frac12\nu^\frac12} \cA \nabla^\bot \phi_{app,1}\cdot \nabla \Omega \cA_{2,\tilde \xi^{(2)}}' \\
& \le \frac{1}{K^{\frac14}} \Big (\sum_{j=0}^{\nu^{-\frac12}} \frac{1}{(j!)^\frac32 \nu^{\frac{j}{2}+\frac14}(j+1)^\frac12} M_{2,j,\frac{1}{1+Y}} [\pa_X \phi_{app,1}] + \cA \nabla \phi_{app,1}\cA_{2,{\bf 1}}'\Big ),
\end{split}
\end{align}
where $K$ has been taken large enough depending on $C_*$, $C_1^*$, and $C_\kappa$.
As for the estimate of $\| \nabla^\bot \phi_{app,1}\cdot \nabla \Omega \|_{L^2(0,\frac{1}{K\nu^\frac12}; \dot{H}^{-1})}$, let us take any $\eta\in \dot{H}^1_0(\T\times \R_+)$. Then we have 
\begin{align*}
\langle  \nabla^\bot \phi_{app,1}\cdot \nabla \Omega, \eta\rangle 
& = \langle \frac{1+Y}{1+\nu^\frac12 Y}  \nabla^\bot \phi_{app,1}\cdot \nabla \Omega, \frac{\eta}{1+Y}\rangle + \langle  \nabla^\bot \phi_{app,1}\cdot \nabla \Omega, \frac{\nu^\frac12 Y \eta }{1+\nu^\frac12 Y}\rangle\\
& = \langle \frac{1+Y}{1+\nu^\frac12 Y}  \nabla^\bot \phi_{app,1}\cdot \nabla \Omega, \frac{\eta}{1+Y}\rangle -  \langle  \Omega, \nabla^\bot \phi_{app,1}\cdot \nabla \big (  \frac{\nu^\frac12 Y\eta }{1+\nu^\frac12 Y} \big )\rangle.
\end{align*}
This implies 
\begin{align*}
|\langle  \nabla^\bot \phi_{app,1}\cdot \nabla \Omega, \eta\rangle|
& \le \|\frac{1+Y}{1+\nu^\frac12 Y}  \nabla^\bot \phi_{app,1}\cdot \nabla \Omega \| \, \| \frac{\eta}{1+Y} \| \\
& \quad + \| \frac{1+Y}{1+\nu^\frac12 Y} \Omega  \nabla^\bot \phi_{app,1}\| \, \| \frac{1+\nu^\frac12 Y}{1+Y} \nabla  \big (  \frac{\nu^\frac12  Y \eta }{1+\nu^\frac12 Y} \big )\|\\
& \le C \|\frac{1+Y}{1+\nu^\frac12 Y}  \nabla^\bot \phi_{app,1}\cdot \nabla \Omega \|  \, \| \pa_Y \eta \| + C\nu^\frac12 \| \frac{1+Y}{1+\nu^\frac12 Y} \Omega  \nabla^\bot \phi_{app,1}\| \, \| \nabla \eta \|,
\end{align*}
where the Hardy inequality was used several times.
Hence we obtain 
\begin{align*}
\| \nabla^\bot \phi_{app,1}\cdot \nabla \Omega \|_{\dot{H}^{-1}} & \le C \|\frac{1+Y}{1+\nu^\frac12 Y}  \nabla^\bot \phi_{app,1}\cdot \nabla \Omega \|  + C\nu^\frac12 \| \frac{1+Y}{1+\nu^\frac12 Y} \Omega  \nabla^\bot \phi_{app,1}\|\\
& \le C  \| \frac{1+Y}{1+\nu^\frac12 Y} \pa_X \Omega\|_{L^\infty}\, \| \pa_Y \phi_{app,1}\| \\
& \quad + C \| (\frac{1+Y}{1+\nu^\frac12 Y})^2 \pa_Y \Omega \|_{L^\infty} \|\frac{1+\nu^\frac12 Y}{1+Y}  \pa_X \phi_{app,1}\|\\
& \qquad  + C \nu^\frac12  \| \frac{1+Y}{1+\nu^\frac12 Y} \Omega\|_{L^\infty}  \| \nabla \phi_{app,1}\| \\
& \le C C_1^* \big ( \nu^\frac12 \| \nabla \phi_{app,1} \| + \| \pa_Y \pa_X \phi_{app,1} \|\big ).
\end{align*}
Then 
\begin{align*}
& \frac{1}{K^\frac12\nu^\frac14} \| \nabla^\bot \phi_{app,1}\cdot \nabla \Omega \|_{L^2(0,\frac{1}{K\nu^\frac12}; \dot{H}^{-1})} \\
& \le \frac{CC_1^*}{K^\frac12\nu^\frac14} \big ( \nu^\frac12 \| \nabla \phi_{app,1} \|_{L^2(0,\frac{1}{K\nu^\frac12}; L^2_{X,Y})} + \| \pa_X \pa_Y \phi_{app,1}\| _{L^2(0,\frac{1}{K\nu^\frac12}; L^2_{X,Y})}\big )\\
& \le \frac{1}{K^{\frac14}} \cA \nabla \phi_{app,1}\cA_{2,{\bf 1}}'.
\end{align*}
The proof is complete.
\end{proof}

\

Propositions \ref{prop.mOS1} and \ref{prop.app} yield
\begin{corollary}\label{cor.prop.app}  There exists $\kappa_4\in (0,1]$ such that the following statement holds for any $\kappa\in (0,\kappa_4]$.  There exists $K_4=K_4(\kappa, C_*, C_j^*)\geq 1$ such that if $K\geq K_4$ then the system \eqref{eq.tilde} admits a unique solution $\tilde \phi_1\in C([0,\infty); \dot{H}^1_0(\T_\nu \times \R_+))$ with $\tilde \omega_1=-\Delta \phi\in C([0,\infty); L^2(\T_\nu \times \R_+))$ satisfying
\begin{align}
\begin{split}
& \cA \tilde \omega_1 \cA_{\infty,\xi}'   + K^\frac12 \cA \tilde \omega_1 \cA_{2,\xi}' + K^\frac14 \cA \nabla \tilde \phi_1 \cA_{2,{\bf 1}}' + K^\frac14 \cA \pa_Y \tilde \phi_1|_{Y=0} \cA_{bc}   \leq \frac{1}{K^\frac12} \cA h\cA_{bc}.
\end{split}
\end{align}
\end{corollary}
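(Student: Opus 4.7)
The plan is to observe that system \eqref{eq.tilde} is precisely of the form \eqref{eq.mOS} with $F=0$, $G = -\nabla^\bot \phi_{app,1}\cdot \nabla \Omega$, and $\phi_0 = 0$, so that Proposition \ref{prop.mOS1} applies directly. This reduces the problem to estimating the source term $G$ in the norms on the right-hand side of \eqref{est.prop.mOS1.1}, namely
\begin{align*}
\frac{1}{K^\frac12 \nu^\frac12} \cA G \cA_{2,\tilde \xi^{(2)}}' + \frac{1}{K^\frac12 \nu^\frac14}\| G\|_{L^2(0,\frac{1}{K\nu^\frac12};\dot H^{-1})},
\end{align*}
purely in terms of $\cA h\cA_{bc}$. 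This is achieved by chaining Proposition \ref{prop.app} with the already established bounds on $\phi_{app,1} = \phi_{1,1}[h] + \phi_{1,2}[h]$.

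The key intermediate step is to show that
\begin{align*}
\sum_{j=0}^{\nu^{-\frac12}} \frac{1}{(j!)^\frac32 \nu^{\frac{j}{2}+\frac14}(j+1)^\frac12} M_{2,j,\frac{1}{1+Y}} [\pa_X \phi_{app,1}] + \cA \nabla \phi_{app,1}\cA_{2,{\bf 1}}' \; \leq \; \frac{C}{K^\frac14} \cA h\cA_{bc}.
\end{align*}
For the Stokes component $\phi_{1,1}[h]$, both summands are controlled by $C K^{-\frac14} \cA h\cA_{bc}$ via Proposition \ref{prop.Stokes.velocity} (the first term directly, and $\cA\nabla \phi_{1,1}\cA_{2,{\bf 1}}'$ is dominated by the first summand of that proposition, which carries a stronger $(j+1)^{3/4}$ weight). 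For the transport corrector $\phi_{1,2}[h]$, Corollary \ref{cor.prop.Airy2} gives the even better bound $C C_\kappa K^{-\frac34} \cA h\cA_{bc}$ for both quantities. Summing these two contributions yields the displayed estimate.

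Inserting this into the right-hand side of \eqref{est.prop.app.1} produces
\begin{align*}
\frac{1}{K^\frac12 \nu^\frac12} \cA \nabla^\bot \phi_{app,1}\cdot \nabla \Omega \cA_{2,\tilde \xi^{(2)}}' + \frac{1}{K^\frac12 \nu^\frac14} \| \nabla^\bot \phi_{app,1}\cdot \nabla \Omega \|_{L^2(0,\frac{1}{K\nu^\frac12};\dot{H}^{-1})} \; \leq \; \frac{C}{K^\frac12}\cA h\cA_{bc}.
\end{align*}
Feeding this into Proposition \ref{prop.mOS1} (with $\phi_0 = 0$, $F=0$) gives the desired bound with a universal constant $C/K^{1/2}$, which is then absorbed into the clean factor $1/K^{1/2}$ by further enlarging $K$ (depending on $\kappa, C_*, C^*_j$). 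Existence and uniqueness of $\tilde \phi_1$ follow from the corresponding statement in Proposition \ref{prop.mOS1}, since $G\in L^2(0,\frac{1}{K\nu^\frac12};\dot H^{-1})$ by the preceding bounds.

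The main ``obstacle'' here is really organizational rather than analytical: all heavy work — the weighted vorticity estimates under artificial boundary conditions, the explicit Stokes construction, the heat--transport estimate for the transport corrector, and the Orr--Sommerfeld source estimate — has been performed upstream. What must be verified carefully is only the arithmetic of the $K$-exponents: each of the two layers $\phi_{1,1}$ and $\phi_{1,2}$ contributes at least $K^{-\frac14}$, Proposition \ref{prop.app} provides an additional $K^{-\frac14}$, and together they produce the gain $K^{-\frac12}$ that allows the iteration in the next stage of the paper to converge. The required smallness of $\kappa$ and largeness of $K$ are the intersections of the hypotheses of Propositions \ref{prop.Stokes.velocity}, \ref{prop.Stokes.vorticity}, Corollary \ref{cor.prop.Airy2}, Proposition \ref{prop.app}, and Proposition \ref{prop.mOS1}, all of which depend only on $\kappa, C_*, C^*_j$.
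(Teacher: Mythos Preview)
Your proposal is correct and follows essentially the same route as the paper: apply Proposition \ref{prop.mOS1} to system \eqref{eq.tilde} with $F=0$, $\phi_0=0$, $G=-\nabla^\bot\phi_{app,1}\cdot\nabla\Omega$, then bound the resulting $G$-terms via Proposition \ref{prop.app}, and finally control the $\phi_{app,1}$-quantities using Proposition \ref{prop.Stokes.velocity} for $\phi_{1,1}$ and Corollary \ref{cor.prop.Airy2} for $\phi_{1,2}$. Your tracking of the $K$-exponents ($K^{-1/4}$ from each layer combining to $K^{-1/2}$, then absorbing the universal constant by enlarging $K$) matches the paper's arithmetic exactly.
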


\begin{proof} Propositions \ref{prop.mOS1} and \ref{prop.app} give
\begin{align*}
& \cA \omega \cA_{\infty,\xi}'   + K^\frac12 \cA \omega \cA_{2,\xi}' + K^\frac14 \cA \nabla \phi \cA_{2,{\bf 1}}' + K^\frac14  \cA \pa_Y \phi|_{Y=0} \cA_{bc} \\
& \leq \frac{C}{K^\frac14} \Big (\sum_{j=0}^{\nu^{-\frac12}} \frac{1}{(j!)^\frac32 \nu^{\frac{j}{2}+\frac14}(j+1)^\frac12} M_{2,j,\frac{1}{1+Y}} [\pa_X \phi_{app,1}]  +  \cA \nabla \phi_{app,1} \cA_{2,{\bf 1}}'\Big ).
\end{align*}
Here $C>0$ is a universal constant. Recall that $\phi_{app,1}[h]=\phi_{1,1}[h]+\phi_{1,2}[h]$.
Then the assertion follows from Proposition \ref{prop.Stokes.velocity} for $\phi_{1,1}[h]$ and Corollary \ref{cor.prop.Airy2} for $\phi_{1,2}[h]$.
The proof is complete.
\end{proof}

\

From the construction,  $\phi_{app}=\phi_{app}[h]=\phi_{app,1}[h] + \tilde \phi_1[h]$ satisfies 
\begin{align}
\begin{split}
&\nu^\frac12 \Delta^2  \phi_{app} - \pa_\tau \Delta  \phi_{app} - V \cdot \nabla \Delta \phi_{app} + \nabla^\bot  \phi_{app} \cdot \nabla \Omega =0,\qquad \tau>0,~X\in \T_\nu, ~Y>0,\\
& \phi_{app}|_{Y=0} =0,\qquad \pa_Y \phi_{app}|_{Y=0}=h + R_{bc}[h],\qquad \phi_{app}|_{\tau=0}=0. 
\end{split} 
\end{align}
Here $R_{bc}[h]$ is the linear operator defined as 
\begin{align}\label{def.R_bc}
R_{bc}[h] = \pa_Y\phi_{1,2}[h] \big |_{Y=0} + \pa_Y \tilde \phi_1[h] \big |_{Y=0}.
\end{align}
We note that the operator $R_{bc}$ is well-defined on the Banach space 
\begin{align}
Z_{bc} = \{h\in L^2(0,\frac{1}{K\nu^\frac12}; L^2_X)~|~ \|h\|_{Z_{bc}}:= \cA h\cA_{bc} <\infty \}.
\end{align}

\begin{proposition}\label{prop.boundary.correct} There exists $\kappa_5\in (0,1]$ such that the following statement holds for any $\kappa\in (0,\kappa_5]$.  There exists $K_5=K_5(\kappa, C_*, C_j^*)\geq 1$ such that if $K\geq K_5$ then the map $R_{bc}: Z_{bc}\rightarrow Z_{bc}$ defined by \eqref{def.R_bc} satisfies
\begin{align}\label{est.prop.boundary.correct.1} 
\cA R_{bc}[h] \cA_{bc} \leq \frac12 \cA h\cA_{bc}.
\end{align} 
Hence, the operator $I+R_{bc}$ is invertible in $Z_{bc}$, and the map
\begin{align}\label{est.prop.boundary.correct.2} 
\Phi_{bc} [h] := \phi_{app} [(I+R_{bc})^{-1} h], \qquad h\in Z_{bc}
\end{align}
gives the solution to \eqref{eq.bc} and satisfies 
\begin{align}\label{est.prop.boundary.correct.3} 
\cA \nabla \Phi_{bc} [h]\cA_{2,{\bf 1}}'\leq C \cA h\cA_{bc}.
\end{align}
Here $C>0$ is a universal constant.
\end{proposition}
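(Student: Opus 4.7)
The plan is to read off \eqref{est.prop.boundary.correct.1} as a direct combination of the Gevrey trace estimates already established for the two building blocks of $\phi_{app}[h]=\phi_{1,1}[h]+\phi_{1,2}[h]+\tilde\phi_1[h]$. By the very definition \eqref{def.R_bc}, $R_{bc}[h]$ is the sum of the boundary traces $\pa_Y\phi_{1,2}[h]|_{Y=0}$ and $\pa_Y\tilde\phi_1[h]|_{Y=0}$. Corollary \ref{cor.prop.Airy2} applied to $\phi_{1,2}[h]$ yields
$$ \cA \pa_Y\phi_{1,2}[h]|_{Y=0} \cA_{bc} \le \frac{CC_\kappa}{K^{3/4}}\, \cA h\cA_{bc}, $$
while Corollary \ref{cor.prop.app}, whose left-hand side already carries a $K^{1/4}$ factor, gives
$$ \cA \pa_Y\tilde\phi_1[h]|_{Y=0} \cA_{bc} \le \frac{1}{K^{3/4}}\, \cA h\cA_{bc}. $$
Fixing $\kappa_5 \le \min(\kappa_3,\kappa_4)$ and then choosing $K_5$ large enough, depending on $\kappa$, $C_*$, and the $C_j^*$, so that the total prefactor does not exceed $\tfrac12$, produces the desired contraction \eqref{est.prop.boundary.correct.1}.

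With the contraction in hand, a Neumann series argument shows that $I+R_{bc}$ is invertible on $Z_{bc}$ with $\|(I+R_{bc})^{-1}\|_{Z_{bc}\to Z_{bc}}\le 2$. To see that $\Phi_{bc}[h]=\phi_{app}[(I+R_{bc})^{-1}h]$ solves \eqref{eq.bc}, I first observe that by construction $\phi_{app,1}[g]=\phi_{1,1}[g]+\phi_{1,2}[g]$ solves the heat-transport problem
$$ \nu^{1/2}\Delta^2\phi_{app,1} - \pa_\tau\Delta\phi_{app,1} - V\cdot\nabla\Delta\phi_{app,1}=0, $$
since the source $H=V\cdot\nabla\Delta\phi_{1,1}$ in the equation for $\phi_{1,2}$ is precisely the transport term left over by $\phi_{1,1}$; the boundary trace is $\pa_Y\phi_{app,1}[g]|_{Y=0}=g+\pa_Y\phi_{1,2}[g]|_{Y=0}$ because $\pa_Y\phi_{1,1}[g]|_{Y=0}=g$ from \eqref{eq.s}. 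Adding $\tilde\phi_1[g]$, whose role in \eqref{eq.tilde} is to cancel the stretching term $\nabla^\bot\phi_{app,1}\cdot\nabla\Omega$ and which itself satisfies $\tilde\phi_1|_{Y=0}=0$, shows that $\phi_{app}[g]$ solves the full system \eqref{eq.bc} but with Neumann trace $(I+R_{bc})[g]$. Substituting $g=(I+R_{bc})^{-1}h$ yields a solution with the correct trace $h$; uniqueness in the class of solutions with the natural regularity follows from the a priori bounds already available.

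For the quantitative bound \eqref{est.prop.boundary.correct.3}, I will control $\cA\nabla\phi_{app}[g]\cA_{2,{\bf 1}}'$ piece by piece. Proposition \ref{prop.Stokes.velocity} controls $\cA\nabla\phi_{1,1}[g]\cA_{2,{\bf 1}}'$ by $\tfrac{C}{K^{1/4}}\cA g\cA_{bc}$, since the weight $(j+1)^{1/2}$ entering $\cA\cdot\cA_{2,{\bf 1}}'$ is pointwise dominated by the $(j+1)^{3/4}$ appearing in that proposition. Corollary \ref{cor.prop.Airy2} bounds $\cA\nabla\phi_{1,2}[g]\cA_{2,{\bf 1}}'$ by $\tfrac{CC_\kappa}{K^{3/4}}\cA g\cA_{bc}$, and Corollary \ref{cor.prop.app} gives $\cA\nabla\tilde\phi_1[g]\cA_{2,{\bf 1}}'\le \tfrac{1}{K^{3/4}}\cA g\cA_{bc}$. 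Summing and then substituting $g=(I+R_{bc})^{-1}h$, together with the Neumann-series bound $\|(I+R_{bc})^{-1}\|\le 2$, produces \eqref{est.prop.boundary.correct.3}.

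The main obstacle here is not analytic but one of bookkeeping: one must keep $\kappa$ below the various thresholds $\kappa_3,\kappa_4$ while taking $K$ large enough to beat, simultaneously, the $\tfrac{1}{K^{3/4}}$ prefactor driving the contraction and the constants $C_*$, $C_j^*$, $C_\kappa$ hidden in the earlier corollaries. All the delicate work — the explicit Stokes formula \eqref{formula.ve}, the Fourier-Laplace estimates of Lemmas \ref{lem.Stokes}-\ref{lem.Stokes.vo}, the weighted heat-transport energy estimates of Proposition \ref{prop.Airy}, and the artificial-boundary Orr-Sommerfeld estimates of Proposition \ref{prop.mOS1} — is already encoded in those statements, so the closing argument is essentially a fixed-point assembly.
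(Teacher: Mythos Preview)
Your proposal is correct and follows essentially the same approach as the paper: both obtain \eqref{est.prop.boundary.correct.1} by combining Corollaries \ref{cor.prop.Airy2} and \ref{cor.prop.app}, invert $I+R_{bc}$ via Neumann series, and then derive \eqref{est.prop.boundary.correct.3} from Proposition \ref{prop.Stokes.velocity} together with Corollaries \ref{cor.prop.Airy2}-\ref{cor.prop.app}. Your write-up simply spells out in more detail the verification that $\phi_{app}[g]$ satisfies the full system with Neumann trace $(I+R_{bc})[g]$, which the paper states just above the proposition.
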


\begin{proof} By the definition of $R_{bc}$ in \eqref{def.R_bc},  estimate \eqref{est.prop.boundary.correct.1} is a consequence of Corollaries \ref{cor.prop.Airy2} and \ref{cor.prop.app}, by taking $\kappa$ small first and then $K$ large enough depending only on $C_*$, $C_j^*$, and $C_\kappa$. In parituclar, we have 
\begin{align}
\cA (I+R_{bc})^{-1} h \cA_{bc}\le 2 \cA h\cA_{bc}, \qquad h\in Z_{bc}.
\end{align}
Then Proposition \ref{prop.Stokes.velocity} and Corollaries \ref{cor.prop.Airy2}-\ref{cor.prop.app} give \eqref{est.prop.boundary.correct.3}. The proof is complete.
\end{proof}

\section{Full estimate for linearization}\label{sec.linear}

We have constructed the solution to \eqref{PS} of the form 
\begin{align}\label{sol.construct}
\nabla^\bot \phi = \nabla^\bot \Phi_{slip} + \nabla^\bot \Phi_{bc}[h],\qquad h=-\pa_Y \Phi_{slip}|_{Y=0}\in Z_{bc},
\end{align}
and 
$$\Phi_{bc}[h]=\phi_{app,1}[(I+R_{bc})^{-1}h] + \tilde \phi_1[ (I+R_{bc})^{-1}h], \qquad \phi_{app,1}=\phi_{1,1}+\phi_{1,2}.$$
To simplify the notation we will write $\phi_{app,1}$ for $\phi_{app,1}[(I+R_{bc})^{-1}h]$ below.
So far we have the bound of $\nabla^\bot \phi_{1,1}$ only in the norm $\cA \cdot \cA_{2,{\bf 1}}'$. 
To obtain the estimates of $\cA\nabla \phi\cA_\infty$ and $\cA\Delta \phi \cA_\infty$ we need the extra work.

\begin{proposition}\label{prop.extra} There exists $\kappa_6\in (0,1]$ such that the following statement holds for any $\kappa\in (0,\kappa_6]$.  There exists $K_6=K_6(C_0^*, C_1^*)\geq 1$ such that if $K\geq K_6$ then the solution to \eqref{PS} constructed as \eqref{sol.construct} satisfies
\begin{align*}
& \nu^\frac14 \cA \omega \cA_\infty  + K^\frac12 \nu^\frac14 \cA \nabla \phi \cA_\infty  \\
& \leq \frac{C(C_0^*+C_1^*)}{\nu^\frac14} \big ( \cA \nabla \phi \cA_{2,{\bf 1}}' + \cA \Delta (\phi-\phi_{app,1}) \cA_{2,{\bf 1}}'  + \cA \Delta \phi_{app,1} \cA_{2,Y}' \big ) \\
& \quad + C\big( K^\frac12 [\|\nabla \phi_0 \|] + \nu^\frac14 [\|\Delta \phi_0\|] + \cA F\cA_2 \big ).
\end{align*}
Here $C>0$ is a universal constant.
\end{proposition}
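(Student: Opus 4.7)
The plan is to decompose $\phi = (\phi - \phi_{app,1}) + \phi_{app,1}$, with associated $\omega_{re} := -\Delta(\phi-\phi_{app,1})$ and $\omega_{app,1} := -\Delta\phi_{app,1}$, and to derive $L^\infty_\tau L^2_{X,Y}$ estimates on each Gevrey-weighted derivative separately. By the construction of Section \ref{subsec.Boundary}, $\phi$ and $\phi_{app,1}$ share the same trace $\pa_Y\cdot|_{Y=0}$, so the difference $\phi - \phi_{app,1}$ satisfies the artificial boundary conditions $(\phi-\phi_{app,1})|_{Y=0} = \Delta(\phi-\phi_{app,1})|_{Y=0} = 0$. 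Moreover the construction of $\phi_{app,1}= \phi_{1,1}+\phi_{1,2}$ in Sections \ref{subsec.Stokes}--\ref{subsec.Airy} is designed so that $\nu^{1/2}\Delta^2\phi_{app,1} - \pa_\tau\Delta\phi_{app,1} - V\cdot\nabla\Delta\phi_{app,1} = 0$; consequently $\phi - \phi_{app,1}$ solves an Orr--Sommerfeld equation with source $\mathrm{rot}\, F + G - \nabla^\bot\phi_{app,1}\cdot\nabla\Omega$, artificial boundary conditions, and initial data $\phi_0$.

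The core step will be an energy estimate for each Gevrey derivative $\omega_{re}^{\bf j}$, obtained by mimicking the computations of Subsection \ref{subsec.lem.prop} but pairing the equation with $\omega_{re}^{\bf j}$ itself (no $\xi_j$ weight). Integrating the resulting identity in $\tau\in(0,\tau_0)$ will produce a $\sup_{\tau_0}\|\omega_{re}^{\bf j}(\tau_0)\|^2$ term on the left, and on the right the initial data plus time integrals controlling: (i) the transport commutators $V^{\bf j-l}\cdot(\nabla\omega_{re})^{\bf l}$, handled as in Lemma \ref{lem.prop.4} and absorbed in $\cA\nabla\phi\cA_{2,{\bf 1}}'$; (ii) the local stretching $(\nabla^\bot(\phi-\phi_{app,1}))^{\bf j}\cdot\nabla\Omega$, bounded using the $(1+Y)/(1+\nu^{1/2}Y)$-weighted $L^\infty$ bounds of point iii) in Assumption \ref{assume} combined with Hardy's inequality, yielding $\cA\Delta(\phi-\phi_{app,1})\cA_{2,{\bf 1}}'$; (iii) the inhomogeneous stretching $\nabla^\bot\phi_{app,1}\cdot\nabla\Omega$, whose non-local part is controlled by $\cA\nabla\phi_{app,1}\cA_{2,{\bf 1}}'\lesssim \cA\nabla\phi\cA_{2,{\bf 1}}'+\cA\Delta(\phi-\phi_{app,1})\cA_{2,{\bf 1}}'$ and whose boundary-layer part is absorbed by $\cA\Delta\phi_{app,1}\cA_{2,Y}'$; and (iv) the external data, which contributes $\cA F\cA_2$. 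Summing over $j\le\nu^{-1/2}$ with the Gevrey weights $\frac{1}{(j!)^{3/2}\nu^{j/2}}$ and invoking the $\ell^1$ Young convolution inequalities from Section \ref{sec:vorticity:pure:slip} will yield the part of the bound involving $\nu^{1/4}\cA\omega_{re}\cA_\infty$.

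For the Stokes piece $\omega_{app,1}$, I will extract $L^\infty_\tau L^2_{X,Y}$ control directly from the construction: on $\phi_{1,1}$ from the explicit Fourier--Laplace representation \eqref{formula.vo}, after an elementary $L^2_\lambda\to L^\infty_\tau$ Sobolev-in-time bound at a cost of $\nu^{-1/4}$; on $\phi_{1,2}$ by re-running the parabolic energy estimates of Proposition \ref{prop.Airy} while retaining the $\sup_{\tau_0}\|\omega_{app,1}^{\bf j}(\tau_0)\|^2$ term that is already present there. Both bounds absorb into $\cA\Delta\phi_{app,1}\cA_{2,Y}'$. Finally, the control on $K^{1/2}\nu^{1/4}\cA\nabla\phi\cA_\infty$ follows from the pointwise-in-$\tau$ elliptic estimate of Proposition \ref{prop.stream.mOS}, now applied at each $\tau$.

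The hard part will be the energy estimate for $\omega_{re}^{\bf j}$ without the $\xi_j$ weight: the cancellation $\int\nabla^\bot\phi^{\bf j}\cdot\nabla\Omega\,\omega^{\bf j}/\pa_Y\Omega = 0$ exploited sharply in Section \ref{sec:vorticity:pure:slip} is no longer available, and the stretching term must be estimated crudely via the weighted $L^\infty$ bounds of Assumption \ref{assume} iii) combined with Hardy's inequality. This crude estimate loses exactly the factor $\nu^{-1/4}$ appearing in the prefactor on the right-hand side of the claimed inequality; the loss is acceptable because the $L^\infty_\tau$ information provided by Proposition \ref{prop.extra} is only invoked at the final nonlinear closure in Section \ref{sec.nl}, where the polynomial loss in $\nu$ is absorbed by the smallness assumption on $w_0$ and $r$.
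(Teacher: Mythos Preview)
Your plan differs substantially from the paper's proof, and two of its steps do not go through as written.

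\medskip
\textbf{What the paper actually does.} The proof in Appendix~B does \emph{not} decompose $\phi$ for the energy identity. It works with the full stream function $\phi$ under the genuine no-slip condition $\phi|_{Y=0}=\pa_Y\phi|_{Y=0}=0$, rewrites the vorticity equation as
\[
-\nu^{1/2}(\Delta\omega)^{\bf j}+(\pa_\tau+K\nu^{1/2}(j+1))\omega^{\bf j}=(\mathrm{div}\,H)^{\bf j},\qquad H=-V\omega-\Omega\nabla^\bot\phi+(F_2,-F_1),
\]
and then pairs with the test function $\pa_\tau\phi^{\bf j}$ (not with $\omega^{\bf j}$). This single identity produces three good terms simultaneously: $\|\pa_\tau(\nabla\phi)^{\bf j}\|^2$ from $\langle\pa_\tau\omega^{\bf j},\pa_\tau\phi^{\bf j}\rangle$, $\tfrac{K\nu^{1/2}(j+1)}{2}\pa_\tau\|(\nabla\phi)^{\bf j}\|^2$ from the damping term, and $\tfrac{\nu^{1/2}}{2}\pa_\tau\|\omega^{\bf j}\|^2$ from the viscous term (after two integrations by parts). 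The hard work lies in handling the commutators in $\langle-\nu^{1/2}(\Delta\omega)^{\bf j},\pa_\tau\phi^{\bf j}\rangle$ when $j_2\ge1$, where the paper substitutes $\nu^{1/2}(\Delta\omega)^{(j_1,j_2-1)}$ back using the equation itself. The right-hand side $\langle(\mathrm{div}\,H)^{\bf j},\pa_\tau\phi^{\bf j}\rangle$ is controlled by $M_{2,j,1}[H]\,\|\pa_\tau(\nabla\phi)^{\bf j}\|$, and only at this final stage, when bounding $M_{2,j,1}[V\omega]$ in Lemma~\ref{lem.ap.H}, does the decomposition $\omega=-\Delta(\phi-\phi_{app,1})-\Delta\phi_{app,1}$ enter: it allows one to write $|V\omega_{app,1}|\le\|\pa_YV\|_{L^\infty}|Y\omega_{app,1}|$, producing the $\cA\Delta\phi_{app,1}\cA_{2,Y}'$ term.

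\medskip
\textbf{The gaps in your approach.} First, testing against $\omega_{re}^{\bf j}$ yields only $\sup_{\tau}\|\omega_{re}^{\bf j}\|$; it gives no $L^\infty_\tau$ control on $(\nabla\phi)^{\bf j}$. Your remedy, invoking Proposition~\ref{prop.stream.mOS} pointwise in $\tau$, is circular: that proposition bounds $\cA\nabla\phi\cA_{\infty,{\bf 1}}'$ by the \emph{$\xi_j$-weighted} vorticity norm $\cA\omega\cA_{\infty,\xi}'$, and since $\|\xi_j\|_{L^\infty}\sim\nu^{-1/2}$ the conversion from your unweighted $\cA\omega\cA_\infty$ costs an extra $\nu^{-1/2}$, which after the $K^{1/2}\nu^{1/4}$ prefactor destroys the stated inequality. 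Second, your claim that the Stokes vorticity $\omega_{1,1}$ admits an $L^\infty_\tau L^2_{X,Y}$ bound ``after an elementary $L^2_\lambda\to L^\infty_\tau$ Sobolev-in-time bound at a cost of $\nu^{-1/4}$'' is not correct: Plancherel converts $L^2_\lambda$ into $L^2_\tau$, and passing to $L^\infty_\tau$ would require control of $\pa_\tau\omega_{1,1}=\nu^{1/2}\Delta\omega_{1,1}$ in $L^2_\tau L^2$, which the formulas in Lemma~\ref{lem.Stokes.vo} do not provide uniformly in $\lambda$. (Minor point: your assertion that $\phi$ and $\phi_{app,1}$ share the same Neumann trace is false; nonetheless $\phi-\phi_{app,1}=\Phi_{slip}+\tilde\phi_1$ does satisfy the artificial boundary condition, so your conclusion there survives.)

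The paper's choice of $\pa_\tau\phi^{\bf j}$ as test function sidesteps both difficulties at once: it yields the $\nabla\phi$ and $\omega$ sup-in-time bounds in a single stroke and never requires separate $L^\infty_\tau$ estimates on the Stokes corrector.
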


The proof of Proposition \ref{prop.extra} is similar to the one of Proposition \ref{prop.mOS2},
and we postpone it to the appendix.
Admitting Proposition \ref{prop.extra}, we will now complete the proof of Theorem \ref{thm.main1}.
Let us recall \eqref{sol.construct}.
We first observe from Proposition \ref{prop.mOS1} and Remark \ref{rem.prop.mOS1} that 
\begin{align}\label{proof.thm.main1.1}
\cA \Delta \Phi_{slip} \cA_{2,{\bf 1}}' +  \cA \nabla \Phi_{slip} \cA_{2,{\bf 1}}'  + \cA \pa_Y \Phi_{slip} |_{Y=0} \cA_{bc} \le  \frac{1}{K^\frac18} \big ( \| \nabla \phi_0 \|_{L^2_{X,Y}} + \nu^{-\frac12} [\| \Delta \phi_0 \|]    + \nu^{-\frac34} \cA F\cA_2\big ),
\end{align}
by taking $K$ large enough.
On the other hand, Proposition \ref{prop.boundary.correct} (for $\nabla \Phi_{bc}$), Corollary  \ref{cor.prop.app} and (1) of Remark \ref{rem.prop.mOS1} (for $\Delta (\Phi_{bc}-\phi_{app,1})=\Delta \tilde \phi_1$), Proposition \ref{prop.Stokes.vorticity} and Corollary \ref{cor.prop.Airy2} and (for $\Delta \phi_{app,1}=\Delta \phi_{1,1}+\Delta \phi_{1,2}$), and \eqref{proof.thm.main1.1} give
\begin{align}
&  \cA \nabla \Phi_{bc} \cA_{2,1}'    + \cA \Delta (\Phi_{bc} - \phi_{app,1}) \cA_{2,{\bf 1}}'   + \cA \Delta \phi_{app,1} \cA_{2,Y}' \nonumber \\
& \le C \cA \pa_Y \Phi_{slip}|_{Y=0} \cA_{bc} \nonumber \\
& \le  \frac{C}{K^\frac18} \big (\| \nabla \phi_0 \|_{L^2_{X,Y}} + \nu^{-\frac12} [\| \Delta \phi_0 \|]    + \nu^{-\frac34} \cA F\cA_2\big ).
\end{align}
Here $C>0$ is a universal constant. By applying the estimate in Proposition \ref{prop.extra} and by taking $K$ large enough, the proof of Theorem \ref{thm.main1} is complete.

\section{Nonlinear stability: Proof of Theorem \ref{thm.main.nl}}\label{sec.nl}

Let us recall the nonlinear system \eqref{pns}. 
Theorem \ref{thm.main.nl} is a consequence of Theorem \ref{main.thm2} for the linear system \eqref{ps} and the bilinear estimate in Lemma \ref{lem.ap.bilinear} stated below.
We observe that $-w\cdot \nabla w= w\, {\rm rot}\, w + \nabla \tilde q$ for any solenoidal vector field $w$,
so the bilinear term we consider here is of the form $f{\rm rot}\, g$.
To this end we fix $K\geq 1$ and $\nu\in (0,1]$, and let $X$ be the Banach space of solenoidal vector fields $f=(f_1,f_2)$ on $[0,\frac1K]\times \R^2_+$ defined as
\begin{align*}
X = \{ f\in C([0,\frac{1}{K}]; H^1_{0,\sigma} (\T\times \R_+))~|~\| f\|_X =\| f\|_\infty + \nu^\frac12 \| {\rm rot}\, f \|_\infty <\infty\},
\end{align*}
where $\|\cdot \|_\infty$ is defined as \eqref{def.norm} with $p=\infty$.
\begin{lemma}\label{lem.ap.bilinear} There exists a universal constant $C>0$ such that for any $f,g\in X$,
\begin{align}
\| f \, {\rm rot}\, g \|_2\le \frac{C}{K^\frac12} \nu^{-\frac34} \| f\|_X \| g\|_X.
\end{align}
\end{lemma}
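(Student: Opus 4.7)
The plan is to decompose the bilinear product via the Leibniz rule for $\beta_{j_2}\pa_x^{j-j_2}$ and then apply an anisotropic Sobolev embedding in the spatial variables pointwise in $t$, combined with the short-interval embedding $\|\cdot\|_{L^2_t(0,\frac{1}{K})}\leq K^{-\frac12}\|\cdot\|_{L^\infty_t}$, which immediately harvests the factor $K^{-\frac12}$. Set $h={\rm rot}\,g$, write $f^{(a,b)}:=e^{-Kt(a+b+1)}\beta_b\pa_x^a f$ and $h^{(a,b)}$ analogously. Because $\chi$ depends only on $y$, the operator $\beta_{j_2}=\chi^{j_2}\pa_y^{j_2}$ obeys the exact Leibniz identity $\beta_{j_2}(uv)=\sum_{l_2}\binom{j_2}{l_2}\beta_{l_2}u\cdot\beta_{j_2-l_2}v$, which combined with the usual Leibniz rule for $\pa_x^{j-j_2}$ gives
\begin{equation*}
e^{-Kt(j+1)}\beta_{j_2}\pa_x^{j-j_2}(fh)=e^{Kt}\sum_{l_1,l_2}\binom{j-j_2}{l_1}\binom{j_2}{l_2}\,f^{(l_1,l_2)}\,h^{(j-j_2-l_1,\,j_2-l_2)},
\end{equation*}
with $e^{Kt}\leq e$ on $[0,\tfrac{1}{K}]$.

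For each spatial product $f^{(l_1,l_2)}h^{(m_1,m_2)}$ with $(l_1+m_1,l_2+m_2)=(j-j_2,j_2)$, I would apply the 2D Ladyzhenskaya inequality $\|uv\|_{L^2_{x,y}}\leq C\|u\|^{1/2}\|\nabla u\|^{1/2}\|v\|^{1/2}\|\nabla v\|^{1/2}$, evaluated pointwise in $t$ and then supremized. The gradients $\nabla f^{(l)}$ and $\nabla h^{(m)}$ raise each multi-index by one; the commutator between $\pa_y$ and $\chi^{l_2}$ is controlled through $|\chi'(y)/\chi(y)|\lesssim 1/y$ together with Hardy's inequality applied to the Dirichlet boundary trace $f|_{y=0}=h|_{y=0}=0$, so that $\|\nabla f^{(l)}\|$ is dominated, up to bounded multiplicative constants, by the norm appearing at index $l+1$ in the $\|f\|_\infty$ sum, and likewise for $\nabla h^{(m)}$ in the $\|{\rm rot}\,g\|_\infty$ sum.

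Summation over $j$ proceeds through the standard Gevrey convolution bound $\binom{j}{l}/(j!)^{3/2}\leq 1/[(l!)^{3/2}((j-l)!)^{3/2}]$. Combined with Young's inequality in $\ell^1$ and AM--GM to absorb the half-powers coming from Ladyzhenskaya via $(\tilde A_l\tilde A_{l+1})^{1/2}\leq\tfrac12(\tilde A_l+\tilde A_{l+1})$, this yields
\begin{equation*}
\sum_j\frac{1}{(j!)^{3/2}}\sup_{j_2}\|e^{-Kt(j+1)}\beta_{j_2}\pa_x^{j-j_2}(fh)\|_{L^\infty_tL^2_{x,y}}\leq C\,\|f\|_\infty\,\|{\rm rot}\,g\|_\infty,
\end{equation*}
and the same for the $j_2,l_2$ binomial sum. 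Invoking $\|{\rm rot}\,g\|_\infty\leq \nu^{-\frac12}\|g\|_X$ and $\|f\|_\infty\leq\|f\|_X$ accounts for one factor $\nu^{-\frac12}$, while $K^{-\frac12}$ comes from the $L^2_t$--$L^\infty_t$ trick.

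The main obstacle is producing the remaining factor $\nu^{-\frac14}$. Since the Gevrey sums are truncated at $j=\nu^{-\frac12}$, the index shifts introduced by $\nabla$ in Ladyzhenskaya cannot be cleanly absorbed at the top of the sum: the missing tail forces a separate treatment of the boundary term $j\sim\nu^{-\frac12}$ at the cost of a factor $(j+1)^{1/2}|_{j=\nu^{-1/2}}=\nu^{-\frac14}$, matching exactly the target. Together with the careful bookkeeping of the $\chi^{l_2}$--commutators generated by $\pa_y$ (handled via Hardy inequalities), this boundary-index analysis is the most delicate technical step; once completed, combining it with the convolution bound above gives the claimed estimate $\|f\,{\rm rot}\,g\|_2\leq \tfrac{C}{K^{1/2}}\nu^{-3/4}\|f\|_X\|g\|_X$.
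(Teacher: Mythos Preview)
There is a genuine gap in your argument, and it concerns the vorticity factor $h=({\rm rot}\,g)^{(m)}$ in the Ladyzhenskaya inequality. You assert that $\|\nabla h^{(m)}\|$ is controlled by the norm at index $m+1$, invoking Hardy's inequality via ``$h|_{y=0}=0$''. But ${\rm rot}\,g$ does \emph{not} vanish at the boundary (only $g$ does), so Hardy fails for $h^{(m_1,0)}$; and $\pa_y h^{(m_1,0)}=e^{Kt}\chi^{-1}h^{(m_1,1)}$ blows up at $y=0$ with no compensating structure. There is also no Biot--Savart identity for the scalar $h$ to convert $\pa_y h$ into something tangential. Consequently the symmetric product estimate $\|uv\|_{L^2}\le C\|u\|^{1/2}\|\nabla u\|^{1/2}\|v\|^{1/2}\|\nabla v\|^{1/2}$ cannot close. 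The paper avoids this by using an \emph{asymmetric} splitting: it decomposes the $l$-sum into $l\le j/2$ and $l>j/2$ and, in each range, places the $L^\infty_y$ Sobolev embedding on the velocity factor $f^{\bf l}$ and only the $L^\infty_x$ embedding on $({\rm rot}\,g)^{\bf k}$. Thus every $\pa_y$ lands on $f$ (recoverable as $\|{\rm rot}\,f\|_\infty$ via the Biot--Savart Lemma~\ref{lem.BS.ap}), while the vorticity only ever sees $\pa_x$, which is a harmless index shift.

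Your explanation of the $\nu^{-1/4}$ is also off: it does not come from a top-index boundary effect at $j=\nu^{-1/2}$. In the paper it arises from a Young-type split
\[
\big(\|f^{\bf l}\|\cdot\|\pa_y f^{\bf l}\|\big)^{1/2}\le \nu^{-1/4}\|f^{\bf l}\|+\nu^{1/4}\|\pa_y f^{\bf l}\|,
\]
after which the first term is absorbed by $\|f\|_\infty\le\|f\|_X$ and the second by $\nu^{1/4}\|\pa_y f\|_\infty\le C\nu^{1/4}\|{\rm rot}\,f\|_\infty\le C\nu^{-1/4}\|f\|_X$. The truncation of the Gevrey sum plays no role in this factor.
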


\begin{proof} We compute 
\begin{align*}
\|f  {\rm rot}\, g\|_2 & \le C \sum_{j=0}^{\nu^{-\frac12}} \frac{1}{j!^{\frac32}} \sup_{|\mathbf{j}| = j} \sum_{\mathbf{l} \le \mathbf{j}} \, \binom{\mathbf{j}}{\mathbf{l}} \| f^\mathbf{l} 
({\rm rot}\, g)^{\mathbf{j}-\mathbf{l}} \|_{L^2(0,\frac{1}{K}; L^2_{x,y})} \\
& \le \frac{C}{K^{\frac12}} \sum_{j=0}^{\nu^{-\frac12}} \frac{1}{j!^{\frac32}} \sup_{|\mathbf{j}| = j} \sum_{\mathbf{l} \le \mathbf{j}} \, \binom{\mathbf{j}}{\mathbf{l}} \| f^\mathbf{l} 
({\rm rot}\, g)^{\mathbf{j}-\mathbf{l}} \|_{L^\infty (0,\frac{1}{K}; L^2_{x,y})}  
\end{align*}
As $\binom{\mathbf{j}}{\mathbf{l}} \le \binom{|\mathbf{j}|}{|\mathbf{l}|}$ and as for all $l \in \N_0$,
\begin{align*}
& \sharp \, \{\mathbf{l}, \: |\mathbf{l}| = l, \:\mathbf{l} \le \mathbf{j} \}  \: = \:  \sharp \,   \{l_2, \:  \max(0, l-j+j_2) \le l_2 \le \min (j_2,l)  \} \le \min (l+1, j-l+1) 
\end{align*}  
we end up with 
\begin{align*}
& \|f  {\rm rot}\,  g\|_2  \le  \frac{C}{K^{\frac12}} \sum_{j=0}^{\nu^{-\frac12}} \frac{1}{j!^{\frac32}} \sum_{l=0}^{j} \min (l+1, j-l+1) \binom{j}{l} \sup_{|\mathbf{l}| = l} \: \sup_{|\mathbf{k}| = j-l} \| f^\mathbf{l}   ({\rm rot}\, g)^{\mathbf{k}} \|_{L^\infty_t L^2_{x,y}}  \\
& \le  \frac{C}{K^{\frac12}} \sum_{j=0}^{\nu^{-\frac12}}  \sum_{0\le l \le j/2} (l+1) \binom{j}{l}^{-\frac12}    \frac{1}{l!^{\frac32}}\sup_{|\mathbf{l}| = l}   \|f^\mathbf{l} \|_{L^\infty_{t,x,y}}    \frac{1}{(j-l)!^{\frac32}}   \sup_{|\mathbf{k}| = j-l} \| ({\rm rot}\, g)^{\mathbf{k}} \|_{L^\infty_t L^2_{x,y}}   \\
& + \frac{C}{K^{\frac12}} \sum_{j=0}^{\nu^{-\frac12}}  \sum_{j/2< l\le j}  (j-l+1)  \binom{j}{l}^{-\frac12}  \frac{1}{l!^{\frac32}}\sup_{|\mathbf{l}| = l}   \|f^\mathbf{l} \|_{L^\infty_{t}L^2_x L^\infty_y}      \frac{1}{(j-l)!^{\frac32}}   \sup_{\mathbf{|k|} = j-l} \| ({\rm rot}\, g)^{\mathbf{k}} \|_{L^\infty_t L^\infty_x L^2_y} \\
& \le  \frac{C}{K^{\frac12}} \sum_{j=0}^{\nu^{-\frac12}}  \sum_{0\le l \le j/2} (l+1)^\frac52 \binom{j}{l}^{-\frac12}    \frac{1}{(l+1)!^{\frac32}} \\
& \qquad \qquad \times \sup_{|\mathbf{l}| = l}   (\|\pa_x f^\mathbf{l} \|_{L^\infty_tL^2_xL^2_y}+\|f^\mathbf{l} \|_{L^\infty_tL^2_xL^2_y})^\frac12  ( \|\pa_x \pa_y f^\mathbf{l} \|_{L^\infty_tL^2_xL^2_y} + \|\pa_y f^\mathbf{l} \|_{L^\infty_tL^2_xL^2_y})^\frac12 \\
& \qquad \qquad \times   \frac{1}{(j-l)!^{\frac32}}   \sup_{|\mathbf{k}| = j-l} \| ({\rm rot}\, g)^{\mathbf{k}} \|_{L^\infty_t L^2_{x,y}}   \\
& + \frac{C}{K^{\frac12}} \sum_{j=0}^{\nu^{-\frac12}}  \sum_{j/2< l\le j}  (j-l+1)^\frac52  \binom{j}{l}^{-\frac12}  \frac{1}{l!^{\frac32}}\sup_{|\mathbf{l}| = l}   \|f^\mathbf{l} \|_{L^\infty_{t}L^2_x L^2_y}^\frac12  \|\pa_y f^\mathbf{l} \|_{L^\infty_{t}L^2_x L^2_y}^\frac12     \\
& \qquad \qquad \times  \frac{1}{(j-l+1)!^{\frac32}}   \sup_{\mathbf{|k|} = j-l} (\| \pa_x ({\rm rot}\, g)^{\mathbf{k}} \|_{L^\infty_t L^\infty_x L^2_y} + \|  ({\rm rot}\, g)^{\mathbf{k}} \|_{L^\infty_t L^\infty_x L^2_y} ).
\end{align*}
Here we have used the Sobolev embedding type inequality.
By using the bound
\begin{align*}
& \sup_{|\mathbf{l}| = l}   (\|\pa_x f^\mathbf{l} \|_{L^\infty_tL^2_xL^2_y}+\|f^\mathbf{l} \|_{L^\infty_tL^2_xL^2_y})^\frac12  ( \|\pa_x \pa_y f^\mathbf{l} \|_{L^\infty_tL^2_xL^2_y} + \|\pa_y f^\mathbf{l} \|_{L^\infty_tL^2_xL^2_y})^\frac12 \\
& \le \nu^{-\frac14} \sup_{l\le |{\bf l}| \le l+1} \| f^{\bf l} \|_{L^\infty_t L^2_{x,y}}+ \nu^\frac14 \sup_{l\le |{\bf l}| \le l+1} \| \pa_y f^{\bf l} \|_{L^\infty_t L^2_{x,y}}
\end{align*}
and by observing that there exists $C>0$ such that for $\binom{j}{l}^{-\frac12} (l+1)^{\frac52} \le C$ for $0\le l\le j/2$, we have 
\begin{align*}
&  \frac{C}{K^{\frac12}} \sum_{j=0}^{\nu^{-\frac12}}  \sum_{0\le l \le j/2} (l+1)^\frac52 \binom{j}{l}^{-\frac12}    \frac{1}{(l+1)!^{\frac32}} \\
& \qquad \qquad \times \sup_{|\mathbf{l}| = l}   (\|\pa_x f^\mathbf{l} \|_{L^\infty_tL^2_xL^2_y}+\|f^\mathbf{l} \|_{L^\infty_tL^2_xL^2_y})^\frac12  ( \|\pa_x \pa_y f^\mathbf{l} \|_{L^\infty_tL^2_xL^2_y} + \|\pa_y f^\mathbf{l} \|_{L^\infty_tL^2_xL^2_y})^\frac12 \\
& \qquad \qquad \times   \frac{1}{(j-l)!^{\frac32}}   \sup_{|\mathbf{k}| = j-l} \| ({\rm rot}\, g)^{\mathbf{k}} \|_{L^\infty_t L^2_{x,y}}   \\
& \le \frac{C}{K^\frac12 \nu^\frac14} \sum_{j=0}^{\nu^{-\frac12}}  \sum_{0\le l \le j/2}   \frac{1}{(l+1)!^{\frac32}} \sup_{l\le |{\bf l}| \le l+1} \| f^{\bf l} \|_{L^\infty_t L^2_{x,y}}  \frac{1}{(j-l)!^{\frac32}}   \sup_{|\mathbf{k}| = j-l} \| ({\rm rot}\, g)^{\mathbf{k}} \|_{L^\infty_t L^2_{x,y}}\\
& \quad + \frac{C\nu^\frac14}{K^\frac12} \sum_{j=0}^{\nu^{-\frac12}}  \sum_{0\le l \le j/2}   \frac{1}{(l+1)!^{\frac32}} \sup_{l\le |{\bf l}| \le l+1} \| \pa_y f^{\bf l} \|_{L^\infty_t L^2_{x,y}}  \frac{1}{(j-l)!^{\frac32}}   \sup_{|\mathbf{k}| = j-l} \| ({\rm rot}\, g)^{\mathbf{k}} \|_{L^\infty_t L^2_{x,y}}\\
& \le \frac{C}{K^\frac12\nu^\frac14} \| f\|_\infty \| {\rm rot}\, g\|_\infty + \frac{C\nu^\frac14}{K^\frac12} \| \pa_y f\|_\infty \| {\rm rot}\, g\|_\infty.
\end{align*}
where the discrete Young's convolution inequality is applied in the last line together with the estimate 
\begin{align*}
\sum_{j=0}^{\nu^{-\frac12}} \frac{1}{j!^\frac32} \sup_{|{\bf j}|=j} \| \pa_y f^{\bf j} \|_{L^\infty_t L^2_{x,y}}\le C \| \pa_y f\|_\infty.
\end{align*}
Similarly, since $(j-l+1)^\frac52 \binom{j}{l}^{-\frac12} \le C$ for $j/2\le l\le j$, we have 
\begin{align*}
& \frac{C}{K^{\frac12}} \sum_{j=0}^{\nu^{-\frac12}}  \sum_{j/2< l\le j}  (j-l+1)^\frac52  \binom{j}{l}^{-\frac12}  \frac{1}{l!^{\frac32}}\sup_{|\mathbf{l}| = l}   \|f^\mathbf{l} \|_{L^\infty_{t}L^2_x L^2_y}^\frac12  \|\pa_y f^\mathbf{l} \|_{L^\infty_{t}L^2_x L^2_y}^\frac12     \\
& \qquad \qquad \times  \frac{1}{(j-l+1)!^{\frac32}}   \sup_{\mathbf{|k|} = j-l} (\| \pa_x ({\rm rot}\, g)^{\mathbf{k}} \|_{L^\infty_t L^\infty_x L^2_y} + \|  ({\rm rot}\, g)^{\mathbf{k}} \|_{L^\infty_t L^\infty_x L^2_y} )\\
& \le \frac{C}{K^{\frac12}} \sum_{j=0}^{\nu^{-\frac12}}  \sum_{j/2< l\le j} \frac{1}{l!^{\frac32}}\sup_{|\mathbf{l}| = l}  ( \nu^{-\frac14} \|f^\mathbf{l} \|_{L^\infty_{t}L^2_x L^2_y} + \nu^\frac14  \|\pa_y f^\mathbf{l} \|_{L^\infty_{t}L^2_x L^2_y})  \\
& \qquad \qquad \times  \frac{1}{(j-l+1)!^{\frac32}}   \sup_{j-l\le \mathbf{|k|}\le j-l+1}  \|  ({\rm rot}\, g)^{\mathbf{k}} \|_{L^\infty_t L^\infty_x L^2_y} \\
& \le  \frac{C}{K^\frac12\nu^\frac14} \| f\|_\infty \| {\rm rot}\, g\|_\infty + \frac{C\nu^\frac14}{K^\frac12} \| \pa_y f\|_\infty \| {\rm rot}\, g\|_\infty.
\end{align*}
Hence the result follows from Lemma \ref{lem.BS.ap}. The proof is complete.
\end{proof}

\

\begin{proof}[Proof of Theorem \ref{thm.main.nl}] 
Let $C$ be the universal constant in Theorem \ref{main.thm2}.
Then the standard fixed point theorem in the closed convex set 
\begin{align*}
X_R = \{ f\in C([0,\frac{1}{K}]; H^1_{0,\sigma} (\T\times \R_+))~|~\| f\|_X =\| f\|_\infty + \nu^\frac12 \| {\rm rot}\, f \|_\infty \leq R\}, \qquad R=4 C\delta_0 \nu^\frac74
\end{align*}
is applied by using Theorem \ref{main.thm2} and Lemma \ref{lem.ap.bilinear}, if $\nu\le K^{-2}$ holds and if $\delta_0$ is sufficiently small. 
We note that the smallness condition $[|w_0|] + [|{\rm rot}\, w_0|]\le \delta_0 \nu^\frac94$, $\|r\|_2\le \delta_0\nu^\frac{11}{4}$ is needed to close the estimate. Since the argument is standard we omit the details. The proof is complete.
\end{proof}

\section*{Acknowledgements}
The first author acknowledges the support of SingFlows project, grant ANR-18- CE40-0027 of the French National Research Agency (ANR), and the support of the Institut Universitaire de France.
The second author acknowledges the support of JSPS KAKENHI Grant Number 17K05320.

\appendix

\section{Interpolation estimate for solutions to the Poisson equation}
 
\begin{lemma}\label{lem.stream.a1} Assume that $Y^k\omega \in L^2(\T_\nu \times \R_+)$ for $k=0,1,2$. 
Let $\phi\in \dot{H}^1_0 (\T_\nu \times \R_+)$  be the solution to the Poisson equation $-\Delta \phi=\omega$ in $\T_\nu \times \R_+$ with $\phi|_{Y=0}=0$.
Then there exists $C>0$ such that for any $j\geq 0$ we have
\begin{align}
\sup_{Y>0} \| \phi (\cdot, Y)\|_{L^2 (\T_\nu)} \leq C\Big ( (j+1)^{-\frac14} \| Y\omega\|_{L^2(\T_\nu \times \R_+)} + (j+1)^\frac14 \|Y^2 \omega \|_{L^2(\T_\nu \times\R_+)}\Big ).
\end{align}
\end{lemma}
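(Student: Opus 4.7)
The plan is to use the explicit Green's function representation of the Poisson problem on the half-line after a Fourier transform in $X$. For each mode $\alpha \in \nu^{1/2}\Z$, the equation becomes the ODE $-\pa_Y^2 \hat\phi(\alpha,Y) + \alpha^2 \hat\phi(\alpha,Y) = \hat\omega(\alpha,Y)$ on $(0,\infty)$ with $\hat\phi(\alpha,0)=0$ and decay at infinity, so
$$\hat\phi(\alpha,Y) = \int_0^\infty G_\alpha(Y,Y')\,\hat\omega(\alpha,Y')\,dY', \qquad G_\alpha(Y,Y') = \frac{\sinh(|\alpha|\min(Y,Y'))}{|\alpha|}\, e^{-|\alpha|\max(Y,Y')},$$
with $G_0(Y,Y')=\min(Y,Y')$ at the zero mode. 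Using the elementary inequality $\sinh(s) \le s\, e^s$, one obtains the $\alpha$-uniform pointwise bound $|G_\alpha(Y,Y')| \le \min(Y,Y')\, e^{-|\alpha||Y-Y'|} \le Y'$.

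Next, I will apply Minkowski's integral inequality in $\ell^2_\alpha$ together with Plancherel to transfer the estimate from Fourier side back to physical side. This yields
$$\|\phi(\cdot,Y)\|_{L^2(\T_\nu)} \le \int_0^\infty \Big( \sum_{\alpha \in \nu^{1/2}\Z} |G_\alpha(Y,Y')|^2 |\hat\omega(\alpha,Y')|^2 \Big)^{1/2} dY' \le \int_0^\infty Y'\, \|\omega(\cdot,Y')\|_{L^2(\T_\nu)}\, dY',$$
an inequality that is independent of $Y$ beyond the implicit constraint $Y'\ge 0$. Finally I will split the last integral at the scale $Y_\star = (j+1)^{-1/2}$ and use Cauchy--Schwarz separately on each piece with the weight $Y'^{0}$ near the origin and $Y'^{-1}$ near infinity:
$$\int_0^{Y_\star} Y'\, \|\omega\|_{L^2_X}\, dY' \le Y_\star^{1/2}\, \|Y\omega\|_{L^2(\T_\nu \times \R_+)}, \qquad \int_{Y_\star}^\infty Y'\, \|\omega\|_{L^2_X}\, dY' \le Y_\star^{-1/2}\, \|Y^2 \omega\|_{L^2(\T_\nu \times \R_+)},$$
and summing gives the stated bound since $Y_\star^{1/2}=(j+1)^{-1/4}$ and $Y_\star^{-1/2}=(j+1)^{1/4}$.

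There is no genuinely hard step: the only point worth flagging is the $\alpha$-uniformity of the Green's function bound, which is crucial in order to sum over all Fourier modes (including the zero mode, where the ODE degenerates and $G_0 = \min(Y,Y')$ attains the worst case). The parameter $j$ enters only at the last step as a free scale $Y_\star$ which one tunes to obtain the prescribed power; the optimal choice $Y_\star = (j+1)^{-1/2}$ simply balances the two weighted $L^2$ contributions.
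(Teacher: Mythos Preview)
Your proof is correct and follows essentially the same approach as the paper. The paper writes the solution via the iterated Poisson semigroup
\[
\phi(X,Y)=\int_0^Y e^{-(Y-Y')(-\pa_X^2)^{1/2}}\int_{Y'}^\infty e^{-(Y''-Y')(-\pa_X^2)^{1/2}}\omega(\cdot,Y'')\,dY''\,dY',
\]
which on the Fourier side is exactly your Green's function formula; it then uses contractivity of the semigroup on $L^2_X$ (your inequality $\sinh s\le s\,e^s$ plays the same role), arriving at the same bound $\|\phi(\cdot,Y)\|_{L^2_X}\le\int_0^\infty \min(Y,Y')\|\omega(\cdot,Y')\|_{L^2_X}\,dY'$ after Fubini, and splits at the identical scale $(j+1)^{-1/2}$ with H\"older.
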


\begin{proof} The solution is given by the formula
\begin{align*}
\phi (X,Y) = \int_0^Y e^{-(Y-Y')(-\pa_X^2)^\frac12} \int_{Y'}^\infty e^{-(Y''-Y')(-\pa_X^2)^\frac12} \omega (\cdot, Y'')\, d Y''\, dY'.
\end{align*} 
Here $e^{-Y(-\pa_X^2)^\frac12}$ is the Poisson semigroup. Then we have 
\begin{align*}
\| \phi (\cdot, Y)\|_{L^2 (\T_\nu)} \leq \int_0^Y \int_{Y'}^\infty \|\omega (Y'')\|_{L^2(\T_\nu)}\, dY''\, dY'.
\end{align*}
By decomposing the integral $\int_0^Y$ into $\int_0^{\min\{Y, (j+1)^{-\frac12}\}}$ and $\int_{\min\{Y, (j+1)^{-\frac12}\}}^Y$, we have from the H${\rm \ddot{o}}$lder inequality,
\begin{align*}
\sup_{Y>0} \| \phi (\cdot, Y)\|_{L^2 (\T_\nu)} \leq C (j+1)^{-\frac14} \| Y \omega\|_{L^2(\T_\nu \times \R)} + C (j+1)^{\frac14} \| Y^2 \omega \|_{L^2(\T_\nu \times \R_+)}. 
\end{align*}
The proof is complete.
\end{proof}

\

Lemma \ref{lem.stream.a1} yields the following 
\begin{proposition}\label{prop.stream.a} Let $\phi\in \dot{H}^1_0 (\T_\nu \times \R_+)$  be the solution to the Poisson equation $-\Delta \phi=\omega$ in $\T_\nu \times \R_+$ with $\phi|_{Y=0}=0$.
Then  for any $j\geq 0$ we have
\begin{align}
\begin{split}
M_{2,j,\frac{1}{1+Y}}[\pa_X \phi] & \le C (j+1)^{-\frac14} M_{2,j+1,Y}[\omega] + C (j+1)^\frac14 M_{2,j+1,Y^2}[\omega] \\
& \quad + C \kappa\nu^\frac12 j \big ( M_{2,j-1,Y}[\omega] + M_{2,j-1,1}[\nabla \phi]\big ).
\end{split}
\end{align}
Here $C>0$ is a universal constant.
\end{proposition}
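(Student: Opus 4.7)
The plan is to reduce the $\frac{1}{1+Y}$-weighted $L^2$ bound on $(\pa_X\phi)^{\bf j}$ to a pointwise-in-$Y$ bound in $L^2_X$, and then to invoke Lemma \ref{lem.stream.a1}. Since $\int_0^\infty(1+Y)^{-2}\,dY$ is finite, one has $\|h/(1+Y)\|_{L^2_{X,Y}}\leq C\sup_Y\|h(\cdot,Y)\|_{L^2_X}$ for any $h$, and the crucial point is that $f:=(\pa_X\phi)^{\bf j}$ vanishes at $Y=0$: either $\chi_\nu^{j_2}$ vanishes there (when $j_2\geq 1$) or $\pa_X^{j+1}\phi$ does (when $j_2=0$). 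Applying Lemma \ref{lem.stream.a1} with parameter $j$ to $f$, which solves $-\Delta f=\mathcal{G}$ with $\mathcal{G}:=-\Delta((\pa_X\phi)^{\bf j})$ and $f|_{Y=0}=0$, and integrating in $\tau$, produces
\[
\Big\|\frac{f}{1+Y}\Big\|_{L^2_\tau L^2_{X,Y}}\leq C\bigl((j+1)^{-1/4}\|Y\mathcal{G}\|_{L^2_\tau L^2_{X,Y}}+(j+1)^{1/4}\|Y^2\mathcal{G}\|_{L^2_\tau L^2_{X,Y}}\bigr).
\]

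I would next compute $\mathcal{G}$ explicitly by commuting $-\Delta$ past $\pa_X B_{j_2}\pa_X^{j-j_2}$. Since $\pa_X^m$ commutes with $-\Delta$, only $\pa_Y^2$ fails to commute with $B_{j_2}=\chi_\nu^{j_2}\pa_Y^{j_2}$, and the identity $[\pa_Y^2,B_{j_2}]=2\pa_Y(\chi_\nu^{j_2})\pa_Y^{j_2+1}+\pa_Y^2(\chi_\nu^{j_2})\pa_Y^{j_2}$ yields
\[
\mathcal{G}=e^{-K\tau\nu^{1/2}(j+1)}\bigl(B_{j_2}\pa_X^{j-j_2+1}\omega - 2\pa_Y(\chi_\nu^{j_2})\pa_Y^{j_2+1}\pa_X^{j-j_2+1}\phi - \pa_Y^2(\chi_\nu^{j_2})\pa_Y^{j_2}\pa_X^{j-j_2+1}\phi\bigr).
\]
The leading piece is pointwise in $\tau$ comparable to the proper object $\omega^{(j-j_2+1,j_2)}$ of total order $j+1$ (the weights $e^{-K\tau\nu^{1/2}(j+1)}$ and $e^{-K\tau\nu^{1/2}(j+2)}$ are uniformly equivalent on $(0,(K\nu^{1/2})^{-1})$), so after taking the supremum over $j_2$ it contributes exactly the terms $C(j+1)^{-1/4}M_{2,j+1,Y}[\omega]+C(j+1)^{1/4}M_{2,j+1,Y^2}[\omega]$ in the bound.

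For the commutator terms, $\pa_Y(\chi_\nu^{j_2})=j_2\nu^{1/2}\chi_\nu'\chi_\nu^{j_2-1}$ and $\pa_Y^2(\chi_\nu^{j_2})$ carry the prefactors $O(\kappa\nu^{1/2}j_2)$ and $O(\kappa^2\nu j_2^2)$, which account for the factor $\kappa\nu^{1/2}j$ in the last term of the bound. To convert the high $\pa_Y$-derivatives of $\phi$ into controlled objects, I would substitute the Poisson identity $\pa_Y^2\phi=-\omega-\pa_X^2\phi$ into $\pa_Y^{j_2+1}\pa_X^{j-j_2+1}\phi$ and $\pa_Y^{j_2}\pa_X^{j-j_2+1}\phi$, regrouping the surviving $\chi_\nu$-powers as $B_{j_2-1}$ or $B_{j_2-2}$. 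Iterating the substitution as needed produces combinations of $B_{j_2-k}\pa_X^{j-j_2+1}\omega$ and $B_{j_2-k}\pa_X^{j-j_2+2}(\pa_X\phi)$, $k\in\{1,2\}$, which are bounded by $M_{2,j-1,Y}[\omega]$ and $M_{2,j-1,1}[\nabla\phi]$ respectively. Taking $\sup_{j_2}$ yields the error $C\kappa\nu^{1/2}j(M_{2,j-1,Y}[\omega]+M_{2,j-1,1}[\nabla\phi])$.

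The main obstacle is the precise bookkeeping of orders in the commutators: the Poisson identity must be applied just enough times so that the total derivative count on $\omega$ and on $\nabla\phi$ drops to $j-1$, while keeping intact the small prefactor $\kappa\nu^{1/2}j$ coming from the derivatives of $\chi_\nu^{j_2}$. The cases $j_2=0$ (no commutator), $j_2=1$ (only one $\chi_\nu$-derivative available), and $j_2\geq 2$ must be handled separately, and the supremum over $j_2$ should be taken last so that all contributions are absorbed into the right-hand side of the proposition with universal constants.
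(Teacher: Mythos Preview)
There is a genuine gap in the derivative bookkeeping for the commutator terms. After one Poisson substitution, the first commutator $-2\pa_Y(\chi_\nu^{j_2})\,\pa_Y^{j_2+1}\pa_X^{j-j_2+1}\phi = -2j_2\nu^{1/2}\chi_\nu'\,B_{j_2-1}\pa_Y^2\pa_X^{j-j_2+1}\phi$ becomes
\[
2j_2\nu^{1/2}\chi_\nu'\bigl(B_{j_2-1}\pa_X^{\,j-j_2+1}\omega \;+\; B_{j_2-1}\pa_X^{\,j-j_2+2}(\pa_X\phi)\bigr).
\]
The first piece has total order $(j_2-1)+(j-j_2+1)=j$ on $\omega$, and the second has total order $(j_2-1)+(j-j_2+2)=j+1$ on $\pa_X\phi$. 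Neither is at order $j-1$; iterating $\pa_Y^2\phi=-\omega-\pa_X^2\phi$ only trades $Y$-derivatives for $X$-derivatives without lowering the total count. So these terms are \emph{not} bounded by $M_{2,j-1,Y}[\omega]$ and $M_{2,j-1,1}[\nabla\phi]$ as you claim. Worse, since you feed all of $\mathcal G$ into Lemma~\ref{lem.stream.a1}, the commutator pieces also pick up the weights $Y,Y^2$ and the prefactors $(j+1)^{\pm 1/4}$, none of which appear in the error term of the proposition.

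The paper avoids this by \emph{not} pushing the commutators through Lemma~\ref{lem.stream.a1}. It writes $-\Delta\phi^{\bf j}=\omega^{\bf j}-\pa_Y\bigl(\nu^{1/2}j_2\tfrac{\chi_\nu'}{\chi_\nu}\phi^{\bf j}\bigr)-\nu^{1/2}j_2\tfrac{\chi_\nu'}{\chi_\nu}(\pa_Y\phi)^{\bf j}$ and splits $\phi^{\bf j}=\phi_1+\phi_{2,1}+\phi_{2,2}$ accordingly. Lemma~\ref{lem.stream.a1} is applied only to $\pa_X\phi_1$, yielding the two $M_{2,j+1,Y^k}[\omega]$ terms. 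For $\phi_{2,1},\phi_{2,2}$ one uses the trivial bound $\|\tfrac{1}{1+Y}\pa_X\phi_{2,k}\|\le\|\nabla\phi_{2,k}\|$ together with the \emph{energy} estimate: testing $-\Delta\phi_{2,k}$ against $\phi_{2,k}$ and integrating by parts moves one derivative off the source. For $\phi_{2,1}$ this gives $\|\nabla\phi_{2,1}\|\le\kappa\nu^{1/2}j_2\|(\pa_Y\phi)^{(j_1,j_2-1)}\|$, and for $\phi_{2,2}$ (after writing $\tfrac{1}{\chi_\nu}(\pa_Y\phi)^{\bf j}=-\omega^{(j_1,j_2-1)}-(\pa_X^2\phi)^{(j_1,j_2-1)}$ and using Hardy plus an $X$-integration by parts) it gives $\|\nabla\phi_{2,2}\|\le C\kappa\nu^{1/2}j_2\bigl(\|Y\omega^{(j_1,j_2-1)}\|+\|(\pa_X\phi)^{(j_1,j_2-1)}\|\bigr)$. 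It is this duality step---absent from your direct $L^2$-bound on $\|Y^k\mathcal G\|$---that recovers one derivative and lands the commutator contributions at order $j-1$.
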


\begin{proof} Since $-\Delta \pa_X \phi=\pa_X \omega$ we have $-(\Delta \pa_X \phi)^{\bf j} = \pa_X \omega^{\bf j}$.
Then we use the commutator relation 
\begin{align*}
-(\Delta  \phi)^{\bf j} & = -\nabla \cdot (\nabla \phi)^{\bf j} +\nu^\frac12 j_2 \frac{\chi_\nu'}{\chi_\nu} ( \pa_Y\phi)^{\bf j}\\\
& = -\Delta  \phi^{\bf j} + \pa_Y \big ( \nu^\frac12 j_2 \frac{\chi_\nu'}{\chi_\nu} \phi^{\bf j}\big ) + \nu^\frac12 j_2 \frac{\chi_\nu'}{\chi_\nu} (\pa_Y \phi)^{\bf j}. 
\end{align*}
Thus we have the Poisson equation for $\phi^{\bf j}$:
\begin{align*}
-\Delta \phi^{\bf j} = \omega^{\bf j} - \pa_Y \big ( \nu^\frac12 j_2 \frac{\chi_\nu'}{\chi_\nu}  \phi^{\bf j}\big ) - \nu^\frac12 j_2 \frac{\chi_\nu'}{\chi_\nu} (\pa_Y \phi)^{\bf j}.
\end{align*}
Then we decompose $\phi^{\bf j}$ into $\phi_1+\phi_{2,1}+\phi_{2,2}$ so that 
\begin{align*}
-\Delta \phi_1= \omega^{\bf j}, \quad -\Delta  \phi_{2,1}=  - \pa_Y \big ( \nu^\frac12 j_2 \frac{\chi_\nu'}{\chi_\nu} \phi^{\bf j}\big ), \quad   -\Delta \phi_{2,2}=  - \nu^\frac12 j_2 \frac{\chi_\nu'}{\chi_\nu} (\pa_Y \phi)^{\bf j},
\end{align*}
subject to the Dirichlet boundary condition.
Then Lemma \ref{lem.stream.a1} implies for $\pa_X\phi_1$,
\begin{align}
\sup_{Y>0} \| \pa_X \phi_1 (\cdot,Y) \|_{L^2(\T_\nu)} \le C\Big ( (j+1)^{-\frac14} \|Y\pa_X \omega^{\bf j} \|_{L^2(\T_\nu\times \R_+)}  + (j+1)^\frac14 \|Y^2\pa_X \omega^{\bf j}\|_{L^2(\T_\nu\times \R_+)}\Big ).
\end{align}
On the other hand, the simple energy estimate gives 
\begin{align*}
\| \nabla \phi_{2,1} \| \leq \nu^\frac12 j_2 \|\frac{\chi_\nu'}{\chi_\nu} \phi^{\bf j} \|\le \kappa \nu^\frac12 j_2 \| (\pa_Y \phi)^{(j_1,j_2-1)}\|.
\end{align*}
As for $\phi_{2,2}$, we have from $\frac{1}{\chi_\nu}(\pa_Y\phi)^{\bf j} = e^{-K\tau \nu^\frac12} (\pa_Y^2 \phi)^{(j_1,j_2-1)} = e^{-K\tau \nu^\frac12} \big ( -\omega^{(j_1,j_2-1)} -\pa_X^2 \phi^{(j_1,j_2-1)}\big )$,
the Hardy inequality and the integration by parts imply
\begin{align*}
\| \nabla \phi_{2,2} \|\le C\kappa \nu^\frac12 j_2  \Big ( \| Y\omega^{(j_1,j_2-1)} \| + \| \pa_X \phi^{(j_1,j_2-1)} \| \Big ).
\end{align*}
Hence we obtain the desired estimate by taking the $L^2$ norm in time and by taking the supremum about ${\bf j}$ such that ${\bf j}=j$. The proof is complete.
\end{proof}

\section{Proof of Proposition \ref{prop.extra}}

Let us go back to \eqref{eq.mOS} with $G=0$, but we impose the noslip boundary condition $\phi|_{Y=0}=\pa_Y\phi|_{Y=0}=0$ in this appendix. Then we have 
\begin{align}
\begin{split}
-\nu^\frac12 (\Delta \omega)^{\bf j} + \big (\pa_\tau +K\nu^\frac12 (j+1) \big ) \omega^{\bf j} & = - (V\cdot \nabla \omega)^{\bf j} - (\nabla^\bot \phi \cdot \nabla \Omega)^{\bf j} + ({\rm rot}\, F)^{\bf j} \\
&= ({\rm div}\, H)^{\bf j},
\end{split}
\end{align}
where 
\begin{align*}
H=- V \omega - \Omega \nabla^\bot \phi + (F_2,-F_1).
\end{align*}
The idea is to take the $L^2$ inner product with $\pa_\tau \phi^{\bf j}$,
which gives the estimates of $\cA \nabla \phi\cA_\infty$ and $\cA \Delta \phi \cA_\infty$ in terms of $\cA \nabla \phi \cA_{2,{\bf 1}}'$. The most technical part is the computation of the viscous term $\langle (\Delta \omega)^{\bf j}, \pa_\tau \phi^{\bf j}\rangle$ when $j_2\ne 0$, for which one needs to convert the vertical derivative $\pa_Y^2\omega$ into the tangential ones by using the equation.

\begin{lemma}\label{lem.ap.B.1} For any $\kappa\in (0,1]$ and $K\geq 1$ we have
\begin{align*}
& \int_0^{\tau_0} \langle \big ( \pa_\tau + K\nu^\frac12 (j+1)\big ) \omega^{\bf j} , \pa_\tau \phi^{\bf j}\rangle \, d\tau \\
& \geq \frac12 \| \pa_\tau (\nabla \phi)^{\bf j}\|_{L^2(0,\tau_0; L^2_{X,Y})}^2 +\frac{K\nu^\frac12 (j+1)}{2} \big ( \| (\nabla \phi)^{\bf j} (\tau_0)\|^2 - \| (\nabla \phi)^{\bf j} (0) \|^2 \big ) \\
& \quad - C \kappa^2 K \nu^\frac12 j  \, (\nu^\frac12 j^\frac32)^2 M_{\infty,j-1,1}[\nabla \phi]^2 - C (\kappa \nu^\frac12 j)^2  M_{2,j-1,1}[\pa_\tau \nabla \phi]^2 .
\end{align*}
Here $C$ is a universal constant.
\end{lemma}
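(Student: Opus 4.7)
The lemma computes a lower bound on the inner product obtained by pairing the operator $\pa_\tau + K\nu^{1/2}(j+1)$ applied to $\omega^{\bf j}$ with the test function $\pa_\tau \phi^{\bf j}$. The natural identity should be that this pairing equals the square of the time derivative of the kinetic energy plus the time derivative of $\frac{K\nu^{1/2}(j+1)}{2}\|(\nabla\phi)^{\bf j}\|^2$, modulo commutator errors arising from $[B_{j_2},\pa_Y]$. I would proceed in three steps: rewrite $\omega^{\bf j}$ in a divergence form, integrate by parts in space to convert the pairing into one for $(\nabla\phi)^{\bf j}$, and then estimate the commutator errors using the key identity $\phi^{\bf j}/\chi_\nu = e^{-K\tau\nu^{1/2}}(\pa_Y\phi)^{(j_1,j_2-1)}$ (for $j_2 \geq 1$) that reduces the order of derivatives by one.

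\textbf{Main computation.} Using the identity \eqref{proof.lem.prop.1.1} one has
$\omega^{\bf j} = -\nabla\cdot(\nabla\phi)^{\bf j} + \nu^{1/2}j_2 (\chi'_\nu/\chi_\nu)(\pa_Y\phi)^{\bf j}$, and also
$\nabla\phi^{\bf j} = (\nabla\phi)^{\bf j} + \nu^{1/2}j_2(\chi'_\nu/\chi_\nu)\phi^{\bf j}\,{\bf e}_2$. Since $\phi|_{Y=0}=0$ (no-slip applies in this appendix) we get $\phi^{\bf j}|_{Y=0} = \pa_\tau\phi^{\bf j}|_{Y=0}=0$, so integrating by parts in space yields
\begin{align*}
\langle\omega^{\bf j},\pa_\tau\phi^{\bf j}\rangle &= \tfrac{1}{2}\tfrac{d}{d\tau}\|(\nabla\phi)^{\bf j}\|^2 + 2\nu^{1/2}j_2\langle(\pa_Y\phi)^{\bf j},(\chi'_\nu/\chi_\nu)\pa_\tau\phi^{\bf j}\rangle, \\
\langle\pa_\tau\omega^{\bf j},\pa_\tau\phi^{\bf j}\rangle &= \|\pa_\tau(\nabla\phi)^{\bf j}\|^2 + 2\nu^{1/2}j_2\langle\pa_\tau(\pa_Y\phi)^{\bf j},(\chi'_\nu/\chi_\nu)\pa_\tau\phi^{\bf j}\rangle.
\end{align*}
Time integration produces the announced main terms $\|\pa_\tau(\nabla\phi)^{\bf j}\|^2_{L^2_\tau L^2}$ and $\frac{K\nu^{1/2}(j+1)}{2}(\|(\nabla\phi)^{\bf j}(\tau_0)\|^2-\|(\nabla\phi)^{\bf j}(0)\|^2)$, together with two error integrals $E_1, E_2$ containing the commutator factor $(\chi'_\nu/\chi_\nu)\pa_\tau\phi^{\bf j}$.

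\textbf{Estimating the errors.} Using $\chi'_\nu \in L^\infty$ with $\|\chi'_\nu\|_\infty = \kappa$ and the identity $\phi^{\bf j}/\chi_\nu = e^{-K\tau\nu^{1/2}}(\pa_Y\phi)^{(j_1,j_2-1)}$, one has
\begin{equation*}
\|(\chi'_\nu/\chi_\nu)\pa_\tau\phi^{\bf j}\|_{L^2_{X,Y}} \leq \kappa\bigl\{K\nu^{1/2}\|(\pa_Y\phi)^{(j_1,j_2-1)}\|_{L^2_{X,Y}} + \|\pa_\tau(\pa_Y\phi)^{(j_1,j_2-1)}\|_{L^2_{X,Y}}\bigr\},
\end{equation*}
which crucially reduces the order from $j$ down to $j-1$. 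Applying Cauchy--Schwarz in $(\tau,X,Y)$ and Young's inequality, $E_1$ absorbs half of $\|\pa_\tau(\nabla\phi)^{\bf j}\|^2_{L^2_\tau L^2}$ and yields an error of size $C(\kappa\nu^{1/2}j)^2 M_{2,j-1,1}[\pa_\tau\nabla\phi]^2$. For $E_2$, the additional prefactor $K\nu^{1/2}(j+1)$ combined with the $L^2_\tau\hookrightarrow L^\infty_\tau$ conversion (paying a factor $\tau_0^{1/2}\leq(K\nu^{1/2})^{-1/2}$) produces precisely $C\kappa^2 K\nu^{1/2}j\cdot(\nu^{1/2}j^{3/2})^2 M_{\infty,j-1,1}[\nabla\phi]^2$. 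Summing both error contributions gives the stated lower bound.

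\textbf{Main obstacle.} The delicate point is the bookkeeping in the final step: one must route the $L^\infty_\tau$-in-time norm through $(\pa_Y\phi)^{(j_1,j_2-1)}$ of order $j-1$ (the order reduction afforded by $\phi^{\bf j}/\chi_\nu$) rather than through $(\pa_Y\phi)^{\bf j}$, and carefully distribute the prefactor $K\nu^{1/2}(j+1)$ between the $L^\infty_\tau$ and $L^2_\tau$ factors so that the accumulated weights of $\nu^{1/2}$, $\kappa$, $K$, and $j$ match the statement. The fact that $\tau_0 \leq (K\nu^{1/2})^{-1}$ is essential: without it, the $K j$-growth in $E_2$ could not be absorbed into the smallness of $K^{-1}$ in the subsequent application within Proposition \ref{prop.extra}.
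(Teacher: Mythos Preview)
Your overall structure and treatment of $E_1$ agree with the paper. However, your handling of $E_2$ has a genuine gap. After applying the reduction $(\chi'_\nu/\chi_\nu)\pa_\tau\phi^{\bf j} = \chi'_\nu\,\pa_\tau\bigl(e^{-K\tau\nu^{1/2}}(\pa_Y\phi)^{(j_1,j_2-1)}\bigr)$, the remaining factor in $E_2$ is still $(\pa_Y\phi)^{\bf j}$, which is of order $j$. A direct Cauchy--Schwarz bound therefore produces an error controlled by $M_{2,j,1}[\nabla\phi]$ or $M_{\infty,j,1}[\nabla\phi]$, not by $M_{\infty,j-1,1}[\nabla\phi]$ as the lemma states. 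You acknowledge the issue in your ``main obstacle'' paragraph, but the mechanism you describe (routing the $L^\infty_\tau$ norm through the order-$(j-1)$ factor via the $L^2_\tau\hookrightarrow L^\infty_\tau$ conversion) does not accomplish the order reduction: the conversion applied to the order-$(j-1)$ factor does nothing for the other factor, and applied to the order-$j$ factor it still gives $M_{\infty,j,1}$. The distinction matters: an error of the form $C\kappa^2 K\nu^{3/2}j^4 M_{\infty,j,1}[\nabla\phi]^2$ (with $j$ instead of $j-1$) is not absorbable in Corollary~\ref{cor.ap.B}, since after summing it exceeds the corresponding main term by a factor $\sim\nu^{1/2}j^{3/2}\le\nu^{-1/4}$.

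The paper's remedy for $E_2$ is twofold. First, integrate by parts in $\tau$: this produces boundary terms at $\tau=0,\tau_0$ involving $\langle\chi'_\nu(\pa_Y\phi)^{\bf j},(\pa_Y\phi)^{(j_1,j_2-1)}\rangle$, together with an integral pairing $\pa_\tau(\pa_Y\phi)^{\bf j}$ against $(\pa_Y\phi)^{(j_1,j_2-1)}$; the latter is absorbed into $\frac14\|\pa_\tau(\nabla\phi)^{\bf j}\|^2_{L^2_\tau L^2}$ plus an acceptable order-$(j-1)$ remainder. Second, for the boundary terms, write $(\pa_Y\phi)^{\bf j}=e^{-K\tau\nu^{1/2}}\chi_\nu(\pa_Y\pa_Y\phi)^{(j_1,j_2-1)}$ and integrate by parts in $Y$: this converts the pairing into $-\tfrac12\langle\pa_Y(\chi'_\nu\chi_\nu)(\pa_Y\phi)^{(j_1,j_2-1)},(\pa_Y\phi)^{(j_1,j_2-1)}\rangle$ plus lower-order terms, all now at order $j-1$, yielding the required $M_{\infty,j-1,1}[\nabla\phi]^2$ bound.
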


\begin{proof} Let us recall the identity
\begin{align}
\omega^{\bf j} = -(\Delta \phi)^{\bf j} = - \nabla \cdot (\nabla \phi)^{\bf j} + \nu^\frac12 j_2 \frac{\chi_\nu'}{\chi_\nu} (\pa_Y \phi)^{\bf j},
\end{align}
which implies 
\begin{align*}
& \langle \big ( \pa_\tau + K\nu^\frac12 (j+1)\big ) \omega^{\bf j} , \pa_\tau \phi^{\bf j}\rangle \\
& = \| \pa_\tau (\nabla \phi)^{\bf j} \|^2 +2\nu^\frac12 j_2 \langle \frac{\chi_\nu'}{\chi_\nu} \pa_\tau (\pa_Y \phi)^{\bf j}, \pa_\tau \phi^{\bf j} \rangle + \frac{K \nu^\frac12 (j+1)}{2} \pa_\tau \| (\nabla \phi)^{\bf j} \|^2\\
& \quad + 2 \nu^\frac12 j_2 K\nu^\frac12 (j+1)  \langle \frac{\chi_\nu'}{\chi_\nu} (\pa_Y\phi)^{\bf j}, \pa_\tau \phi^{\bf j}\rangle .
\end{align*}
Then from $\pa_\tau \phi^{\bf j} = \chi_\nu \pa_\tau \big ( e^{-K\tau \nu^\frac12} (\pa_Y \phi)^{(j_1,j_2-1)}\big )$ for $j_2\geq 1$ we have 
\begin{align*}
& \int_0^{\tau_0} 2\nu^\frac12 j_2 \langle \frac{\chi_\nu'}{\chi_\nu} \pa_\tau (\pa_Y \phi)^{\bf j}, \pa_\tau \phi^{\bf j} \rangle \, d\tau \\
& \geq -\frac14 \| \pa_\tau (\nabla \phi)^{\bf j} \|_{L^2(0,\tau_0; L^2)}^2 -C (\kappa \nu^\frac12 j)^2 \big (  M_{2,j-1,1}[\pa_\tau \nabla \phi]^2 + (K\nu^\frac12)^2 M_{2,j-1,1}[\nabla \phi]^2 \big ) ,
\end{align*}
while we have from the integration by parts in time,
\begin{align*}
& \int_0^{\tau_0}  2 \nu^\frac12 j_2 K\nu^\frac12 (j+1)  \langle \frac{\chi_\nu'}{\chi_\nu} (\pa_Y\phi)^{\bf j}, \pa_\tau \phi^{\bf j}\rangle\, d\tau \\
&  =  2 \nu^\frac12 j_2 K\nu^\frac12 (j+1) \Big ( e^{-K\tau_0 \nu^\frac12}\langle \chi_\nu' (\pa_Y\phi)^{\bf j} , (\pa_Y \phi)^{(j_1,j_2-1)} \rangle (\tau_0) - \langle \chi_\nu' (\pa_Y\phi)^{\bf j}, (\pa_Y \phi)^{(j_1,j_2-1)} \rangle(0) \Big ) \\
& \quad - 2 \nu^\frac12 j_2 K\nu^\frac12 (j+1) \int_0^{\tau_0} e^{-K\tau \nu^\frac12} \langle \pa_\tau (\pa_Y \phi)^{\bf j}, \chi_\nu' (\pa_Y \phi)^{(j_1,j_2-1)} \rangle \, d\tau  \\
& \geq  2 \nu^\frac12 j_2 K\nu^\frac12 (j+1) \Big ( e^{-K\tau_0 \nu^\frac12}\langle \chi_\nu' (\pa_Y\phi)^{\bf j} (\tau_0), (\pa_Y \phi)^{(j_1,j_2-1)} (\tau_0) \rangle  - \langle \chi_\nu' (\pa_Y\phi)^{\bf j} (0), (\pa_Y \phi)^{(j_1,j_2-1)} (0) \rangle \Big ) \\
& \quad - \frac14 \| \pa_\tau (\pa_Y \phi)^{\bf j} \|_{L^2(0,\tau_0; L^2)}^2 - C (K \kappa \nu j^2)^2  M_{2,j-1,1}[\nabla \phi]^2.
\end{align*}
We also observe that for $j_2\geq 1$, 
\begin{align*}
\langle \chi_\nu' (\pa_Y \phi)^{\bf j} , (\pa_Y \phi)^{(j_1,j_2-1)} \rangle & = e^{-K\tau \nu^\frac12} \langle \chi_\nu' \chi_\nu (\pa_Y \pa_Y\phi)^{(j_1,j_2-1)}, (\pa_Y \phi)^{(j_1,j_2-1)}\rangle \\
& =  - \frac{e^{-K\tau \nu^\frac12}}{2} \langle \pa_Y (\chi_\nu' \chi_\nu) \,  (\pa_Y\phi)^{(j_1,j_2-1)}, (\pa_Y \phi)^{(j_1,j_2-1)}\rangle \\
& \quad - e^{-K\tau \nu^\frac12} \nu^\frac12 (j_2-1) \| \chi_\nu'  (\pa_Y \phi)^{(j_1,j_2-1)}\|^2
\end{align*}
Thus we conclude also from $K\tau \nu^\frac12 \le 1$ that 
\begin{align*}
& \int_0^{\tau_0}  2 \nu^\frac12 j_2 K\nu^\frac12 (j+1)  \langle \frac{\chi_\nu'}{\chi_\nu} (\pa_Y\phi)^{\bf j}, \pa_\tau \phi^{\bf j}\rangle\, d\tau \\
& \geq -C  K \nu^\frac12 (\kappa \nu^\frac12 j)^2 \Big ( j \| (\pa_Y \phi)^{(j_1,j_2-1)} (\tau_0) \|^2  +\| (\pa_Y \phi)^{(j_1,j_2-1)} (0) \|^2 \Big )  \\
& \quad - \frac14 \| \pa_\tau (\pa_Y \phi)^{\bf j} \|_{L^2(0,\tau_0; L^2)}^2 - C (K \kappa \nu j^2)^2  M_{2,j-1,1}[\nabla \phi]^2.
\end{align*}
Collecting these above and $M_{2,j-1,1}[\nabla \phi]^2\le (K\nu^\frac12)^{-1} M_{\infty,j-1,1}[\nabla \phi]^2$, we obtain the desired estimate. The proof is complete.
\end{proof}

\begin{lemma}\label{lem.ap.B.2} For any $\kappa\in (0,1]$ and $K\geq 1$ we have 
\begin{align*}
& \int_0^{\tau_0} \langle -\nu^\frac12 (\Delta \omega)^{\bf j}, \pa_\tau\phi^{\bf j} \rangle\, d\tau  \\
& \geq \frac{\nu^\frac12}{2} \big ( \| \omega^{\bf j} (\tau_0)\|^2 - \| \omega^{\bf j} (0) \|^2 \big ) - \frac14 M_{2,j,1}[\pa_\tau \nabla \phi]^2 \\
& \quad - C (\kappa \nu^\frac12 j)^2 \Big (  M_{2,j-1,1}[\pa_\tau \nabla \phi]^2  + (\nu^\frac12 (j-1))^2 M_{2,j-2,1}[\pa_\tau \nabla \phi]^2 \Big )\\
& \quad -C\kappa^2 \nu^\frac12 \Big ( M_{\infty,j,1}[\omega]^2 + (\nu^\frac12  j)^2 M_{\infty,j-1,1}[\omega]\Big ) \\
& \quad - C K \nu^\frac12 j (\kappa \nu^\frac12 j^\frac32)^2 \Big ( M_{\infty,j-1,1}[\nabla \phi]^2 +  (\nu^\frac12 (j-1))^2 M_{\infty,j-2,1}[\nabla \phi]^2 \Big ) \\
& \quad -C \Big ( M_{2,j,1}[H]^2 + (\nu^\frac12 j)^2 M_{2,j-1,1}[H]^2 \Big ).
\end{align*}
Here $C$ is a universal constant.
\end{lemma}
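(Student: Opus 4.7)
The aim is to reproduce, at the level of the vorticity, the computation one would do in the unweighted case $j_2 = 0$: integrate by parts to trade $-\nu^{\frac12}\Delta \omega$ tested against $\pa_\tau \phi$ for the time derivative $\frac{\nu^{\frac12}}{2}\pa_\tau \|\omega\|^2$, using $\omega = -\Delta \phi$ and the no-slip conditions $\phi|_{Y=0} = \pa_Y \phi|_{Y=0} = 0$. The substance of the lemma is to carry this out in the $B_{j_2}$-weighted setting, where several commutators between $B_{j_2}$ and $\Delta$ have to be absorbed into the error terms on the right.

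First I would apply the commutator identity \eqref{proof.lem.prop.1.2} to split
\[
-\nu^{\frac12}(\Delta \omega)^{\bf j} = -\nu^{\frac12}\nabla \cdot (\nabla \omega)^{\bf j} + \nu\, \frac{j_2 \chi_\nu'}{\chi_\nu}(\pa_Y \omega)^{\bf j},
\]
test against $\pa_\tau \phi^{\bf j}$, and integrate by parts in space on the divergence piece. All boundary contributions at $Y=0$ vanish because $\pa_\tau \phi^{\bf j}|_{Y=0} = 0$ always (either by $\phi|_{Y=0}=0$ when $j_2=0$, or by $\chi_\nu|_{Y=0}=0$ when $j_2\geq 1$), and similarly $\pa_\tau \pa_Y \phi^{\bf j}|_{Y=0}=0$ (using the no-slip condition $\pa_Y \phi|_{Y=0}=0$ in the case $j_2=0$, and the extra factor of $\chi_\nu$ when $j_2 \geq 1$). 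Using \eqref{proof.lem.prop.1.3} to replace $(\nabla \omega)^{\bf j}$ by $\nabla \omega^{\bf j}$ up to a commutator, and integrating by parts once more, the main contribution becomes $-\nu^{\frac12}\int \omega^{\bf j}\, \Delta \pa_\tau \phi^{\bf j}$.

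The next step is to convert $-\Delta \phi^{\bf j}$ back into $\omega^{\bf j}$. Expanding $\Delta \phi^{\bf j}$ and comparing with the identity \eqref{proof.lem.prop.1.2} applied to $\omega = -\Delta \phi$, I obtain
\[
\omega^{\bf j} + \Delta \phi^{\bf j} = R_{\bf j},
\]
where $R_{\bf j}$ is an explicit sum of commutator terms: schematically $\nu^{\frac12} j\, \chi_\nu' e^{-K\tau \nu^{\frac12}} (\pa_Y^2 \phi)^{(j_1,j_2-1)}$ plus $\nu j^2 (\pa_Y \phi)^{(j_1,j_2-1)}$ and similar $j_2-2$ contributions. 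Then
\[
-\nu^{\frac12}\int \omega^{\bf j} \Delta \pa_\tau \phi^{\bf j} = \frac{\nu^{\frac12}}{2}\pa_\tau \|\omega^{\bf j}\|^2 - \nu^{\frac12}\int \omega^{\bf j} \pa_\tau R_{\bf j},
\]
and integration in $\tau$ produces the advertised main term $\frac{\nu^{\frac12}}{2}(\|\omega^{\bf j}(\tau_0)\|^2 - \|\omega^{\bf j}(0)\|^2)$.

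The main obstacle is the bookkeeping of the three remaining error terms: the commutator piece from Step 1, the commutator from writing $(\nabla \omega)^{\bf j}$ as $\nabla \omega^{\bf j}$, and above all the $R_{\bf j}$ term. For the latter, the vertical derivatives $(\pa_Y^2 \phi)^{(j_1,j_2-1)}$ in $R_{\bf j}$ must be converted to tangential derivatives via the relation $\pa_Y^2 \phi = -\omega - \pa_X^2 \phi$ (this is the ``convert vertical into tangential'' step announced in the introduction of the appendix); this rewrites $R_{\bf j}$ as $\nu^{\frac12} j\, \chi_\nu'$ times $\omega^{(j_1,j_2-1)}$ and $(\pa_X \phi)^{(j_1+1,j_2-1)}$, plus genuinely lower-order pieces. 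The factor $\pa_\tau$ in front of $R_{\bf j}$ is then moved by integration by parts in time, producing time-boundary contributions (controlled by $\nu^{\frac12}\|\omega^{\bf j}\|\cdot\|R_{\bf j}\|$ at $\tau=0,\tau_0$ through Cauchy--Schwarz and Young) and a bulk term $-\nu^{\frac12}\int \pa_\tau \omega^{\bf j} \cdot R_{\bf j}$; this last one is handled by replacing $\pa_\tau \omega^{\bf j} = -\pa_\tau \Delta \phi^{\bf j} + \pa_\tau R_{\bf j}$ and one final IBP in space, so that everything is measured in $M_{2,j,1}[\pa_\tau \nabla \phi]$, $M_{2,j-1,1}[\pa_\tau \nabla \phi]$, $M_{\infty,j,1}[\omega]$, $M_{\infty,j-1,1}[\nabla \phi]$, and $M_{2,j,1}[H]$, with combinatorial factors $\kappa \nu^{\frac12} j$ (from $\nu^{\frac12} j_2 \chi_\nu'$) counted at each occurrence. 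Keeping track of these factors and ensuring that the $\|\pa_\tau \nabla \phi^{\bf j}\|^2$-contribution carries a constant at most $\frac14$ (so that it can be absorbed into the $\frac12$ produced by Lemma \ref{lem.ap.B.1}) is the delicate combinatorial task; the rest of the work consists of Hardy's inequality applied to the bound $|\chi_\nu'/\chi_\nu|\leq C/Y$ near $Y=0$, and Cauchy--Schwarz paired with Young's inequality to split products into the desired squared norms.
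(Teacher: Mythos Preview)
Your overall architecture is right: integrate by parts twice in space to produce $\nu^{\frac12}\langle\omega^{\bf j},\pa_\tau\omega^{\bf j}\rangle$, then account for the commutators. You also correctly list three commutator contributions. But you have mis-prioritized them, and the one you dismiss as routine is in fact the whole difficulty.

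The ``commutator piece from Step 1'' is $\nu j_2\langle\frac{\chi_\nu'}{\chi_\nu}(\pa_Y\omega)^{\bf j},\pa_\tau\phi^{\bf j}\rangle$. Since $\frac{1}{\chi_\nu}(\pa_Y\omega)^{\bf j}=e^{-K\tau\nu^{1/2}}(\pa_Y^2\omega)^{(j_1,j_2-1)}$, this term carries \emph{two vertical derivatives of the vorticity}. Hardy, Cauchy--Schwarz and Young do not help here: $\|(\pa_Y^2\omega)^{(j_1,j_2-1)}\|$ is simply not bounded by any of the norms appearing in the lemma. This is precisely the term the appendix introduction warns about when it says one must ``convert the vertical derivative $\pa_Y^2\omega$ into the tangential ones by using the equation.'' The conversion is $\pa_Y^2\omega=\Delta\omega-\pa_X^2\omega$ followed by the \emph{evolution PDE} at level $j-1$:
\[
\nu^{\frac12}(\Delta\omega)^{(j_1,j_2-1)}=(\pa_\tau+K\nu^{\frac12}j)\,\omega^{(j_1,j_2-1)}-({\rm div}\,H)^{(j_1,j_2-1)}.
\]
This is how the $M_{2,j,1}[H]$ and $M_{2,j-1,1}[H]$ terms enter the estimate; in your scheme there is no mechanism that produces them. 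Your ``convert vertical into tangential'' step uses only the Poisson identity $\pa_Y^2\phi=-\omega-\pa_X^2\phi$ applied to $R_{\bf j}$, which is a different (and much easier) object. Moreover, the $(\pa_\tau+K\nu^{1/2}j)\omega^{(j_1,j_2-1)}$ produced by the PDE is itself delicate: after further manipulation it yields a residual $\langle(\frac{\chi_\nu'}{\chi_\nu})^2\omega^{\bf j},\pa_\tau\phi^{\bf j}\rangle$, which for $j_2\ge 2$ forces a \emph{second} use of the PDE at level $j-2$. This two-step recursion is exactly why the lemma statement carries both $j-1$ and $j-2$ error norms; your plan does not anticipate it.

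Separately, your proposed treatment of the $R_{\bf j}$ contribution does not close either. After integrating by parts in time and writing $\pa_\tau\omega^{\bf j}=-\pa_\tau\Delta\phi^{\bf j}+\pa_\tau R_{\bf j}$, the spatial IBP on $-\nu^{\frac12}\int\pa_\tau\Delta\phi^{\bf j}\cdot R_{\bf j}$ lands on $\nabla R_{\bf j}$; since the leading part of $R_{\bf j}$ (after your Poisson conversion) is $\kappa\nu^{\frac12}j\,\chi_\nu'\omega^{(j_1,j_2-1)}$, this requires a bound on $\|\nabla\omega^{(j_1,j_2-1)}\|$, which is again outside the error budget.
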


\begin{proof} We observe from 
\begin{align}\label{proof.lem.ap.B.2.-1}
\begin{split}
(\Delta \omega)^{\bf j} & = \nabla\cdot (\nabla \omega)^{\bf j} - \nu^\frac12 j_2 \frac{\chi_\nu'}{\chi_\nu} (\pa_Y \omega)^{\bf j},\qquad \chi_\nu' =\kappa e^{-\kappa \nu^\frac12 Y},\\
\nabla \partial_\tau \phi^{\bf j} & = \partial_\tau (\nabla \phi)^{\bf j} +\nu^\frac12 j_2\frac{\chi_\nu'}{\chi_\nu} \partial_\tau \phi^{\bf j} {\bf e}_2
\end{split}
\end{align}
and the integration by parts that 
\begin{align*}
\langle -\nu^\frac12 (\Delta \omega)^{\bf j}, \pa_\tau\phi^{\bf j} \rangle = \nu^\frac12 \langle (\nabla \omega)^{\bf j}, \partial_\tau (\nabla \phi)^{\bf j}\rangle + 2\nu j_2 \langle \frac{\chi_\nu'}{\chi_\nu} (\pa_Y \omega)^{\bf j}, \pa_\tau \phi^{\bf j}\rangle.
\end{align*}
Then the similar identities
\begin{align}\label{proof.lem.ap.B.2.0}
\begin{split}
(\nabla \omega)^{\bf j}  & = \nabla \omega^{\bf j} - \nu^\frac12 j_2\frac{\chi_\nu'}{\chi_\nu}  \omega^{\bf j} {\bf e}_2,\\
\nabla\cdot \pa_\tau (\nabla \phi)^{\bf j} & = \pa_\tau (\Delta \phi)^{\bf j} + \nu^\frac12 j_2 \frac{\chi_\nu'}{\chi_\nu} \pa_\tau (\pa_Y \phi)^{\bf j}
\end{split}
\end{align}
together with the integration by parts yield
\begin{align}\label{proof.lem.ap.B.2.1}
\langle -\nu^\frac12 (\Delta \omega)^{\bf j}, \pa_\tau\phi^{\bf j} \rangle = \nu^\frac12 \langle \omega^{\bf j}, \pa_\tau \omega^{\bf j}\rangle  -  2\nu j_2 \langle \frac{\chi_\nu'}{\chi_\nu} \omega^{\bf j}, \pa_\tau (\pa_Y \phi)^{\bf j}\rangle+ 2\nu j_2 \langle \frac{\chi_\nu'}{\chi_\nu} (\pa_Y \omega)^{\bf j}, \pa_\tau \phi^{\bf j}\rangle.
\end{align}
Again from the above identities about the commutators we have for $j_2\geq 1$,
\begin{align*}
\langle \frac{\chi_\nu'}{\chi_\nu} \omega^{\bf j}, \pa_\tau (\pa_Y\phi)^{\bf j} \rangle 
= - \langle \frac{\chi_\nu'}{\chi_\nu} (\pa_Y \omega)^{\bf j}, \pa_\tau \phi^{\bf j}\rangle 
- \nu^\frac12 \langle \frac{\chi_\nu''}{\chi_\nu}\omega^{\bf j}, \pa_\tau \phi^{\bf j}\rangle -\nu^\frac12 (2j_2 - 1) \langle (\frac{\chi_\nu'}{\chi_\nu})^2 \omega^{\bf j}, \pa_\tau \phi^{\bf j} \rangle.
\end{align*}   
Here $\chi_\nu''=-\kappa^2 e^{-\kappa \nu^\frac12 Y}$.
Thus \eqref{proof.lem.ap.B.2.1} is written as 
\begin{align}\label{proof.lem.ap.B.2.2}
\begin{split}
\langle -\nu^\frac12 (\Delta \omega)^{\bf j}, \pa_\tau\phi^{\bf j} \rangle 
& = \nu^\frac12 \langle \omega^{\bf j}, \pa_\tau \omega^{\bf j}\rangle +4\nu j_2 \langle \frac{\chi_\nu'}{\chi_\nu} (\pa_Y \omega)^{\bf j}, \pa_\tau \phi^{\bf j}\rangle \\
& \quad + 2\nu^\frac 32 j_2 \langle \frac{\chi_\nu''}{\chi_\nu}\omega^{\bf j}, \pa_\tau \phi^{\bf j}\rangle  + 2\nu^\frac32 j_2 (2j_2-1)  \langle (\frac{\chi_\nu'}{\chi_\nu})^2 \omega^{\bf j}, \pa_\tau \phi^{\bf j} \rangle.
\end{split}
\end{align}
Let us compute the term $\langle \frac{\chi_\nu'}{\chi_\nu} (\pa_Y \omega)^{\bf j}, \pa_\tau \phi^{\bf j}\rangle$. From the identity 
\begin{align*}
\frac{1}{\chi_\nu} (\pa_Y \omega)^{\bf j} = e^{-K\tau\nu^\frac12} (\pa_Y^2 \omega)^{(j_1,j_2-1)} = e^{-K\tau\nu^\frac12} \big ( (\Delta \omega)^{(j_1,j_2-1)} - \pa_X^2 \omega^{(j_1,j_2-1)}\Big ),
\end{align*}
we have 
\begin{align*}
\langle \frac{\chi_\nu'}{\chi_\nu} (\pa_Y \omega)^{\bf j}, \pa_\tau \phi^{\bf j}\rangle = e^{-K\tau \nu^\frac12} \langle \chi_\nu' (\Delta \omega)^{(j_1,j_2-1)} ,\pa_\tau \phi^{\bf j}\rangle + \langle \chi_\nu' \omega^{(j_1+1,j_2-1)}, \pa_\tau \pa_X \phi^{\bf j}\rangle.
\end{align*}
Since $\nu^\frac12 (\Delta \omega)^{(j_1,j_2-1)} = (\pa_\tau + K\nu^\frac12 j) \omega^{(j_1,j_2-1)}  - ({\rm div}\, H)^{(j_1,j_2-1)}$, the identity \eqref{proof.lem.ap.B.2.2} is written as 
\begin{align}\label{proof.lem.ap.B.2.3}
\begin{split}
\langle -\nu^\frac12 (\Delta \omega)^{\bf j}, \pa_\tau\phi^{\bf j} \rangle 
& = \nu^\frac12 \langle \omega^{\bf j}, \pa_\tau \omega^{\bf j}\rangle +4\nu^\frac12 j_2 e^{-K\tau \nu^\frac12} \langle \chi_\nu' \big ( \pa_\tau + K\nu^\frac12 j\big ) \omega^{(j_1,j_2-1)}, \pa_\tau \phi^{\bf j}\rangle \\
& \quad - 4\nu^\frac12 j_2 e^{-K\tau \nu^\frac12} \langle \chi_\nu' ({\rm div}\, H)^{(j_1,j_2-1)}, \pa_\tau \phi^{\bf j}\rangle \\
& \quad + 4\nu j_2 \langle \chi_\nu' \omega^{(j_1+1,j_2-1)}, \pa_\tau \pa_X \phi^{\bf j}\rangle\\
& \quad + 2\nu^\frac 32 j_2 \langle \frac{\chi_\nu''}{\chi_\nu}\omega^{\bf j}, \pa_\tau \phi^{\bf j}\rangle  + 2\nu^\frac32 j_2 (2j_2-1)  \langle (\frac{\chi_\nu'}{\chi_\nu})^2 \omega^{\bf j}, \pa_\tau \phi^{\bf j} \rangle.
\end{split}
\end{align}
Next we compute the term $\nu^\frac12 j_2 e^{-K\tau \nu^\frac12} \langle \chi_\nu' \big ( \pa_\tau + K\nu^\frac12 j\big ) \omega^{(j_1,j_2-1)}, \pa_\tau \phi^{\bf j}\rangle$ in \eqref{proof.lem.ap.B.2.3}: from the identities as in \eqref{proof.lem.ap.B.2.0} we have 
\begin{align*}
\begin{split}
e^{-K\tau \nu^\frac12} \langle \chi_\nu' \big ( \pa_\tau + K\nu^\frac12 j\big ) \omega^{(j_1,j_2-1)}, \pa_\tau \phi^{\bf j}\rangle
& = e^{-K\tau \nu^\frac12} \langle \chi_\nu' (\pa_\tau + K\nu^\frac12 j) (\nabla \phi)^{(j_1,j_2-1)}, \pa_\tau (\nabla \phi)^{{\bf j}}\rangle \\
&  \quad  + 2\nu^\frac12 j_2 e^{-K\tau \nu^\frac12} \langle \frac{(\chi_\nu')^2}{\chi_\nu} (\pa_\tau + K\nu^\frac12 j) (\pa_Y \phi)^{(j_1,j_2-1)}, \pa_\tau \phi^{{\bf j}}\rangle  \\
& \quad + \nu^\frac12 e^{-K\tau \nu^\frac12} \langle \chi_\nu'' (\pa_\tau + K\nu^\frac12 j) (\pa_Y  \phi)^{(j_1,j_2-1)}, \pa_\tau  \phi^{{\bf j}}\rangle .
\end{split}
\end{align*}
By setting $(\nabla \phi)^{\bf \tilde j-1}=e^{-K\tau \nu^\frac12} (\nabla \phi)^{(j_1,j_2-1)}$ for simplicity, we have 
\begin{align*}
\begin{split}
& e^{-K\tau \nu^\frac12} \langle \chi_\nu' \big ( \pa_\tau + K\nu^\frac12 j\big ) \omega^{(j_1,j_2-1)}, \pa_\tau \phi^{\bf j}\rangle\\
& = \langle \chi_\nu' \pa_\tau  (\nabla \phi)^{\bf \tilde j-1}, \pa_\tau (\nabla \phi)^{{\bf j}}\rangle \\
&  \quad  + 2\nu^\frac12 j_2  \langle \frac{(\chi_\nu')^2}{\chi_\nu} \pa_\tau  (\pa_Y \phi)^{\bf \tilde j-1}, \pa_\tau \phi^{{\bf j}}\rangle   + \nu^\frac12  \langle \chi_\nu'' \pa_\tau  (\pa_Y  \phi)^{\bf \tilde j-1}, \pa_\tau  \phi^{{\bf j}}\rangle \\
& \quad + K\nu^\frac12 j \Big ( \langle \chi_\nu'  (\nabla \phi)^{\bf \tilde j-1}, \pa_\tau (\nabla \phi)^{{\bf j}}\rangle   + 2\nu^\frac12 j_2  \langle \frac{(\chi_\nu')^2}{\chi_\nu}  (\pa_Y \phi)^{\bf \tilde j-1}, \pa_\tau \phi^{{\bf j}}\rangle  + \nu^\frac12  \langle \chi_\nu''  (\pa_Y  \phi)^{\bf \tilde j-1}, \pa_\tau  \phi^{{\bf j}}\rangle \Big ).
\end{split}
\end{align*}
Since 
\begin{align*}
& \pa_\tau  (\nabla \phi)^{{\bf j}}=\chi_\nu \pa_\tau (\pa_Y \nabla \phi)^{\bf \tilde j-1}=\chi_\nu \pa_Y \pa_\tau (\nabla \phi)^{\bf \tilde j-1} - \nu^\frac12 (j_2-1) \chi_\nu' \pa_\tau (\nabla \phi)^{\bf \tilde j -1},\\
&\pa_\tau \phi^{{\bf j}}=\chi_\nu \pa_\tau (\pa_Y  \phi)^{\bf \tilde j-1},
\end{align*}
we then arrive at 
\begin{align}\label{proof.lem.ap.B.2.4}
& \nu^\frac12 j_2 e^{-K\tau \nu^\frac12} \langle \chi_\nu' \big ( \pa_\tau + K\nu^\frac12 j\big ) \omega^{(j_1,j_2-1)}, \pa_\tau \phi^{\bf j}\rangle \nonumber \\
\begin{split}
& = \nu^\frac12 j_2 \Big \{ -\frac12 \langle \pa_Y ( \chi_\nu' \chi_\nu) \pa_\tau  (\nabla \phi)^{\bf \tilde j-1}, \pa_\tau (\nabla \phi)^{\bf \tilde j-1}\rangle \\
& \quad - \nu^\frac12 (j_2-1) \langle  (\chi_\nu')^2  \pa_\tau  (\nabla \phi)^{\bf \tilde j-1}, \pa_\tau ( \nabla \phi)^{\bf \tilde j-1}\rangle \\
&  \quad  + 2\nu^\frac12 j_2  \langle (\chi_\nu')^2 \pa_\tau (\pa_Y \phi)^{\bf \tilde j-1}, \pa_\tau (\pa_Y \phi)^{\bf \tilde j-1}\rangle  + \nu^\frac12  \langle \chi_\nu'' \pa_\tau (\pa_Y  \phi)^{\bf \tilde j-1}, \chi_\nu \pa_\tau  (\pa_Y \phi)^{\bf \tilde j-1}\rangle \\
& \quad +  K\nu^\frac12 j \Big ( \langle \chi_\nu'  (\nabla \phi)^{\bf \tilde j-1}, \pa_\tau ( \nabla \phi)^{\bf j}\rangle  \\
& \qquad \qquad \quad  + 2\nu^\frac12 j_2  \langle (\chi_\nu')^2  (\pa_Y \phi)^{\bf \tilde j-1},  \pa_\tau (\pa_Y \phi)^{\bf \tilde j-1}\rangle  + \nu^\frac12  \langle \chi_\nu''  (\pa_Y  \phi)^{\bf \tilde j-1}, \chi_\nu \pa_\tau  (\pa_Y\phi)^{\bf \tilde j-1}\rangle \Big ) \Big \}\nonumber 
\end{split} \\
\begin{split}
& \geq - C (\kappa \nu^\frac12 j_2)^2 \| \pa_\tau (\nabla \phi)^{\bf \tilde j-1}\|^2 \\
& \quad  +  K\nu j_2 j \Big ( \langle \chi_\nu'  (\nabla \phi)^{\bf \tilde j-1}, \pa_\tau ( \nabla \phi)^{\bf j}\rangle    + \nu^\frac12 j_2 \pa_\tau  \| \chi_\nu'   (\pa_Y \phi)^{\bf \tilde j-1}\|^2 + \frac{\nu^\frac12}{2}\pa_\tau  \langle \chi_\nu''  (\pa_Y  \phi)^{\bf \tilde j-1}, \chi_\nu  (\pa_Y\phi)^{\bf \tilde j-1}\rangle \Big ).
\end{split}
\end{align}
Here we have used the fact that it suffices to consider the case $j_2\geq 1$, and  $C$ is a universal constant.
Hence, by going back to \eqref{proof.lem.ap.B.2.3}, we have 
\begin{align}\label{proof.lem.ap.B.2.5}
\begin{split}
\langle -\nu^\frac12 (\Delta \omega)^{\bf j}, \pa_\tau\phi^{\bf j} \rangle 
& \geq  \nu^\frac12 \langle \omega^{\bf j}, \pa_\tau \omega^{\bf j}\rangle - C(\kappa \nu^\frac12 j_2)^2 \| \pa_\tau (\nabla \phi)^{\bf \tilde j -1} \|^2 \\
&\quad  +  K\nu j_2 j \Big ( \langle \chi_\nu'  (\nabla \phi)^{\bf \tilde j-1}, \pa_\tau ( \nabla \phi)^{\bf j}\rangle  \\
& \qquad  + \nu^\frac12 j_2 \pa_\tau  \| \chi_\nu'   (\pa_Y \phi)^{\bf \tilde j-1}\|^2 + \frac{\nu^\frac12}{2}\pa_\tau  \langle \chi_\nu''  (\pa_Y  \phi)^{\bf \tilde j-1}, \chi_\nu  (\pa_Y\phi)^{\bf \tilde j-1}\rangle \Big )\\
& \quad - 4\nu^\frac12 j_2 e^{-K\tau \nu^\frac12} \langle \chi_\nu' ({\rm div}\, H)^{(j_1,j_2-1)}, \pa_\tau \phi^{\bf j}\rangle \\
& \quad + 4\nu j_2 \langle \chi_\nu' \omega^{(j_1+1,j_2-1)}, \pa_\tau \pa_X \phi^{\bf j}\rangle\\
& \quad + 2\nu^\frac 32 j_2 \langle \frac{\chi_\nu''}{\chi_\nu}\omega^{\bf j}, \pa_\tau \phi^{\bf j}\rangle  + 2\nu^\frac32 j_2 (2j_2-1)  \langle (\frac{\chi_\nu'}{\chi_\nu})^2 \omega^{\bf j}, \pa_\tau \phi^{\bf j} \rangle.
\end{split}
\end{align}
Here $C$ is a universal constant. Next we observe from $\pa_\tau \phi^{\bf j} = \chi_\nu \pa_\tau (\pa_Y\phi)^{\bf \tilde j-1}$ that 
\begin{align}\label{proof.lem.ap.B.2.6}
- 4\nu^\frac12 j_2 e^{-K\tau \nu^\frac12} \langle \chi_\nu' ({\rm div}\, H)^{(j_1,j_2-1)}, \pa_\tau \phi^{\bf j}\rangle \geq - C \kappa \nu^\frac12 j_2 \big ( \| H_1^{(j_1+1,j_2-1)} \| + \| H_2^{\bf j} \| \big )  \| \pa_\tau (\pa_Y \phi)^{\bf \tilde j-1} \|,
\end{align}
and also 
\begin{align}\label{proof.lem.ap.B.2.7}
 4\nu j_2 \langle \chi_\nu' \omega^{(j_1+1,j_2-1)}, \pa_\tau \pa_X \phi^{\bf j}\rangle & \geq -C \kappa \nu j_2 \| \omega^{(j_1+1,j_2-1)} \| \, \| \pa_\tau \pa_X \phi^{\bf j} \|,\\ 
2\nu^\frac 32 j_2 \langle \frac{\chi_\nu''}{\chi_\nu}\omega^{\bf j}, \pa_\tau \phi^{\bf j}\rangle   & \geq -C \kappa^2 \nu^\frac32 j_2 \| \omega^{\bf j} \| \, \| \pa_\tau (\pa_Y \phi)^{\bf \tilde j-1} \|.\label{proof.lem.ap.B.2.7'}
\end{align}
Finally let us compute the term $\nu^\frac12 \langle (\frac{\chi_\nu'}{\chi_\nu})^2 \omega^{\bf j}, \pa_\tau \phi^{\bf j} \rangle$ when $j_2\geq 1$. If $j_2=1$ then 
\begin{align}\label{proof.lem.ap.B.2.8}
\nu^\frac12 \langle (\frac{\chi_\nu'}{\chi_\nu})^2 \omega^{\bf j}, \pa_\tau \phi^{\bf j} \rangle 
& = \nu^\frac12 \langle (\chi_\nu')^2 e^{-K\tau \nu^\frac12}(\pa_Y \omega)^{(j_1,0)}, \pa_\tau (e^{-K\tau \nu^\frac12}(\pa_Y \phi)^{(j_1,0)} )\rangle \nonumber \\
& = \nu^\frac12 \langle e^{-K\tau \nu^\frac12} \nabla \pa_Y \phi^{(j_1,0)}, \nabla \Big ( (\chi_\nu')^2\pa_\tau (e^{-K\tau \nu^\frac12}(\pa_Y \phi)^{(j_1,0)} )\Big ) \rangle \nonumber \\
& = \frac12 \nu^\frac12 \pa_\tau \| \chi_\nu' e^{-K\tau \nu^\frac12} \nabla \pa_Y \phi ^{(j_1,0)} \|^2 \nonumber \\
& \quad + 2\nu \langle \chi_\nu''\chi_\nu' e^{-K\tau \nu^\frac12}  \pa_Y^2 \phi^{(j_1,0)}, \pa_\tau (e^{-K\tau \nu^\frac12}(\pa_Y \phi)^{(j_1,0)} )\rangle \nonumber \\
& \geq \frac12 \nu^\frac12 \pa_\tau \| \chi_\nu' e^{-K\tau \nu^\frac12} \nabla \pa_Y \phi ^{(j_1,0)} \|^2 - C\kappa^3 \nu \| \omega^{(j_1,0)} \| \, \| \pa_\tau (\pa_Y \phi)^{\bf \tilde j-1}\|.
\end{align} 
If $j_2\geq 2$ then 
\begin{align}\label{proof.lem.ap.B.2.9}
\nu^\frac12 \langle (\frac{\chi_\nu'}{\chi_\nu})^2 \omega^{\bf j}, \pa_\tau \phi^{\bf j} \rangle 
& = e^{-2K\tau \nu^\frac12} \nu^\frac12 \langle (\chi_\nu')^2 (\pa_Y^2\omega)^{(j_1,j_2-2)}, \pa_\tau \phi^{\bf j}\rangle,
\end{align}\label{proof.lem.ap.B.2.10}
and then by using the identity $\nu^\frac12 (\Delta \omega)^{(j_1,j_2-2)} = (\pa_\tau + K\nu^\frac12 (j-1)) \omega^{(j_1,j_2-2)} - ({\rm div}\, H)^{(j_1,j_2-2)}$, we have 
\begin{align}
\begin{split}
\nu^\frac12 \langle (\frac{\chi_\nu'}{\chi_\nu})^2 \omega^{\bf j}, \pa_\tau \phi^{\bf j} \rangle 
& =  - \nu^\frac12 \langle (\chi_\nu')^2  \omega^{(j_1+2,j_2-2)}, \pa_\tau  \phi^{\bf j}\rangle\\
& \quad + e^{-2K\tau \nu^\frac12} \langle (\chi_\nu')^2 (\pa_\tau + K\nu^\frac12 (j-1)) \omega^{(j_1,j_2-2)}, \pa_\tau \phi^{\bf j}\rangle\\
& \qquad -   e^{-2K\tau \nu^\frac12} \langle (\chi_\nu')^2 ({\rm div}\, H)^{(j_1,j_2-2)}, \pa_\tau \phi^{\bf j}\rangle.
\end{split}
\end{align}
As for the second term of the right-hand side of \eqref{proof.lem.ap.B.2.10}, we have for $j\geq j_2\geq 2$,
\begin{align*}
& e^{-2K\tau \nu^\frac12} \langle (\chi_\nu')^2 (\pa_\tau + K\nu^\frac12 (j-1)) \omega^{(j_1,j_2-2)}, \pa_\tau \phi^{\bf j}\rangle \\
& = e^{-2K\tau \nu^\frac12} \langle (\chi_\nu')^2 (\pa_\tau + K\nu^\frac12 (j-1)) \pa_X \phi ^{(j_1,j_2-2)}, \pa_\tau \pa_X \phi^{\bf j}\rangle \\
& \quad - e^{-2K\tau \nu^\frac12} \langle (\chi_\nu')^2 (\pa_\tau + K\nu^\frac12 (j-1)) \big ( e^{2K\tau \nu^\frac12} (\pa_Y\phi)^{\bf \tilde j-1} \big ) , \pa_\tau (\pa_Y \phi)^{\bf \tilde j-1} \rangle\\
&  \geq - \kappa^2 \big ( \| \pa_\tau \pa_X \phi^{(j_1,j_2-2)} \| + K\nu^\frac12 j \| \pa_X \phi^{(j_1,j_2-2)} \| \big ) \| \pa_\tau \pa_X \phi^{\bf j} \|  \\
& \qquad - \kappa^2 \big ( \| \pa_\tau (\pa_Y \phi)^{\bf \tilde j-1}\| + K\nu^\frac12 j \| (\pa_Y \phi)^{\bf \tilde j-1} \| \big ) \| \pa_\tau (\pa_Y \phi)^{\bf \tilde j-1} \| .
\end{align*}
Since it is straightforward to see that 
\begin{align*}
 - \nu^\frac12 \langle (\chi_\nu')^2  \omega^{(j_1+2,j_2-2)}, \pa_\tau  \phi^{\bf j}\rangle & \geq - \kappa^2 \nu^\frac12  \| \omega^{(j_1+1,j_2-2)} \|\,   \|\pa_\tau (\pa_X \phi)^{\bf j}\| \\
  -   e^{-2K\tau \nu^\frac12} \langle (\chi_\nu')^2 ({\rm div}\, H)^{(j_1,j_2-2)}, \pa_\tau \phi^{\bf j}\rangle & \geq - \kappa^2 \big ( \| H_1^{(j_1+1,j_2-2)} \| + \|H_2^{(j_1,j_2-1)} \|\big ) \| \pa_\tau (\pa_Y \phi)^{\bf \tilde j-1} \|,
 \end{align*}
 we obtain for $j_2\geq 2$,
 \begin{align}\label{proof.lem.ap.B.2.11}
\begin{split}
& \nu^\frac12 \langle (\frac{\chi_\nu'}{\chi_\nu})^2 \omega^{\bf j}, \pa_\tau \phi^{\bf j} \rangle \\
& \geq  -\kappa^2 \Big ( \| \pa_\tau \pa_X \phi^{(j_1,j_2-2)} \| + K\nu^\frac12 j \| \pa_X \phi^{(j_1,j_2-2)} \|  + \nu^\frac12  \| \omega^{(j_1+1,j_2-2)} \| \Big )\|\pa_\tau (\pa_X \phi)^{\bf j}\|  \\
& \quad -\kappa^2 \Big (  \| \pa_\tau (\pa_Y \phi)^{\bf \tilde j-1}\| + K\nu^\frac12 j \| (\pa_Y \phi)^{\bf \tilde j-1} \| + \| H_1^{(j_1+1,j_2-2)} \| + \|H_2^{(j_1,j_2-1)} \| \Big ) \| \pa_\tau (\pa_Y \phi)^{\bf \tilde j-1} \|.
\end{split}
\end{align}
Collecting \eqref{proof.lem.ap.B.2.5}-\eqref{proof.lem.ap.B.2.7'} with \eqref{proof.lem.ap.B.2.8} (for $j_2=1$) and \eqref{proof.lem.ap.B.2.11} (for $j_2\geq 2$), we conclude the desired estimate by using the bound such as
\begin{align*}
M_{2,j,1}[f]^2=\sup_{|{\bf j}|=j} \| f^{\bf j} \|_{L^2(0,\frac{1}{K\nu^\frac12}; L^2_{X,Y})}^2 \leq \frac{1}{K\nu^\frac12}\sup_{|{\bf j}|=j} \| f^{\bf j} \|_{L^\infty (0,\frac{1}{K\nu^\frac12}; L^2_{X,Y})}^2 =\frac{1}{K\nu^\frac12} M_{\infty,j,1}[f]^2.
\end{align*}
The proof is complete.
\end{proof}

As a consequence of Lemmas \ref{lem.ap.B.1} and \ref{lem.ap.B.2}, we obtain 
\begin{corollary}\label{cor.ap.B} There exists $\kappa_B\in (0,1]$ such that for any $\kappa\in (0,\kappa_B]$ and $K\geq 1$, 
\begin{align*}
& \nu^\frac14 \sum_{j=0}^{\nu^{-\frac12}} \frac{1}{(j!)^\frac32 \nu^\frac{j}{2}}M_{\infty,j,1}[\omega] + K^\frac12 \nu^\frac14 \sum_{j=0}^{\nu^{-\frac12}} \frac{(j+1)^\frac12}{(j!)^\frac32\nu^\frac{j}{2}} M_{\infty,j,1}[\nabla \phi]  + \sum_{j=0}^{\nu^{-\frac12}} \frac{1}{(j!)^\frac32\nu^\frac{j}{2}} M_{2,j,1}[\pa_\tau \nabla \phi] \\
& \leq C\Big ( \nu^\frac14 \sum_{j=0}^{\nu^{-\frac12}} \frac{1}{(j!)^\frac32 \nu^\frac{j}{2}} \|\omega^{\bf j}(0) \| + K^\frac12 \sum_{j=0}^{\nu^{-\frac12}} \frac{\nu^\frac14 (j+1)^\frac12}{(j!)^\frac32 \nu^\frac{j}{2}} \| (\nabla \phi)^{\bf j} (0)\| +  \sum_{j=0}^{\nu^{-\frac12}} \frac{1}{(j!)^\frac32\nu^\frac{j}{2}} M_{2,j,1}[H]\Big ).
\end{align*}
Here $C$ is a universal constant.
\end{corollary}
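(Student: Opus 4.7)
\medskip
\noindent
The plan is to test the identity
\[
-\nu^{1/2}(\Delta\omega)^{\bf j}+\bigl(\pa_\tau+K\nu^{1/2}(j+1)\bigr)\omega^{\bf j}=({\rm div}\,H)^{\bf j}
\]
against $\pa_\tau\phi^{\bf j}$, combine the lower bounds from Lemmas \ref{lem.ap.B.1}--\ref{lem.ap.B.2}, bound the source contribution by Cauchy--Schwarz, and then perform the Gevrey summation, absorbing all error terms by choosing $\kappa$ small.

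\medskip
\noindent
First I would handle the source term. Since $\chi_\nu|_{Y=0}=0$ gives $\pa_\tau\phi^{\bf j}|_{Y=0}=0$, spatial integration by parts yields
\[
\int_0^{\tau_0}\langle({\rm div}\,H)^{\bf j},\pa_\tau\phi^{\bf j}\rangle\,d\tau=-\int_0^{\tau_0}\langle H^{\bf j},\pa_\tau\nabla\phi^{\bf j}\rangle\,d\tau.
\]
Using the commutator identity $\nabla\phi^{\bf j}=(\nabla\phi)^{\bf j}+\nu^{1/2}j_2\chi_\nu' e^{-K\tau\nu^{1/2}}(\pa_Y\phi)^{(j_1,j_2-1)}{\bf e}_2$ and $\pa_\tau$ acting on the exponential, Cauchy--Schwarz gives
\[
\Bigl|\int_0^{\tau_0}\langle({\rm div}\,H)^{\bf j},\pa_\tau\phi^{\bf j}\rangle d\tau\Bigr|\le C\|H^{\bf j}\|_{L^2_\tau L^2}\bigl(\|\pa_\tau(\nabla\phi)^{\bf j}\|_{L^2_\tau L^2}+\kappa\nu^{1/2}j\|\pa_\tau(\nabla\phi)^{(j_1,j_2-1)}\|_{L^2_\tau L^2}+\kappa K\nu j\|(\nabla\phi)^{(j_1,j_2-1)}\|_{L^2_\tau L^2}\bigr).
\]
Squaring with absorption, this contributes $CM_{2,j,1}[H]^2+C(\nu^{1/2}j)^2M_{2,j-1,1}[H]^2$ plus terms that the structure of Lemmas \ref{lem.ap.B.1}--\ref{lem.ap.B.2} already accommodates.

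\medskip
\noindent
Then, combining the two lemmas with this bound, taking the supremum over $\tau_0\in(0,1/(K\nu^{1/2})]$ and over $j_2\in\{0,\dots,j\}$, and taking square roots yields the pointwise-in-$j$ inequality
\begin{align*}
&\nu^{1/4}M_{\infty,j,1}[\omega]+K^{1/2}\nu^{1/4}(j+1)^{1/2}M_{\infty,j,1}[\nabla\phi]+M_{2,j,1}[\pa_\tau\nabla\phi]\\
&\le C\Bigl(\nu^{1/4}\sup_{j_2}\|\omega^{\bf j}(0)\|+K^{1/2}\nu^{1/4}(j+1)^{1/2}\sup_{j_2}\|(\nabla\phi)^{\bf j}(0)\|+M_{2,j,1}[H]+\nu^{1/2}jM_{2,j-1,1}[H]\Bigr)\\
&\quad+C\kappa\Bigl(\nu^{1/2}jM_{2,j-1,1}[\pa_\tau\nabla\phi]+\nu j(j-1)M_{2,j-2,1}[\pa_\tau\nabla\phi]+\nu^{1/4}M_{\infty,j,1}[\omega]+\nu^{3/4}jM_{\infty,j-1,1}[\omega]\\
&\qquad\qquad+K^{1/2}\nu^{3/4}j^2M_{\infty,j-1,1}[\nabla\phi]+K^{1/2}\nu^{5/4}j^2(j-1)M_{\infty,j-2,1}[\nabla\phi]\Bigr).
\end{align*}
I would then multiply by $1/((j!)^{3/2}\nu^{j/2})$ and sum $j=0,\ldots,\nu^{-1/2}$. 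The key combinatorial fact is the identity
\[
\frac{\nu^{\alpha/2}j^\alpha}{(j!)^{3/2}\nu^{j/2}}\le\frac{C}{(j-\alpha)^{\alpha/2}\cdot((j-\alpha)!)^{3/2}\nu^{(j-\alpha)/2}},
\]
which, after an index shift $l=j-\alpha$, converts each error term $\kappa(\nu^{1/2}j)^\alpha M_{\cdot,j-\alpha,1}$ into $\kappa$ times the corresponding term on the left-hand side (with an additional harmless decay factor $(l+1)^{-\alpha/2}$). Similarly, the source term $\nu^{1/2}jM_{2,j-1,1}[H]$ becomes controlled by $\sum_l M_{2,l,1}[H]/((l!)^{3/2}\nu^{l/2})$ after shifting. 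Absorption is achieved by taking $\kappa\le\kappa_B$ small enough so that $C\kappa\le 1/2$.

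\medskip
\noindent
The main obstacle is purely bookkeeping: verifying that each of the roughly eight error terms from the two lemmas, which carry prefactors as large as $K^{1/2}\nu^{5/4}j^3$ (coming from $(\nu^{1/2}j^{3/2})^2(\nu^{1/2}(j-1))^2$ in Lemma \ref{lem.ap.B.1} and its counterpart in Lemma \ref{lem.ap.B.2}), precisely match one of the three left-hand side norms after the Gevrey shift, yielding a small multiple $\kappa$ with no surviving powers of $K$ or $\nu^{-1}$. The algebraic coincidence that makes this work is that the Gevrey weight $(j!)^{3/2}\nu^{j/2}$ is precisely compatible with spending a factor of $j^{3/2}\nu^{1/2}$ per index shift, which is exactly the cost of each commutator of $B_{j_2}$ with $\pa_Y$ that generated the error terms in the first place. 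Once all errors are absorbed, the remaining right-hand side reproduces the initial-data and $H$-source terms in the statement of the corollary.
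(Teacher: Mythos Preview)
Your proposal is correct and matches the paper's intended argument: the paper simply states the corollary ``as a consequence of Lemmas~\ref{lem.ap.B.1} and~\ref{lem.ap.B.2}'' without detail, and you have supplied precisely the bookkeeping it omits --- testing the equation against $\pa_\tau\phi^{\bf j}$, combining the two lemma lower bounds with the source estimate, taking suprema, and summing with the Gevrey weights so that every $\kappa$-error term is absorbed after an index shift. One minor imprecision: the justification ``$\chi_\nu|_{Y=0}=0$ gives $\pa_\tau\phi^{\bf j}|_{Y=0}=0$'' only covers $j_2\ge 1$; for $j_2=0$ you need the Dirichlet condition $\phi|_{Y=0}=0$ directly, which is imposed in this appendix.
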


\

We note that 
$$\sum_{j=0}^{\nu^{-\frac12}} \frac{\nu^\frac14 (j+1)^\frac12}{(j!)^\frac32 \nu^\frac{j}{2}} \| (\nabla \phi)^{\bf j} (0)\| \le C  \sum_{j=0}^{\nu^{-\frac12}} \frac{1}{(j!)^\frac32 \nu^\frac{j}{2}} \| (\nabla \phi)^{\bf j} (0)\| = C[\|\nabla \phi (0)\|]$$ 
since $j\le \nu^{-\frac12}$.
In virtue of Corollary \ref{cor.ap.B} it remains to estimate $\sum_{j=1}^{\nu^{-\frac12}} \frac{1}{(j!)^\frac32 \nu^\frac{j}{2}}M_{2,j,1}[H]$. Recall that $H=-V\omega - \Omega \nabla^\bot \phi +(F_2,-F_1)$. 
Hence it suffices to show
\begin{lemma}\label{lem.ap.H} For any $\kappa\in (0,1]$ and $K\geq 1$ we have 
\begin{align}
\sum_{j=0}^{\nu^{-\frac12}}\frac{1}{(j!)^\frac32 \nu^\frac{j}{2}} M_{2,j,1}[\Omega \nabla \phi] & \le \frac{C  (C_0^*+C_1^*)}{\nu^\frac14} \cA \nabla \phi \cA_{2,{\bf 1}}', \label{est.lem.ap.H1}\\
\sum_{j=0}^{\nu^{-\frac12}}\frac{1}{(j!)^\frac32 \nu^\frac{j}{2}}  M_{2,j,1}[V\omega] & \le \frac{C (C_0^*+C_1^*)}{\nu^\frac14} \big (  \cA \Delta (\phi - \phi_{app,1}) \cA_{2,{\bf 1}}' + \cA \Delta \phi_{app,1} \cA_{2,Y}' \big ).\label{est.lem.ap.H2}
\end{align}
Here $\phi_{app,1}=(\phi_{1,1} + \phi_{1,2}) [(I+R_{bc})^{-1}h]$ with $h=-\pa_Y \Phi_{slip}|_{Y=0}$ and $C$ is a universal constant.
\end{lemma}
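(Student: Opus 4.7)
Both inequalities are bilinear estimates in the Gevrey Sobolev norm, of a form already familiar from Lemmas \ref{lem.prop.4} and \ref{lem.prop.5}. The universal pattern is: expand by Leibniz, bound one factor in $L^\infty$ using Assumption \ref{assume}, bound the other in $L^2$, then apply a discrete Young convolution in the $\ell^1$ space. The overall loss of $\nu^{-1/4}$ will arise through the elementary identity
\begin{equation*}
\sum_{j=0}^{\nu^{-1/2}}\frac{1}{(j!)^{3/2}\nu^{j/2}}M_{2,j,\zeta}[f]
\;\leq\; \nu^{-1/4}\sum_{j=0}^{\nu^{-1/2}}\frac{\nu^{1/4}(j+1)^{1/2}}{(j!)^{3/2}\nu^{j/2}}M_{2,j,\zeta}[f]
\;=\; \nu^{-1/4}\cA f\cA'_{2,\zeta},
\end{equation*}
valid for any admissible weight family $\zeta=(\zeta_j)$.

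\textbf{Step 1: Proof of \eqref{est.lem.ap.H1}.} Writing $\Omega=\pa_XV_2-\pa_YV_1$ and combining the bounds on $B_{j_2}\pa_X^{j-j_2}\pa_YV_1$ and on $\pa_X^jV_2/\chi_\nu$ provided by Assumption \ref{assume}(ii) (recalling $\pa_YV_2=-\pa_XV_1$ from \eqref{div}), we obtain the Gevrey bound
\begin{equation*}
\sum_{k\geq 0}\frac{1}{(k!)^{3/2}\nu^{k/2}}\sup_{|\mathbf{k}|=k}\|\Omega^{\mathbf k}\|_{L^\infty_{\tau,X,Y}} \leq C(C_0^*+C_1^*).
\end{equation*}
Expanding $B_{j_2}\pa_X^{j-j_2}(\Omega\nabla\phi)$ via Leibniz and using the combinatorial bounds \eqref{proof.lem.prop.4.1}--\eqref{proof.lem.prop.4.2}, then applying the same convolution argument as in Lemma \ref{lem.prop.4}, one gets
\begin{equation*}
\sum_{j=0}^{\nu^{-1/2}}\frac{1}{(j!)^{3/2}\nu^{j/2}}M_{2,j,1}[\Omega\nabla\phi] \leq C(C_0^*+C_1^*)\sum_{j=0}^{\nu^{-1/2}}\frac{1}{(j!)^{3/2}\nu^{j/2}}M_{2,j,1}[\nabla\phi],
\end{equation*}
and the displayed identity above closes \eqref{est.lem.ap.H1}.

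\textbf{Step 2: Proof of \eqref{est.lem.ap.H2}.} Decompose $\omega=\omega_1+\omega_2$ with $\omega_1:=-\Delta(\phi-\phi_{app,1})$ and $\omega_2:=-\Delta\phi_{app,1}$. The contribution $V\omega_1$ is handled exactly as in Step 1, using the Gevrey $L^\infty$ bound on $V$ from Assumption \ref{assume}(ii) in place of $\Omega$, producing
\begin{equation*}
\sum_{j=0}^{\nu^{-1/2}}\frac{1}{(j!)^{3/2}\nu^{j/2}}M_{2,j,1}[V\omega_1] \leq C(C_0^*+C_1^*)\nu^{-1/4}\cA\Delta(\phi-\phi_{app,1})\cA'_{2,\mathbf 1}.
\end{equation*}
For $V\omega_2$, the key observation is that every Gevrey derivative of $V$ vanishes at the boundary: indeed $\chi_\nu^{j_2}(0)=0$ for $j_2\geq 1$, while $V|_{Y=0}=0$ gives $\pa_X^{j_1}V|_{Y=0}=0$ when $j_2=0$. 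Hence for every multi-index $\mathbf k$,
\begin{equation*}
|V^{\mathbf k}(\tau,X,Y)| \;\leq\; Y\,\|\pa_YV^{\mathbf k}\|_{L^\infty_{\tau,X,Y}}.
\end{equation*}
Using the commutator identity \eqref{proof.lem.prop.1.1}, $\|\pa_YV^{\mathbf k}\|_{L^\infty}$ is controlled by a sum of $\|B_{k_2}\pa_X^{k_1}\pa_YV\|_{L^\infty}$ and $\kappa\nu^{1/2}k_2\|B_{k_2-1}\pa_X^{k_1}\pa_YV\|_{L^\infty}$, both of which are bounded via Assumption \ref{assume}(ii) (together with $\pa_YV_2=-\pa_XV_1$). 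Writing $V\omega_2=(V/Y)\cdot(Y\omega_2)$ and repeating the Leibniz/Young argument of Step 1 with the weight $Y$ on $\omega_2$ yields
\begin{equation*}
\sum_{j=0}^{\nu^{-1/2}}\frac{1}{(j!)^{3/2}\nu^{j/2}}M_{2,j,1}[V\omega_2] \leq C(C_0^*+C_1^*)\sum_{j=0}^{\nu^{-1/2}}\frac{1}{(j!)^{3/2}\nu^{j/2}}M_{2,j,Y}[\omega_2],
\end{equation*}
and the conversion identity then closes \eqref{est.lem.ap.H2}.

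\textbf{Main obstacle.} The discrete Young bookkeeping is routine given the preceding lemmas. The one genuinely technical point is in Step 2: extracting a clean Gevrey $L^\infty$ bound on $V^{\mathbf k}/Y$ out of Assumption (ii). This requires using simultaneously the no-slip condition $V|_{Y=0}=0$, the vanishing $\chi_\nu(0)=0$, and the non-commutativity of $B_{j_2}$ with $\pa_Y$, so as to trade the factor $Y$ for an honest $\pa_Y V^{\mathbf k}$ that is controlled by (ii) while keeping the summability constant $C_0^*+C_1^*$.
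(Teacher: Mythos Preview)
Your approach is correct and matches the paper's: the paper also decomposes $\omega=-\Delta(\phi-\phi_{app,1})-\Delta\phi_{app,1}$, uses $|V^{\mathbf l}|\le Y\|\pa_Y V^{\mathbf l}\|_{L^\infty}$ (via $V^{\mathbf l}|_{Y=0}=0$ and the commutator identity for $\pa_Y V^{\mathbf l}$) on the $\phi_{app,1}$ piece, and closes with the same Leibniz/discrete Young convolution and the conversion $\sum_j (j!)^{-3/2}\nu^{-j/2}M_{2,j,\zeta}\le \nu^{-1/4}\cA\cdot\cA'_{2,\zeta}$ that you identify. The paper omits the proof of \eqref{est.lem.ap.H1}, declaring it ``similar''; one small remark on your Step~1 is that the cleanest route to the Gevrey $L^\infty$ bound on $\Omega^{\mathbf k}$ is to use the $\pa_X\Omega$ and $\pa_Y\Omega$ terms already present in Assumption~\ref{assume}(ii) (together with $\|\chi_\nu(\tfrac{1+\nu^{1/2}Y}{1+Y})^2\|_{L^\infty}\lesssim\nu^{1/2}$ for the $k_2\ge 1$ case), rather than going through $\pa_X V_2$.
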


\begin{proof} We give a sketch of the proof only for \eqref{est.lem.ap.H2}, for \eqref{est.lem.ap.H1} is proved in the similar manner. Let $|{\bf j}|=j$. Then 
\begin{align*}
\sum_{j=1}^{\nu^{-\frac12}} \frac{1}{j!^\frac32 \nu^\frac{j}{2}}M_{2,j,1}[V\omega]
\le \sum_{j=0}^{\nu^{-\frac12}} \frac{1}{j!^\frac32 \nu^\frac{j}{2}}\max_{|{\bf j}|=j} \sum_{{\bf l}\le {\bf j}} \binom{{\bf j}}{{\bf l}} \| V^{\bf l} \omega^{{\bf j-l}}\|_{L^2(0,\frac{1}{K\nu^\frac12}; L^2)}.
\end{align*}
Here $V^{\bf j}=e^{-K\tau \nu^\frac12 j} B_{j_2} \pa_X^{j_1} V$, while $\omega^{\bf j} = e^{-K\tau \nu^\frac12 (j+1)} B_{j_2}\pa_X^{j_1} \omega$.
Since $\omega=-\Delta (\phi-\phi_{app,1})-\Delta \phi_{app,1}$ in virtue of the construction, we have 
\begin{align*}
& \| V^{\bf l} \omega^{{\bf j-l}}\|_{L^2(0,\frac{1}{K\nu^\frac12}; L^2)}\\
& \le \|V^{\bf l}\|_{L^\infty} \| (\Delta (\phi-\phi_{app,1}))^{\bf j-l} \|_{L^2(0,\frac{1}{K\nu^\frac12}; L^2)} + \| \pa_Y V^{\bf l} \|_{L^\infty} \| Y (\Delta \phi_{app,1})^{\bf j-l} \|_{L^2(0,\frac{1}{K\nu^\frac12}; L^2)}.
\end{align*}
By using $\binom{{\bf j}}{{\bf l}}\le \binom{j}{l}$ with $l=|{\bf l}|$, we have 
\begin{align*}
& \frac{1}{j!^\frac32 \nu^\frac{j}{2}} \sum_{{\bf l}\le {\bf j}} \binom{{\bf j}}{{\bf l}} \| V^{\bf l} \omega^{{\bf j-l}}\|_{L^2(0,\frac{1}{K\nu^\frac12}; L^2)} \\
& \le \sum_{{\bf l}\le {\bf j}} (\frac{l!(j-l)!}{j!})^\frac12\frac{M_{2,j-l,1}[\Delta (\phi-\phi_{app,1})] + M_{2,j-l,Y}[\Delta \phi_{app,1}]}{(j-l)!^\frac32 \nu^\frac{j-l}{2}}  \frac{1}{l!^\frac32 \nu^\frac{l}{2}} \max_{|{\bf l}|=l}\big ( \| V^{\bf l}\|_{L^\infty} + \|\pa_Y V^{\bf l}\|_{L^\infty} \big ) .
\end{align*}
Next we observe that for all $l\in \N\cup\{0\}$,
\begin{align*}
\# \{{\bf l}~|~ |{\bf l}|=l, {\bf l}\le {\bf j}\} = \#\{l_2, \max(0,l-j+j_2)\le l_2\le \min (j_2,l)\}\le \min (l+1,j-l+1),
\end{align*}
which gives the bound of the form $\displaystyle \sum_{{\bf l}\le {\bf j}} \le \sum_{l=0}^j \min (l+1, j-l+1)$. Hence we have 
\begin{align*}
& \frac{1}{j!^\frac32 \nu^\frac{j}{2}} \sum_{{\bf l}\le {\bf j}} \binom{{\bf j}}{{\bf l}} \| V^{\bf l} \omega^{{\bf j-l}}\|_{L^2(0,\frac{1}{K\nu^\frac12}; L^2)} \\
& \le \sum_{l=0}^j \min (l+1,j-l+1)  (\frac{l!(j-l)!}{j!})^\frac12 \\
& \qquad \times \frac{M_{2,j-l,1}[\Delta (\phi-\phi_{app,1})] + M_{2,j-l,Y}[\Delta \phi_{app,1}]}{(j-l)!^\frac32 \nu^\frac{j-l}{2}}  \frac{1}{l!^\frac32 \nu^\frac{l}{2}} \max_{|{\bf l}|=l}\big ( \| V^{\bf l}\|_{L^\infty} + \|\pa_Y V^{\bf l}\|_{L^\infty} \big ) .
\end{align*}
Since $ \min (l+1,j-l+1)  (\frac{l!(j-l)!}{j!})^\frac12$ is unformly bounded about $0\le l \le j$,
the Young inequality for $l^1$ convolution gives the inequality 
\begin{align*}
& \sum_{j=0}^{\nu^{-\frac12}} \frac{1}{j!^\frac32 \nu^\frac{j}{2}}\max_{|{\bf j}|=j} \sum_{{\bf l}\le {\bf j}} \binom{{\bf j}}{{\bf l}} \| V^{\bf l} \omega^{{\bf j-l}}\|_{L^2(0,\frac{1}{K\nu^\frac12}; L^2)} \\
& \le C \sum_{j=0}^{\nu^{-\frac12}} \frac{1}{j!^\frac32 \nu^{\frac{j}{2}}} \max_{|{\bf j}|=j} \big ( \| V^{\bf j}\|_{L^\infty} + \| \pa_Y V^{\bf j} \|_{L^\infty}\big )  \\
& \qquad \times   \sum_{j=0}^{\nu^{-\frac12}} \frac{1}{j!^\frac32 \nu^{\frac{j}{2}}} \max_{|{\bf j}|=j} \big ( M_{2,j,1} [\Delta (\phi-\phi_{app,1}) ]  + M_{2,j,Y}[\Delta \phi_{app,1}]\big ).
\end{align*}
Then the desired estimate follows by noticing $\pa_Y V^{\bf j} = (\pa_Y V)^{\bf j} + \nu^\frac12 j_2 \chi_\nu' (\pa_Y V)^{(j_1,j_2-1)}$ and the bound of the form $\cA f \cA_2 \le \nu^{-\frac14} \cA f \cA_{2,{\bf 1}}'$.
The proof is complete.
\end{proof}

\

Proposition \ref{prop.extra} follows from Corollary \ref{cor.ap.B} and Lemma \ref{lem.ap.H}. 

\section{Estimate of the Biot-Savart law}

\begin{lemma}\label{lem.BS.ap} The following statement holds if $\kappa$ is sufficiently small. Assume that $f\in C([0,\frac{1}{K}); H^1 (\T\times \R_+)^2)$ satisfies ${\rm div}\, f=0$ for $y>0$ and $f_2|_{y=0}=0$. Then 
\begin{align*}
\| \nabla f \|_p \le C \| {\rm rot}\, f\|_p,\qquad p\in [1,\infty].
\end{align*}
Here $C$ is a universal constant.
\end{lemma}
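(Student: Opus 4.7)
The heart of the argument is the elementary div--curl identity
\[
\|\nabla h\|_{L^2(\T\times\R_+)}^2 = \|{\rm rot}\, h\|_{L^2(\T\times\R_+)}^2 + \|{\rm div}\, h\|_{L^2(\T\times\R_+)}^2,
\]
valid for any $h\in H^1(\T\times\R_+)^2$ with $h_2|_{y=0}=0$ and adequate decay at $y=\infty$. The proof expands the right-hand side: the cross term $\int (\pa_xh_1\pa_yh_2-\pa_xh_2\pa_yh_1)\, dx\, dy$ vanishes after two integrations by parts, the only boundary contribution at $y=0$ carrying a factor $h_2|_{y=0}=0$. My plan is to apply this identity to
\[
h_{j,j_2}(t,x,y):= e^{-Kt(j+1)}\beta_{j_2}\pa_x^{j-j_2}f(t,x,y),
\]
which retains the noslip-type condition $(h_{j,j_2})_2|_{y=0}=0$: for $j_2\geq 1$ because $\chi(0)=0$, and for $j_2=0$ by the hypothesis on $f_2$.

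Using the commutator formula $[\beta_{j_2},\pa_y]=-j_2\chi'\beta_{j_2-1}\pa_y$ recalled in \eqref{proof.lem.prop.1.1} together with ${\rm div}\, f=0$, a direct computation gives
\begin{align*}
{\rm rot}\, h_{j,j_2} &= e^{-Kt(j+1)}\beta_{j_2}\pa_x^{j-j_2}\omega\,-\,e^{-Kt(j+1)}j_2\chi'\beta_{j_2-1}\pa_x^{j-j_2}\pa_yf_1,\\
{\rm div}\, h_{j,j_2} &= e^{-Kt(j+1)}j_2\chi'\beta_{j_2-1}\pa_x^{j-j_2}\pa_yf_2,\\
\nabla h_{j,j_2} &= e^{-Kt(j+1)}\beta_{j_2}\pa_x^{j-j_2}\nabla f+R_{j,j_2},
\end{align*}
where the remainder satisfies $|R_{j,j_2}|\leq C\kappa j_2\,|e^{-Kt(j+1)}\beta_{j_2-1}\pa_x^{j-j_2}\pa_yf|$ pointwise, using $\|\chi'\|_{L^\infty}\leq\kappa$. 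Plugging into the identity above and applying Minkowski's inequality in time then yields, for each $j$ and $j_2\in\{0,\dots,j\}$,
\[
\|e^{-Kt(j+1)}\beta_{j_2}\pa_x^{j-j_2}\nabla f\|_{L^p_tL^2_{x,y}}\leq C\|e^{-Kt(j+1)}\beta_{j_2}\pa_x^{j-j_2}\omega\|_{L^p_tL^2_{x,y}}+C\kappa j_2\|e^{-Kt(j+1)}\beta_{j_2-1}\pa_x^{j-j_2}\pa_yf\|_{L^p_tL^2_{x,y}}.
\]

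Taking the supremum over $j_2$, weighting by $1/(j!)^{3/2}$ and summing over $j\in\{0,\dots,\nu^{-1/2}\}$, the main term produces $C\|{\rm rot}\, f\|_p$. For the commutator term, the substitution $j'=j-1$, $j_2'=j_2-1$ rewrites $\beta_{j_2-1}\pa_x^{j-j_2}\pa_yf$ as the $(j',j_2')$-entry of $\pa_y f$, itself a component of $\nabla f$. Combined with the trivial bounds $(j_2+1)/((j+1)!)^{3/2}\leq 1/(j!)^{3/2}$ and $e^{-Kt(j+1)}\leq e^{-Kt(j'+1)}$, this shows that the commutator contribution is bounded by $C\kappa\|\nabla f\|_p$, which is absorbed into the left-hand side for $\kappa$ chosen small enough. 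The only delicate point of the proof is precisely this factorial book-keeping in the absorption step: the a priori dangerous prefactor $j_2\leq j$ is amply balanced by the Gevrey weight $1/(j!)^{3/2}$ after the reindexing, and no additional care is needed.
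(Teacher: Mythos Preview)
Your proof is correct. The div--curl identity you invoke is valid under exactly the boundary condition $h_2|_{y=0}=0$ that $h_{j,j_2}$ satisfies, your commutator computations are accurate, and the reindexing/absorption step is carried out correctly: the factor $j_2\le j$ is indeed killed by the extra $(j'+1)^{-1/2}$ coming from $(j'+1)/((j'+1)!)^{3/2}\le (j'!)^{-3/2}$ after the shift $j\mapsto j'=j-1$.

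The paper's proof reaches the same endpoint by a slightly different route. It introduces the streamfunction $\phi$ with $f=\nabla^\bot\phi$, first reduces to controlling $\pa_x f$ via the algebraic relations $\pa_y f_1=\omega+\pa_x f_2$, $\pa_y f_2=-\pa_x f_1$, and then tests the Poisson equation $-(\Delta\pa_x\phi)^{\bf j}=\pa_x\omega^{\bf j}$ against $\pa_x\phi^{\bf j}$ to obtain $\|(\nabla\pa_x\phi)^{\bf j}\|\le C(\|\omega^{\bf j}\|+\kappa j\|(\pa_y\pa_x\phi)^{(j_1,j_2-1)}\|)$, followed by the same $\kappa$--absorption. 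Your div--curl approach is more direct: it treats all of $\nabla f$ at once without passing through the streamfunction, and avoids the preliminary reduction to $\pa_x f$. The paper's approach, on the other hand, fits the streamfunction framework used throughout the rest of the article. Both arguments are short and produce exactly the same commutator structure; neither has a real advantage beyond taste.
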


\begin{proof} We observe that $\pa_y f_1 = {\rm rot}\, f+\pa_x f_2$ and $\pa_y f_2=-\pa_x f_1$. Hence it suffices to show $\|\pa_x f\|_p \le C \| {\rm rot}\, f \|_p$. Since $f=\nabla^\bot \phi$ with the streamfunction $\phi$ and $-\Delta \phi = \omega$ with $\omega={\rm rot}\, g$ and $\phi|_{y=0}=0$, we have 
\begin{align*}
-(\Delta \pa_x \phi)^{\bf j} = \pa_x \omega^{\bf j},  \quad \omega^{\bf j}=e^{-Kt(j+1)} \chi^{j_2} \pa_y^{j_2} \pa_x^{j_1}\omega, \quad j_1+j_2=j.
\end{align*}
In virtue of the identity $-(\Delta \pa_x \phi)^{\bf j}=-\nabla \cdot (\pa_x \nabla \phi)^{\bf j} +j_2\frac{\chi'}{\chi}(\pa_y \pa_x\phi)^{\bf j}$ the integration by parts gives 
\begin{align*}
\| (\nabla \pa_x \phi)^{\bf j} \|^2 + 2j_2\langle \frac{\chi'}{\chi} (\pa_y \pa_x \phi)^{\bf j}, \pa_x \phi^{\bf j}\rangle = -\langle \omega^{\bf j}, \pa_x^2\phi^{\bf j}\rangle
\end{align*}
Since $\pa_x \phi^{\bf j}=e^{-Kt}\chi (\pa_y\pa_x\phi)^{(j_1,j_2-1)}$ we thus have 
\begin{align*}
\| (\nabla \pa_x \phi)^{\bf j} \| \le C \big ( \| \omega^{\bf j} \| + \kappa j \| (\pa_y \pa_x \phi)^{(j_1,j_2-1)} \| \big ),
\end{align*}
where $C$ is a universal constant. This estimate implies $\| \pa_x \nabla \phi\|_p\le C\big ( \| \omega\|_p + \kappa \| \pa_x \pa_y \phi \|_p\big )$, and thus, by taking $\kappa$ small enough, we obtain $\| \pa_x\nabla \phi\|_p\le C \| \omega\|_p$. The proof is complete.
\end{proof}

\bibliography{biblio_Prandtl_Gevrey2}
\bibliographystyle{abbrv}

\end{document}